\documentclass[12pt]{amsart}
\usepackage{amssymb, todonotes, amsmath}

\usepackage[margin=2.5cm, a4paper]{geometry}
\usepackage{tikz}
\usetikzlibrary{matrix}
\usetikzlibrary{chains}

\setlength{\marginparwidth}{2cm}

\newtheorem{thm}{Theorem}[section]
\newtheorem{prop}[thm]{Proposition} 
\newtheorem{cor}[thm]{Corollary} 

\newtheorem{lem}[thm]{Lemma}
\newtheorem{rem}[thm]{Remark}
\newtheorem{defn}[thm]{Definition}

\newtheorem{example}[thm]{Example}
\numberwithin{equation}{section}

\renewcommand{\MR}[1]{}

\newcommand{\cald}{\mathcal{D}}
\newcommand{\cala}{\mathcal{A}}
\newcommand{\calc}{\mathcal{C}}

\newcommand{\calf}{\mathcal{F}}
\newcommand{\calt}{\mathcal{T}}
\newcommand{\calk}{\mathcal{K}}
\newcommand{\calj}{\mathcal{J}}
\newcommand{\call}{\mathcal{L}}


\begin{document}
	
\title[Weighted Cuntz-Krieger algebras]%
{Weighted Cuntz-krieger algebras}
%
\author[L. Helmer and B. Solel]{Leonid~Helmer \, and \, Baruch~Solel}
\thanks{2010 {\it Mathematics Subject Classification.} 46L05, 47L80, 46L08, 46L35, 46L89.  }
\thanks{{\it key words and phrases.} Directed graph, Graph algebras, Cuntz-Krieger algebras, Weighted shift, Simplicity, Cuntz-Pimsner
	algebra, $C^*$-Correspondence, Fock space, $C^*$-algebra, Gauge-invariant ideals}
\address{Department of Mathematics, Ben Gurion University, Beer Sheva, Israel} \email{leonihe@gmail.com}
\address{Department of Mathematics, Technion - Israel Institute of Technology, Haifa 32000, Israel}
\email{mabaruch@technion.ac.il}

\date{}

%


\begin{abstract}
	Let $E$ be a finite directed graph with no sources or sinks and write $X_E$ for the graph correspondence.
	We study the $C^*$-algebra $C^*(E,Z):=\mathcal{T}(X_E,Z)/\mathcal{K}$ where $\mathcal{T}(X_E,Z)$ is the $C^*$-algebra generated by weighted shifts on the Fock correspondence $\mathcal{F}(X_E)$  given by a weight sequence $\{Z_k\}$ of operators $Z_k\in \mathcal{L}(X_{E^{k}})$ and $\mathcal{K}$ is the algebra of compact operators on the Fock correspondence. If $Z_k=I$ for every $k$, $C^*(E,Z)$ is the Cuntz-Krieger algebra associated with the graph $E$. 
	
	We show that $C^*(E,Z)$ can be realized as  a Cuntz-Pimsner algebra and use a result of Schweizer to find conditions for the algebra $C^*(E,Z)$ to be simple. We also analyse the gauge-invariant ideals of $C^*(E,Z)$ using a result of Katsura and conditions that generalize the conditions of subsets of $E^0$ (the vertices of $E$) to be hereditary or saturated. 
	
	As an example, we discuss in some details the case where $E$ is a cycle.
\end{abstract}

\maketitle

\section{Introduction }

In \cite{MuS16} the second author, with P. Muhly, introduced and studied algebras of weighted shifts on the Fock space associated with a correspondence. This is a far reaching generalization of the classical weighted shift (on $\ell_2$). The emphasis there was on the nonself adjoint operator algebras generated by such shifts (these algebras are called weighted Hardy algebras). In the last section of that paper they studied the $C^*$-algebras generated by weighted shifts on the Fock correspondence associated with the correspondence $_{\alpha}M$ where $M$ is a von Neumann algebra and $\alpha$ is an automorphism on $M$. Such an algebra was referred to as a weighted crossed product and, if the weights were trivial, one gets the usual, unweighted, crossed product (of $M$ by $\alpha$). It was shown there that the weighted crossed product could be presented as an unweighted crossed product (of another $C^*$-algebra by a certain automorphism).

This generalizes a result of O'Donovan (\cite{OD75}) that shows that the $C^*$-algebra generated by a single weighted shift (on $\ell_2$) modulo the compact operators on $\ell_2$, is isomorphic to the crossed product of a certain commutative $C^*$-algebra by an action of $\mathbb{Z}$.

In \cite{HS} we considered the $C^*$-algebra generated by $d$ weighted shifts modulo the compact operators. More precisely, such algebra is generated by weighted shifts on the Fock space $\mathcal{F}(\mathbb{C}^d)$ (with weights given by a sequence $\{Z_k\}$ of $d^k\times d^k$ matrices) modulo the compact operators on the Fock space. If $Z_k=I$ for every $k$, we end up with the Cuntz algebra $O_d$. In the general case, we referred to such a $C^*$-algebra as a weighted Cuntz algebra. We showed there that every weighted Cuntz algebra can be presented as a Cuntz-Pimsner algebra and we used this fact to study the simplicity of the algebra.

Here we explore the case where the Fock space $\mathcal{F}(\mathbb{C}^d)$ is replaced by the Fock correspondence associated with a directed graph.

More precisely, we fix a finite directed graph $E$ with no sources or sinks and consider the graph correspondence $X_E$. The algebra we study are subalgebras of $\mathcal{L}(\mathcal{F}(X_E))/K(\mathcal{F}(X_E))$ that are generated by a weighted shift on the Fock correspondence $\mathcal{F}(X_E)$ modulo $K(\mathcal{F}(X_E))$. The weights are given by a sequence $\{Z_k\}$ of positive, adjointable operators on $\{(X_E)^{\otimes k}\}$ as in \cite{MuS16} (see Definition~\ref{weights}). If $Z_k=I$ for every $k$, we get the Cuntz-Krieger algebra $C^*(E)$ introduced by Cuntz and Krieger in \cite{CK}. For a general $Z:=\{Z_k\}$, we write $C^*(E,Z) $ for the algebra and refer to it as the weighted Cuntz-Krieger algebra associated with $E$ and $Z$.

In Theorem~\ref{isomorphism} we show that the weighted Cuntz-Krieger algebra $C^*(E,Z)$ is isomorphic to a Cuntz-Pimsner algebra $O(q(F),q(\mathcal{D}))$ associated with a $C^*$-correspondence $q(F)$ over a $C^*$-algebra $q(\mathcal{D})$. Then, at least in principle, one can apply the theory of Cuntz-Pimsner algebras to study the algebra $C^*(E,Z)$. 

We then use this approach to study the simplicity of the algebra and the collection of all its gauge-invariant ideals.

The problem with this approach (as the one we had in \cite{HS}) is that dealing with a general sequence of weights can be quite complicated and we had to impose an additional condition. The condition we impose is that the sequence $\{Z_k\}$ is essentially periodic of period $p$ for some natural number $p$. We refer to it as Condition A(p) (see Lemma~\ref{uz} and the discussion preceding it). Note that the (unweighted) Cuntz-Krieger algebra $C^*(E)$ satisfies this condition with $p=1$.

In the rest of the paper we assume that Condition A(p) holds for some natural number $p$.

In order to study the ideals in $C^*(E,Z)$ we first study the algebra $q(\mathcal{D})$ and its ideals. We define an increasing sequence of sub $C^*$-algebras, $\mathcal{C}_0\subseteq \mathcal{C}_1 \subseteq \ldots $ such that $q(\mathcal{D})=\overline{\cup_n \mathcal{C}_n}$. For every $v\in E^0$, we define a certain ``corner" of $\mathcal{C}_0$, denoted $\mathcal{C}_v$, and we show that each $\mathcal{C}_n$ is isomorphic to a $C^*$-algebra, $\mathcal{A}_n$, given, roughly, by a bundle over a subset of $\Gamma_n\times \Gamma_n$ (where $\Gamma_n$ is the set of all paths of length $(n+1)p-1$ ) and each fibre is one of the algebras $\{\mathcal{C}_v\}$ (see Proposition~\ref{cn} for the precise statement). It follows that $q(\mathcal{D})$ is a direct limit of $\{\mathcal{A}_n\}$ (Corollary~\ref{dirlim} ).

Given a (closed, two sided) ideal $J$ in $q(\mathcal{D})$ it is equal to $\overline{\cup_n (J\cap \mathcal{C}_n)}$ and, as we prove in Proposition~\ref{ideals}, to each $J\cap \mathcal{C}_n$ we can associate a family $\{(J\cap \mathcal{C}_n)_v\}_{v\in E^0}$ that fully describe it and each $(J\cap \mathcal{C}_n)_v$ is an ideal in $\mathcal{C}_v$.

Using this analysis, we are able, in Theorem~\ref{simplicity}, to find conditions for the algebra $C^*(E,Z)$ to be simple. For this we use the characterization of simplicity for Cuntz-Pimsner algebras proved by J. Schweizer in \cite{Sch01} (see Theorem~\ref{simpleCP}).

To describe the set of all the gauge-invariant ideals we use a result of Katsura (\cite{Ka2}) that proves that there is a bijection between the set of all gauge-invariant ideals in a Cuntz-Pimsner algebra $O(X,A)$ and the set of all $O$-pairs (see Definition~\ref{Opair} for the definition of an $O$-pair and Theorem~\ref{Opairsinvideals} for the statement of his result). It turns out that the $O$-pairs in our case are all pairs of the form $(J,q(\mathcal{D}))$ where $J\subseteq q(\mathcal{D})$ is a fully invariant ideal (in the sense of Definition~\ref{finv}). Thus, there is a bijection between the set of all gauge-invariant ideals in $C^*(E,Z)$ and the set of all fully invariant ideals in $q(\mathcal{D})$ (Corollary~\ref{finvideals}).

In the unweighted case (Cuntz-Krieger algebras) it is known (\cite[Theorem 4.1 (a)]{BPRS}) that the gauge-invariant ideals of $C^*(E)$ correspond to subsets of $E^0$ that are hereditary and saturated (see Definition~\ref{sather}). Here, in the weighted case, we consider families $\{J_v\}_{v\in E^0}$ of ideals $J_v\subseteq \mathcal{C}_v$ and define, for such a family, two conditions, called (H) (for Hereditary) and (S) (for Saturated) and prove in Theorem~\ref{gaugeinvid} that there is a bijection (explicitely given) between the collection of all such families that satisfy (H) and (S) and the collection of the fully invariant ideals in $q(\mathcal{D})$ (and, therefore, also the collection of all the gauge-invariant ideals in $C^*(E,Z)$).

 In Section 5 we discuss the unweighted case and show that, indeed, our general theorem (Theorem~\ref{gaugeinvid}) can be used to give a new proof of the result of Bates, Pask, Raeburn and Szymanski mentioned above.
 
 Another case that we study in some detail is when $E$ is a directed cycle. In this case, the unweighted algebra $C^*(E)$ is not simple but it has no non trivial gauge-invariant ideals. We show in Section 6 that, for some choice of weighted sequence $Z$, the weighted algebra $C^*(E,Z)$ does have non trivial gauge-invariant ideals.

\section{The weighted Cuntz-Krieger algebras as Cuntz-Pimsner algebras}

Let $E=(E^0, E^1, s,r)$ be a finite directed graph where $E^0$ and $E^1$ are finite sets of vertices and edges respectively, and $s:E^1\rightarrow E^0$ and $r:E^1\rightarrow E^0$ the source and range maps respectively.
The vertex $v\in E^0$ is called a source if $r^{-1}(v)=\emptyset$ and $v$ is a sink if $s^{-1}(v)=\emptyset$. 

We assume throughout that $E$ has no sinks. Starting in Section 4, we also assume that $E$ has no sources.

In this section, we allow sources.

Given a finite graph $E=(E^0, E^1, s,r)$, then $E$ defines a graph correspondence as follows.
Let $A:=c(E^0)$ be the $C^*$-algebra of all functions defined on $E^0$ with the $\sup$-norm.
Thus $A$ is a finite dimensional $C^*$-algebra.
Write $\delta_v$ for the function which is $1$ at $v\in E^0$ and $0$ elsewhere. Then every $a\in A$ can be written as 
$a=\sum_{i=1}^{n}\alpha_i\delta_{v_i}$, $\alpha_i\in \mathbb{C}$, and $a(v_i)=\alpha_i$.

Let $c(E^1)$ be a set of all functions defined on $E^1$, then $c(E^1)$ is naturally a $A$-bimodule with the left and right actions defined by
$(a\cdot \xi)(e)=a(r(e))\xi(e)$ and $(\xi\cdot a)(e)=\xi(e)a(s(e))$ respectively, where $\xi\in c(E^1)$, $a\in A$ and $e\in E^1$. We write $\varphi_E(a)\xi$ for the left action. When $E$ has no sources the left action $\varphi_E$ is faithful.
For the $A$-valued inner product we set
$$\langle \xi,\eta\rangle(v):=\sum_{e\in s^{-1}(v)}\overline{\xi(e)}\eta(e),$$
where  $\xi,\eta\in c(E^1)$, $e\in E^1$, $v\in E^0$.
By separation and completion we obtain a $C^*$-correspondence over $A$. We denote this correspondence by $X_E$. When $E$ has no sinks $X_E$ is a full correspondence \cite[Proposition 8.8]{Rae}. 

Let $\calk(X_E)$ be the algebra of generalized compact operators on $X_E$:
$$\calk(X_E)=\overline{span}\{\theta_{\xi,\eta}:\xi,\eta\in X_E\},$$
where $\theta_{\xi,\eta}$ defined by
$$\theta_{\xi,\eta}(\zeta)=\xi\langle \eta,\zeta\rangle$$
If we write $\delta_e$ for the function on $E^1$ which is $1$ at $e\in E^1$ and $0$ elsewhere, then every $\xi\in E^1$ has the form $\xi=\sum_{e\in E^1} \alpha_e\delta_{e} $ where $\xi(e)=\alpha_e$.
Then $$\calk(X_E)=\overline{span}\{\theta_{\delta_{\alpha},\delta_{\beta}}:\alpha,\beta \in E^1\}.$$
It follows from \cite[Proposition 8.8]{Rae} and the finiteness of $E$ that  $\varphi_E(\delta_v)\in \calk(X_E)$, and that 
$$\varphi_E(\delta_v)=\sum_{\{e:v=r(e)\}}\theta_{\delta_e,\delta_e}$$ and $\varphi_E(\delta_v)=0$ if $v$ is a source.
It follows that $\calk(X_E)$ is a unital algebra with the identity $I=\sum_{e\in E^1}\varphi_E(\delta_{e})$, hence $\calk(X_E)=\call(X_E)$.

For $k\in \mathbb{N}$, write $E^k$ for the set of all paths in $E$ of length $k$. For such a path $\alpha=e_1 e_2 \cdots e_k$ ($e_i\in E^1$ and $s(e_j)=r(e_{j+1})$ for $1\leq j \leq k-1$) we write $s(\alpha)=s(e_k)$, $r(\alpha)=r(e_1)$ and $|\alpha|=k$. This allows us to view $E^k$ as a graph and then the correspondence $X_{E^k}$ (over $A$) is well defined. Moreover, it is not hard to check that, for $k,m$ we have an isomorphism of correspondences
\begin{equation}
	X_{E^k}\otimes_A X_{E^m}\cong X_{E^{k+m}}
\end{equation}
where the isomorphism maps $\delta_{\alpha}\otimes_A \delta_{\beta}$ to $\delta_{\alpha \beta}$  (note that $\delta_{\alpha}\otimes_A \delta_{\beta}$ is non zero only if $r(\beta)=s(\alpha)$) . (See \cite[Lemma 5.1]{MuS99}). In particular, $X_{E^k}\cong X_E^{\otimes k}$. Note that $X_{E^k}$ is spanned by $\{\delta_{\alpha}:|\alpha|=k\}$ and we shall often identify  $X_{E^k}$ and $ X_E^{\otimes k}$. For $k=0$ we write $X_E^{\otimes 0}=A$.

This allows us to define the full Fock correspondence $\calf(X_E)=\oplus_{k\geq 0}X_E^{\otimes k}$. This is a correspondence over $A$. 

We write $Q_k$ for the projection, in $\mathcal{L}(\mathcal{F}(X_E))$, onto the $k$th summand $X_E^{\otimes k}$.

Using our assumption that the graph is finite, we get the following.

\begin{lem}\label{K}
	Each $Q_k$, $k\geq 0$, is in $\calk(\calf(X_E))$ and, for $T\in \call(\calf(X_E))$, $T\in \calk(\calf(X_E))$ if and only if 
	$$\lim_{k\rightarrow \infty} \| Q_k T Q_k\| =0.$$	
	
\end{lem}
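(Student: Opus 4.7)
The plan is to treat the two assertions separately.

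For $Q_k \in \calk(\calf(X_E))$, the strategy is to exhibit $Q_k$ explicitly as a finite sum of rank-one operators. I would adapt the text's formula $\varphi_E(\delta_v) = \sum_{r(e)=v} \theta_{\delta_e,\delta_e}$ to the iterated graph $E^k$: via $X_E^{\otimes k} \cong X_{E^k}$ one gets $\varphi_{E^k}(\delta_v) = \sum_{\alpha \in E^k,\, r(\alpha)=v}\theta_{\delta_\alpha,\delta_\alpha}$, and summing over $v \in E^0$ yields the identity of $X_E^{\otimes k}$. Viewed in $\mathcal{L}(\calf(X_E))$, each $\theta_{\delta_\alpha,\delta_\alpha}$ with $\delta_\alpha \in X_E^{\otimes k}$ annihilates every Fock-summand other than $X_E^{\otimes k}$, because the $A$-valued inner product of elements from distinct summands vanishes. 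Hence the same finite sum equals the projection $Q_k$, and finiteness of $E^k$ places $Q_k$ inside $\calk(\calf(X_E))$.

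For the forward direction of the characterization, I would first verify the identity $Q_k\,\theta_{\xi,\eta}\,Q_k = \theta_{Q_k\xi,\,Q_k\eta}$ for $\xi,\eta \in \calf(X_E)$, which gives the bound $\|Q_k\theta_{\xi,\eta}Q_k\| \leq \|Q_k\xi\|\|Q_k\eta\|$. Both factors tend to zero: writing $\langle \xi,\xi\rangle = \sum_k \langle Q_k\xi,Q_k\xi\rangle$ as a convergent series in $A$ forces the norm of the $k$-th term to vanish. A standard $\varepsilon/3$ density argument then extends $\|Q_k T Q_k\| \to 0$ to every $T$ in $\calk(\calf(X_E))$.

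The converse is where I expect the main obstacle. I would use that, by the same finite-graph argument that gives $\mathcal{L}(X_E) = \calk(X_E)$, one has $\mathcal{L}(X_E^{\otimes \ell}, X_E^{\otimes j}) = \calk(X_E^{\otimes \ell}, X_E^{\otimes j})$ for all $j,\ell$, so every block $Q_j T Q_\ell$ is automatically compact, and in particular each truncation $P_N T P_N$ with $P_N := Q_0 + \cdots + Q_N$ lies in $\calk(\calf(X_E))$. The plan is then to show $P_N T P_N \to T$ in norm. The delicate step is deducing decay of the off-diagonal blocks $\|Q_j T Q_\ell\|$ with $\max(j,\ell) > N$ from the purely diagonal hypothesis. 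I would attempt this by reducing to positive operators via the real/imaginary decomposition $T = \tfrac12(T+T^*) + i\cdot\tfrac{1}{2i}(T-T^*)$ and invoking the Cauchy--Schwarz-type bound $\|Q_j S Q_\ell\| \leq \|Q_j S Q_j\|^{1/2}\|Q_\ell S Q_\ell\|^{1/2}$ valid for $S \geq 0$; combined with the orthogonal decomposition of $\calf(X_E)$ and the fact that each level is finitely generated over $A$, this should control the tails and yield $\|T - P_N T P_N\| \to 0$, hence $T \in \calk(\calf(X_E))$.
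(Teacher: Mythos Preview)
Your treatment of $Q_k\in\calk(\calf(X_E))$ and of the forward implication (if $T\in\calk$ then $\|Q_kTQ_k\|\to 0$) is correct and is essentially what the paper does: $Q_k=\sum_{|\alpha|=k}\theta_{\delta_\alpha,\delta_\alpha}$ is a finite sum of rank-ones, and for $T=\theta_{\xi,\eta}$ one has $Q_kTQ_k=\theta_{Q_k\xi,Q_k\eta}$, then extend by density.

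The converse, however, cannot be proved, because as stated it is \emph{false}. Take $E$ to be a single loop (one vertex, one edge $e$); then $\calf(X_E)\cong\ell^2(\mathbb{N})$ and $S_e$ is the unilateral shift. Since $S_e$ raises degree by one, $Q_kS_eQ_k=0$ for every $k$, yet $S_e$ is certainly not compact. More generally, for any $e\in E^1$ with $s(e)$ not a source, $S_e$ satisfies $Q_kS_eQ_k=0$ while $S_eS_e^*$ is a projection with $\|Q_k S_eS_e^* Q_k\|=1$ for infinitely many $k$, so $S_e\notin\calk$. The paper's own proof in fact establishes only the forward implication; the phrase ``since $\calk$ is a closed ideal'' is being used to pass from rank-one generators to all of $\calk$, not to dispatch the converse. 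In the paper the lemma is only ever invoked for operators that commute with every $Q_k$ (e.g.\ $\sum_{|\alpha|=p}S_\alpha Z S_\alpha^*-Z$ in Lemma~\ref{uz}), and for such diagonal $T$ the converse \emph{is} immediate: $T-P_NTP_N=\sum_{k>N}Q_kTQ_k$ has norm $\sup_{k>N}\|Q_kTQ_k\|\to 0$ and each $P_NTP_N$ is compact.

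Your proposed route to the converse breaks down at a specific point. The real/imaginary decomposition produces self-adjoint operators, not positive ones, and the Cauchy--Schwarz bound $\|Q_jSQ_\ell\|\le\|Q_jSQ_j\|^{1/2}\|Q_\ell SQ_\ell\|^{1/2}$ requires $S\ge 0$. Passing from a self-adjoint $S$ to its positive and negative parts $S_{\pm}$ is a nonlinear (functional-calculus) operation, and there is no reason for $\|Q_kSQ_k\|\to 0$ to force $\|Q_kS_{+}Q_k\|\to 0$; indeed $S=S_e+S_e^*$ already shows this fails. So the off-diagonal control you need simply is not available from the diagonal hypothesis alone.
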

\begin{proof}
	It is easy to check that $Q_k=\Sigma_{|\alpha|=k} \theta_{\delta_{\alpha},\delta_{\alpha}}$ which lies in $\calk(\calf(X_E))$ because $E$ is finite. Since $\calk(\calf(X_E))$ is a closed ideal, it is left to show that, if 
	$T\in \calk(\calf(X_E))$ then $\lim_{k\rightarrow \infty} \| Q_k T Q_k\| =0$.
	
	For this it suffices to let $T$ be a generator $T=\theta_{\xi,\eta}$. Then, for every $k$, $Q_kTQ_k=\theta_{Q_k\xi,Q_k\eta}$ and, thus, for such $T$, $\|Q_kTQ_k\|\leq \|Q_k\xi\| \|Q_k\eta\| \rightarrow_k 0$ (as $\Sigma_k Q_k=I$). 
	
\end{proof}

For $\alpha\in E^1$ we define the shift (or creation) operator $S_{\alpha}$ on the Fock correspondence by
\begin{equation}
	S_{\alpha}(\xi_1 \otimes \cdots \xi_k)=\delta_{\alpha}\otimes \xi_1 \otimes \cdots \xi_k .
\end{equation}
Using the identification of $X_{E^k}$ and $ X_E^{\otimes k}$, we can write
\begin{equation}
	S_{\alpha}\delta_{\beta}=\delta_{\alpha \beta}
\end{equation}
for a path $\beta$ of length $k$.

For $a\in A$, we define the operator $\varphi_{\infty}(a)$ on the Fock correspondence by
\begin{equation}
	\varphi_{\infty}(a)(\xi_1 \otimes \cdots \xi_k)=(\varphi_E(a)\xi_1) \otimes \cdots \xi_k
\end{equation}
and, using the identification above,
\begin{equation}
	\varphi_{\infty}(\delta_v)\delta_{\beta}=\delta_{\beta}
\end{equation} if $r(\beta)=v$ and $0$ otherwise.
Both $S_{\alpha}$ and $	\varphi_{\infty}(\delta_v)$ (for $\alpha\in E^1$ and $v\in E^0$) are in $\mathcal{L}(\calf(X_E))$. 

Also note that 
$	\varphi_{\infty}(\delta_v)$ is a projection and we write $P_v$ for it. We also write $P_s$ for the sum of the projections associated with vertices that are sources. That is
\begin{equation}\label{Ps}
	P_s=\Sigma_{|r^{-1}(v)|=0} P_v.
\end{equation} Note that, if $v$ is a source, $P_v\delta_{\alpha}=0$ whenever $|\alpha|>0$. Thus $$P_s\leq Q_0$$ and $P_s\in \calk(\calf(X_E))$.

It will also be convenient to write, for every path $\alpha$ of length $k$, $\alpha=\alpha_1 \cdots \alpha_k$ (with $\alpha_i\in E^1$), $S_{\alpha}=S_{\alpha_1}\cdots S_{\alpha_k}$. Thus $S_{\alpha}\delta_{\beta}=\delta_{\alpha \beta}$.

For the adjoint, $S_{\alpha}^*$, it is easy to check that $S_{\alpha}^*\delta_{\beta}$ is different from $0$ only if we can write $\beta=\alpha \gamma$ (for some path $\gamma$) and, in this case, $S_{\alpha}^*\delta_{\beta}=\delta_{\gamma}$.

It follows that, for different paths $\alpha,\beta$ of the same length,
\begin{equation}
	S_{\alpha}^*S_{\beta}=0
\end{equation}and \begin{equation}
	S_{\alpha}^*S_{\alpha}=P_{s(\alpha)}
\end{equation}

In particular, each $S_{\alpha}$ is a partial isometry.

It is also easy to check that, for $k\geq 1$ and $v\in E^0$,
$$\Sigma_{|\alpha|=k, r(\alpha)=v} S_{\alpha}S_{\alpha}^*\leq P_v.$$
Thus $\{S_{\alpha}: |\alpha|=k\}$ is a Toeplitz-Cuntz-Krieger family.

In fact, for a vertex  $v$ that is not a source,
\begin{equation}\label{sumSonv}
	\Sigma_{|\alpha|=k, r(\alpha)=v} S_{\alpha}S_{\alpha}^*=P_v(I-\Sigma_{i=0}^{k-1} Q_i)
\end{equation} where $Q_i$ is the projection of the Fock correspondence onto the $i$'th summand. If $v$ is a source, the sum on the left is just $0$. Thus
\begin{equation}\label{sumS}
	\Sigma_{|\alpha|=k}S_{\alpha}S_{\alpha}^*=(I-\Sigma_{i=0}^{k-1} Q_i)P_s^{\perp}.
\end{equation}

Now, the Toeplitz algebra 
$\calt(X_E)$ is defined to be $C^*(\{S_{\alpha}: \alpha\in E^1\})$.

The following definition appeared in \cite{MuS16} in the context of $W^*$-correspondences and has been used by us in \cite{HS}.

\begin{defn}\label{weights}  A sequence $Z=\{Z_k\}_{k\geq 0}$ of operators $Z_k\in \call((X_E)^{\otimes k})\cap \varphi_{X_E^{\otimes k}}(A)'$  will be called a weight sequence  in case:
	
	1. $Z_0=I_A$,
	
	2. $\sup\|Z_k\|<\infty$,
	
	3.	each $Z_k$ is positive and invertible, and
	
	4. There is an $\epsilon >0$ such that $Z_k\geq \epsilon I$ for all $k\geq 1$ .
	
\end{defn}

Given a weight sequence it defines a weight operator $Z=diag(Z_1,Z_2,...) :\calf(X_E)\rightarrow \calf(X_E)$,
where $Z_k:
X_E^{\otimes k}\rightarrow X_E^{\otimes k}$. We write, for $\alpha\in E^1$, $W_{\alpha}=ZS_{\alpha}$ and refer to it as a weighted shift. The weighted Toeplitz algebra, $\mathcal{T}(X_E,Z)$, is the $C^*$-algebra generated by $\{W_{\alpha}: \alpha\in E^1\}$. It follows from our assumptions on the weight sequence that the hypotheses $\textbf{A}$ and $\textbf{B}$ from \cite[Section 6]{MuS16} are satisfied and (using  \cite[Proposition 6.5]{MuS16}) the Toeplitz algebra $\mathcal{T}(X_E)$ is a subalgebra of the the weighted Toepliz algebra $\mathcal{T}(X_E, Z)=C^*(W_{\alpha}:\alpha\in E^1)$. It also follows that $\mathcal{T}(X_E, Z)$ contains the algebra $\mathcal{K}(\calf(X_E)$ of the compact operators on the Fock correspondence.

The following lemma is a consequence of the inclusion $\calt(X_E)\subseteq \calt(X_E, Z)$. 
\begin{lem}\label{T}
	The operators $Q_k$ ($0\leq k$), $S_{\alpha}$ ($\alpha\in E^1$) and $Z$ are all contained in $\mathcal{T}(X_E, Z)$. So, $\mathcal{T}(X_E, Z)$ is the $C^*$-algebra generated by $\{I,S_{\alpha},Z: \alpha\in E^1\}$.
\end{lem}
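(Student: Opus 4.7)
The plan is to observe that the inclusions $\calt(X_E)\subseteq \calt(X_E,Z)$ and $\calk(\calf(X_E))\subseteq\calt(X_E,Z)$, both recorded in the paragraph preceding the lemma, immediately handle two of the three families: each $S_\alpha$ lies in $\calt(X_E)\subseteq\calt(X_E,Z)$, and each projection $Q_k$ lies in $\calk(\calf(X_E))\subseteq\calt(X_E,Z)$ by Lemma~\ref{K}. Hence the only real content is to produce $Z$ inside $\calt(X_E,Z)$.

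To extract $Z$, I would strip the $S_\alpha$ factor off $W_\alpha=ZS_\alpha$ by multiplying on the right by $S_\alpha^*$ and summing over the (finitely many) edges. Applying equation~(\ref{sumS}) with $k=1$, the sum $\sum_{|\alpha|=1}W_\alpha S_\alpha^*$ collapses to $Z(I-Q_0)P_s^{\perp}$. Since $P_s\leq Q_0$ (noted in the text after~(\ref{Ps})), the factor $(I-Q_0)P_s^{\perp}$ simplifies to $I-Q_0$, giving $\sum_{|\alpha|=1}W_\alpha S_\alpha^*=Z(I-Q_0)$. Adding back $Q_0$, which equals $ZQ_0$ because $Z_0=I_A$, reconstructs $Z$ as an element of $\calt(X_E,Z)$.

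For the second assertion, one direction is immediate: since each $W_\alpha=ZS_\alpha$ lies in $C^*(\{I,S_\alpha,Z:\alpha\in E^1\})$, we have $\calt(X_E,Z)\subseteq C^*(\{I,S_\alpha,Z:\alpha\in E^1\})$. The reverse inclusion is what I have just outlined, together with the observation $I=Q_0+\sum_{|\alpha|=1}S_\alpha S_\alpha^*\in\calt(X_E,Z)$, obtained by the same manipulation of equation~(\ref{sumS}).

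I do not foresee a genuine obstacle here: the entire argument is the short calculation $W_\alpha S_\alpha^*=ZS_\alpha S_\alpha^*$ combined with the Cuntz--Krieger-type relation~(\ref{sumS}). The only point requiring a little care is keeping track of what happens at source vertices, which is precisely what the identity $(I-Q_0)P_s^{\perp}=I-Q_0$ encodes.
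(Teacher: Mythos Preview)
Your proposal is correct and follows essentially the same approach as the paper: both use the inclusion $\calt(X_E)\subseteq\calt(X_E,Z)$ to get each $S_\alpha$, and both recover $Z$ via $\sum_{|\alpha|=1}W_\alpha S_\alpha^*=ZQ_0^{\perp}$ together with $ZQ_0=Q_0$. The only minor differences are that you invoke $\calk(\calf(X_E))\subseteq\calt(X_E,Z)$ and Lemma~\ref{K} for the $Q_k$ (whereas the paper builds them directly as $Q_0=I-\sum_\alpha S_\alpha S_\alpha^*$ and $Q_k=\sum_{|\alpha|=k}S_\alpha Q_0 S_\alpha^*$), and that you spell out the simplification $(I-Q_0)P_s^{\perp}=I-Q_0$ which the paper leaves implicit.
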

\begin{proof} The inclusion $\calt(X_E)\subseteq \calt(X_E, Z)$ implies that
	$S_{\alpha} \in \mathcal{T}(X_E, Z)$. Hence, $Q_0=I-\Sigma S_{\alpha}S_{\alpha}^*$ and $Q_k=\Sigma_{|\alpha|=k} S_{\alpha}P_0S_{\alpha}^*$ are in $\calt(X_E, Z)$. For $Z$, note that $ZQ_0=Q_0\in \mathcal{T}(X_E, Z)$ and $ZQ_0^{\perp}=\Sigma_{\alpha\in E^1} (ZS_{\alpha})S_{\alpha}^* \in \mathcal{T}(X_E, Z)$.
\end{proof}
Write $W_t:=\Sigma_{n=0}^{\infty}e^{int}Q_n$
and $\gamma_t=Ad (W_t)$ for $t\in \mathbb{R}$ to get the gauge automorphism group on $\call(\mathcal{F}(X_E))$. 



Next, we write
$$\mathcal{D}:=\{X\in \mathcal{T}(X_E,Z):\; \gamma_t(X)=X\; \forall t \;\} $$ and 
$$ F:=\{X\in \mathcal{T}(X_E,Z):\; \gamma_t(X)=e^{it}X\; \forall t\;\} .$$
For simplicity, we shall often write $\mathcal{T}$ for $\mathcal{T}(X_E, Z)$.

\begin{lem}
	\begin{enumerate}
		\item[(1)] $F$ is a $C^*$-correspondence over the $C^*$-algebra $\mathcal{D}$ where the left and right actions are defined by multiplication and the $\mathcal{D}$-valued inner product is $\langle T,S\rangle=T^*S$.
		\item[(2)] $\mathcal{D}=\{T\in \mathcal{T}: TQ_k=Q_kT , \;k\geq 0 \}\subseteq \Sigma^{\oplus}_{k\geq 0} Q_k\mathcal{T}Q_k$ and $F=\{T\in \mathcal{T}: TQ_{k}=Q_{k+1}T , \;k\geq 0 \}\subseteq \Sigma^{\oplus}_{k\geq 0} Q_{k+1}\mathcal{T}Q_k$.
		\item[(3)] Writing $\varphi_F$ for the left action of $\mathcal{D}$ on $F$, we get $\ker(\varphi_F)=\mathcal{D}Q_0=Q_0\mathcal{T}Q_0$.
		\item[(4)] Write $K(F)$ for the algebra of generalized compact operators on $F$ then $K(F)=\varphi_F(\mathcal{D})=\mathcal{L}(F)$
	\end{enumerate}
\end{lem}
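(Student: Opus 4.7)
The strategy is to prove (2) first and use the explicit description it provides to deduce (1), (3), and (4). The key input is that the gauge action is inner, implemented by the unitary $W_t=\sum_{n\geq 0} e^{int}Q_n$ via $\gamma_t=\mathrm{Ad}(W_t)$. For $T\in\mathcal{T}$, one has the SOT-convergent expansion $\gamma_t(T)=W_tTW_t^*=\sum_{j,k}e^{i(j-k)t}Q_jTQ_k$. If $\gamma_t(T)=T$ for every $t$, Fourier uniqueness forces $Q_jTQ_k=0$ for $j\neq k$, so $T$ commutes with every $Q_k$; conversely, commutation with every $Q_k$ implies $TW_t=W_tT$ and hence gauge invariance. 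The identical argument with the shift $j=k+1$ in place of $j=k$ characterizes $F$. The direct-sum inclusions follow at once from the decompositions $T=\sum_k Q_kTQ_k$ (for $\mathcal{D}$) and $T=\sum_k Q_{k+1}TQ_k$ (for $F$).

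Part (1) is then largely formal. For $T\in F$, $D\in\mathcal{D}$, compute $\gamma_t(TD)=e^{it}TD$ and $\gamma_t(DT)=e^{it}DT$, so both $TD$ and $DT$ lie in $F$. The expression $T^*S$ lies in $\mathcal{D}$ because $\gamma_t(T^*S)=e^{-it}T^*\cdot e^{it}S=T^*S$, and the standard properties of a $\mathcal{D}$-valued inner product follow from the $C^*$-algebra structure of $\mathcal{T}$. The Hilbert-module norm coincides with the operator norm via $\|T\|_F^2=\|T^*T\|=\|T\|_\mathcal{T}^2$, and $F$ is complete as a norm-closed subspace of $\mathcal{T}$. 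The left multiplication $\varphi_F$ is a $*$-homomorphism into $\mathcal{L}(F)$, with $*$-compatibility coming from $\langle\varphi_F(D)T,S\rangle=T^*D^*S=\langle T,\varphi_F(D^*)S\rangle$.

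For (3), first $\mathcal{D}Q_0=Q_0\mathcal{D}Q_0\subseteq Q_0\mathcal{T}Q_0$ since $\mathcal{D}$ commutes with $Q_0$; conversely any $Q_0XQ_0\in Q_0\mathcal{T}Q_0$ commutes with every $Q_k$ and therefore lies in $\mathcal{D}$, giving $\mathcal{D}Q_0=Q_0\mathcal{T}Q_0$. Now every $T\in F$ satisfies $Q_0T=0$ (from $T=\sum_k Q_{k+1}TQ_k$), so $\mathcal{D}Q_0$ annihilates $F$ from the left, i.e.\ $\mathcal{D}Q_0\subseteq\ker\varphi_F$. Conversely, if $DT=0$ for every $T\in F$, apply this to $T=S_\alpha\in F$ for each $\alpha\in E^1$ (note $S_\alpha\in F$ by Lemma~\ref{T}) to get $D\delta_{\alpha\beta}=0$ for every path $\beta$ with $r(\beta)=s(\alpha)$; since every path of length $\geq 1$ factors in this way, $DQ_k=0$ for all $k\geq 1$, hence $D=DQ_0$.

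For (4), the inclusion $K(F)\subseteq\varphi_F(\mathcal{D})$ is immediate from $\theta_{T,S}(R)=TS^*R$ together with $TS^*\in\mathcal{D}$ (from (1)), so $\theta_{T,S}=\varphi_F(TS^*)$. The main obstacle, I expect, is to exhibit the identity of $\mathcal{L}(F)$ inside $K(F)$: by equation (\ref{sumS}) with $k=1$, $\sum_{\alpha\in E^1}S_\alpha S_\alpha^*=(I-Q_0)P_s^\perp$, and for $R\in F$ one has $Q_0R=0$ and also $P_sR=P_sQ_0R=0$ (using $P_s\leq Q_0$), so $\sum_\alpha\theta_{S_\alpha,S_\alpha}(R)=\sum_\alpha S_\alpha S_\alpha^*R=R$ for every $R\in F$. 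Therefore $\mathrm{id}_F=\varphi_F(I)\in K(F)$; since $K(F)$ is a closed ideal in $\mathcal{L}(F)$ containing the identity, $K(F)=\mathcal{L}(F)$, and chaining with the reverse inclusion yields $K(F)=\varphi_F(\mathcal{D})=\mathcal{L}(F)$.
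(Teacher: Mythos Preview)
Your proof is correct and follows essentially the same approach as the paper, which omits parts (1)--(3) as straightforward and proves (4) by exactly your two steps: $\theta_{T,S}=\varphi_F(TS^*)$ gives $K(F)\subseteq\varphi_F(\mathcal{D})$, and $\sum_{\alpha\in E^1}\theta_{S_\alpha,S_\alpha}=\mathrm{id}_F$ forces $K(F)=\mathcal{L}(F)$. Your treatment of the $P_s$ term in (4) is slightly more explicit than the paper's (which silently uses $(I-Q_0)P_s^\perp=Q_0^\perp$ via $P_s\leq Q_0$), but the argument is the same.
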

\begin{proof}
	The proof of the parts (1)-(3) is straightforward and is omitted.
	For part (4), recall that $K(F)\subseteq \mathcal{L}(F)$ is the norm closed ideal generated by the operators $\theta_{\xi_1,\xi_2}$ (for $\xi_1,\xi_2 \in F$) where $$\theta_{\xi_1,\xi_2}\xi_3=\xi_1 \langle \xi_2,\xi_3\rangle=\xi_1\xi_2^*\xi_3.$$
	But, then, $\theta_{\xi_1,\xi_2}=\varphi_F(\xi_1\xi_2^*)\in \varphi_F(\mathcal{D})$. Thus, $K(F)\subseteq \varphi_F(\mathcal{D})$. 
	Since, $\call({X_E}$) is a unital algebra and $\varphi_F$ is unital we obtain
	$$I=\varphi_F(I)=\varphi_F(Q_0^{\perp})=\varphi_F(\Sigma_{|\alpha|=1} S_{\alpha}S_{\alpha}^*)=\Sigma_{|\alpha|=1} \theta_{S_{\alpha},S_{\alpha}}\in K(F).$$
\end{proof}

Write $\mathcal{K}$ for the algebra of compact operators on the Fock space $\mathcal{F}(X_E)$ and let $q:\call(\mathcal{F}(X_E))\rightarrow  \call(\mathcal{F}(X_E))/\mathcal{K}$ be the quotient map. It is straightforward to check that $q(F)$ is a $C^*$-correspondence over $q(\mathcal{D})$ (with the obvious operations and inner product).
\begin{lem}\label{qF1}
	\begin{enumerate}
		\item[(1)] $\ker(\varphi_{q(F)})=\{0\}$
		\item[(2)] For $\xi,\eta \in F$ $$\theta_{q(\xi),q(\eta)}=\varphi_{q(F)}(q(\xi\eta^*)).$$
		\item[(3)] $K(q(F))=\call(q(F))=\varphi_{q(F)}(q(\cald)) $.
		
	\end{enumerate}
	
\end{lem}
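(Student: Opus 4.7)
The plan is to handle the three parts in an order that exposes their logical dependencies: first (2) as a routine transfer identity, then (1) as the substantive fact, and finally (3) as a corollary of (2) together with the unit computation from the preceding lemma.

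For part (2), my approach is to directly compute both sides on a typical element $q(\zeta)\in q(F)$. By the preceding lemma, $\xi\eta^*$ lies in $\mathcal{D}$ (block-diagonality is preserved: $F\subseteq \sum^{\oplus}_k Q_{k+1}\mathcal{T}Q_k$ implies $\xi\eta^*\in \sum^{\oplus}_k Q_{k+1}\mathcal{T}Q_{k+1}$), so $\varphi_{q(F)}(q(\xi\eta^*))$ makes sense and sends $q(\zeta)$ to $q(\xi\eta^*\zeta)$. The same vector is obtained from $\theta_{q(\xi),q(\eta)}(q(\zeta)) = q(\xi)\cdot\langle q(\eta),q(\zeta)\rangle_{q(F)} = q(\xi)q(\eta^*\zeta) = q(\xi\eta^*\zeta)$, so the identity holds.

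For part (1), the key observation is that $S_\alpha\in F$ for every $\alpha\in E^1$ (since $\gamma_t(S_\alpha)=e^{it}S_\alpha$). Suppose $q(d)\in \ker(\varphi_{q(F)})$ for some $d\in \mathcal{D}$. Then $dS_\alpha\in\mathcal{K}$ for every $\alpha\in E^1$, hence by (\ref{sumS})
\[
\sum_{\alpha\in E^1} dS_\alpha S_\alpha^* \;=\; d(I-Q_0)P_s^\perp \;\in\;\mathcal{K}.
\]
Now $Q_0\in \mathcal{K}(\mathcal{F}(X_E))$ by Lemma~\ref{K}, and because $P_s\leq Q_0$ we also have $P_s\in\mathcal{K}(\mathcal{F}(X_E))$. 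Therefore $dQ_0$ and $dP_s$ are compact, and splitting
\[
d \;=\; d(I-Q_0)P_s^\perp \;+\; dQ_0 P_s^\perp \;+\; dP_s
\]
shows that $d\in \mathcal{K}$, i.e.\ $q(d)=0$.

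For part (3), part (2) immediately gives $K(q(F))\subseteq \varphi_{q(F)}(q(\mathcal{D}))$. Conversely, the preceding lemma established the unit decomposition $I=\sum_{|\alpha|=1}\theta_{S_\alpha,S_\alpha}$ in $\mathcal{L}(F)$; pushing through $q$ and applying (2) identifies the unit of $\mathcal{L}(q(F))$ with $\sum_{\alpha}\theta_{q(S_\alpha),q(S_\alpha)}\in K(q(F))$. Since $K(q(F))$ is a closed ideal in $\mathcal{L}(q(F))$ containing the unit, $K(q(F))=\mathcal{L}(q(F))$, and sandwiching $\varphi_{q(F)}(q(\mathcal{D}))$ between them forces equality throughout.

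The main obstacle is part (1); the only subtlety is that $\sum_{\alpha}S_\alpha S_\alpha^*$ is not quite $I-Q_0$ when sources are present, and this is resolved by the compactness of $P_s$ inherited from $P_s\leq Q_0$. Parts (2) and (3) are then essentially bookkeeping on top of this.
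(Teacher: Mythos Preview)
Your proof is correct and follows essentially the same line as the paper's. Parts (1) and (2) are identical in substance: the paper writes the same decomposition $d=dQ_0P_s^{\perp}+\Sigma_{|\alpha|=1}dS_{\alpha}S_{\alpha}^*+dP_s$ (which is your splitting with the middle term expanded via (\ref{sumS})) and argues exactly as you do. For (3) the paper takes a slightly more direct route, writing $\varphi_{q(F)}(q(d))=\Sigma_{|\alpha|=1}\theta_{q(dS_{\alpha}),q(S_{\alpha})}$ to get $\varphi_{q(F)}(q(\mathcal{D}))\subseteq K(q(F))$ immediately, whereas you show the unit lies in $K(q(F))$ and then sandwich; both arguments rest on the same finite decomposition and are equivalent.
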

\begin{proof}
	Note first that, for $d\in \cald$, we have (using (\ref{sumS})),
	\begin{equation}\label{d}
		d=dQ_0P_s^{\perp}+\Sigma_{|\alpha|=1} dS_{\alpha}S_{\alpha}^*+dP_s
	\end{equation}
	Assume now that $q(d)\in \ker(\varphi_{q(F)})$. Then, for $|\alpha|=1$, $q(dS_{\alpha})=0$ (as $S_{\alpha}\in F$) and, thus, $dS_{\alpha}\in \calk$ and also $dS_{\alpha}S_{\alpha}^*\in \calk$. We also showed that $Q_0$ and $P_s$ lie in $\calk$. It follows from (\ref{d}) that $q(d)=0$. This proves (1). In fact, for every $d\in \cald$ we get from (\ref{d}),
	$$q(d)=\Sigma_{|\alpha|=1}q(dS_{\alpha}S_{\alpha}^*).$$
	
	For (2) we compute $\theta_{q(\xi),q(\eta)}q(\zeta)=q(\xi)q(\eta)^*q(\zeta)=q(\xi\eta^*)q(\zeta)=\varphi_{q(F)}(q(\xi)q(\eta)^*)q(\zeta)$.
	This shows that $K(q(F))\subseteq \varphi_{q(F)}(q(\cald))$. For the converse, we have, for $d\in \cald$, $\varphi_{q(F)}(q(d))=\Sigma_{|\alpha|=1} \varphi_{q(F)}(q((dS_{\alpha})S_{\alpha}^*))=\Sigma_{|\alpha|=1} \theta_{q(dS_{\alpha}),q(S_{\alpha})} \in K(q(F))$.
\end{proof}

We now write, for $e\in E^1$ and $v\in E^0$, $u_e:=q(S_e)$, $q(P_v)=p_v$ and also $z:=q(Z)$. For a path $\alpha=\alpha_1 \alpha_2 \ldots \alpha_k$, we write $u_{\alpha}=u_{\alpha_1}u_{\alpha_2}\cdots u_{\alpha_k}$. So that $u_{\alpha}=q(S_{\alpha})$.

The following properties of $\{u_{\alpha}\}$ follow immediately from the properties of $\{S_{\alpha}\}$ and will be used repeatedly.

\begin{lem}\label{alpha}
	\begin{enumerate}
		\item [(1)] Let $\alpha,\beta$ are two paths of the same length.
		Then $u_{\alpha}^*u_{\alpha}=p_{s(\alpha)}$,  and $u_{\alpha}^*u_{\beta}=0$ whenever
		$\alpha\neq\beta$ .
		
		\item[(2)] For every $n>0$,$\Sigma_{|\alpha|=n}u_{\alpha}u_{\alpha}^*=I$.
		\item[(3)] For every $n>0$ and $v\in E^0$ that is not a source, $\Sigma_{|\alpha|=n, r(\alpha)=v}u_{\alpha}u_{\alpha}^*=p_v$.
		\item[(4)] If $m\geq 1$, $z^m$ commutes with $p_v$ for every vertex $v$. (Since $Z\in \varphi_{\infty}(A)'$).
	\end{enumerate}
\end{lem}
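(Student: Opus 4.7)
The plan is to observe that all four claims are obtained by passing properties of the Toeplitz generators $\{S_\alpha\}, \{P_v\}, Z$ through the quotient map $q$, using only the facts already established in the excerpt. I will treat the four items in turn.

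For part (1), I would apply $q$ to the identities $S_\alpha^* S_\alpha = P_{s(\alpha)}$ and $S_\alpha^* S_\beta = 0$ for distinct paths $\alpha, \beta$ of the same length, which appear just before Lemma \ref{T}. Since $u_\alpha = q(S_\alpha)$, $p_{s(\alpha)} = q(P_{s(\alpha)})$ and $q$ is a $\ast$-homomorphism, the two required equalities follow.

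For parts (2) and (3), I would use equations (\ref{sumS}) and (\ref{sumSonv}) respectively. Both right-hand sides involve the projections $Q_0, \dots, Q_{n-1}$ and (in the case of (\ref{sumS})) the projection $P_s$; by Lemma \ref{K} each $Q_i$ is compact, and the discussion around (\ref{Ps}) gives $P_s \leq Q_0$ so that $P_s \in \mathcal{K}$ as well. Therefore $q(\Sigma_{i=0}^{n-1} Q_i) = 0$ and $q(P_s^\perp) = q(I)$, so applying $q$ to (\ref{sumS}) yields $\Sigma_{|\alpha|=n} u_\alpha u_\alpha^* = I$, while applying $q$ to (\ref{sumSonv}) yields $\Sigma_{|\alpha|=n,\,r(\alpha)=v} u_\alpha u_\alpha^* = q(P_v) = p_v$ for any non-source $v$.

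For part (4), the definition of a weight sequence imposes $Z_k \in \varphi_{X_E^{\otimes k}}(A)'$, so $Z \in \varphi_\infty(A)'$; in particular $Z$ commutes with $\varphi_\infty(\delta_v) = P_v$ for every $v \in E^0$. Applying $q$ gives $zp_v = p_v z$, whence $z^m p_v = p_v z^m$ by induction on $m$. I do not anticipate any real obstacle here; the only small point to keep straight is that the ``clean'' identities of Lemma \ref{alpha} result from compact correction terms in $\mathcal{T}(X_E,Z)$ being killed in the quotient, and this is exactly what Lemma \ref{K} provides.
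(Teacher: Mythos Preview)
Your proposal is correct and is exactly the approach the paper intends: the paper itself offers no detailed proof, stating only that the properties ``follow immediately from the properties of $\{S_{\alpha}\}$,'' and your argument simply spells out this passage through the quotient map $q$, using Lemma~\ref{K} and the identities (\ref{sumSonv}), (\ref{sumS}) as required.
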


\begin{lem}\label{qF}
	$$q(F)=\Sigma_{|\alpha|=1} u_{\alpha}q(\cald).$$
	More generally, if we write $F^n$ for $span\{a_1\cdots a_n :\; a_i\in F\}$ then
	$$q(F^n)=\Sigma_{|\alpha|=n}u_{\alpha}q(\cald).$$
\end{lem}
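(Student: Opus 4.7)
The plan is to prove both inclusions for each identity, with the general case $F^n$ handled by essentially the same argument as $n=1$. For the easy inclusion $\sum_{|\alpha|=n}u_\alpha q(\cald)\subseteq q(F^n)$, I first verify that each $S_\alpha$ with $|\alpha|=n$ lies in $F^n$: writing $\alpha=\alpha_1\cdots\alpha_n$, each factor $S_{\alpha_i}$ satisfies $S_{\alpha_i}Q_k=Q_{k+1}S_{\alpha_i}$ and so belongs to $F$ by the characterization in the previous lemma, whence $S_\alpha=S_{\alpha_1}\cdots S_{\alpha_n}\in F^n$ and $u_\alpha\in q(F^n)$. Since $F^n$ is naturally a right $\cald$-module (the product $a_1\cdots a_{n-1}(a_n d)$ stays in $F^n$ whenever $d\in\cald$), I get $u_\alpha q(\cald)\subseteq q(F^n)$, and summing gives the inclusion.

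For the reverse inclusion $q(F^n)\subseteq \sum_{|\alpha|=n}u_\alpha q(\cald)$, the key tool is Lemma~\ref{alpha}(2), which provides the partition of unity $\sum_{|\alpha|=n}u_\alpha u_\alpha^*=I$ in $q(\calt)$. This is precisely where the passage to the quotient by compacts matters: in $\calt$ itself (\ref{sumS}) gives only $\sum S_\alpha S_\alpha^*=(I-\sum_{i<n}Q_i)P_s^\perp$, but the discrepancy is a compact operator. Given $f\in F^n$, I multiply on the left by this identity to obtain
$$q(f)=\sum_{|\alpha|=n}u_\alpha u_\alpha^*\, q(f)=\sum_{|\alpha|=n}u_\alpha\, q(S_\alpha^*f),$$
so it remains to show $q(S_\alpha^*f)\in q(\cald)$, which I establish by showing $S_\alpha^*f\in\cald$. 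Writing $f=a_1\cdots a_n$ with $a_i\in F$ and iterating $a_iQ_k=Q_{k+1}a_i$ yields $fQ_k=Q_{k+n}f$. Combined with $S_\alpha^*Q_{k+n}=Q_kS_\alpha^*$ (immediate from $|\alpha|=n$), this gives
$$S_\alpha^*fQ_k=S_\alpha^*Q_{k+n}f=Q_kS_\alpha^*f,$$
so $S_\alpha^*f$ commutes with every $Q_k$ and therefore lies in $\cald$ by the characterization in the preceding lemma. There is no real obstacle: once the partition-of-unity identity in the quotient is in hand, the decomposition is a one-line computation, and the verification that the coefficients lie in $q(\cald)$ is purely formal.
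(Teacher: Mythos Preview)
Your proof is correct and follows essentially the same approach as the paper: use the partition of unity $\sum_{|\alpha|=n}u_\alpha u_\alpha^*=I$ from Lemma~\ref{alpha}(2) to decompose $q(F^n)$, then observe that $u_\alpha^*q(F^n)\subseteq q(\cald)$. You supply more detail than the paper does---the paper simply asserts $u_\alpha^*q(F^n)\subseteq q(\cald)$ and calls the converse inclusion obvious---but the underlying argument is identical.
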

\begin{proof} Fix $n\geq 1$.
	We have $\Sigma_{|\alpha|=n} u_{\alpha}u_{\alpha}^*=I$ and, therefore, $q(F^n)=\Sigma_{|\alpha|=n} u_{\alpha}u_{\alpha}^*q(F^n)=\Sigma_{|\alpha|=n} u_{\alpha} (u_{\alpha}^*q(F^n))\subseteq \Sigma_{\alpha} u_{\alpha}q(\cald)$. The converse inclusion is obvious.	
\end{proof}

\begin{defn} The algebra $\mathcal{T}(X_E,Z)/\mathcal{K}$ will be called the \emph{weighted Cuntz-Krieger algebra} and will be written $C^*(E,Z)$.
	\end{defn} 

The main result of this section is the following theorem.

\begin{thm}\label{isomorphism}
	The $C^*$-algebra $C^*(E,Z)=\mathcal{T}(X_E,Z)/\mathcal{K}$ is $*$-isomorphic to the Cuntz-Pimsner algebra $\mathcal{O}(q(F),q(\mathcal{D}))$.
\end{thm}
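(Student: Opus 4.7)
The plan is to recognize $C^*(E,Z)$ as the universal $C^*$-algebra for a suitable covariant representation of the correspondence $(q(F),q(\cald))$ and then invoke the gauge-invariant uniqueness theorem for Cuntz–Pimsner algebras. The key structural inputs, already established, are Lemma~\ref{qF1}(1) ($\ker \varphi_{q(F)}=0$) and Lemma~\ref{qF1}(3) ($\mathcal{K}(q(F))=\mathcal{L}(q(F))=\varphi_{q(F)}(q(\cald))$). Together these say that Katsura's ideal $J_{q(F)}:=\varphi_{q(F)}^{-1}(\mathcal{K}(q(F)))\cap (\ker\varphi_{q(F)})^\perp$ is all of $q(\cald)$, so a Cuntz–Pimsner covariant representation of $(q(F),q(\cald))$ is simply a Toeplitz representation satisfying the covariance identity $\pi(a)=\psi_t(\varphi_{q(F)}(a))$ for \emph{every} $a\in q(\cald)$.

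First, I would take $(\pi,t)$ to be the tautological inclusions of $q(\cald)$ and $q(F)$ into $C^*(E,Z)=\calt/\calk$. That $(\pi,t)$ is a Toeplitz representation is immediate from the way the $q(\cald)$-bimodule and inner product on $q(F)$ were defined. To verify covariance, fix $d\in\cald$ and recall from the proof of Lemma~\ref{qF1} that $q(d)=\sum_{|\alpha|=1} q(dS_\alpha S_\alpha^*)$. Then
\[
\psi_t(\varphi_{q(F)}(q(d)))=\sum_{|\alpha|=1}\psi_t\bigl(\theta_{q(dS_\alpha),q(S_\alpha)}\bigr)=\sum_{|\alpha|=1} q(dS_\alpha)q(S_\alpha)^*=q(d),
\]
which is exactly the covariance condition on $J_{q(F)}=q(\cald)$. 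By the universal property of $\mathcal{O}(q(F),q(\cald))$ there is a $*$-homomorphism $\Phi:\mathcal{O}(q(F),q(\cald))\to C^*(E,Z)$ sending the universal generators to $(\pi,t)$.

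Next, I would show $\Phi$ is surjective. By Lemma~\ref{T}, $\calt(X_E,Z)$ is generated by $\{I,S_\alpha,Z:\alpha\in E^1\}$. Since each $S_\alpha\in F$ and $Z\in\cald$ (indeed $ZQ_k=Q_kZ$ for every $k$), the images $u_\alpha=q(S_\alpha)$ lie in $t(q(F))$ and $z=q(Z)$ lies in $\pi(q(\cald))$, so the range of $\Phi$ exhausts $C^*(E,Z)$.

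For injectivity I would apply the gauge-invariant uniqueness theorem. Two things are needed: (a) a gauge action on $C^*(E,Z)$ compatible with the one on $\mathcal{O}(q(F),q(\cald))$, and (b) injectivity of $\pi$ on $q(\cald)$. For (a), observe that $\gamma_t=\mathrm{Ad}(W_t)$ preserves $\calt(X_E,Z)$ (it fixes $Z$, since $Z$ commutes with each $Q_k$ and hence with $W_t$, and it multiplies each $S_\alpha$ by $e^{it}$) and it preserves $\calk$ because $\calk$ is an ideal stable under conjugation by unitaries; hence $\gamma_t$ descends to a continuous one-parameter automorphism group $\bar\gamma_t$ on $C^*(E,Z)$, which fixes $\pi(q(\cald))$ and scales $t(q(F))$ by $e^{it}$. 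Point (b) is automatic, since $\pi$ is the inclusion of the subalgebra $q(\cald)\subseteq C^*(E,Z)$. Katsura's gauge-invariant uniqueness theorem then forces $\Phi$ to be injective, completing the proof.

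The main obstacle — and essentially the only non-formal step — is checking that the covariance equation holds on all of $q(\cald)$, not just on some proper ideal; but this has already been prepared by the identity $q(d)=\sum_{|\alpha|=1}q(dS_\alpha S_\alpha^*)$ derived in Lemma~\ref{qF1}, which removes any obstruction coming from the $Q_0$-corner of $\cald$ after passing to the Calkin-type quotient.
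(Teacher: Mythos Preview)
Your proof is correct and follows essentially the same route as the paper: the paper likewise takes the tautological inclusions of $q(\cald)$ and $q(F)$ into $\calt/\calk$ as a covariant pair (verifying covariance via the same identity $q(d)=\sum_{|\alpha|=1}q(dS_\alpha S_\alpha^*)$ from Lemma~\ref{qF1}), obtains a surjective $*$-homomorphism from $\mathcal{O}(q(F),q(\cald))$, and then appeals to Katsura's gauge-invariant uniqueness theorem for injectivity.
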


The arguments used in the proof of the theorem are similar to the ones used in the proof of \cite[Theorem 2.10]{HS} but we present them here for completeness.

Recall that, given a $C^*$-correspondence $X$ over a $C^*$-algebra $A$ and a $C^*$-algebra $B$. A $^*$-representation $\pi:\mathcal{O}(X,A)\rightarrow B$ is given by a pair of maps:
a contractive map $T:X\rightarrow B$ and a $^*$-homomorphism $\sigma: A\rightarrow B$ such that, for $x,y\in X$ and $a,b\in A$,
\begin{enumerate}
	\item[(1)] $T(\varphi_X(a)x b)=\sigma(a)T(x)\sigma(b)$
	\item[(2)] $T(x)^*T(y)=\sigma(\langle x,y \rangle)$
	\item[(3)] $\sigma^{(1)}(\varphi_X(a))=\sigma(a)$ for every $a\in J_X:=\varphi_X^{-1}(K(X))\cap (\ker(\varphi_X))^{\perp}$ where $\sigma^{(1)}:K(X)\rightarrow B$ is a $^*$-homomorphism defined by $\sigma^{(1)}(\theta_{x,y})=T(x)T(y)^*$.
	
\end{enumerate}

\begin{lem}\label{homomorphism}
	With $q(F)$ and $q(\mathcal{D})$ as above, define
	$$ \sigma: q(\mathcal{D}) \rightarrow \mathcal{T}/\mathcal{K}$$ by $\sigma(q(d))=q(d)$ and
	$$T: q(F) \rightarrow \mathcal{T}/\mathcal{K}$$ by $T(q(\xi))=q(\xi)$.
	
	Then $\sigma$ and $T$ satisfy conditions (1)-(3) above and, thus, define a $^*$-homomorphism $\pi$ of $\mathcal{O}(q(F),q(\mathcal{D}))$ onto $\mathcal{T}/\mathcal{K}$.
	
\end{lem}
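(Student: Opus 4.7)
The plan is to verify the three axioms (1)--(3) of a Cuntz--Pimsner representation for the pair $(\sigma,T)$, invoke the universal property of $\mathcal{O}(q(F),q(\mathcal{D}))$ to extend to the required $*$-homomorphism $\pi$, and then check surjectivity using Lemma~\ref{T}.

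Conditions (1) and (2) are essentially tautological. Since $\sigma$ and $T$ are both just the inclusion of $q(\mathcal{D})$ and $q(F)$ into $\mathcal{T}/\mathcal{K}$, and since (by the previous lemma) the left and right actions on $q(F)$ are implemented by multiplication in $\mathcal{T}/\mathcal{K}$ while the $q(\mathcal{D})$-valued inner product is $\langle a,b\rangle = a^*b$, the identities $T(\varphi_{q(F)}(q(d))q(\xi)q(d')) = \sigma(q(d))T(q(\xi))\sigma(q(d'))$ and $T(q(\xi))^*T(q(\eta))=\sigma(\langle q(\xi),q(\eta)\rangle)$ hold in $\mathcal{T}/\mathcal{K}$ by direct computation.

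The real content sits in (3). By Lemma~\ref{qF1}, $\ker(\varphi_{q(F)})=\{0\}$ and $K(q(F))=\varphi_{q(F)}(q(\mathcal{D}))=\mathcal{L}(q(F))$, so Katsura's ideal is $J_{q(F)}=q(\mathcal{D})$. Fix $d\in\mathcal{D}$. The computation in the proof of Lemma~\ref{qF1}(2) gives
\[
\varphi_{q(F)}(q(d)) \;=\; \sum_{|\alpha|=1} \theta_{q(dS_\alpha),\,q(S_\alpha)} .
\]
Applying $\sigma^{(1)}$ and using the definition $\sigma^{(1)}(\theta_{x,y})=T(x)T(y)^*$ yields
\[
\sigma^{(1)}(\varphi_{q(F)}(q(d))) \;=\; \sum_{|\alpha|=1} q(dS_\alpha)\,q(S_\alpha)^* \;=\; q\!\left(\sum_{|\alpha|=1} dS_\alpha S_\alpha^*\right).
\]
On the other hand, equation (\ref{d}) reads $d=dQ_0P_s^\perp + \sum_{|\alpha|=1} dS_\alpha S_\alpha^* + dP_s$, and since both $Q_0$ and $P_s$ lie in $\mathcal{K}$ (by Lemma~\ref{K} and the remarks after (\ref{Ps})), applying $q$ gives $q(d)=q(\sum_{|\alpha|=1} dS_\alpha S_\alpha^*)$. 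Hence $\sigma^{(1)}(\varphi_{q(F)}(q(d)))=q(d)=\sigma(q(d))$, establishing (3).

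With (1)--(3) verified, the universal property of $\mathcal{O}(q(F),q(\mathcal{D}))$ produces the required $*$-homomorphism $\pi$. For surjectivity, Lemma~\ref{T} asserts that $\mathcal{T}$ is generated by $\{I,S_\alpha,Z:\alpha\in E^1\}$. The generators $q(S_\alpha)=u_\alpha$ lie in the range of $T$, while $Z$ is diagonal and therefore commutes with every $Q_k$, so $Z\in\mathcal{D}$ and $q(Z)=z$ lies in the range of $\sigma$. Thus the image of $\pi$ contains a set of $C^*$-generators of $\mathcal{T}/\mathcal{K}$ and is therefore all of $\mathcal{T}/\mathcal{K}$. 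The only step that requires any bookkeeping is (3); the rest is immediate from the identifications already in place.
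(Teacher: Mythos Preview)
Your proof is correct and follows essentially the same route as the paper's own argument: both dispatch (1) and (2) as immediate from the definitions, verify (3) via the identity $\varphi_{q(F)}(q(d))=\sum_{|\alpha|=1}\theta_{q(dS_\alpha),q(S_\alpha)}$ together with $q(d)=q(\sum_{|\alpha|=1} dS_\alpha S_\alpha^*)$, and deduce surjectivity from the generating set $\{I,S_\alpha,Z\}$ of $\mathcal{T}$. The only difference is that you spell out the use of equation~(\ref{d}) and the compactness of $Q_0,P_s$ more explicitly than the paper does.
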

\begin{proof}
	It is clear that $\sigma$ is a well defined $^*$-homomorphism and $T$ is a contraction. It is also clear that $T$ is a bimodule map (over $\sigma$). For (2), we write
	$$T(q(\xi))^*T(q(\eta))=q(\xi)^*q(\eta)=q(\xi^*\eta)=\sigma(\langle q(\xi),q(\eta)\rangle). $$
	For (3), note first that it follows from Lemma~\ref{qF1} that $J_{q(F)}=q(\cald)$ and, for $d\in \cald$,
	$$\varphi_{q(F)}(q(d))=\Sigma_{|\alpha|=1} \theta_{q(dS_{\alpha}),q(S_{\alpha})}.$$
	Thus $\sigma^{(1)}(\varphi_{q(F)}(q(d)))=\Sigma_{|\alpha|=1} T(q(dS_{\alpha}))T(q(S_{\alpha}))^*=\Sigma_{\alpha} q(dS_{\alpha})q(S_{\alpha})^*=q(d)=\sigma(q(d))$ proving (3). Since $\calt$ is generated by $\{I,S_{\alpha},Z : \alpha \in E^1\}$ and $I, Z \in \cald$ and $S_{\alpha}\in F$, $\calt / \calk$ is generated by the images of $\sigma$ and $T$. Thus, $\pi$ is onto $\calt /\calk$.
	
\end{proof}

\begin{proof} (of Theorem~\ref{isomorphism})
	In view of Lemma~\ref{homomorphism} all we need to prove is that the $*$-homomorphism $\pi$ is injective but this follows from Theorem 6.4 of \cite{Ka}, since $\sigma$ is injective and $\pi$ admits a gauge action. Indeed, the injectivity of $\sigma$
	is clear from its definition, and if $d\in \mathcal{D}$ and $S_{\alpha}\in F$ ($\alpha\in E^1$), then $\gamma_t(d)=d$ and $\gamma_t(S_{\alpha})=e^{it}S_{\alpha}$. Hence, $\gamma_t(\sigma(q(d)))=\gamma_t(q(d))=q(d)=\sigma(q(d))$ and $\gamma_t(T(q(S_{\alpha})))=e^{it}T(q(S_{\alpha}))$.
\end{proof}


\begin{rem}\label{gauge} The algebra $\mathcal{T}$ carries a gauge group action given by $\{\gamma_t\}$ (see the discussion following Lemma~\ref{T}). Since it leaves $\mathcal{K}$ invariant, we have a gauge group action on $C^*(E,Z)=\mathcal{T}/\mathcal{K}$. Note that the $^*$-isomorphism constructed in the proof of the theorem intertwines the action of the gauge group on $C^*(E,Z)$ and the gauge group action on the Cuntz-Pimsner algebra $O(q(F),q(\mathcal{D}))$ ) and, thus, when we later discuss gauge-invariant ideals of $C^*(E,Z)$, there is no ambiguity.
	\end{rem}

\section{The algebra $q(\cald)$ }\label{3}

In this section we shall study the algebra $q(\cald)$ and its ideals. We shall pay special attention to those ideals that are \emph{invariant} in the following sense.

\begin{defn}\label{idinv}
	A subspace $X\subseteq q(\mathcal{D})$ is said to be invariant if 
		$$u_{\alpha}^*Xu_{\beta} \subseteq X$$ for all $\alpha,\beta \in E^1$.
 Using Lemma~\ref{qF}, if $X=J$ is an ideal, this is equivalent to
		$$q(F)^*Jq(F)\subseteq J .$$	
\end{defn}

Note that the automorphism group $\gamma=\{\gamma_t\}$, on $\call(\mathcal{F}(X_E))$, as defined in the discussion following Lemma~\ref{T}, can be used to define bounded projections $\Phi_j$ on $\call(\mathcal{F}(X_E))$ by
$$\Phi_j(X)=\frac{1}{2\pi}\int_0^{2\pi}e^{-ijt}\gamma_t(X)dt .$$

In particular, for $A\in \mathcal{T}$, $A\in \mathcal{D}$ if and only if $\Phi_0(A)=A$.

\begin{lem}\label{qD}
	$$q(\cald)=C^*(\{u_{\alpha}z^mu_{\beta}^*, u_{\alpha}^*z^mu_{\beta}:\;|\alpha|=|\beta|,\; m\geq 0\}).$$
\end{lem}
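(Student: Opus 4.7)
The plan is to handle the two inclusions separately. The inclusion $q(\cald)\supseteq C^*(G)$ (with $G$ denoting the generating set in the lemma) is immediate: each element of $G$ has the form $q(S_\alpha Z^m S_\beta^*)$ or $q(S_\alpha^* Z^m S_\beta)$ with $|\alpha|=|\beta|$, and since $\gamma_t(S_\alpha)=e^{i|\alpha|t}S_\alpha$ and $\gamma_t(Z)=Z$, each such element is fixed by $\gamma_t$, hence lies in $q(\cald)$.

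For the reverse inclusion $q(\cald)\subseteq C^*(G)$, I would use the gauge-averaging conditional expectation $\Phi_0(X)=\frac{1}{2\pi}\int_0^{2\pi}\gamma_t(X)\,dt$, which is a norm-contractive projection of $\calt$ onto $\cald$. By Lemma~\ref{T}, $\calt$ is the norm closure of the $*$-algebra generated by $\{I,S_e,Z : e\in E^1\}$. Since $\Phi_0$ is norm-continuous and annihilates monomials of nonzero gauge weight, $\cald$ is the norm closure of the linear span of gauge-invariant monomials in $\{I,S_e,S_e^*,Z\}$. It then suffices to show that, for each such monomial $M$, $q(M)\in C^*(G)$.

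I would establish this by induction on the $S$-length of $M$ (the number of $S_e$ and $S_e^*$ factors), using three ingredients: the orthogonality $u_\mu^*u_\nu=\delta_{\mu\nu}p_{s(\mu)}$ for $|\mu|=|\nu|$ (Lemma~\ref{alpha}(1)); the commutation $z^m p_v=p_v z^m$ (Lemma~\ref{alpha}(4)); and the resolution of identity $\sum_{|\alpha|=n}u_\alpha u_\alpha^*=I$ in $q(\cald)$ (from equation~(\ref{sumS}) together with $P_s,Q_0\in\calk$). In the inductive step, a gauge-balanced monomial can always be bracketed to expose a sub-word of shape $S_e\cdot N\cdot S_f^*$ or $S_e^*\cdot N\cdot S_f$ with $N$ gauge-invariant of strictly smaller $S$-length; prepending and appending the corresponding $u_e,u_f^*$ to a generator $u_\gamma z^m u_\delta^*\in G$ yields, via path concatenation, the new generator $u_{e\gamma}z^m u_{f\delta}^*\in G$.

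The hard part will be the base case $M=Z^m$, which reduces to showing $z^m\in C^*(G)$. My plan here is to exploit the resolution of identity at ever-longer path lengths: for each $n\geq 1$, $z^m = \sum_{|\alpha|=|\beta|=n} u_\alpha(u_\alpha^* z^m u_\beta) u_\beta^*$, and each summand should then be re-expressed, via the three relations above and careful regrouping, as a finite product of $G$-generators of path-length $n$. The delicate point is that the outer $u_\alpha$ and $u_\beta^*$ are not individually in $C^*(G)$, so the reduction must absorb them into longer-path generators; I expect the freedom to take $n$ arbitrarily large will be essential here.
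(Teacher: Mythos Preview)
Your overall strategy---use $\Phi_0$ to reduce to gauge-invariant monomials in $\{z,u_e,u_e^*\}$ and then show each such monomial lies in $C^*(G)$---is exactly the paper's approach. But two points in your execution need correction.

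\textbf{The base case is not hard; it is trivial.} The generating set allows $|\alpha|=|\beta|=0$ (the paper's own proof writes ``$|\alpha|=|\beta|\geq 0$'' explicitly). With length-$0$ paths, $z^m$ is itself a generator (or, if you prefer to index by vertices, $z^m=\sum_v p_v z^m p_v$ is a finite sum of generators). Your elaborate plan involving resolutions of identity at arbitrarily large path length is unnecessary, and in fact the step you flag as ``delicate'' (absorbing the outer $u_\alpha$, $u_\beta^*$ into longer generators) does not work as you describe: from $z^m=\sum_{|\alpha|,|\beta|=n}u_\alpha(u_\alpha^*z^mu_\beta)u_\beta^*$ you cannot turn each summand into a product of generators without already knowing $z^m\in C^*(G)$.

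\textbf{The inductive step as written also has a gap.} You claim that prepending $u_e$ and appending $u_f^*$ to a generator $u_\gamma z^m u_\delta^*$ yields the generator $u_{e\gamma}z^m u_{f\delta}^*$---true for a single generator. But the induction hypothesis only gives $N$ as a \emph{product} $g_1\cdots g_s$ of generators, and then $u_e g_1$ alone (say $g_1=u_\gamma z^m u_\delta^*$) gives $u_{e\gamma}z^m u_\delta^*$ with $|e\gamma|\neq|\delta|$, which is not in $G$. The fix the paper has in mind (hidden behind ``straightforward, using Lemma~\ref{alpha}'') uses the identities
\[
u_\mu z^a=(u_\mu z^a u_\mu^*)\,u_\mu,\qquad z^a u_\mu^*=u_\mu^*\,(u_\mu z^a u_\mu^*),
\]
both of which follow from $u_\mu^*u_\mu=p_{s(\mu)}$ and $z p_v=p_v z$. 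These let you peel off type-$1$ generators from either end of the monomial and reduce the remaining word; combined with the analogous handling of initial $u^*$-blocks (yielding type-$2$ generators), one obtains the desired factorisation by induction on the length of the word. Once you have these moves, the paper's one-line claim that each gauge-invariant monomial is a product of elements of $G$ goes through.
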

\begin{proof}
	Fix $a\in q(\cald)$ and write $a=q(A)$ for some $A\in \cald$. Since $A\in  \calt$, it is a norm limit of (noncommutative) polynomials in $Z, S_i, S_i^*$. Since $\Phi_0$ is a contractive projection, and $A=\Phi_0(A)$, we can assume, by applying $\Phi_0$, that each of these polynomials lie in $\cald$. Each such polynomial is a finite sum of monomials and each monomial lies in the range of some $\Phi_j$. By applying $\Phi_0$, we can assume that each monomial lies in $\cald$. Thus, $A$ is a norm limit of polynomials in $\{Z,S_i, S_i^*\}$ where each monomial lies in $\cald$. It follows that $a$ is a norm limit of polynomials in $\{z,u_i, u_i^*\}$ where each monomial lies in $q(\mathcal{D})$. It is straightforward to check (using Lemma~\ref{alpha}) that each monomial in $q(\cald)$ is a product of elements in $\{u_{\alpha}z^mu_{\beta}^*, u_{\alpha}^*z^mu_{\beta}:\;|\alpha|=|\beta|\geq 0,\; m\geq 0\}$.	
	
\end{proof}

Recall that a unital $C^*$-algebra is said to be \emph{finite} if it has no proper isometries.

\begin{lem}\label{finite}
	The algebra $q(\mathcal{D})$ is finite.
\end{lem}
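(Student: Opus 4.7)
My plan is to exploit the block-diagonal structure of $\cald$ coming from the inclusion $\cald\subseteq \Sigma^{\oplus}_{k\geq 0}Q_k\calt Q_k$ (established in the lemma characterising $\cald$ and $F$), together with Lemma~\ref{K} which characterises $\mathcal{K}$ by the decay $\|Q_k\cdot Q_k\|\to 0$. The key structural observation is that each corner $Q_k\call(\calf(X_E))Q_k$ is canonically isomorphic to $\call(X_E^{\otimes k})$, and because $E$ is finite (so $A=c(E^0)$ is finite-dimensional and $X_E^{\otimes k}$ is spanned by the finite set $\{\delta_\alpha:|\alpha|=k\}$) this corner is a finite-dimensional $C^*$-algebra, hence finite in the $C^*$ sense.

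Given an isometry $q(d)\in q(\cald)$ with representative $d\in\cald$, I would first reduce the problem to a level-wise statement: $d^*d-I\in\mathcal{K}$, and since $d$ commutes with each $Q_k$, setting $d_k:=Q_kdQ_k$ one has $d_k^*d_k=Q_k d^*d\, Q_k$. Lemma~\ref{K} then yields $\|d_k^*d_k-Q_k\|\to 0$. The strategy is to upgrade this one-sided estimate to $\|d_kd_k^*-Q_k\|\to 0$ by using finite-dimensionality, and then to apply Lemma~\ref{K} in the other direction to conclude $dd^*-I\in\mathcal{K}$.

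For the upgrade I would use a polar-decomposition argument on each level: once $\|d_k^*d_k-Q_k\|<1$, the positive element $|d_k|=(d_k^*d_k)^{1/2}$ is invertible in $Q_k\call(\calf(X_E))Q_k$, and the partial isometry $u_k:=d_k|d_k|^{-1}$ from the polar decomposition of $d_k$ satisfies $u_k^*u_k=Q_k$; since the corner is finite, $u_k$ is unitary there, i.e.\ $u_ku_k^*=Q_k$. A direct computation then gives $d_kd_k^*-Q_k=u_k(d_k^*d_k-Q_k)u_k^*$, so $\|d_kd_k^*-Q_k\|\leq\|d_k^*d_k-Q_k\|\to 0$. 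Combined with Lemma~\ref{K} this yields $dd^*-I\in\mathcal{K}$, so $q(d)$ is actually unitary and $q(\cald)$ contains no proper isometries.

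The only real conceptual obstacle is the passage from $\|d_k^*d_k-Q_k\|\to 0$ to $\|d_kd_k^*-Q_k\|\to 0$, which would fail in a general infinite $C^*$-algebra; this is precisely the step where the finiteness of the graph $E$ is used, through the finite-dimensionality of the level-$k$ corners. Everything else is bookkeeping between the block decomposition of $\cald$ and the compactness criterion in Lemma~\ref{K}.
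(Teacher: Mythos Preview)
Your proof is correct and follows essentially the same strategy as the paper: reduce to the block-diagonal levels via $d_k=Q_kdQ_k$, use Lemma~\ref{K} to translate the compactness of $d^*d-I$ into $\|d_k^*d_k-Q_k\|\to 0$, and invoke the finite-dimensionality of $\call(X_E^{\otimes k})$ at each level. The only cosmetic difference is in the level-$k$ step: the paper inverts $d_k$ directly (once $d_k^*d_k$ is close to $Q_k$) and assembles a bounded global inverse of a compact perturbation of $d$, whereas you use the polar decomposition $d_k=u_k|d_k|$ together with the finiteness of the corner to get $u_ku_k^*=Q_k$ and hence $\|d_kd_k^*-Q_k\|\to 0$; both routes are equivalent and yield that $q(d)$ is unitary.
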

\begin{proof}
	Suppose $v=q(V)$ is an isometry in $q(\mathcal{D})$. Then $v^*v=I$ and, thus $V^*V=I+K$ for a compact operator $K$. Thus $\lim_{k\rightarrow \infty}\|Q_k(V^*V-I)Q_k\|=0$. Fix $N$ such that, for every $k\geq N$, $\|Q_kV^*VQ_k-Q_k\|<1/2$. It follows that, for such $k$, $Q_kV^*VQ_k$ is invertible in $\call(X_E^{\otimes k})$. Write $C_k$ for its inverse and note that
	\begin{equation}\label{Cbdd} 
		\|C_k\|\leq \frac{1}{1-\|Q_k-Q_kV^*VQ_k\|}<2.
	\end{equation}  
	Since $X_E^{\otimes k}$ is a finite dimensional space and $Q_kV^*VQ_k=Q_kV^*Q_kVQ_k$ is invertible in $\call(X_E^{\otimes k})$, it follows that $Q_kVQ_k$ is also invertible there. Write $B_k$ for its inverse (so that $C_k=B_k^*B_k$) and $B:=I\oplus I \oplus \cdots \oplus I \oplus B_N\oplus B_{N+1} \oplus \cdots \in \call(\calf(X_E))$. It follows from (\ref{Cbdd}) that $B$ is bounded (in fact $\|B\|\leq \sqrt{2}$) and it is the inverse of $V':=I\oplus I \oplus \cdots \oplus I \oplus Q_NVQ_N\oplus Q_{N+1}VQ_{N+1} \oplus \cdots$. Thus $V'$ is invertible in $\call(\calf(X_E))$ and $q(V')$ is invertible in $\call(\calf(X_E))/\mathcal{K}$. But $q(V')=q(V)=v$. Thus $v$ is invertible in $\call(\calf(X_E))/\mathcal{K}$. Since $v\in q(\mathcal{D})$, it is also invertible there.	
	
\end{proof}

We shall say that the graph is a \emph{cycle with an entry} and write $E=C_{k,l}$ for $1\leq l \leq k$ if $E^0=\{v_1,\ldots ,v_l,\ldots, v_k\}$ and $E^1=\{e_1,\ldots,e_{k}\}$ with $e_i: v_i\rightarrow v_{i+1}$ if $1\leq i \leq k-1$ and $e_{k}:v_{k}\rightarrow v_l$. (If $l=1$ we get a cycle and we write $C_k$ for $C_{k,1}$).

\begin{lem}\label{cycleentry}
	For an irreducible, finite graph with no sinks, the following conditions are equivalent.
	\begin{enumerate}
		\item [(1)] $E$ is a cycle with an entry.
		\item[(2)] Given $n\in \mathbb{N}$ and $v\in E$, there is a unique path $\alpha$ with $|\alpha|=n$ and $s(\alpha)=v$.
		\item[(3)] For every $v\in E^0$, $|s^{-1}(v)|=1$.
		\item[(4)] The algebra $q(\cald)$ is commutative.
	\end{enumerate}
\end{lem}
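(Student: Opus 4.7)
The plan is to close the cycle $(1) \Rightarrow (2) \Rightarrow (3) \Rightarrow (1)$ and separately to prove $(3) \Leftrightarrow (4)$. For $(1) \Rightarrow (2)$, in $C_{k,l}$ each vertex has a unique outgoing edge, so iterating yields a unique length-$n$ path from each vertex. For $(2) \Rightarrow (3)$, take $n = 1$. For $(3) \Rightarrow (1)$, condition $(3)$ gives a well-defined map $f \colon v \mapsto r(e)$ where $e$ is the unique edge with $s(e) = v$; finiteness makes the forward orbit of any $v_0 \in E^0$ eventually periodic, yielding a ``tail plus cycle'' shape. Irreducibility is what forces every vertex of $E$ onto this orbit: a vertex off the orbit cannot be reached forward from orbit points, so irreducibility must supply a forward path from it into the orbit, and two such strays would have to be comparable in this forward order, placing all strays on a single linear tail. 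This gives exactly the shape $C_{k,l}$.

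For $(3) \Rightarrow (4)$, Lemma~\ref{qD} writes $q(\cald)$ as the $C^*$-algebra generated by $u_\alpha z^m u_\beta^*$ and $u_\alpha^* z^m u_\beta$ with $|\alpha| = |\beta|$. Using the identities $u_\alpha p_v = \delta_{v,s(\alpha)} u_\alpha$ and $p_v u_\beta^* = \delta_{v,s(\beta)} u_\beta^*$ together with the commutation of $z^m$ with every $p_v$, a short computation shows $u_\alpha z^m u_\beta^* = 0$ unless $s(\alpha) = s(\beta)$ and $u_\alpha^* z^m u_\beta = 0$ unless $r(\alpha) = r(\beta)$. Under $(3)$, $s(\alpha) = s(\beta)$ with $|\alpha|=|\beta|$ forces $\alpha = \beta$, so the first type collapses to the diagonal elements $u_\alpha z^m u_\alpha^*$. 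For the off-diagonal second type $\alpha \neq \beta$, note that $f$ is bijective on any cycle of $E$; if both $s(\alpha)$ and $s(\beta)$ lay on cycles, the equality $f^n(s(\alpha)) = r(\alpha) = r(\beta) = f^n(s(\beta))$ would collapse to $s(\alpha) = s(\beta)$. So at least one of them, call it $w$, lies on no cycle; a backward traversal from $w$ cannot revisit any vertex (a revisit would expose a forward cycle through the repeated vertex and thence through $w$), so only finitely many paths end at $w$, $P_w$ has finite rank, and $p_w = 0$ in $q(\cald)$. The off-diagonal generator, which equals $p_w \cdot u_\alpha^* z^m u_\beta$, then vanishes. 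Hence $q(\cald) = \bigoplus_v p_v q(\cald) p_v$ with the sum over cycle vertices $v$; on each corner the surviving part of $p_v\mathcal{F}(X_E)$ has a one-per-level basis indexed by the (eventually unique) long path ending at $v$, and since $Z_k$ preserves and is scalar on the one-dimensional range-$v$ block, every surviving generator acts diagonally on this basis. Diagonal operators commute, so each corner, and hence $q(\cald)$, is abelian.

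For $(4) \Rightarrow (3)$ I reason by contrapositive: suppose some vertex $v$ admits distinct edges $e_1, e_2 \in s^{-1}(v)$. Put $a = u_{e_1} z u_{e_2}^*$ and $p = u_{e_1} u_{e_1}^*$ in $q(\cald)$. Using $u_{e_1}^* u_{e_1} = p_v$, $p_v u_{e_2}^* = u_{e_2}^*$ and $u_{e_2}^* u_{e_1} = 0$, one computes $pa = a$ while $ap = 0$; so commutativity of $q(\cald)$ would force $a = 0$. The key remaining step, and the main obstacle, is to show $a \neq 0$. Lifting to $S_{e_1} Z S_{e_2}^*$, the compression $Q_{k+1}(S_{e_1} Z S_{e_2}^*) Q_{k+1}$ sends $\delta_{e_2\gamma}$ with $|\gamma| = k$ and $r(\gamma) = v$ to $\delta_{e_1} \otimes Z_k\delta_\gamma$; the tensor is nonzero because $r(Z_k\delta_\gamma) = r(\gamma) = v = s(e_1)$ (using $Z_k \in \varphi_{X_E^{\otimes k}}(A)'$), and it has norm at least $\epsilon$ by the weight hypothesis $Z_k \geq \epsilon I$. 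Irreducibility now supplies arbitrarily long such $\gamma$ (it places $v$ on a cycle, so traversing that cycle backward produces paths of every period-multiple length ending at $v$), giving $\|Q_{k+1}(S_{e_1} Z S_{e_2}^*)Q_{k+1}\| \geq \epsilon$ for infinitely many $k$. By Lemma~\ref{K} this means $S_{e_1} Z S_{e_2}^* \notin \calk$, so $a \neq 0$, yielding the desired contradiction.
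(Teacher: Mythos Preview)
Your overall scheme matches the paper's, but there is a genuine gap in your argument for $(3)\Rightarrow(4)$. The claim that the ``range-$v$ block'' of $X_E^{\otimes k}$ is one-dimensional, and that $p_v\mathcal{F}(X_E)$ has a one-per-level basis, fails whenever the graph has a nontrivial tail ($l>1$ in $C_{k,l}$). For instance, in $C_{4,3}$ there are two distinct paths of length $3$ ending at $v_3$, namely $e_4e_3e_2$ and $e_4e_3e_4$, so the range-$v_3$ block of $X_E^{\otimes 3}$ is two-dimensional. What \emph{is} at most one-dimensional under $(2)$ is each simultaneous $(\text{range},\text{source})$-block of $X_E^{\otimes k}$, since a path is determined by its source and its length. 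Because each $Z_k$ is a bimodule map, it preserves those blocks and is therefore diagonal in the basis $\{\delta_\gamma\}$; this is exactly the paper's argument, carried out upstairs in $\mathcal{D}$ before passing to the quotient. Once $Z$ is diagonal, every $S_\alpha Z^m S_\beta^*$ and $S_\alpha^* Z^m S_\beta$ is diagonal and commutativity of $q(\mathcal{D})$ is immediate. Your detour through the corners $p_v q(\mathcal{D}) p_v$ and the separate elimination of off-diagonal generators of the second kind is correct as far as it goes, but the final diagonality step rests on the false one-per-level claim rather than on the bimodule property of $Z_k$.

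For $(4)\Rightarrow(3)$ the paper uses the simpler pair $u_{e_1}u_{e_1}^*$ and $u_{e_1}u_{e_2}^*$, with $(u_{e_1}u_{e_1}^*)(u_{e_1}u_{e_2}^*)=u_{e_1}u_{e_2}^*$ but $(u_{e_1}u_{e_2}^*)(u_{e_1}u_{e_1}^*)=0$; your insertion of $z$ is harmless but buys nothing. Your explicit verification via Lemma~\ref{K} that the element is nonzero is more careful than the paper, which simply asserts $u_{e_1}u_{e_2}^*\neq 0$. Note, though, that your step ``irreducibility places $v$ on a cycle'' uses the strongly-connected reading of irreducibility, which is in some tension with the paper allowing sources in $C_{k,l}$; this is a wrinkle in the hypothesis rather than in your argument.
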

\begin{proof}
	It is clear that (1) implies (2). Also, (2) and (3) are equivalent (in fact, (3) is (2) with $n=1$ and (2) follows from (3) by successive applications of (3)).
	
	Suppose (2) holds. Then  $|s^{-1}(v)|=1$ for all $v$ and thus $|E^0|=|E^1|$ and 
	\begin{equation}\label{sr}|E^1|=\Sigma_v |s^{-1}(v)|=\Sigma_v|r^{-1}(v)|.
	\end{equation} To prove (1), assume first that $E$ has no sources. Therefore $|r^{-1}(v)|\geq 1$ for every $v$. But then, from (\ref{sr}), we get $|r^{-1}(v)|=1$ for all $v$. Since $E$ is finite and irreducible, it follows that it is a cycle.
	Assume now that $E$ has sources. Suppose there are two sources, $v_1,w$. Since $E$ is irreducible, there is either a path $\alpha$ with $s(\alpha)=v_1$ and $r(\alpha)=w$ or a path $\beta$ with $s(\beta)=w$ and  $r(\beta)=v_1$. But this is impossible since both $v_1$ and $w$ are assumed to be sources. Thus, there is only one source, say $v_1$. Since 
	$|r^{-1}(v_1)|=0$ and $|r^{-1}(v)|\geq 1$ for all other $v$, it follows from (\ref{sr})   that there is one vertex, say $v_l$ with $|r^{-1}(v_l)|=2$ and, for every $v$ different from $v_1$ and $v_l$, $|r^{-1}(v)|=1$. It is easy to check that, in this case, $E=C_{k,l}$.
	
	Now assume that (2) holds and prove (4). Each summand of the Fock correspondence, viewed simply as a vector space, has a finite basis. For the zero'th term it is $\{\delta_v: v\in E^0\}$ and for the $k$'th term it is $\{\delta_{\alpha} : |\alpha|=k \}$. Putting these together, we get a countable basis for the Fock correspondence (viewed as a vector space). Once we show that each generator of $\mathcal{D}$ is diagonal with respect to this basis, we are done.

	First note that $Z$ is diagonal. For that, fix an element $\delta_{\gamma}$ of the basis and let $k=|\gamma|$. Then $Z\delta_{\gamma}=Z_k \delta_{\gamma}$. Since $Z_k$ is a bimodule map, $Z \delta_{\gamma}$ will be a linear combination of $\{\delta_{\alpha}: |\alpha|=k, s(\alpha)=s(\gamma), r(\alpha)=r(\gamma)\}$. But, since $E$ satisfies (2), the latter set is $\{\delta_{\gamma}\}$ showing that $Z$ is diagonal.
	
	Now consider $S_{\alpha}Z^mS_{\beta}^*\delta_{\gamma}$ (with $|\alpha|=|\beta|$). This is $0$ unless $\gamma=\beta \gamma'$ (and then $s(\beta)=r(\gamma')$). In this case, we get $S_{\alpha}Z^m\delta_{\gamma'}$ and, since $Z^m$ was shown to be diagonal, this lies in $\mathbb{C}S_{\alpha}\delta_{\gamma'}=\mathbb{C}\delta_{\alpha \gamma'}$. But this is $0$ unless $s(\alpha)=r(\gamma')=s(\beta)$. Since $E$ satisfies (2), it follows that $\alpha=\beta$ (as $s(\alpha)=s(\beta)$ and $|\alpha|=|\beta|$). This shows that $S_{\alpha}Z^mS_{\beta}^*$ is diagonal. The proof for $S_{\alpha}^*Z^mS_{\beta}$ is similar and is omitted. This proves (4).
	
	Conversely, suppose (3) does not hold. Then there is a vertex $v$ with $|s^{-1}(v)|>1$. So we can fix $e_1,e_2\in E^1$ such that $s(e_1)=s(e_2)$ but $e_1\neq e_2$. Write $u_i$ for $u_{e_i}$ and consider $u_1u_1^*, u_1u_2^* \in q(\cald)$. We have $u_1u_1^*u_1u_2^*=u_1u_2^*\neq 0$ but $u_1u_2^*u_1u_1^*=0$ (as $u_2^*u_1=0$). Thus (4) does not hold and it shows that (4) implies (3).	
	
\end{proof}


\begin{lem}\label{cycle}
	Let $E=C_{k,l}$. Then for $m\geq 0$ and two different paths of the same length  $\alpha,\beta$, we have $u_{\alpha}^*z^mu_{\beta}=u_{\alpha}z^mu_{\beta}^*=0$.
	
	Also, if $E$ is a cycle, for every path $\alpha$ we have $u_{\alpha}u_{\alpha}^*=p_{r(\alpha)}$.
\end{lem}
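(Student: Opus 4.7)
The plan is to work at the level of operators on the Fock correspondence and then pass to the quotient by $\calk$, exploiting the very strong uniqueness of paths in $C_{k,l}$. Since $E=C_{k,l}$ satisfies conditions (2) and (3) of Lemma~\ref{cycleentry}, the proof of that lemma already establishes that each $Z_k$ is diagonal in the basis $\{\delta_\gamma : |\gamma|=k\}$: that is, $Z_k \delta_\gamma = c_\gamma \delta_\gamma$ for some scalar $c_\gamma$. I will use this diagonality throughout together with condition~(2), which says that a vertex uniquely determines a forward path of any fixed length.

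To prove $S_\alpha^* Z^m S_\beta = 0$ (which is stronger than vanishing modulo $\calk$) for distinct $\alpha,\beta$ with $|\alpha|=|\beta|$, I would evaluate on a basis element $\delta_\gamma$. Either $s(\beta)\neq r(\gamma)$, in which case the result is zero, or else $S_\beta \delta_\gamma = \delta_{\beta\gamma}$; by diagonality $Z^m \delta_{\beta\gamma} = c \delta_{\beta\gamma}$, and then $S_\alpha^* \delta_{\beta\gamma}$ is nonzero only when $\beta\gamma = \alpha\eta$ for some path $\eta$, which, since $|\alpha|=|\beta|$, forces $\alpha=\beta$, contradicting our assumption. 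The computation for $S_\alpha Z^m S_\beta^*$ is parallel: applied to $\delta_\gamma$ it vanishes unless $\gamma=\beta\gamma'$, in which case the chain of operators delivers a nonzero scalar multiple of $S_\alpha \delta_{\gamma'}$; nonvanishing here requires $s(\alpha)=r(\gamma')=s(\beta)$, but condition~(2) (iterated from condition~(3)) shows that $s(\alpha)=s(\beta)$ together with $|\alpha|=|\beta|$ forces $\alpha=\beta$, again a contradiction. Quotienting by $\calk$ gives $u_\alpha^* z^m u_\beta = u_\alpha z^m u_\beta^* = 0$.

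For the second statement, when $E$ is a cycle $C_k$ every vertex has a unique incoming edge, so by backward induction on path length, for each $v\in E^0$ and each $n\geq 1$ there is exactly one path of length $n$ with range $v$. Since no vertex of a cycle is a source, Lemma~\ref{alpha}(3) gives
\[
p_{r(\alpha)} \;=\; \sum_{|\beta|=|\alpha|,\; r(\beta)=r(\alpha)} u_\beta u_\beta^*,
\]
and by the uniqueness just noted this sum collapses to the single term $u_\alpha u_\alpha^*$, yielding $u_\alpha u_\alpha^* = p_{r(\alpha)}$. I do not expect a genuine obstacle here; the content is essentially already inside the proof of Lemma~\ref{cycleentry}, and the main task is to track that the uniqueness conditions force \emph{exact} vanishing of the cross terms rather than merely a diagonal form.
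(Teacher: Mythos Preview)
Your proof is correct, and in fact you establish the slightly stronger fact that the operators $S_\alpha^*Z^mS_\beta$ and $S_\alpha Z^m S_\beta^*$ already vanish on $\mathcal{F}(X_E)$, not merely modulo $\calk$. The paper's argument takes a different route: it stays entirely in the quotient and exploits the commutativity of $q(\cald)$ from Lemma~\ref{cycleentry}(4). Since $u_\beta u_\beta^*\in q(\cald)$ commutes with $z^m$, one has $u_\alpha^*z^m u_\beta = u_\alpha^*z^m u_\beta u_\beta^* u_\beta = u_\alpha^* u_\beta u_\beta^* z^m u_\beta = 0$, and similarly $u_\alpha z^m u_\beta^* = u_\alpha z^m u_\alpha^* u_\alpha u_\beta^* = 0$ once one observes that $u_\alpha u_\beta^*=0$ (which uses the same uniqueness-of-paths argument you invoke). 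Your approach is essentially a reprise of the basis computation used in proving (2)$\Rightarrow$(4) of Lemma~\ref{cycleentry}; the paper's approach instead \emph{uses} that result as a black box to get a short algebraic manipulation. Both are equally valid; the paper's version is a bit slicker while yours is more self-contained and yields exact vanishing at the operator level. Your treatment of the final statement about $u_\alpha u_\alpha^*=p_{r(\alpha)}$ agrees with the paper's.
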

\begin{proof}

	Note that $u_{\beta}u_{\beta}^*$ lies in $q(\mathcal{D})$ and, by Lemma~\ref{cycleentry}(4), commutes with $z^m$. Thus $u_{\alpha}^*z^mu_{\beta}=u_{\alpha}^*z^mu_{\beta}u_{\beta}^*u_{\beta}=u_{\alpha}^*u_{\beta}u_{\beta}^*z^mu_{\beta}=0$ (as $u_{\alpha}^*u_{\beta}=0$).	Also, if $u_{\alpha}u_{\beta}^*\neq 0$ then $s(\alpha)=s(\beta)$. But, since $|\alpha|=|\beta|$, they cannot be different paths (by Lemma~\ref{cycleentry}(2)). Thus $u_{\alpha}u_{\beta}^*=0$ and $u_{\alpha}z^mu_{\beta}^*=u_{\alpha}u_{\alpha}^*u_{\alpha}z^mu_{\beta}^*=u_{\alpha}z^mu_{\alpha}^*u_{\alpha}u_{\beta}^*=0$.
	
	The last statement follows from Lemma~\ref{alpha} (3) since there is only one path of a given length that ends in $v$.
\end{proof}

For the next proposition note that we can view $q(\mathcal{D})$ as a $C^*$-correspondence over itself with left and right actions given by multiplication and the inner product is $\langle d_1,d_2\rangle:=d_1^*d_2$.

\begin{prop}\label{nonperiodic}
	Assume that the graph $E$ has no sinks (so that $|s^{-1}(v)|>0$ for every $v\in E^0$).
	\begin{enumerate}
		\item[(1)] Suppose $E$ is not a cycle with entry, then there is no map $w:q(\cald)\rightarrow q(F^n)$ that is an isomorphism of correspondences (that is, a surjective bimodule map that preserves the inner product).
		\item[(2)] Suppose $E=C_{k,l}$. Then every $u_{\alpha}$ for $|\alpha|=n$ commutes with $q(\cald)$ if and only if  there is a map $w:q(\cald)\rightarrow q(F^n)$ that is an isomorphism of correspondences .
	\end{enumerate} 
\end{prop}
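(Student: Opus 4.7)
My plan for both parts of Proposition~\ref{nonperiodic} begins with a common reduction. Suppose $w:q(\cald)\to q(F^n)$ is a correspondence isomorphism and set $u:=w(1)$. The bimodule axiom gives $w(d)=du=ud$ for every $d\in q(\cald)$, so $u$ commutes with $q(\cald)$, and inner-product preservation yields $u^{*}u=\langle w(1),w(1)\rangle=\langle 1,1\rangle=I$. Surjectivity makes every $u_{\alpha}$ with $|\alpha|=n$ of the form $u\,c_{\alpha}$, where $c_{\alpha}:=u^{*}u_{\alpha}\in q(\cald)$, and so $uu^{*}u_{\alpha}=uc_{\alpha}=u_{\alpha}$ for each $\alpha$. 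Combining with Lemma~\ref{alpha}(2),
\[
uu^{*}=uu^{*}\sum_{|\alpha|=n}u_{\alpha}u_{\alpha}^{*}=\sum_{|\alpha|=n}u_{\alpha}u_{\alpha}^{*}=I,
\]
so $u$ is a unitary in $\calt/\calk$ lying in $q(F^n)$ and commuting with $q(\cald)$.

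For part~(1) I would expand $u=\sum_{|\alpha|=n}u_{\alpha}d_{\alpha}$ with $d_{\alpha}:=u_{\alpha}^{*}u$. Matching coefficients against the orthogonal family $\{u_{\alpha}\}$ in the identity $u\,p_{v}=p_{v}\,u$ forces $d_{\alpha}=p_{s(\alpha)}\,d_{\alpha}\,p_{r(\alpha)}$; evaluating the unitarity relations via $u_{\gamma}^{*}(\cdot)u_{\delta}$ produces the partial-isometry identities $d_{\gamma}d_{\delta}^{*}=\delta_{\gamma\delta}\,p_{s(\gamma)}$ and $\sum_{\alpha}d_{\alpha}^{*}d_{\alpha}=I$. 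Consequently the projections $e_{\alpha}:=d_{\alpha}^{*}d_{\alpha}\leq p_{r(\alpha)}$ are pairwise orthogonal, Murray--von Neumann equivalent via $d_{\alpha}$ in $q(\cald)$ to $p_{s(\alpha)}$, and satisfy $\sum_{r(\alpha)=w}e_{\alpha}=p_{w}$ for every vertex $w$. To reach a contradiction when $E$ is not a cycle with entry, I would apply a tracial state $\tau$ on $q(\cald)$ that is strictly positive on every non-source $p_{v}$ — obtainable as a suitable limit of normalized traces on the finite-dimensional algebras $\call(X_E^{\otimes k})$ — to get $\tau(p_{w})=\sum_{r(\alpha)=w}\tau(p_{s(\alpha)})$; summing over $w$ gives $\sum_{v}(N_{v}-1)\tau(p_{v})=0$, where $N_{v}$ is the number of length-$n$ paths starting at $v$. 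Since source vertices have $p_{v}=0$ (the corresponding $P_v$ being already compact) while non-sources satisfy $\tau(p_{v})>0$ and $N_{v}\geq 1$, this forces $|s^{-1}(v)|=1$ for every $v$, and then Lemma~\ref{cycleentry}(3) makes $E$ a cycle with entry, contradicting the hypothesis. The hardest step will be producing the tracial state with the required faithfulness.

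For part~(2), the direction $(\Leftarrow)$ is immediate from Lemma~\ref{cycleentry}(4): when $E=C_{k,l}$, $q(\cald)$ is commutative, so each $c_{\alpha}\in q(\cald)$ commutes with every $d\in q(\cald)$, giving $u_{\alpha}d=uc_{\alpha}d=udc_{\alpha}=du_{\alpha}$. For $(\Rightarrow)$, I would invoke Lemma~\ref{cycleentry}(2) to select, for each $v\in E^{0}$, the unique length-$n$ path $\alpha_{v}$ with $s(\alpha_{v})=v$, and set $U:=\sum_{v}u_{\alpha_{v}}$. Using Lemma~\ref{alpha}(1), Lemma~\ref{cycle} (which yields $u_{\alpha}u_{\beta}^{*}=0$ for distinct paths of the same length), and Lemma~\ref{alpha}(2), one checks that $U^{*}U=\sum_{v}p_{v}=I$ and $UU^{*}=\sum_{|\alpha|=n}u_{\alpha}u_{\alpha}^{*}=I$, using that $\{\alpha_{v}\}_{v}$ exhausts the length-$n$ paths. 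The hypothesis that each $u_{\alpha_{v}}$ commutes with $q(\cald)$ makes $U$ do the same, so $w(d):=Ud$ defines a $q(\cald)$-bimodule map into $q(F^n)$ preserving the inner product; surjectivity follows from $u_{\alpha}=U\,U^{*}u_{\alpha}=U\,p_{s(\alpha)}$ together with Lemma~\ref{qF}.
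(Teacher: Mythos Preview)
Your treatment of part~(2) is essentially the paper's argument, just packaged through the single element $u=w(1)$ rather than the individual preimages $c_v=w^{-1}(u_{\alpha_v})$; both directions agree with the paper.

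For part~(1), your reduction is correct and in fact gives you everything you need, but the step you flag as ``hardest'' is a real gap. The proposed tracial state on $q(\cald)$, built as a limit of normalized traces on $\call(X_E^{\otimes k})$, need not be strictly positive on every non-source $p_v$: if the number of length-$k$ paths ending at one vertex grows exponentially while at another it stays bounded (which happens already for graphs with a loop attached to a cycle), the normalization kills the small vertex in the limit. Since the proposition does not assume irreducibility, there is no Perron--Frobenius rescue either, and you have not supplied an alternative construction.

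The paper avoids this entirely by appealing to Lemma~\ref{finite}: $q(\cald)$ contains no proper isometries. You can finish your own argument the same way, with no trace needed. From your identities $d_\gamma d_\delta^{*}=\delta_{\gamma\delta}\,p_{s(\gamma)}$ and $\sum_{\alpha}d_\alpha^{*}d_\alpha=I$ it follows that $V:=\sum_{v\in E^0}d_{\alpha_v}^{*}$ (one choice of $\alpha_v$ with $s(\alpha_v)=v$ for each $v$) is an isometry in $q(\cald)$ with $VV^{*}=\sum_v e_{\alpha_v}$. When $E$ is not a cycle with an entry there is, by Lemma~\ref{cycleentry}, a vertex $v_0$ with two distinct length-$n$ paths starting at it; the extra one gives a nonzero projection $e_{\alpha'}$ orthogonal to $VV^{*}$, so $V$ is proper, contradicting Lemma~\ref{finite}. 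This is exactly the mechanism the paper uses (it works with the preimages $w^{-1}(u_\alpha)$, but the isometry it builds plays the same role).
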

\begin{proof}
	Assume first that $E$ is not a cycle with entry. That implies that there is a vertex, say $v_0$, such that $|s^{-1}(v_0)|>1$. Fix $n\geq 1$ and $w:q(\cald)\rightarrow q(F^n)$ that is an isomorphism of correspondences. Since there are no sinks, we can find, for every $v\in E^0$, a path $\alpha_v$ whose length is $n$ and $s(\alpha_v)=v$. Since $|s^{-1}(v_0)|>1$, we can find two different paths $\alpha_{v_0}$ and $\alpha_{v_0}'$ with $|\alpha_{v_0}|=|\alpha_{v_0}'|=n$ and $s(\alpha_{v_0})=s(\alpha_{v_0}')=v_0$.
	For every pair $\alpha,\beta$ of different paths of length $n$,  $u_{\alpha},u_{\beta}\in q(F^n)$ and, thus, there are $d_{\alpha},d_{\beta}\in q(\mathcal{D})$ such that $w(d_{\alpha})=u_{\alpha}$ and $w(d_{\beta})=u_{\beta}$. Since $w$ preserves inner products, we have $d_{\alpha}^*d_{\beta}=\langle d_{\alpha},d_{\beta}\rangle=\langle w(d_{\alpha}),w(d_{\beta})\rangle = \langle u_{\alpha},u_{\beta}\rangle=u_{\alpha}^*u_{\beta}=0$. Similarly $d_{\alpha}^*d_{\alpha}=p_{s(\alpha)}$. 
	Now write $d=\Sigma_{v\in E^0}d_{\alpha_v}$ and note that it is an isometry in $q(\mathcal{D})$ but its range is orthogonal to the range of $d_{\alpha_{v_0}'}$. By Lemma~\ref{finite}, this is impossible, proving (1).
	
	Now, assume that $E$ is  $C_{k,l}$ and $w:q(\cald)\rightarrow q(F^n)$ is an isomorphism of correspondences. For every $v\in E^0$, let $\alpha_v$ be the (unique) path of length $n$ that starts at $v$ and write $u_v=u_{\alpha_v}$ so that, by Lemma~\ref{qF}, $F^n=\Sigma_v u_v q(\mathcal{D})$. Fix $a\in q(\cald)$. For every $v\in E^0$ there is $c_v\in q(\cald)$ such that $w(c_v)=u_{v}$ and $\{b_v\}$ in $q(\cald)$ such that $w(a)=\Sigma_v u_v b_v$. Thus $w(a)=\Sigma_v w(c_v)b_v=w(\Sigma_v c_vb_v)$ . It follows that $a=\Sigma c_vb_v$.  Now, $u^{*}_vau_v=\langle w(c_v),w(ac_v) \rangle=\langle c_v,ac_v \rangle =c_v^*ac_v=c_v^*c_va$ (since $q(\cald)$ is commutative in this case). Similarly, for $a=I$, $u_v^*u_v=c_v^*c_v$ and we get $u_v^*au_v=u_v^*u_va$. Since $u_v$ is a partial isometry and $u_vu_v^*$ commutes with $a$ (as elements of $q(\mathcal{D})$), we get $au_v=u_va$. 
	
	For the other direction, assume that every $a\in q(\cald)$ commutes with each $u_{\gamma}$ for a path $\gamma$ of lenth $n$.

	We define $w:q(\cald) \rightarrow q(F^n)$ by $$w(a)=\Sigma_{|\gamma|=n} u_{\gamma}a=\Sigma_{|\gamma|=n}au_{\gamma}. $$ Clearly $w$ is a linear bimodule map. By Lemma~\ref{qF} $w$ is surjective. It is left to show that it preserves the inner products and, for that, we compute, for $a,b\in q(\cald)$,
	$$\langle w(a),w(b)\rangle=w(a)^*w(b)=\Sigma_{|\gamma|=|\gamma'|=n} a^*u_{\gamma}^*u_{\gamma'}b=\Sigma_{|\gamma|=n} a^*p_{s(\gamma)}b=a^*b=\langle a,b\rangle.$$

\end{proof}

For a fixed $p\in \mathbb{N}$ we shall consider the following condition on $Z$:
\vspace{5mm}

\textbf{Condition A(p):} $\lim_{k\rightarrow \infty} \|I_p\otimes Z_k-Z_{k+p}\|=0$.

\vspace{5mm}
In particular, Condition A(p) holds if there is some $N>0$ such that, for $k\geq N$, $Z_{k+p}=I_{p}\otimes Z_k$. Note that, if $\alpha$ is a path of length $p+k$ and $\alpha=\beta \gamma$ is the unique way of writing it with $|\beta|=p$ and $|\gamma|=k$, then $$(I_p\otimes Z_k)\delta_{\alpha}=\delta_{\beta}\otimes Z_k\delta_{\gamma}.$$

\begin{lem}\label{uz}
	$Z$ satisfies Condition A(p) if and only if $u_{\alpha}z=zu_{\alpha}$ for every path $\alpha$ with $|\alpha|=p$. In this case, we also have $u_{\alpha}^*z=zu_{\alpha}^*$ for $|\alpha|=p$.
\end{lem}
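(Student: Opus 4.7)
The plan is to translate the commutation condition to the upstairs algebra: since $z = q(Z)$ and $u_\alpha = q(S_\alpha)$, we have $u_\alpha z = z u_\alpha$ if and only if the commutator $S_\alpha Z - Z S_\alpha$ lies in $\calk(\calf(X_E))$. I would first evaluate this commutator on each Fock summand. For $\xi \in X_E^{\otimes k}$ and $|\alpha| = p$,
\[
S_\alpha Z \xi \;=\; \delta_\alpha \otimes Z_k \xi \;=\; (I_p \otimes Z_k)(\delta_\alpha \otimes \xi), \qquad Z S_\alpha \xi \;=\; Z_{k+p}(\delta_\alpha \otimes \xi),
\]
using the description of $I_p \otimes Z_k$ recalled just before the lemma, and therefore
\[
(S_\alpha Z - Z S_\alpha)\, Q_k \;=\; (I_p \otimes Z_k - Z_{k+p})\, S_\alpha Q_k,
\]
a finite-rank operator sending $X_E^{\otimes k}$ into the orthogonal summand $X_E^{\otimes(k+p)}$, of norm at most $\|I_p \otimes Z_k - Z_{k+p}\|$ since $S_\alpha$ is a partial isometry.

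For the ``if'' direction, assume Condition A(p) holds. The operators $(S_\alpha Z - Z S_\alpha) Q_k$ have pairwise orthogonal domains and pairwise orthogonal ranges as $k$ varies, so the norm of any tail $\sum_{k > N}(S_\alpha Z - Z S_\alpha) Q_k$ equals $\sup_{k > N}\|(S_\alpha Z - Z S_\alpha) Q_k\| \le \sup_{k > N}\|I_p \otimes Z_k - Z_{k+p}\|$, which tends to $0$. Thus $S_\alpha Z - Z S_\alpha$ is a norm limit of finite-rank operators, hence compact, giving $u_\alpha z = z u_\alpha$.

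For the ``only if'' direction, the main obstacle is that a single commutator $S_\alpha Z - Z S_\alpha$ only measures $I_p \otimes Z_k - Z_{k+p}$ on the proper subspace $\delta_\alpha \otimes X_E^{\otimes k}$ of $X_E^{\otimes(k+p)}$; to recover control on the whole summand one must sum over all paths of length $p$. Using $\sum_{|\alpha|=p} S_\alpha S_\alpha^* = \sum_{m \ge p} Q_m$ (from~\eqref{sumS}, since $P_s$ annihilates paths of positive length) and the direct identity that $\sum_{|\alpha|=p} S_\alpha Z S_\alpha^*$ acts on $X_E^{\otimes m}$ as $I_p \otimes Z_{m-p}$ for $m \ge p$, one obtains
\[
\sum_{|\alpha|=p} (S_\alpha Z - Z S_\alpha) S_\alpha^* \;=\; \sum_{k \ge 0}(I_p \otimes Z_k - Z_{k+p})\, Q_{k+p}.
\]
If every $S_\alpha Z - Z S_\alpha$ is compact, so is the left side, hence so is the block-diagonal operator $T$ on the right. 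Applying Lemma~\ref{K} to $T$ gives $\|I_p \otimes Z_k - Z_{k+p}\| = \|Q_{k+p} T Q_{k+p}\| \to 0$, which is Condition A(p).

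Finally, the assertion $u_\alpha^* z = z u_\alpha^*$ is immediate: each $Z_k$ is positive, so $Z$ and $z$ are self-adjoint, and taking adjoints in $u_\alpha z = z u_\alpha$ yields $z u_\alpha^* = u_\alpha^* z$.
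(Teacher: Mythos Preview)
Your proof is correct and follows essentially the same route as the paper: both lift the commutation to $S_\alpha Z-ZS_\alpha\in\calk$, pass to the sum $\sum_{|\alpha|=p}(S_\alpha Z-ZS_\alpha)S_\alpha^*$, identify it with the block-diagonal operator $\sum_{k\ge 0}(I_p\otimes Z_k-Z_{k+p})Q_{k+p}$, and invoke Lemma~\ref{K}. The only differences are cosmetic---you treat the two implications separately rather than as a chain of equivalences---and you supply the one-line argument for $u_\alpha^*z=zu_\alpha^*$ via self-adjointness of $z$, which the paper leaves implicit.
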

\begin{proof}
	$u_{\alpha}z=zu_{\alpha}$ if and only if $S_{\alpha}Z-ZS_{\alpha}\in \calk$. This hold if and only if $S_{\alpha}ZS_{\alpha}^*-ZS_{\alpha}S_{\alpha}^*\in \calk$ (since $S_{\alpha}^*S_{\alpha}=P_{s(\alpha)}$ and $Z$ commutes with it). Thus, $u_{\alpha}z=zu_{\alpha}$ for every $|\alpha|=p$ if and only if $\Sigma_{|\alpha|=p} S_{\alpha}ZS_{\alpha}^*-\Sigma_{|\alpha|=p}ZS_{\alpha}S_{\alpha}^*\in \calk$. Since $I-\Sigma_{|\alpha|=p} S_{\alpha}S_{\alpha}^*=\Sigma_{i=0}^{p-1}Q_iP_s^{\perp}+P_s\in \calk$, this is satisfied if and only if $\Sigma_{|\alpha|=p} S_{\alpha}ZS_{\alpha}^*-Z\in \calk$. The last inclusion is equivalent to $||Q_{k+p}(\Sigma_{|\alpha|=p} S_{\alpha}ZS_{\alpha}^*)Q_{k+p}-Q_{k+p}ZQ_{k+p}||\rightarrow 0$ for $k\rightarrow \infty$. 
	But $Q_{k+p}ZQ_{k+p}=Z_{k+p}$ and it is easy to check that $Q_{k+p}(\Sigma_{|\alpha|=p} S_{\alpha}ZS_{\alpha}^*)Q_{k+p}=I_p\otimes Z_k$ (simply apply it to $\delta_{\gamma \beta}$ for $|\gamma|=p$ and $\beta|=k$). So this completes the proof.

\end{proof}

From now on, we assume that the weight sequence $Z$ satisfies Condition A(p) for some $p\geq 1$. We also write $q$ for $p-1$.

\begin{lem}\label{zcomm}
	Assume Condition A(p) and let $c$ be in $C^*(z)$. Then
	\begin{enumerate}
		\item[(1)] For paths $\alpha,\beta$ with $|\alpha|=|\beta|$, $cu_{\alpha}u_{\beta}^*=u_{\alpha}u_{\beta}^*c$
		\item [(2)] For paths $\alpha$,$\beta$, we have $u_{\alpha}^*cu_{\beta}=u_{\alpha}^*cu_{\alpha}u_{\alpha}^*u_{\beta}$.
		\item[(3)] If $|\alpha|=|\beta|$ but $\alpha\ne \beta$ then $u_{\alpha}^*cu_{\beta}=0$.
		\item[(4)] For paths $\mu$ and $\xi$ with $s(\xi)=r(\mu)$ , $u_{\mu}^*u_{\xi}^*cu_{\xi}=u_{\mu}^*u_{\xi}^*cu_{\xi}u_{\mu}u_{\mu}^*$.
	\end{enumerate}	
\end{lem}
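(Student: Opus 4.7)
The plan is to reduce everything to the single commutation relation of Lemma~\ref{uz}, namely that $u_\gamma$ (and hence $u_\gamma^*$) commutes with $z$ whenever $|\gamma|=p$. Each of the four identities is linear in $c$, and the set of $c$ for which an identity of the required form holds is norm-closed; together with the fact that $z$ is self-adjoint (so $C^*(z)$ is the norm-closure of polynomials in $z$), this means it is enough to prove each claim for $c=z$. The extension from $c=z$ to arbitrary $z^m$ is then immediate by repeatedly inserting factors of $z$, and the passage to $C^*(z)$ is by continuity.

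For (1), fix paths $\alpha,\beta$ with $|\alpha|=|\beta|=n$. If $s(\alpha)\ne s(\beta)$ then $u_\alpha u_\beta^*=0$ and both sides of the desired identity vanish, so I may assume $s(\alpha)=s(\beta)$. Choose $m\ge 0$ with $n+m$ a multiple of $p$, and insert the resolution $\sum_{|\eta|=m}u_\eta u_\eta^*=I$ of Lemma~\ref{alpha}(2) to write
\[
u_\alpha u_\beta^* \;=\; \sum_{|\eta|=m,\,r(\eta)=s(\alpha)} u_{\alpha\eta}\, u_{\beta\eta}^{\,*}.
\]
Each path $\alpha\eta$ has length a multiple of $p$, so it factors as a concatenation of paths of length $p$, and by Lemma~\ref{uz} each factor $u_{\gamma_i}$ commutes with $z$; hence $u_{\alpha\eta}$ itself commutes with $z$, and similarly for $u_{\beta\eta}^{\,*}$. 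Thus each summand $u_{\alpha\eta}u_{\beta\eta}^{\,*}$ commutes with $z$, and summing gives $[u_\alpha u_\beta^{\,*},z]=0$.

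Parts (2)--(4) are now simple algebraic consequences of (1). For (2), apply (1) with the pair $(\alpha,\alpha)$ to get $u_\alpha u_\alpha^{\,*}c=c\,u_\alpha u_\alpha^{\,*}$; combined with the identity $u_\alpha^{\,*}u_\alpha u_\alpha^{\,*}=u_\alpha^{\,*}$ (which follows from $u_\alpha^{\,*}u_\alpha=p_{s(\alpha)}$ and $p_{s(\alpha)}u_\alpha^{\,*}=u_\alpha^{\,*}$), this yields
\[
u_\alpha^{\,*}c\,u_\beta \;=\; u_\alpha^{\,*}u_\alpha u_\alpha^{\,*}c\,u_\beta \;=\; u_\alpha^{\,*}c\,u_\alpha u_\alpha^{\,*}u_\beta.
\]
For (3), use (2) and observe that $u_\alpha^{\,*}u_\beta=0$ when $|\alpha|=|\beta|$ and $\alpha\ne\beta$ by Lemma~\ref{alpha}(1). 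For (4), apply (2) to the pair $(\xi\mu,\xi)$ of paths (well-defined because $s(\xi)=r(\mu)$) to obtain
\[
u_{\xi\mu}^{\,*}c\,u_\xi \;=\; u_{\xi\mu}^{\,*}c\,u_{\xi\mu}\,u_{\xi\mu}^{\,*}u_\xi,
\]
and then expand $u_{\xi\mu}=u_\xi u_\mu$ and simplify $u_{\xi\mu}^{\,*}u_\xi=u_\mu^{\,*}u_\xi^{\,*}u_\xi=u_\mu^{\,*}p_{s(\xi)}=u_\mu^{\,*}$ using $s(\xi)=r(\mu)$.

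The only step that requires a genuine idea is (1); there the obstacle is that Lemma~\ref{uz} only hands us commutation for paths of length exactly $p$, whereas we want it for arbitrary equal-length pairs. The device of extending both paths by a common tail $\eta$ of appropriate length in order to land on a length that is a multiple of $p$ is the key manoeuvre that unlocks the argument, after which (2), (3), (4) are routine.
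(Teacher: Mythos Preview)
Your proof is correct and follows essentially the same approach as the paper: for (1) you insert the resolution $\sum_{|\eta|=m}u_\eta u_\eta^*=I$ to extend $\alpha,\beta$ to length a multiple of $p$ and then invoke Lemma~\ref{uz}, exactly as the paper does (the paper writes the insertion as $\sum_{|\gamma|=np-m}u_\gamma u_\gamma^*$ and does not separate off the case $s(\alpha)\ne s(\beta)$, but the computation is the same). Parts (2)--(4) are derived from (1) in the same way in both arguments.
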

\begin{proof} Suppose $|\alpha|=|\beta|=m$ and fix $n$ such that $m\leq np$.
	Since $\Sigma_{|\gamma|=np-m}u_{\gamma}u_{\gamma}^*=I$ and  $z$ is commutes with $u_{\delta}$ for $|\delta|=np$, we have
	$$u_{\alpha}u_{\beta}^*z=\Sigma_{|\gamma|=np-m} u_{\alpha}u_{\gamma}u_{\gamma}^*u_{\beta}^*z=\Sigma_{|\gamma|=np-m} u_{\alpha}u_{\gamma}zu_{\gamma}^*u_{\beta}^*=\Sigma_{|\gamma|=np-m} zu_{\alpha}u_{\gamma}u_{\gamma}^*u_{\beta}^*=zu_{\alpha}u_{\beta}^*. $$
	(Note that, if $s(\alpha)\neq r(\gamma)$, $u_{\alpha}u_{\gamma}=0$ but the equalities still hold). This proves (1).
	
	For (2) we use (1) and the fact that $u_{\alpha}$ is a partial isometry to compute $u_{\alpha}^*cu_{\beta}=u_{\alpha}^*u_{\alpha}u_{\alpha}^*cu_{\beta}=u_{\alpha}^*cu_{\alpha}u_{\alpha}^*u_{\beta}$.
	
	Part (3) follows from (2) and Lemma~\ref{alpha} (1).
	
	For (4), we use (2) with $\alpha=\xi \mu$ and $\beta=\xi$ ( recall that $u_{\xi}^*u_{\xi}=p_{s(\xi)}=p_{r(\mu)}$).

\end{proof}

\begin{lem}\label{qDAp}
	If, for some $p>0$, condition A(p) holds then 
	$$q(\cald)=C^*(\{u_{\alpha}z^mu_{\beta}^*:\;|\alpha|=|\beta|,\; m\geq 0\})$$ $$=C^*(\{u_{\alpha}z^mu_{\beta}^*:\;|\alpha|=|\beta|,\; m\geq 0, \; s(\alpha)=s(\beta)\}).$$	
\end{lem}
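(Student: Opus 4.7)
The plan is to start from Lemma~\ref{qD}, which says $q(\cald)$ is generated by $\mathcal{G}\cup\mathcal{G}'$, where $\mathcal{G}=\{u_\alpha z^m u_\beta^* : |\alpha|=|\beta|,\,m\geq 0\}$ and $\mathcal{G}'=\{u_\alpha^* z^m u_\beta : |\alpha|=|\beta|,\,m\geq 0\}$, and then establish two reductions: (i) every element of $\mathcal{G}'$ already lies in $C^*(\mathcal{G})$, and (ii) every $u_\alpha z^m u_\beta^*\in \mathcal{G}$ with $s(\alpha)\neq s(\beta)$ vanishes. Together these yield both equalities.

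For (i), I would split on $\alpha$ versus $\beta$. When $\alpha\neq\beta$ (with $|\alpha|=|\beta|$), Lemma~\ref{zcomm}(3) applied to $c=z^m$ immediately gives $u_\alpha^* z^m u_\beta=0$, so the real content is the diagonal case $u_\alpha^* z^m u_\alpha$. The idea is to pad $\alpha$ to a length divisible by $p$ so that Condition A(p) lets $z^m$ pass through the padded path. Concretely, set $n=|\alpha|$ and choose $k\geq 1$ with $p\mid n+k$; by Lemma~\ref{uz} (iterated), $z^m$ then commutes with $u_{\alpha\mu}=u_\alpha u_\mu$ for every path $\mu$ of length $k$ with $r(\mu)=s(\alpha)$. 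Multiplying $z^m u_\alpha u_\mu = u_\alpha u_\mu z^m$ on the left by $u_\alpha^*$ and on the right by $u_\mu^*$, using $u_\alpha^* u_\alpha = p_{s(\alpha)}$ and $p_{s(\alpha)} u_\mu = u_\mu$, yields $u_\alpha^* z^m u_\alpha \cdot u_\mu u_\mu^* = u_\mu z^m u_\mu^*$. Summing over such $\mu$ and invoking Lemma~\ref{alpha}(3) collapses the left-hand factor to $p_{s(\alpha)}$, and since $u_\alpha p_{s(\alpha)}=u_\alpha$ absorbs it, I obtain $u_\alpha^* z^m u_\alpha=\sum_{|\mu|=k,\,r(\mu)=s(\alpha)} u_\mu z^m u_\mu^*\in C^*(\mathcal{G})$.

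For (ii), first note that $z^m$ commutes with every vertex projection $p_v$: applying Lemma~\ref{zcomm}(1) with $\alpha=\beta=\gamma$ gives $z^m u_\gamma u_\gamma^* = u_\gamma u_\gamma^* z^m$, and summing over $\gamma$ of some positive length with $r(\gamma)=v$ via Lemma~\ref{alpha}(3) yields $z^m p_v = p_v z^m$. Then, using $u_\alpha=u_\alpha p_{s(\alpha)}$ and $u_\beta^*=p_{s(\beta)}u_\beta^*$ and sliding $p_{s(\alpha)}$ across $z^m$, I get $u_\alpha z^m u_\beta^* = u_\alpha z^m p_{s(\alpha)} p_{s(\beta)} u_\beta^* = 0$ whenever $s(\alpha)\neq s(\beta)$. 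The only point to watch is the source case: if $s(\alpha)$ is a source then $P_{s(\alpha)}\leq P_s\in\calk$, so $p_{s(\alpha)}=0$, hence $u_\alpha=0$, and both identities hold trivially (the $\mu$-sum in (i) is also empty, since no path of positive length can end at a source). I don't expect any real obstacle beyond this bookkeeping---the heart of the proof is the commutation supplied by Condition A(p) combined with the partial-isometry relations from Lemma~\ref{alpha}.
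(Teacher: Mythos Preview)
Your proof is correct and follows essentially the same route as the paper's: both arguments reduce to showing that every $u_\alpha^* z^m u_\beta$ lies in $C^*(\mathcal{G})$ by padding to a length divisible by $p$ so that Condition~A(p) lets $z^m$ commute past, and both handle the second equality via the commutation of $z^m$ with vertex projections. The only cosmetic differences are that the paper first strips off a prefix of length a multiple of $p$ (reducing to $|\alpha|=|\beta|<p$) before padding with $|\gamma|=p-k$, whereas you pad directly to an arbitrary multiple of $p$; and the paper simply cites Lemma~\ref{alpha}(4) for $z^m p_v = p_v z^m$ rather than re-deriving it from Lemma~\ref{zcomm}(1).
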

\begin{proof}
	We need to show that, for $|\alpha|=|\beta|$ and $m\geq 0$, $u_{\alpha}^*z^mu_{\beta}\in C^*(\{u_{\alpha}z^mu_{\beta}^*:\;|\alpha|=|\beta|,\; m\geq 0\})$. 
	First, write $\alpha=\alpha'\alpha''$ and $\beta=\beta'\beta''$ where $|\alpha'|=|\beta'|$ is a multiple of $p$ and $0\leq |\alpha''|=|\beta''|=k<p$. Then $u_{\alpha}^*z^mu_{\beta}=u_{\alpha''}^*u_{\alpha'}^*z^mu_{\beta'}u_{\beta''}=u_{\alpha''}^*z^mu_{\alpha'}^*u_{\beta'}u_{\beta''}$ which is $0$ if $\alpha'$ is different from $\beta'$ and $u_{\alpha''}^*z^mu_{\beta''}$ otherwise. Thus, we can assume that $|\alpha|=|\beta|=k<p$.
	Then, we write
	$$u_{\alpha}^*z^mu_{\beta}=\Sigma_{|\gamma|=p-k} u_{\gamma}u_{\gamma}^*u_{\alpha}^*z^mu_{\beta}=
	\Sigma_{|\gamma|=p-k} u_{\gamma}(u_{\alpha}u_{\gamma})^*z^mu_{\beta}=\Sigma_{|\gamma|=p-k} u_{\gamma}z^m(u_{\alpha}u_{\gamma})^*u_{\beta}$$ which is $0$ if $\alpha$ is different from $\beta$ and $\Sigma u_{\gamma}z^mu_{\gamma}^*$ otherwise. In any case, we see that $u_{\alpha}^*z^mu_{\beta}$ lies in 
	$C^*(\{u_{\alpha}z^mu_{\beta}^*:\;|\alpha|=|\beta|,\; m\geq 0\})$.
	
	Finally note that, by Lemma~\ref{alpha}(4), $z^m$ commutes with $p_{s(\alpha)}$ and, thus, $u_{\alpha}z^mu_{\beta}^*=0$ unless $s(\alpha)=s(\beta)$.
\end{proof}

We now write for $n\geq 0$,
\begin{equation}\label{Cn}
	\mathcal{C}_n=C^*(\{u_{\alpha}z^lu_{\beta}^* :\; l\geq 0,\; np\leq |\alpha|=|\beta|<(n+1)p,\; s(\alpha)=s(\beta)\}).
\end{equation} 
In particular,
\begin{equation}\label{C0}
	\mathcal{C}_0=C^*(\{u_{\alpha}z^lu_{\beta}^* :\; l\geq 0,\; 0\leq |\alpha|=|\beta|<p,\; s(\alpha)=s(\beta)\}).
\end{equation}

\begin{lem}\label{inclusion}
	Assuming Condition A(p) for some $p>0$, 
	for every $n\geq 0$, $\mathcal{C}_n\subseteq \mathcal{C}_{n+1}$ . If $\iota_n$ is the inclusion map then $$\iota_n(u_{\alpha}z^lu_{\beta}^*)=\Sigma_{|\gamma|=p} u_{\alpha}u_{\gamma}z^lu_{\gamma}^*u_{\beta}^*.$$
	
	
\end{lem}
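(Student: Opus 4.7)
The plan is to rewrite every defining generator of $\mathcal{C}_n$ as a finite linear combination of defining generators of $\mathcal{C}_{n+1}$; this establishes both the inclusion and the formula for $\iota_n$ in one stroke. The two ingredients are the resolution of the identity $\Sigma_{|\gamma|=p}u_{\gamma}u_{\gamma}^{*}=I$ from Lemma~\ref{alpha}(2), and the commutation $u_{\gamma}^{*}z=zu_{\gamma}^{*}$ for every $|\gamma|=p$, which is exactly the content of Lemma~\ref{uz} under Condition A(p).

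Fix a defining generator $u_{\alpha}z^{l}u_{\beta}^{*}$ of $\mathcal{C}_n$, so that $np\leq m:=|\alpha|=|\beta|<(n+1)p$ and $s(\alpha)=s(\beta)$. Insert $I=\Sigma_{|\gamma|=p}u_{\gamma}u_{\gamma}^{*}$ between $u_{\alpha}$ and $z^{l}$, and then push the factor $u_{\gamma}^{*}$ past $z^{l}$ using Lemma~\ref{uz} applied $l$ times:
\begin{equation*}
u_{\alpha}z^{l}u_{\beta}^{*}
= u_{\alpha}\Bigl(\Sigma_{|\gamma|=p}u_{\gamma}u_{\gamma}^{*}\Bigr)z^{l}u_{\beta}^{*}
= \Sigma_{|\gamma|=p} u_{\alpha}u_{\gamma}\,u_{\gamma}^{*}z^{l}\,u_{\beta}^{*}
= \Sigma_{|\gamma|=p} u_{\alpha}u_{\gamma}\,z^{l}\,u_{\gamma}^{*}u_{\beta}^{*}.
\end{equation*}
This is precisely the displayed formula, so it only remains to check that each nonzero summand is a defining generator of $\mathcal{C}_{n+1}$.

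If $r(\gamma)\ne s(\alpha)$ then $u_{\alpha}u_{\gamma}=0$ and that summand drops out. Otherwise, since $s(\alpha)=s(\beta)$, the concatenations $\alpha\gamma$ and $\beta\gamma$ are both genuine paths of common length $m+p$, satisfying $(n+1)p\leq m+p<(n+2)p$ and $s(\alpha\gamma)=s(\gamma)=s(\beta\gamma)$. Therefore $u_{\alpha}u_{\gamma}z^{l}u_{\gamma}^{*}u_{\beta}^{*}=u_{\alpha\gamma}z^{l}u_{\beta\gamma}^{*}$ lies in the generating set of $\mathcal{C}_{n+1}$, and consequently $u_{\alpha}z^{l}u_{\beta}^{*}\in\mathcal{C}_{n+1}$. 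Since $\mathcal{C}_n$ is the $C^{*}$-algebra generated by such elements, the inclusion $\mathcal{C}_n\subseteq\mathcal{C}_{n+1}$ follows.

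The only point where any hypothesis is really used is the commutation $u_{\gamma}^{*}z^{l}=z^{l}u_{\gamma}^{*}$, and that is exactly what Condition A(p) supplies via Lemma~\ref{uz}; the remainder is a routine identity-insertion, so I do not expect any genuine obstacle.
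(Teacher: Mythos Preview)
Your proof is correct and follows essentially the same approach as the paper's own proof: insert the resolution of the identity $\Sigma_{|\gamma|=p}u_{\gamma}u_{\gamma}^{*}=I$, commute $u_{\gamma}^{*}$ past $z^{l}$ via Lemma~\ref{uz}, and observe that the resulting terms are generators of $\mathcal{C}_{n+1}$. The paper's version is terser and omits the bookkeeping about $r(\gamma)=s(\alpha)$ and $s(\alpha\gamma)=s(\beta\gamma)$ that you spell out, but the argument is the same.
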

\begin{proof}
	We take $u_{\alpha}z^lu_{\beta}^*\in \mathcal{C}_n$ (with $np\leq |\alpha|=|\beta|	<(n+1)p$) and compute
	$$u_{\alpha}z^lu_{\beta}^*=\Sigma_{|\gamma|=p} u_{\alpha}u_{\gamma}u_{\gamma}^*z^lu_{\beta}^*=\Sigma_{|\gamma|=p} u_{\alpha}u_{\gamma}z^lu_{\gamma}^*u_{\beta}^*\in \mathcal{C}_{n+1}$$ since $(n+1)p\leq|\alpha \gamma|=|\beta \gamma|< (n+2)p$.
\end{proof}

Thus
$$q(\cald)=\overline{\cup_n \mathcal{C}_n}.$$

\begin{lem}\label{eCnf}
	\begin{enumerate}
		\item[(1)]	For every $n\geq 0$ and every $e,f\in E^1$, $$u_e^*\calc_n u_f\subseteq \calc_n$$ so that $\calc_n$ is invariant (see Definition~\ref{idinv}).
		\item[(2)] If $m<n$, $$\calc_m=\bigvee_{|\gamma_i|=(n-m)p} u_{\gamma_1}^*\calc_n u_{\gamma_2}.$$
		\item[(3)] If $n\geq 0$, Condition A(p) holds, $np\leq|\alpha_1|=|\alpha_2|=|\alpha_3|=|\alpha_4|<(n+1)p$ and $|\gamma_1|=|\gamma_2|$ then
		$$u_{\alpha_1}u_{\gamma_1}^*u_{\alpha_2}^*\calc_n u_{\alpha_3}u_{\gamma_2}u_{\alpha_4}^*\subseteq \calc_n.$$
	\end{enumerate}
\end{lem}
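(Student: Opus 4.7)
The plan is to handle all three parts by first verifying the claimed containment on a single generator $g = u_\alpha z^l u_\beta^*$ of $\mathcal{C}_n$ (with $np \leq |\alpha|=|\beta|<(n+1)p$ and $s(\alpha)=s(\beta)$), and then bootstrapping from one generator to the whole $C^*$-algebra by a splitting identity combined with induction on the length of a monomial in the generators. For (1), the partial-isometry identities $u_e^*u_e = p_{s(e)}$ and $u_e^*u_{e'}=0$ for $e\ne e'$ give $u_e^* u_\alpha = u_{\alpha'}$ precisely when $\alpha=e\alpha'$, and $u_\beta^* u_f = u_{\beta'}^*$ precisely when $\beta=f\beta'$. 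Hence $u_e^* g u_f$ either vanishes or equals $u_{\alpha'} z^l u_{\beta'}^*$ with $|\alpha'|=|\beta'|=|\alpha|-1$, which sits in $\mathcal{C}_n$ when $|\alpha|>np$ and in $\mathcal{C}_{n-1}\subseteq\mathcal{C}_n$ via Lemma~\ref{inclusion} when $|\alpha|=np$. The edge case $n=0,\ |\alpha|=0$ reduces to $u_e^* z^l u_f$; Lemma~\ref{zcomm}(3) kills the $e\ne f$ terms, and otherwise the identity from the proof of Lemma~\ref{qDAp} gives $u_e^* z^l u_e = \sum_{|\delta|=p-1,\ r(\delta)=s(e)} u_\delta z^l u_\delta^* \in \mathcal{C}_0$. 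To pass from one generator to all of $\mathcal{C}_n$ I use
\[
u_e^*(xy)u_f \;=\; \sum_{e'\in E^1}(u_e^* x u_{e'})(u_{e'}^* y u_f),
\]
valid by Lemma~\ref{alpha}(2); induction on the number of generator factors in a monomial shows both parenthesised factors lie in $\mathcal{C}_n$, and linearity plus norm-continuity of $X\mapsto u_e^* X u_f$ extend the containment to linear combinations and norm limits.

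For (2), the inclusion $\bigvee_{|\gamma_i|=(n-m)p}u_{\gamma_1}^*\mathcal{C}_n u_{\gamma_2}\subseteq\mathcal{C}_m$ is the same argument with $u_{\gamma_1},u_{\gamma_2}$ of length $(n-m)p$ in place of $u_e,u_f$ and the splitting identity $I = \sum_{|\xi|=(n-m)p}u_\xi u_\xi^*$; on a generator of $\mathcal{C}_n$ the outcome is a generator of $\mathcal{C}_m$ of length in $[mp,(m+1)p)$. For the reverse inclusion, given a generator $u_{\alpha'} z^l u_{\beta'}^*\in\mathcal{C}_m$ the no-sources hypothesis lets me pick paths $\gamma_1,\gamma_2$ of length $(n-m)p$ with $s(\gamma_1)=r(\alpha')$ and $s(\gamma_2)=r(\beta')$; then $u_{\gamma_1\alpha'} z^l u_{\gamma_2\beta'}^*$ is a generator of $\mathcal{C}_n$ and $u_{\gamma_1}^*(u_{\gamma_1\alpha'} z^l u_{\gamma_2\beta'}^*) u_{\gamma_2} = u_{\alpha'} z^l u_{\beta'}^*$. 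Passing to the $C^*$-algebras generated yields $\mathcal{C}_m\subseteq\bigvee u_{\gamma_1}^*\mathcal{C}_n u_{\gamma_2}$.

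Part (3) follows the same template but with heavier case work. On a generator $g$ of $\mathcal{C}_n$, the products $u_{\alpha_2}^* u_\alpha$ and $u_\beta^* u_{\alpha_3}$ collapse — in three sub-cases according as $|\alpha|$ equals, exceeds, or is exceeded by $k=|\alpha_i|$ — to a partial-isometry factor of length $<p$ times a vertex projection. After this reduction the central factor reads $u_{\gamma_1}^* z^l u_{\gamma_2}$; Lemma~\ref{zcomm}(3) forces $\gamma_1=\gamma_2=\gamma$, and writing $\gamma = \gamma^{(1)}\gamma^{(2)}$ with $|\gamma^{(1)}|\equiv 0\pmod p$ and $|\gamma^{(2)}|<p$ the identity from the proof of Lemma~\ref{qDAp} rewrites $u_\gamma^* z^l u_\gamma$ as an element of $\mathcal{C}_0$. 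Conjugating by $u_{\alpha_1}$ on the left and $u_{\alpha_4}^*$ on the right and repeatedly applying Lemma~\ref{zcomm}(1) to commute $z^l$ past length-$p$ blocks — together with, in the sub-case where the naive length of the summands crosses $(n+1)p$, a re-summation via $\sum_{|\delta|=p-|\gamma^{(2)}|,\ r(\delta)=s(\gamma)}u_\delta u_\delta^* = p_{s(\gamma)}$ and a factorisation $\alpha_i = \alpha_i''\tau_i$ with $|\tau_i|=|\gamma^{(2)}|$ — returns the full expression to a linear combination of generators of $\mathcal{C}_n$. The splitting-identity induction from (1) then extends the containment from generators to all of $\mathcal{C}_n$. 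The main technical obstacle is precisely this last sub-case of (3): the interplay between the three-way split on $|\alpha|$ vs.\ $k$, the sub-split on $|\gamma^{(2)}|$, and the need to keep the resulting element inside $\mathcal{C}_n$ rather than spill into $\mathcal{C}_{n+1}$, demands a careful vertex-and-length bookkeeping that the other parts do not.
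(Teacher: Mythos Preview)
Your arguments for parts (1) and (2) match the paper's essentially verbatim: reduce to generators via the splitting identity $u_e^*(xy)u_f=\sum_{e'\in E^1}(u_e^*xu_{e'})(u_{e'}^*yu_f)$, handle the three length sub-cases, and invoke Lemma~\ref{inclusion} for the drop into $\mathcal{C}_{n-1}$. One small slip in (2): to choose $\gamma_i$ of length $(n-m)p$ with $s(\gamma_i)=r(\alpha')$ you need that $E$ has no \emph{sinks}, not no sources; at this point in the paper only the no-sinks hypothesis is in force.

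For part (3) the paper takes a far shorter route that avoids your generator-by-generator case analysis entirely. The trick is to regroup the concatenated path: write $\alpha_2\gamma_1=\mu_1\mu_2$ with $|\mu_2|=|\alpha_2|$ (so $|\mu_1|=|\gamma_1|$), and similarly $\alpha_3\gamma_2=\nu_1\nu_2$ with $|\nu_2|=|\alpha_3|$. Then for \emph{every} $X$ one has
\[
u_{\alpha_1}u_{\gamma_1}^*u_{\alpha_2}^*\, X\, u_{\alpha_3}u_{\gamma_2}u_{\alpha_4}^*
=\bigl(u_{\alpha_1}u_{\mu_2}^*\bigr)\bigl(u_{\mu_1}^* X u_{\nu_1}\bigr)\bigl(u_{\nu_2}u_{\alpha_4}^*\bigr).
\]
The two outer factors lie in $\mathcal{C}_n$ outright (both paths have length in $[np,(n+1)p)$, take $l=0$), and $u_{\mu_1}^*\mathcal{C}_n u_{\nu_1}\subseteq\mathcal{C}_n$ by iterating part (1). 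Since $\mathcal{C}_n$ is an algebra, done --- no bootstrapping needed.

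Beyond being longer, your approach to (3) has a genuine gap at the last step. You assert that ``the splitting-identity induction from (1)'' lifts the generator-level containment to all of $\mathcal{C}_n$, but the identity $u_e^*(xy)u_f=\sum_{e'}(u_e^*xu_{e'})(u_{e'}^*yu_f)$ has no evident analogue for the map $X\mapsto u_{\alpha_1}u_{\gamma_1}^*u_{\alpha_2}^* X u_{\alpha_3}u_{\gamma_2}u_{\alpha_4}^*$: inserting a resolution of the identity between $x$ and $y$ does not factor this into two instances of the same operation. Also, your claim that ``the central factor reads $u_{\gamma_1}^* z^l u_{\gamma_2}$'' is only accurate in the sub-case $|\alpha|=|\alpha_i|$; otherwise the residual partial isometry of length $<p$ sits between $u_{\gamma_i}^*$ and $z^l$, and further work is required before Lemma~\ref{zcomm}(3) applies. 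The regrouping trick above is exactly what converts (3) from the multi-case grind you describe into a two-line consequence of (1).
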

\begin{proof}
	Note that, if $c_1, c_2\in \calc_n$ and $e,f\in E^1$ then 
	$$u_e^*c_1c_2u_f=\sum_{g\in E^1} u_e^*c_1u_gu_g^*c_2u_f$$ and, thus, to prove (1), it suffices to show that, for every generator $u_{\alpha}z^lu_{\beta}^*$ of $\calc_n$ (where $l\geq 0$, $np\leq|\alpha|=|\beta|<(n+1)p$ and $s(\alpha)=s(\beta)$), we have $$u_e^*u_{\alpha}z^lu_{\beta}^*u_f\in \calc_n .$$
	Assume first that $|\alpha|=|\beta|>np$. Then we can write $\alpha=h\alpha'$ and $\beta=g\beta'$ for some $h,g\in E^1$ and $np\leq |\alpha'|=|\beta'|< (n+1)p$ and we have $u_e^*u_{\alpha}z^lu_{\beta}^*u_f=\delta_{e,h}\delta_{f,g}u_{\alpha'}z^lu_{\beta'}^*\in \mathcal{C}_n$. Now assume that $|\alpha|=|\beta|=np$ and $n\neq 0$. Then, a similar argument shows that $u_e^*u_{\alpha}z^lu_{\beta}^*u_f=\delta_{e,h}\delta_{f,g}u_{\alpha'}z^lu_{\beta'}^*$ and $|\alpha'|=|\beta'|=np-1<np$. Thus, in this case, $u_e^*u_{\alpha}z^lu_{\beta}^*u_f\in \mathcal{C}_{n-1}\subseteq \mathcal{C}_n$ (where we used Lemma~\ref{inclusion}). To complete the proof we have to show that $u_e^*z^lu_f\in \mathcal{C}_0$. For that, we write $u_e^*z^lu_f=\Sigma_{|\gamma|=p-1}u_{\gamma}u_{\gamma}^*u_e^*z^lu_f=\Sigma_{|\gamma|=p-1}u_{\gamma}z^lu_{\gamma}^*u_e^*u_f=\delta_{e,f}\Sigma_{|\gamma|=p-1}u_{\gamma}z^lu_{\gamma}^*\in \mathcal{C}_0$. This proves part (1).
	
	For part (2), fix $\gamma_1,\gamma_2$ with $|\gamma_i|=(n-m)p$ and let $u_{\alpha_1}z^lu_{\alpha_2}^*$ be an arbitrary generator of $\calc_n$ (so that $l\geq 0$, $np\leq |\alpha_i|<(n+1)p$ and $s(\alpha_1)=s(\alpha_2)$). Then consider $b:=u_{\gamma_1}^*u_{\alpha_1}z^lu_{\alpha_2}^*u_{\gamma_2}$. If $b\neq 0$ then both $u_{\gamma_1}^*u_{\alpha_1}$ and $u_{\gamma_2}^*u_{\alpha_2}$ are non zero which implies that we can write $\alpha_i=\gamma_i \delta_i$ with $|\delta_i|=|\alpha_i|-(n-m)p$. In this case, $b=u_{\delta_1}z^lu_{\delta_2}^*\in \calc_m$. Since this holds for every generator  of $\calc_n$,  and $\calc_m$ is a $C^*$-algebra, we get $$\bigvee_{|\gamma_i|=(n-m)p} u_{\gamma_1}^*\calc_n u_{\gamma_2}\subseteq \calc_m .$$ For the other direction, fix a generator $u_{\mu_1}z^lu_{\mu_2}^*$ of $\calc_m$ (so that $mp\leq |\mu_i|<(m+1)p\leq np$ and $s(\mu_1)=s(\mu_2)$). Then, for $\gamma_i$ with $|\gamma_i|=(n-m)p$ and $s(\gamma_i)=r(\mu_i)$,
	$$u_{\mu_1}z^lu_{\mu_2}^*= u_{\gamma_1}^*u_{\gamma_1}u_{\mu_1}z^lu_{\mu_2}^*u_{\gamma_2}^*u_{\gamma_2}\in \bigvee_{|\gamma_i|=(n-m)p} u_{\gamma_1}^*\calc_n u_{\gamma_2} $$ since $u_{\gamma_1}u_{\mu_1}z^lu_{\mu_2}^*u_{\gamma_2}^*\in \calc_n$. (Note that such $\gamma_i$ exist since $E$ has no sinks.) Since this holds for every generator  of $\calc_m$ and the set on the right-hand side is a $C^*$-algebra, this proves (2).

	For part (3), note that, since $|\alpha_2 \gamma_1|\geq |\alpha_1|$, we can write $\alpha_2 \gamma_1=\mu_1 \mu_2$ where $|\mu_2|=|\alpha_2|$ and, similarly, $\alpha_3\gamma_2=\nu_1\nu_2$ with $|\nu_2|=|\alpha_3	|$. Thus $u_{\alpha_1}u_{\gamma_1}^*u_{\alpha_2}^*\calc_n u_{\alpha_3}u_{\gamma_2}u_{\alpha_4}^*=u_{\alpha_1}u_{\mu_2}^*u_{\mu_1}^*\calc_n u_{\nu_1}u_{\nu_2}u_{\alpha_4}^*$. But now the result follows because $u_{\alpha_1}u_{\mu_2}^*$ and $u_{\nu_2}u_{\alpha_4}^*$ are in $\calc_n$ by definition and $u_{\mu_1}^*\calc_n u_{\nu_1}$ is contained in $\calc_n$ by successively applying part (1).
	
\end{proof}

\begin{lem}\label{Lempsi}
	\begin{enumerate}
		\item[(1)]	For $c\in \mathcal{C}_0$, $|\theta_1|=|\theta_2|=q$ and $|\mu_1|=|\mu_2|$ with $\mu_1\neq \mu_2$,
		\begin{equation}
			u_{\mu_1}^*u_{\theta_1}^*cu_{\theta_2}u_{\mu_2}=0.
		\end{equation}
		\item[(2)] For $c\in \mathcal{C}_0$ and $|\theta_1|=|\theta_2|=q$,
		$$u_{\theta_1}^*cu_{\theta_2}\neq 0$$ only if $s(\theta_1)=s(\theta_2)$.
		\item[(3)] For $a\in \calc_n$ and $|\gamma_1|=|\gamma_2|=np+q$,
		$$u_{\gamma_1}^*au_{\gamma_2}\neq 0$$ only if $s(\gamma_1)=s(\gamma_2)$.
	\end{enumerate} 	
\end{lem}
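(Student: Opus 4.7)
The plan is to establish (1) and (2) simultaneously --- first on a single generator of $\calc_0$ by direct computation, then on products of generators by induction, and finally on all of $\calc_0$ by linearity and norm continuity. Part (3) will follow from (2) together with a factorization of $\gamma_i$ combined with Lemma~\ref{eCnf}(2).

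For the base case, I would take a generator $c=u_\alpha z^l u_\beta^*$ of $\calc_0$, so $|\alpha|=|\beta|\le q$ and $s(\alpha)=s(\beta)$. Because $|\theta_1|=|\theta_2|=q$, the products $u_{\theta_1}^* u_\alpha$ and $u_\beta^* u_{\theta_2}$ vanish unless $\theta_1=\alpha\alpha'$ and $\theta_2=\beta\beta'$ for paths with $|\alpha'|=|\beta'|=q-|\alpha|$, in which case they reduce to $u_{\alpha'}^*$ and $u_{\beta'}$ respectively, giving
$$u_{\theta_1}^* c u_{\theta_2}=u_{\alpha'}^* z^l u_{\beta'}.$$
Since $|\alpha'|=|\beta'|$, Lemma~\ref{zcomm}(3) forces $\alpha'=\beta'$ for nonvanishing, whence $s(\theta_1)=s(\alpha')=s(\beta')=s(\theta_2)$, proving (2) for a single generator. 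Sandwiching once more gives
$$u_{\mu_1}^* u_{\theta_1}^* c u_{\theta_2} u_{\mu_2}=u_{\alpha'\mu_1}^* z^l u_{\beta'\mu_2},$$
and the hypothesis $\mu_1\ne\mu_2$ forces $\alpha'\mu_1\ne\beta'\mu_2$ (either $\alpha'\ne\beta'$ gives different prefixes, or $\alpha'=\beta'$ leaves $\mu_1\ne\mu_2$ in the suffix); Lemma~\ref{zcomm}(3) again yields zero, proving (1) for a single generator.

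For the inductive step on a product $c=c_1\cdots c_k$ of generators, I would insert the partition of unity $I=\sum_{|\xi|=|\mu_1|+q} u_\xi u_\xi^*$ between $c_1$ and $c_2\cdots c_k$, writing each $\xi=\theta'\mu'$ with $|\theta'|=q$ and $|\mu'|=|\mu_1|$, to obtain
$$u_{\mu_1}^* u_{\theta_1}^* c u_{\theta_2} u_{\mu_2}=\sum_{\theta',\mu'}\bigl(u_{\mu_1}^* u_{\theta_1}^* c_1 u_{\theta'} u_{\mu'}\bigr)\bigl(u_{\mu'}^* u_{\theta'}^* c_2\cdots c_k u_{\theta_2} u_{\mu_2}\bigr).$$
The base case (1) applied to the generator $c_1$ kills every summand with $\mu'\ne\mu_1$, and on the surviving ones the inductive hypothesis applied to $c_2\cdots c_k$ (with $\mu_1\ne\mu_2$) kills the second factor. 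An entirely parallel argument using $I=\sum_{|\xi|=q}u_\xi u_\xi^*$ and chaining the implication $s(\theta_1)=s(\xi)=s(\theta_2)$ through the base case and the inductive hypothesis proves (2) for products. Linearity and norm continuity of the maps $c\mapsto u_{\theta_1}^* c u_{\theta_2}$ and $c\mapsto u_{\mu_1}^* u_{\theta_1}^* c u_{\theta_2} u_{\mu_2}$ then extend the statements to all of $\calc_0$.

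For (3), I would factor $\gamma_i=\gamma_i'\theta_i$ with $|\gamma_i'|=np$ and $|\theta_i|=q$, noting that $s(\gamma_i)=s(\theta_i)$, so that
$$u_{\gamma_1}^* a u_{\gamma_2}=u_{\theta_1}^*\bigl(u_{\gamma_1'}^* a u_{\gamma_2'}\bigr)u_{\theta_2}.$$
The case analysis in the proof of Lemma~\ref{eCnf}(2) (with $m=0$) shows that $u_{\gamma_1'}^* a u_{\gamma_2'}\in \calc_0$, so (2) applies and nonvanishing forces $s(\theta_1)=s(\theta_2)$, i.e.\ $s(\gamma_1)=s(\gamma_2)$. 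The main technical obstacle I anticipate is simply the bookkeeping of the path factorizations ($\theta_1=\alpha\alpha'$, $\xi=\theta'\mu'$, $\gamma_i=\gamma_i'\theta_i$) and verifying that the required concatenations form valid paths at each stage; the degenerate case $q=0$ (i.e.\ $p=1$) needs to be noted separately, but there the $\theta_i$ collapse to vertices and the partition-of-unity sums trivialize, reducing (1) and (2) to a direct application of Lemma~\ref{zcomm}(3) after commuting $z^l$ past the vertex projections.
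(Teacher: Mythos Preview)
Your argument is correct and follows essentially the same route as the paper's: reduce to products of generators, insert partitions of unity $\sum u_\xi u_\xi^*$ to break up the product, and use the orthogonality relations from Lemma~\ref{zcomm} to conclude. Part (3) is handled identically in both.

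The only organizational difference is that the paper treats a general product $u_{\delta_1}z^{l_1}u_{\nu_1}^*\cdots u_{\delta_n}z^{l_n}u_{\nu_n}^*$ in one pass---inserting $\sum_{|\gamma|=q}u_\gamma u_\gamma^*$ between every pair of consecutive factors, reducing to a canonical form $u_{\mu_1}^*u_{\xi_1}^*z^{l_1}u_{\xi_1}\cdots u_{\xi_n}^*z^{l_n}u_{\xi_n}u_{\mu_2}$, and then invoking Lemma~\ref{zcomm}(4) repeatedly to slide $u_{\mu_1}u_{\mu_1}^*$ through to meet $u_{\mu_2}$---whereas you structure the same computation as an induction on the number of factors, inserting a single partition of unity and applying Lemma~\ref{zcomm}(3) at each step. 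Your version avoids Lemma~\ref{zcomm}(4) altogether and is arguably cleaner bookkeeping, but the two arguments unwind to the same calculation.
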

\begin{proof}
	From the definition of $\mathcal{C}_0$, we can assume that 
	$$c=u_{\delta_1}z^{l_1}u_{\nu_1}^*u_{\delta_2}z^{l_2}u_{\nu_2}^* \cdots u_{\delta_n}z^{l_n}u_{\nu_n}^* $$  for $0\leq |\delta_i|=|\nu_i|=k_i\leq q$ and $l_i\geq 0$ for every $1\leq i \leq n$.
	
	Recall that $\Sigma_{|\gamma|=q}u_{\gamma}u_{\gamma}^*=I$ and, thus, $	u_{\mu_1}^*u_{\theta_1}^*cu_{\theta_2}u_{\mu_2}$ can be written as a sum of elements of the form
	$$a=u_{\mu_1}^*u_{\theta_1}^*u_{\delta_1}z^{l_1}u_{\nu_1}^*u_{\gamma_1}u_{\gamma_1}^*u_{\delta_2}z^{l_2}u_{\nu_2}^*u_{\gamma_2}u_{\gamma_2}^*u_{\delta_3} \cdots u_{\delta_n}z^{l_n}u_{\nu_n}^*u_{\theta_2}u_{\mu_2}$$ with $|\gamma_i|=q$. So, to prove (1), it suffices to show that (assuming $\mu_1\neq \mu_2$) each such term is $0$. Thus, we now fix $a$ as above.
	
	Since $|\delta_1|\leq |\theta_1|=q$, we can write $\theta_1=\theta_1'\xi_1$ with $|\theta_1'|=|\delta_1|$ and $|\xi_1|=q-|\delta_1|=q-k_1$ so that $u_{\theta_1}^*u_{\delta_1}=u_{\xi_1}^*u_{\theta_1'}^*u_{\delta_1}$. But the last term is non zero only if $\theta_1'=\delta_1$ and, in this case, $u_{\theta_1}^*u_{\delta_1}=u_{\xi_1}^*$. Thus we can assume that $u_{\theta_1}^*u_{\delta_1}=u_{\xi_1}^*$ with $|\xi_1|=q-k_1$. Similarly we assume that $u_{\nu_1}^*u_{\gamma_1}=u_{\eta_1}$ with $|\eta_1|=q-k_1$, $u_{\gamma_1}^*u_{\delta_2}=u_{\xi_2}^*$ ($|\xi_2|=q-k_2$) and so on and finally $u_{\nu_n}^*u_{\theta_2}=u_{\eta_n}$ ($|\eta_n|=q-k_n$).
	Thus, we can assume that 
	$$a=u_{\mu_1}^*u_{\xi_1}^*z^{l_1}u_{\eta_1}u_{\xi_2}^*z^{l_2}u_{\eta_2} \cdots u_{\xi_n}^*z^{l_n}u_{\eta_n}u_{\mu_2} $$ with $|\xi_i|=|\eta_i|$ for all $i$. But, using Lemma~\ref{zcomm}(3), we can assume that $\xi_i=\eta_i$ for all $i$. Thus
	$$a=u_{\mu_1}^*u_{\xi_1}^*z^{l_1}u_{\xi_1}u_{\xi_2}^*z^{l_2}u_{\xi_2} \cdots u_{\xi_n}^*z^{l_n}u_{\xi_n}u_{\mu_2}. $$ Now applying Lemma~\ref{zcomm}(4) successively , we get
	$$a=u_{\mu_1}^*u_{\xi_1}^*z^{l_1}u_{\xi_1}u_{\mu_1}u_{\mu_1}^*u_{\xi_2}^*z^{l_2}u_{\xi_2} \cdots u_{\xi_n}^*z^{l_n}u_{\xi_n}u_{\mu_2}=\ldots =$$ $$=u_{\mu_1}^*u_{\xi_1}^*z^{l_1}u_{\xi_1}u_{\mu_1}u_{\mu_1}^*u_{\xi_2}^*z^{l_2}u_{\xi_2}u_{\mu_1}u_{\mu_1}^* \cdots u_{\xi_n}^*z^{l_n}u_{\xi_n}u_{\mu_1}u_{\mu_1}^*u_{\mu_2}$$ and this is $0$ since $|\mu_1|=|\mu_2|$ but $\mu_1\neq \mu_2$. This proves (1). Part (2) follows from (1) with $|\mu_1|=|\mu_2|=0$.
	
	To prove part (3) fix $a\in \calc_n$ and paths $\gamma_1,\gamma_2$ of length $np+q$ and $s(\gamma_1)\neq s(\gamma_2)$. For every $\mu_1,\mu_2$ of length $np$, write $a(\mu_1,\mu_2)=u_{\mu_1}u_{\mu_1}^*au_{\mu_2}u_{\mu_2}^*$ and consider $u_{\gamma_1}^*a(\mu_1,\mu_2)u_{\gamma_2}=(u_{\gamma_1}^*u_{\mu_1})(u_{\mu_1}^*au_{\mu_2})(u_{\mu_2}^*u_{\gamma_2})$. This would be non zero only if there are paths $\nu_1,\nu_2$ of length $q$ such that $\gamma_i=\mu_i\nu_i$ ($i=1,2$) and, in this case, we can write $u_{\mu_1}^*au_{\mu_2}=c\in \calc_0$ and $u_{\nu_i}=u_{\mu_1}^*u_{\gamma_i}$. We  then have
	$$u_{\gamma_1}^*a(\mu_1,\mu_2)u_{\gamma_2}=u_{\nu_1}^*cu_{\nu_2}=0$$ since $s(\nu_1)=s(\gamma_1)\neq s(\gamma_2)=s(\nu_2)$ (using part (2)).
	Summimg up over all $\mu_1,\mu_2$ of length $np$, we get
	$$u_{\gamma_1}^*au_{\gamma_2}=0.$$

\end{proof}

To understand the structure of $q(\cald)$ we start by looking closely at $\mathcal{C}_0$.

We first need some notation.

Recall that we write $q$ for $p-1$.
Since $E$ has no sinks, it allows us, given $v\in E^0$, to find a path that starts in $v$ and has length $q$. Such a path is not unique but we fix one and denote it $\xi_{v}$. It will be fixed throughout the rest of the paper.

For $v\in E^0$, we write
\begin{equation}\label{cv}
	\calc_v=u_{\xi_{v}}u_{\xi_{v}}^*\mathcal{C}_0 u_{\xi_{v}}u_{\xi_{v}}^*\subseteq \mathcal{C}_0.
\end{equation}

\begin{prop}\label{cn}
	We have
	$$\mathcal{C}_n \cong \Sigma_{ |\gamma_1|=|\gamma_2|=np+p-1, s(\gamma_1)=s(\gamma_2)} e_{ \gamma_1 ,\gamma_2 } \otimes \mathcal{C}_{s(\gamma_1)} \subseteq M_{\Gamma_n}(\mathbb{C})\otimes \mathcal{C}_0 $$
	where $\Gamma_n$ is $E^{(n+1)p-1}$, the set of all paths of length $(n+1)p-1$,  $M_{\Gamma_n}$ is the matrix algebra indexed by $\Gamma_n$ and $e_{\cdot,\cdot}$ is a matrix unit there.
	Writing $$\mathcal{A}_n= \Sigma_{ |\gamma_1|=|\gamma_2|=np+p-1, s(\gamma_1)=s(\gamma_2)} e_{\gamma_1 ,\gamma_2} \otimes \mathcal{C}_{s(\gamma_1)},$$ the isomorphism from $\mathcal{C}_n$ to $\mathcal{A}_n$ is given by
	\begin{equation}\label{taun}
		\tau_n(c)= \Sigma_{  |\gamma_i|=np+p-1, s(\gamma_1)=s(\gamma_2)} e_{\gamma_1 ,\gamma_2 } \otimes u_{\xi_{s(\gamma_1)}}u_{\gamma_1}^*cu_{\gamma_2}u_{\xi_{s(\gamma_1)}}^*
	\end{equation}
	for $c\in \mathcal{C}_n$. It's inverse is given by
	\begin{equation}\label{tauninverse}\tau_n^{-1}(e_{\gamma_1 ,\gamma_2  }\otimes b)=u_{\gamma_1}u_{\xi_{s(\gamma_1)}}^*bu_{\xi_{s(\gamma_1)}}u_{\gamma_2}^*
	\end{equation} 	
	for $b\in \mathcal{C}_{s(\gamma_1)}$,  $|\gamma_1|=|\gamma_2|=np+p-1$ and $s(\gamma_1)=s(\gamma_2)$.		
\end{prop}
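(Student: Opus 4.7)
The plan is to verify that the formulas \eqref{taun} and \eqref{tauninverse} define mutually inverse $*$-homomorphisms between $\mathcal{C}_n$ and $\mathcal{A}_n$. Set $M_{\gamma_1,\gamma_2}(c) := u_{\xi_{v_1}} u_{\gamma_1}^* c u_{\gamma_2} u_{\xi_{v_1}}^*$, where $v_1 = s(\gamma_1) = s(\gamma_2)$. The crux is showing $M_{\gamma_1,\gamma_2}(c) \in \mathcal{C}_{v_1}$ for every $c \in \mathcal{C}_n$; since $u_{\xi_{v_1}}^* u_{\xi_{v_1}} = p_{v_1}$ and $p_{v_1} u_{\gamma_i}^* = u_{\gamma_i}^*$ (because $s(\gamma_i) = v_1$), a short computation yields $u_{\xi_{v_1}} u_{\xi_{v_1}}^* M_{\gamma_1,\gamma_2}(c) u_{\xi_{v_1}} u_{\xi_{v_1}}^* = M_{\gamma_1,\gamma_2}(c)$, so it suffices to prove membership in $\mathcal{C}_0$. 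For a generator $c = u_\alpha z^l u_\beta^*$ of $\mathcal{C}_n$, a direct expansion together with Lemma~\ref{zcomm}(3) shows that $u_{\gamma_1}^* c u_{\gamma_2}$ vanishes unless $\gamma_1 = \alpha\gamma'$ and $\gamma_2 = \beta\gamma'$ for a common path $\gamma'$ of length $np+q-|\alpha|$, in which case it equals $u_{\gamma'}^* z^l u_{\gamma'}$.

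To conclude $u_{\xi_{v_1}} u_{\gamma'}^* z^l u_{\gamma'} u_{\xi_{v_1}}^* \in \mathcal{C}_0$ I will apply Lemma~\ref{eCnf}(3) with $n=0$, taking $\alpha_1 = \alpha_4 = \xi_{v_1}$ and $\alpha_2 = \alpha_3 = \zeta$ for any path $\zeta$ of length $q$ with $s(\zeta) = r(\gamma')$ (such $\zeta$ exists since $E$ has no sinks). Applied to $u_\zeta z^l u_\zeta^* \in \mathcal{C}_0$, the lemma places $u_{\xi_{v_1}} u_{\gamma'}^* u_\zeta^* u_\zeta z^l u_\zeta^* u_\zeta u_{\gamma'} u_{\xi_{v_1}}^*$ in $\mathcal{C}_0$, and the identities $u_\zeta^* u_\zeta = p_{s(\zeta)} = p_{r(\gamma')}$ together with $p_{r(\gamma')} u_{\gamma'} = u_{\gamma'}$ collapse this to exactly $u_{\xi_{v_1}} u_{\gamma'}^* z^l u_{\gamma'} u_{\xi_{v_1}}^*$. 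For general $c \in \mathcal{C}_n$, linearity and continuity cover sums and limits, while for products $c = c_1 c_2$ one inserts $\sum_{|\gamma_3|=np+q} u_{\gamma_3} u_{\gamma_3}^* = I$; Lemma~\ref{Lempsi}(3) restricts the sum to $s(\gamma_3)=v_1$, and substituting $u_{\gamma_3} u_{\gamma_3}^* = u_{\gamma_3} u_{\xi_{v_1}}^* u_{\xi_{v_1}} u_{\gamma_3}^*$ factorizes $M_{\gamma_1,\gamma_2}(c_1 c_2) = \sum_{\gamma_3 : s(\gamma_3) = v_1} M_{\gamma_1,\gamma_3}(c_1) M_{\gamma_3,\gamma_2}(c_2)$. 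This same factorization, combined with $e_{\gamma_1,\gamma_2} e_{\gamma_3,\gamma_4} = \delta_{\gamma_2,\gamma_3} e_{\gamma_1,\gamma_4}$, gives multiplicativity of $\tau_n$; the involution identity follows from $M_{\gamma_1,\gamma_2}(c)^* = M_{\gamma_2,\gamma_1}(c^*)$.

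It remains to check that \eqref{tauninverse} lands in $\mathcal{C}_n$ and that the two maps are inverses. For the former, split $\gamma_i = \gamma_i' \gamma_i''$ with $|\gamma_i'|=np$ and $|\gamma_i''|=q$; since $s(\gamma_i'') = v_1 = s(\xi_{v_1})$, the elements $u_{\gamma_i''} u_{\xi_{v_1}}^*$ are generators of $\mathcal{C}_0$, so $(u_{\gamma_1''} u_{\xi_{v_1}}^*) b (u_{\xi_{v_1}} u_{\gamma_2''}^*) \in \mathcal{C}_0$, and conjugation by $u_{\gamma_1'}$ and $u_{\gamma_2'}^*$ with $|\gamma_i'|=np$ sends generators of $\mathcal{C}_0$ to generators of $\mathcal{C}_n$. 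For the inverse identities, direct substitution gives $\tau_n^{-1}(\tau_n(c)) = \sum_{\gamma_1,\gamma_2 : s(\gamma_1) = s(\gamma_2)} u_{\gamma_1} u_{\gamma_1}^* c u_{\gamma_2} u_{\gamma_2}^*$ (after using $u_{\gamma_i} p_{v_1} = u_{\gamma_i}$); by Lemma~\ref{Lempsi}(3) the source constraint is automatic, so Lemma~\ref{alpha}(2) reduces each sum to $I$, yielding $c$. For $\tau_n \circ \tau_n^{-1}$, only $(\gamma_3,\gamma_4) = (\gamma_1,\gamma_2)$ survives because $u_{\gamma_j}^* u_{\gamma_i} = \delta_{ij} p_{v_1}$, and the resulting $u_{\xi_{v_1}} u_{\xi_{v_1}}^* b u_{\xi_{v_1}} u_{\xi_{v_1}}^*$ equals $b$ since $b \in \mathcal{C}_{v_1}$ is in the corner. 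The main obstacle is the $\mathcal{C}_0$-membership of $M_{\gamma_1,\gamma_2}(c)$: a naive expansion of $u_{\xi_{v_1}} u_{\gamma'}^* z^l u_{\gamma'} u_{\xi_{v_1}}^*$ produces terms $u_\mu z^l u_\mu^*$ with $|\mu| \geq p$ that are not directly generators of $\mathcal{C}_0$, and the auxiliary path $\zeta$ introduced to trigger Lemma~\ref{eCnf}(3) is the essential trick.
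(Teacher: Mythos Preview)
Your proof is correct and follows the same overall strategy as the paper: verify that $\tau_n$ lands in $\mathcal{A}_n$, is a $*$-homomorphism, and that the two formulas are mutual inverses. The inverse computations and the multiplicativity argument are essentially identical to the paper's.

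The one place where you diverge is precisely the step you flag as ``the main obstacle'': showing that $M_{\gamma_1,\gamma_2}(c)\in\mathcal{C}_0$. You work on generators, reduce to $u_{\xi_{v_1}}u_{\gamma'}^* z^l u_{\gamma'} u_{\xi_{v_1}}^*$, and then invoke Lemma~\ref{eCnf}(3) with an auxiliary path~$\zeta$; afterwards you extend to products via the factorization $M_{\gamma_1,\gamma_2}(c_1c_2)=\sum_{\gamma_3}M_{\gamma_1,\gamma_3}(c_1)M_{\gamma_3,\gamma_2}(c_2)$. This works, but the paper avoids the whole detour by splitting $\gamma_i=\mu_i\alpha_i$ with $|\mu_i|=np$ and $|\alpha_i|=q$ and writing
\[
u_{\xi_{s(\gamma_1)}}u_{\gamma_1}^* c\, u_{\gamma_2}u_{\xi_{s(\gamma_1)}}^*
=(u_{\xi_{s(\gamma_1)}}u_{\alpha_1}^*)\,(u_{\mu_1}^* c\, u_{\mu_2})\,(u_{\alpha_2}u_{\xi_{s(\gamma_1)}}^*),
\]
a product of three elements each lying in $\mathcal{C}_0$ (the outer two are generators; the middle one follows by iterating Lemma~\ref{eCnf}(1)). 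This three-factor decomposition handles all $c\in\mathcal{C}_n$ at once, without the auxiliary path, without the generator/product case split, and without needing Lemma~\ref{eCnf}(3) at all. What your route buys is an explicit formula for the matrix entries on generators, which is pleasant but not needed here. On the other hand, you do explicitly verify that $\tau_n^{-1}$ lands in $\mathcal{C}_n$, a point the paper leaves implicit.
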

\begin{proof}
	Note first that, for  $|\gamma_1|=|\gamma_2|=np+p-1$ (with $s(\gamma_1)=s(\gamma_2)$), we can write $\gamma_i=\mu_i\alpha_i$ where $|\mu_i|=np$, $|\alpha_i|=p-1$ and $s(\alpha_1)=s(\alpha_2)$. Then, for $c\in \mathcal{C}_n$ have that $u_{\xi_{s(\gamma_1)}}u_{\gamma_1}^*cu_{\gamma_2}u_{\xi_{s(\gamma_1)}}^*=u_{\xi_{s(\gamma_1)}}u_{\alpha_1}^*u_{\mu_1}^*cu_{\mu_2}u_{\alpha_2}u_{\xi_{s(\gamma_1)}}^*=(u_{\xi_{s(\gamma_1)}}u_{\alpha_1}^*)(u_{\mu_1}^*cu_{\mu_2})(u_{\alpha_2}u_{\xi_{s(\gamma_1)}}^*)$. This lies in $\mathcal{C}_{s(\gamma_1)}$ since each of the three factors lies in $\mathcal{C}_0$ (for the middle one, apply Lemma~\ref{eCnf}(1) successively). So that $\tau_n$ maps $\mathcal{C}_n$ into $\mathcal{A}_n$.
	We now show that $\tau_n$ is a multiplicative map on $\mathcal{C}_n$. For this, we fix $c,b\in \mathcal{C}_n$ and compute
		
	$$\tau_n(c)\tau_n(b)=  \Sigma_{ s(\gamma_1)=s(\gamma_2), |\gamma_i|=np+p-1}\Sigma_{ s(\gamma_1')=s(\gamma_2'), |\gamma'_i|=np+p-1}$$  $$ e_{\gamma_1,\gamma_2 } e_{\gamma_1' ,\gamma_2' }\otimes u_{\xi_{s(\gamma_1)}}u_{\gamma_1}^*cu_{\gamma_2}u_{\xi_{s(\gamma_1)}}^*u_{\xi_{s(\gamma_1')}}u_{\gamma'_1}^*bu_{\gamma'_2}u_{\xi_{s(\gamma_2')}}^*$$ $$=\Sigma_{\gamma_1,\gamma_2'} e_{\gamma_1 ,\gamma_2' } \otimes u_{\xi_{s(\gamma_1)}}u_{\gamma_1}^*c (\Sigma_{\gamma_2} u_{\gamma_2}u_{\gamma_2}^*)bu_{\gamma_2'}u_{\xi_{s(\gamma_1')}}^*=\tau_n(cb).$$
	We used the fact that, in the sum above, $\gamma_2=\gamma_1'$, $s(\gamma_1)=s(\gamma_2)$  and $\Sigma_{\gamma_2} u_{\gamma_2}u_{\xi_{s(\gamma_2)}}^*u_{\xi_{s(\gamma_2)}}u_{\gamma_2}^*=\Sigma_{\gamma_2}u_{\gamma_2}p_{s(\gamma_2)}u_{\gamma_2}^*=\Sigma_{|\gamma_2|=np+p-1}u_{\gamma_2}u_{\gamma_2}^*=I$.
	 
	Now compute, for $c\in \mathcal{C}_n$, $$\tau_n^{-1}(\tau_n(c))=\Sigma_{ |\gamma_1|=|\gamma_2|=np+p-1, s(\gamma_1)=s(\gamma_2)} \tau^{-1}(e_{\gamma_1 ,\gamma_2 }\otimes u_{\xi_{s(\gamma_1)}}u_{\gamma_1}^*cu_{\gamma_2}u_{\xi_{s(\gamma_2)}}^*)$$  $$=\Sigma_{ |\gamma_1|=|\gamma_2|=np+p-1, s(\gamma_1)=s(\gamma_2)} u_{\gamma_1}u_{\xi_{s(\gamma_1)}}^*u_{\xi_{s(\gamma_1)}}u_{\gamma_1}^*cu_{\gamma_2}u_{\xi_{s(\gamma_2)}}^*u_{\xi_{s(\gamma_2)}}u_{\gamma_2}^*$$ $$=\Sigma_{|\gamma_i|=np+p-1, s(\gamma_1)=s(\gamma_2)}u_{\gamma_1}u_{\gamma_1}^*cu_{\gamma_2}u_{\gamma_2}^*=\Sigma_{|\gamma_i|=np+p-1}u_{\gamma_1}u_{\gamma_1}^*cu_{\gamma_2}u_{\gamma_2}^*=c.$$ Note that in the equality before the last one, we used Lemma~\ref{Lempsi}(3) to drop the ``$s(\gamma_1)=s(\gamma_2)$".
	
	And, for the other direction, fix $|\gamma_i|=np+p-1$ with  $s(\gamma_1)=s(\gamma_2)$ and $b\in \mathcal{C}_{s(\gamma_1)}$, and compute
	$$\tau_n(\tau_n^{-1}(e_{\gamma_1 ,\gamma_2  }\otimes b))=\tau_n(u_{\gamma_1}u_{\xi_{s(\gamma_1)}}^*bu_{\xi_{s(\gamma_1)}}u_{\gamma_2}^*)$$  $$= \Sigma_{ |\gamma_i'|=np+p-1} e_{\gamma_1',\gamma_2'} \otimes u_{\xi_{s(\gamma')}}u_{\gamma_1'}^*u_{\gamma_1}u_{\xi_{s(\gamma_1)}}^*bu_{\xi_{s(\gamma_1)}}u_{\gamma_2}^*u_{\gamma_2'}u_{\xi_{s(\gamma_1')}}^*.$$
	Since $u_{\gamma_1'}^*u_{\gamma_1}\neq 0$ only if $\gamma_1=\gamma_1'$ (and then $u_{\gamma_1'}^*u_{\gamma_1}=p_{s(\gamma_1)}$) we have $\gamma_1'=\gamma_1$ and, similarly, $\gamma_2'=\gamma_2$  and we end up with 
	$$e_{\gamma_1 ,\gamma_2 }\otimes u_{\xi_{s(\gamma_1)}}u_{\xi_{s(\gamma_1)}}^*bu_{\xi_{s(\gamma_2)}}u_{\xi_{s(\gamma_2)}}=e_{\gamma_1 ,\gamma_2 }\otimes b.$$
\end{proof}

Now we let $\psi_n:\mathcal{A}_n \rightarrow \mathcal{A}_{n+1}$ be
$$\psi_n=\tau_{n+1} \circ \iota_n \circ \tau_n^{-1}$$ and get

\begin{cor}\label{dirlim}
	$$	q(\cald)\cong \lim (\mathcal{A}_n,\psi_n).$$
\end{cor}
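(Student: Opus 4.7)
The plan is to deduce the corollary as a formal consequence of Proposition~\ref{cn} together with the inclusion description of Lemma~\ref{inclusion}. Essentially all the hard work is already done; what remains is to check that the $\tau_n$'s assemble into an isomorphism at the level of the direct limit.

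First, I would observe that $(\mathcal{C}_n, \iota_n)$ is a direct system of $C^*$-algebras with $\iota_n:\mathcal{C}_n\hookrightarrow \mathcal{C}_{n+1}$ injective (an inclusion). Its algebraic direct limit is $\bigcup_n \mathcal{C}_n$ and, by the remark just after Lemma~\ref{inclusion}, the $C^*$-direct limit is $\overline{\bigcup_n \mathcal{C}_n}=q(\cald)$.

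Next, by Proposition~\ref{cn}, each $\tau_n:\mathcal{C}_n\to \mathcal{A}_n$ is a $*$-isomorphism. Define $\psi_n=\tau_{n+1}\circ \iota_n\circ \tau_n^{-1}$ as in the statement. Then $\psi_n$ is a (necessarily injective) $*$-homomorphism from $\mathcal{A}_n$ to $\mathcal{A}_{n+1}$, being a composition of three such maps, and by construction the square
$$
\begin{array}{ccc}
\mathcal{C}_n & \xrightarrow{\iota_n} & \mathcal{C}_{n+1} \\
\tau_n\downarrow & & \downarrow \tau_{n+1} \\
\mathcal{A}_n & \xrightarrow{\psi_n} & \mathcal{A}_{n+1}
\end{array}
$$
commutes. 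Thus $(\tau_n)_n$ is a morphism of direct systems from $(\mathcal{C}_n,\iota_n)$ to $(\mathcal{A}_n,\psi_n)$ consisting of isomorphisms.

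By the universal property of the $C^*$-direct limit (functoriality applied to a levelwise isomorphism of directed systems), the family $(\tau_n)_n$ induces a $*$-isomorphism between the two limits. Concretely, the maps $\tau_n$ extend by continuity to a $*$-isomorphism
$$\tau:\overline{\bigcup_n \mathcal{C}_n}\;\longrightarrow\; \lim(\mathcal{A}_n,\psi_n),$$
whose inverse is obtained analogously from $(\tau_n^{-1})_n$. Since $\overline{\bigcup_n \mathcal{C}_n}=q(\cald)$, this yields $q(\cald)\cong \lim(\mathcal{A}_n,\psi_n)$, as claimed.

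There is no genuine obstacle here: the only thing one might want to be careful about is that the $\iota_n$ really are inclusions (and hence injective), so that the standard universal-property argument applies unchanged and no compatibility with connecting maps beyond the commuting square above needs to be checked. Everything else is the general fact that a compatible family of isomorphisms of directed systems induces an isomorphism of direct limits.
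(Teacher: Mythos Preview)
Your proof is correct and is precisely the argument the paper has in mind: the corollary is stated without proof because it follows formally from $q(\cald)=\overline{\cup_n \mathcal{C}_n}$, the isomorphisms $\tau_n$ of Proposition~\ref{cn}, and the very definition $\psi_n=\tau_{n+1}\circ\iota_n\circ\tau_n^{-1}$, which makes the squares commute by construction. You have simply spelled out this standard direct-limit argument.
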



\begin{prop}\label{psin}
	We have
	$$\psi_n(e_{\alpha,\beta}\otimes b)=\Sigma_{|\mu|=p, r(\mu)=s(\alpha)=s(\beta)} e_{\alpha \mu,\beta \mu}\otimes u_{\xi_{s(\mu)}}u_{\mu}^*u_{\xi_{s(\alpha)}}^*bu_{\xi_{s(\beta)}}u_{\mu}u_{\xi_{s(\mu)}}^* $$
	for $b\in u_{\xi_{s(\alpha)}}u_{\xi_{s(\alpha)}}^*\mathcal{C}_0 u_{\xi_{s(\beta)}}u_{\xi_{s(\beta)}}^*$, $s(\alpha)=s(\beta)$ and $|\alpha|=|\beta|=(n+1)p-1$.
\end{prop}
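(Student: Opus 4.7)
The plan is to unwind the definition $\psi_n=\tau_{n+1}\circ \iota_n\circ \tau_n^{-1}$ step by step, using the explicit formulas (\ref{taun}) and (\ref{tauninverse}). By (\ref{tauninverse}), $\tau_n^{-1}(e_{\alpha,\beta}\otimes b)=c$, where
\[
c:=u_\alpha u_{\xi_{s(\alpha)}}^*\,b\,u_{\xi_{s(\alpha)}}u_\beta^*\in \calc_n.
\]
Since $\iota_n$ is just the inclusion $\calc_n\hookrightarrow \calc_{n+1}$, it leaves $c$ unchanged as an operator; Lemma~\ref{inclusion} is needed only as a consistency check that $c\in \calc_{n+1}$, not for its explicit formula.

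Plugging $c$ into (\ref{taun}) expresses $\tau_{n+1}(c)$ as a sum over pairs $(\gamma_1',\gamma_2')$ of paths of length $(n+2)p-1=|\alpha|+p$ with $s(\gamma_1')=s(\gamma_2')$, whose generic summand is $e_{\gamma_1',\gamma_2'}\otimes u_{\xi_{s(\gamma_1')}}u_{\gamma_1'}^*\,c\,u_{\gamma_2'}u_{\xi_{s(\gamma_1')}}^*$. A routine path-composition calculation (analogous to the one governing $S_\alpha^*S_\beta$) shows that $u_{\gamma_1'}^*u_\alpha\neq 0$ forces $\gamma_1'=\alpha\mu_1$ with $|\mu_1|=p$ and $r(\mu_1)=s(\alpha)$, in which case $u_{\gamma_1'}^*u_\alpha=u_{\mu_1}^*$; symmetrically, $\gamma_2'=\beta\mu_2$ with $u_\beta^*u_{\gamma_2'}=u_{\mu_2}$. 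The constraint $s(\gamma_1')=s(\gamma_2')$ becomes $s(\mu_1)=s(\mu_2)$. Thus $\tau_{n+1}(c)$ collapses to a double sum indexed by pairs $(\mu_1,\mu_2)$ of length-$p$ paths with $r(\mu_i)=s(\alpha)$ and $s(\mu_1)=s(\mu_2)$, having summand
\[
e_{\alpha\mu_1,\beta\mu_2}\otimes u_{\xi_{s(\mu_1)}}u_{\mu_1}^*u_{\xi_{s(\alpha)}}^*\,b\,u_{\xi_{s(\alpha)}}u_{\mu_2}u_{\xi_{s(\mu_1)}}^*.
\]

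The heart of the argument is now to collapse this double sum to its diagonal. For this I isolate the inner block $u_{\mu_1}^*u_{\xi_{s(\alpha)}}^*\,b\,u_{\xi_{s(\alpha)}}u_{\mu_2}$, which fits exactly into the hypothesis of Lemma~\ref{Lempsi}(1) with $\theta_1=\theta_2=\xi_{s(\alpha)}$ of length $q$, with the two $\mu$'s of common length $p$, and with $b\in \calc_{s(\alpha)}\subseteq \calc_0$. The lemma therefore forces the inner block (and hence the whole summand) to vanish whenever $\mu_1\neq\mu_2$. The surviving terms are indexed by a single $\mu$ of length $p$ with $r(\mu)=s(\alpha)=s(\beta)$, and rewriting $u_{\xi_{s(\alpha)}}=u_{\xi_{s(\beta)}}$ (they coincide since $s(\alpha)=s(\beta)$) yields exactly the asserted formula.

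The main obstacle is precisely this off-diagonal cancellation: the residual constraint $s(\mu_1)=s(\mu_2)$ inherited from the definition of $\tau_{n+1}$ is strictly weaker than $\mu_1=\mu_2$, and its strengthening is not a matter of path combinatorics but a genuine operator-algebraic identity. The whole weight of the proof therefore rests on having Lemma~\ref{Lempsi}(1) available; everything else is bookkeeping of path compositions and source/range matching.
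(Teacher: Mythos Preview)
Your proof is correct and follows essentially the same route as the paper: apply $\tau_n^{-1}$ via (\ref{tauninverse}), then $\tau_{n+1}$ via (\ref{taun}), reduce the sum over $(\gamma_1',\gamma_2')$ to a double sum over $(\mu_1,\mu_2)$ by the path-composition identities, and finally invoke Lemma~\ref{Lempsi}(1) to kill the off-diagonal terms $\mu_1\neq\mu_2$. Your exposition is in fact slightly more explicit than the paper's in identifying how Lemma~\ref{Lempsi}(1) is instantiated (with $\theta_1=\theta_2=\xi_{s(\alpha)}$ and $b\in\calc_{s(\alpha)}\subseteq\calc_0$), which is helpful.
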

\begin{proof}
	For $b\in u_{\xi_{s(\alpha)}}u_{\xi_{s(\alpha)}}^*\mathcal{C}_0 u_{\xi_{s(\beta)}}u_{\xi_{s(\beta)}}^*$	(with  $|\alpha|=|\beta|=(n+1)p-1$ and $s(\alpha)=s(\beta)$) we have
	
	$$\psi_n(e_{\alpha,\beta}\otimes b)=\tau_{n+1}(u_{\alpha}u_{\xi_{s(\alpha)}}^*bu_{\xi_{s(\alpha)}}u_{\beta}^*)=\Sigma_{|\gamma_i|=(n+2)p-1} e_{\gamma_1,\gamma_2}\otimes u_{\xi_{s(\gamma_1)}}u_{\gamma_1}^*u_{\alpha}u_{\xi_{s(\alpha)}}^*bu_{\xi_{s(\alpha)}}u_{\beta}^*u_{\gamma_2}u_{\xi_{s(\gamma_2)}}^*.$$
	Now, $u_{\alpha}^*u_{\gamma_1}=u_{\mu_1}$ if $\gamma_1=\alpha \mu_1$ (for some $\mu_1$ with $|\mu_1|=p$ and $r(\mu_1)=s(\alpha)$) and $0$ otherwise. Similarly $u_{\beta}^*u_{\gamma_2}=u_{\mu_2}$ if $\gamma_2=\beta \mu_2$ and $0$ otherwise. Thus
	$$\psi_n(e_{\alpha,\beta}\otimes b)=\Sigma_{|\mu_i|=p} e_{\alpha \mu_1,\beta \mu_2}\otimes u_{\xi_{s(\mu_1)}}u_{\mu_1}^*u_{\xi_{s(\alpha)}}^*bu_{\xi_{s(\alpha)}}u_{\mu_2}u_{\xi_{s(\mu_2)}}^*.$$
	By Lemma~\ref{Lempsi}, the terms with $\mu_1\neq\mu_2$ are zero and the result follows.
	
\end{proof}

\
\section{Ideals in $q(\cald)$}

From now on, we assume that $E$ has no sources (and no sinks).

\begin{prop}\label{ideals} Fix $n\geq 0$.
	A subspace $J\subseteq \mathcal{C}_n$ is an ideal there if and only if there is a family of subspaces $\{J_{v}\subseteq \mathcal{C}_{v}\}$ satisfying
	\begin{enumerate}
		\item [(1)] $\tau_n(J)=\sum_{|\mu_i|=np+q, s(\mu_1)=s(\mu_2)} e_{\mu_1,\mu_2}\otimes J_{s(\mu_1)} $ and
		\item[(2)] for every $v\in E^0$, $J_v$ is an ideal in $\calc_v$.
		\item[(3)] For $\mu_1,\mu_2$ with $|\mu_i|=np+q$ and $s(\mu_i)=v$ ($i=1,2$), $J_v=u_{\xi_v}u_{\mu_1}^*Ju_{\mu_2}u_{\xi_v}^*$.
	
	\end{enumerate}
In this case, we have
	\begin{equation}\label{J}
		J=\sum_{|\mu_i|=np+q, s(\mu_1)=s(\mu_2)} u_{\mu_1}u_{\xi_{s(\mu_1)}}^*J_{s(\mu_1)}u_{\xi_{s(\mu_1)}}u_{\mu_2}^* .\end{equation}
\end{prop}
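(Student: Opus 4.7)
The plan is to transfer the question to $\mathcal{A}_n$ via the $*$-isomorphism $\tau_n$ from Proposition~\ref{cn} and then invoke elementary facts about ideals in direct sums of matrix algebras. Observe first that
$$\mathcal{A}_n = \bigoplus_{v \in E^0} M_{\Gamma_n^v}(\mathbb{C}) \otimes \mathcal{C}_v,$$
where $\Gamma_n^v := \{\gamma : |\gamma| = np+q,\ s(\gamma) = v\}$, because the matrix units $e_{\gamma_1,\gamma_2}$ appearing in the definition of $\mathcal{A}_n$ are non-zero only for pairs with $s(\gamma_1) = s(\gamma_2)$. Since $E$ has no sinks, $\Gamma_n^v$ is non-empty for every $v$. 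By the standard description of ideals in a direct sum of matrix algebras over a $C^*$-algebra, every closed two-sided ideal of $\mathcal{A}_n$ has the form $\bigoplus_v M_{\Gamma_n^v}(J_v)$ for a unique family of ideals $J_v \subseteq \mathcal{C}_v$.

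The key computation, which is essentially (\ref{tauninverse}) read in reverse, is that for $c \in \mathcal{C}_v$ and $\mu_1,\mu_2 \in \Gamma_n^v$,
$$\tau_n\bigl(u_{\mu_1} u_{\xi_v}^* c u_{\xi_v} u_{\mu_2}^*\bigr) = e_{\mu_1,\mu_2} \otimes c,$$
and, by~(\ref{taun}), reading off the $(\mu_1,\mu_2)$-entry of $\tau_n(j)$ for a general $j \in \mathcal{C}_n$ returns $u_{\xi_v} u_{\mu_1}^* j u_{\mu_2} u_{\xi_v}^* \in \mathcal{C}_v$.

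The \emph{if} direction is then immediate: given ideals $J_v \subseteq \mathcal{C}_v$, the set $J$ defined by~(\ref{J}) maps under $\tau_n$ to $\bigoplus_v M_{\Gamma_n^v}(J_v)$, which is an ideal in $\mathcal{A}_n$, so $J$ is an ideal in $\mathcal{C}_n$; properties (1) and (3) fall out by rereading the key computation.

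For the \emph{only if} direction, given an ideal $J \subseteq \mathcal{C}_n$, set $J_v := u_{\xi_v} u_{\mu_1}^* J u_{\mu_2} u_{\xi_v}^*$ for some fixed $\mu_1, \mu_2 \in \Gamma_n^v$. I expect the main obstacle to be showing that this is \emph{independent} of the choice of $\mu_1,\mu_2$; once that is done, the remaining claims (that $J_v$ is an ideal, that $\tau_n(J) = \sum e_{\mu_1,\mu_2} \otimes J_{s(\mu_1)}$, and that $J$ is recovered by~(\ref{J})) follow by unwinding $\tau_n$ together with the direct-sum/matrix-algebra classification. For the independence: given another choice $\mu_1',\mu_2' \in \Gamma_n^v$, both $u_{\mu_1'} u_{\mu_1}^*$ and $u_{\mu_2} u_{\mu_2'}^*$ are generators of $\mathcal{C}_n$ (take $l=0$ in~(\ref{Cn})), so for any $j \in J$ the element $j' := u_{\mu_1'} u_{\mu_1}^* j u_{\mu_2} u_{\mu_2'}^*$ again lies in $J$; using $u_{\xi_v} p_v = u_{\xi_v}$ and $p_v u_{\xi_v}^* = u_{\xi_v}^*$ (which hold since $s(\xi_v)=v$ and $u_{\xi_v}$ is a partial isometry) one checks $u_{\xi_v} u_{\mu_1'}^* j' u_{\mu_2'} u_{\xi_v}^* = u_{\xi_v} u_{\mu_1}^* j u_{\mu_2} u_{\xi_v}^*$, giving equality of the two candidate definitions of $J_v$.
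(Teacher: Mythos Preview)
Your proof is correct and follows essentially the same strategy as the paper: transfer the problem to $\mathcal{A}_n$ via the isomorphism $\tau_n$, and recognise that ideals there are determined by ideals $J_v\subseteq\mathcal{C}_v$ in each coefficient algebra. The only difference is packaging: you invoke the standard classification of ideals in $\bigoplus_v M_{\Gamma_n^v}(\mathcal{C}_v)$ directly, whereas the paper reproves this fact in context by cutting $c\in J$ with the projections $u_{\mu_i}u_{\mu_i}^*$ and then multiplying by matrix units $e_{\nu_i,\mu_i}\otimes\mathcal{C}_v$ on either side to obtain $\mathcal{C}_vJ_{\mu_1,\mu_2}\mathcal{C}_v\subseteq J_{\nu_1,\nu_2}$ (and hence $J_{\mu_1,\mu_2}=J_{\nu_1,\nu_2}$ and the ideal property). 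Your independence argument via $j':=u_{\mu_1'}u_{\mu_1}^*\,j\,u_{\mu_2}u_{\mu_2'}^*\in J$ is a minor variant of the same matrix-unit trick, carried out on the $\mathcal{C}_n$ side rather than the $\mathcal{A}_n$ side. Note, incidentally, that once you cite the general classification the independence of $J_v$ on the choice of $\mu_1,\mu_2$ is automatic, so your direct verification, while correct, is not strictly needed.
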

\begin{proof}
Assume that $J\subseteq \calc_n$ is an ideal. Take $c\in J$. Then 
\begin{equation}\label{taunc}
	\tau_n(c)=\sum_{|\mu_1|=|\mu_2|=np+q, s(\mu_1)=s(\mu_2)} e_{\mu_1,\mu_2}\otimes u_{\xi_{s(\mu_1)}}u_{\mu_1}^*cu_{\mu_2}u_{\xi_{s(\mu_2)}}^*.
\end{equation}	
If $|\mu_i|=np+q$ then $u_{\mu_i}u_{\mu_i}^*\in \calc_n$ and, thus, 
$u_{\mu_1}u_{\mu_1}^*cu_{\mu_2}u_{\mu_2}^*	\in J$ and $\tau_n(u_{\mu_1}u_{\mu_1}^*cu_{\mu_2}u_{\mu_2}^*)\in \tau_n(J)$. But $$\tau_n(u_{\mu_1}u_{\mu_1}^*cu_{\mu_2}u_{\mu_2}^*)=\sum_{|\gamma_i|=np+q, s(\gamma_1)=s(\gamma_2)} e_{\gamma_1,\gamma_2}\otimes u_{\xi_{s(\gamma_1)}}u_{\gamma_1}^*u_{\mu_1}u_{\mu_1}^*cu_{\mu_2}u_{\mu_2}^*u_{\gamma_2}u_{\xi_{s(\gamma_2)}}. $$ In this expression the non zero summands are only those for which $\mu_i=\gamma_i$. Thus, for every $|\mu_i|=np+q$ with $s(\mu_1)=s(\mu_2)$,
$$\tau_n(u_{\mu_1}u_{\mu_1}^*cu_{\mu_2}u_{\mu_2}^*)=e_{\mu_1,\mu_2}\otimes u_{\xi_{s(\mu_1)}}u_{\mu_1}^*cu_{\mu_2}u_{\xi_{s(\mu_2)}}^* \in \tau_n(J).$$ 
Write $$J_{\mu_1,\mu_2}:=u_{\xi_{s(\mu_1)}}u_{\mu_1}^*Ju_{\mu_2}u_{\xi_{s(\mu_2)}}^*\subseteq \calc_{s(\mu_1)}.$$ Then
$e_{\mu_1,\mu_2}\otimes J_{\mu_1,\mu_2}\subseteq \tau_n(J)$. Since we also have $\tau_n(J)\subseteq \sum_{|\mu_i|=np+q, s(\mu_1)=s(\mu_2)} e_{\mu_1,\mu_2}\otimes J_{\mu_1,\mu_2}$ (by (\ref{taunc}), we get
\begin{equation}\label{J2}
\tau_n(J)= \sum_{|\mu_i|=np+q, s(\mu_1)=s(\mu_2)} e_{\mu_1,\mu_2}\otimes J_{\mu_1,\mu_2}.
\end{equation}	
Now, for every $\nu_1,\nu_2$ with $|\nu_i|=np+q$ and $s(\nu_1)=s(\nu_2)=s(\mu_1)=s(\mu_2)=v$, we have $e_{\nu_1,\nu_2}\otimes \calc_{v}J_{\mu_1,\mu_2}\calc_{v}=(e_{\nu_1,\mu_1}\otimes \calc_{v})(e_{\mu_1,\mu_2}\otimes J_{\mu_1,\mu_2})(e_{\nu_2,\mu_2}\otimes \calc_{v})\subseteq \tau_n(\calc_n)\tau_n(J)\tau_n(\calc_n)\subseteq \tau_n(J)$. Thus,
\begin{equation}\label{J3} 
\calc_{v}J_{\mu_1,\mu_2}\calc_{v}\subseteq J_{\nu_1,\nu_2}.
\end{equation}
Since $u_{\xi_{s(\mu_i)}}u_{\xi_{s(\mu_i)}}^*\in \calc_{v}$, we find that $J_{\mu_1,\mu_2}=u_{\xi_{s(\mu_1)}}u_{\xi_{s(\mu_1)}}^*J_{\mu_1,\mu_2}u_{\xi_{s(\mu_2)}}u_{\xi_{s(\mu_2)}}^*\subseteq J_{\nu_1,\nu_2}$. By symmetry, $J_{\mu_1,\mu_2}=J_{\nu_1,\nu_2}$ whenever $s(\mu_1)=s(\mu_2)=s(\nu_1)=s(\nu_2)$. Thus we can now write $J_{s(\mu_1)}$ instead of $J_{\mu_1,\mu_2}$. But then, using (\ref{J3}) we get part (2) and (\ref{J2})  implies part (1).	

The converse direction is straightforward and is omitted.

Equation (\ref{J}) follows from part (1) and the definition of $\tau_n^{-1}$.
	\end{proof}

\begin{cor}\label{JnJ0}
	Suppose $J$ is an ideal in $q(\cald)$ and $J_n=J\cap \calc_n$. Assume that for every $v\in E^0$ and $n\geq 0$, $J_{n,v}=J_{0,v}$. Then
	\begin{equation}\label{Jn}
		J_n=\sum_{|\delta_i|=np} u_{\delta_1}J_{0}u_{\delta_2}^*.
	\end{equation}
\end{cor}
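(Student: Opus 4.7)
My plan is to prove the corollary by a direct computation that combines the two formulas from Proposition~\ref{ideals} (applied at levels $n$ and $0$) with the path decomposition $\mu = \delta\nu$ where $|\delta|=np$ and $|\nu|=q$.

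First, I would apply Proposition~\ref{ideals} to the ideal $J_n\subseteq \calc_n$ together with the hypothesis $J_{n,v}=J_{0,v}$. Formula~(\ref{J}) then yields
\[
J_n = \sum_{|\mu_i|=np+q,\; s(\mu_1)=s(\mu_2)} u_{\mu_1}u_{\xi_{s(\mu_1)}}^*\, J_{0,s(\mu_1)}\, u_{\xi_{s(\mu_1)}}u_{\mu_2}^*.
\]
Applying the same proposition to $J_0\subseteq \calc_0$ (the case $n=0$) gives
\[
J_0 = \sum_{|\nu_i|=q,\; s(\nu_1)=s(\nu_2)} u_{\nu_1}u_{\xi_{s(\nu_1)}}^*\, J_{0,s(\nu_1)}\, u_{\xi_{s(\nu_1)}}u_{\nu_2}^*.
\]

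Next, I would substitute this expression for $J_0$ into the right-hand side of~(\ref{Jn}). For $|\delta_i|=np$, we get
\[
u_{\delta_1}J_0 u_{\delta_2}^* = \sum_{|\nu_i|=q,\; s(\nu_1)=s(\nu_2)} u_{\delta_1}u_{\nu_1}u_{\xi_{s(\nu_1)}}^*\, J_{0,s(\nu_1)}\, u_{\xi_{s(\nu_1)}}u_{\nu_2}^*u_{\delta_2}^*.
\]
The key combinatorial observation is that $u_{\delta_i}u_{\nu_i}$ is non-zero precisely when $s(\delta_i)=r(\nu_i)$, in which case $u_{\delta_i}u_{\nu_i}=u_{\delta_i\nu_i}$ is a path of length $np+q$. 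Conversely, each path $\mu_i$ with $|\mu_i|=np+q$ has a \emph{unique} decomposition $\mu_i=\delta_i\nu_i$ with $|\delta_i|=np$ and $|\nu_i|=q$, and we have $s(\mu_i)=s(\nu_i)$.

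Summing over all pairs $(\delta_1,\delta_2)$ with $|\delta_i|=np$ therefore gives a sum indexed by pairs $(\mu_1,\mu_2)$ with $|\mu_i|=np+q$ and $s(\mu_1)=s(\mu_2)$ (this last condition coming from $s(\nu_1)=s(\nu_2)$), so
\[
\sum_{|\delta_i|=np} u_{\delta_1}J_0 u_{\delta_2}^* = \sum_{|\mu_i|=np+q,\; s(\mu_1)=s(\mu_2)} u_{\mu_1}u_{\xi_{s(\mu_1)}}^*\, J_{0,s(\mu_1)}\, u_{\xi_{s(\mu_1)}}u_{\mu_2}^* = J_n,
\]
which is exactly~(\ref{Jn}). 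There is no real obstacle here: the entire argument is bookkeeping with paths, and the only thing to be careful about is that the bijection $(\delta,\nu)\leftrightarrow \mu$ matches the index conditions on both sides (in particular, that $s(\mu_1)=s(\mu_2)$ is equivalent to $s(\nu_1)=s(\nu_2)$, the $\delta$-components being free), which is immediate from the uniqueness of path decomposition.
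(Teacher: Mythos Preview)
Your proposal is correct and follows essentially the same approach as the paper: both arguments combine the formula~(\ref{J}) from Proposition~\ref{ideals} at levels $n$ and $0$ with the unique decomposition $\mu=\delta\nu$ of a path of length $np+q$ into pieces of lengths $np$ and $q$. The only cosmetic difference is direction---the paper factors the expression for $J_n$ to extract $J_0$, while you expand the right-hand side of~(\ref{Jn}) and match it against the formula for $J_n$---but the content is identical.
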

\begin{proof} Fix $n\geq 0$.
Every path $\mu$ of length $np+q$ can be written, in a unique way, as $\mu=\delta \alpha$ for paths $\alpha$ and $\delta$ of lengths $q$ and $np$ respectively. Also, given a path $\delta$ of length $np$, there is a path $\alpha$ of length $q$ such that $\delta \alpha$ is well defined ($s(\delta)=r(\alpha)$) and has length $np+q$ (since $E$ has no sources).

It follows from Equation (\ref{J}) that $	J_n=\sum_{|\mu_i|=np+q, s(\mu_1)=s(\mu_2)} u_{\mu_1}u_{\xi_{s(\mu_1)}}^*J_{n,s(\mu_1)}u_{\xi_{s(\mu_1)}}u_{\mu_2}^*$ and, using the discussion above,
$$J_n= \sum_{|\delta_i|=np, |\alpha_i|=q, s(\alpha_1)=s(\alpha_2)} u_{\delta_1}u_{\alpha_1}u_{\xi_{s(\mu_1)}}^*J_{n,s(\mu_1)}u_{\xi_{s(\mu_1)}}u_{\alpha_2}^*u_{\delta_2}^*.$$	
Note that if $r(\alpha_i)\neq s(\delta_i)$, the corresponding summand will vanish so that the sum is really over $\alpha_i,\delta_i$ such that $r(\alpha_i)=s(\delta_i)$. Thus, using the assumption that $J_{n,v}=J_{0,v}$ for every $v$,
$$J_n=\sum_{|\delta_i|=np} u_{\delta_1} (\sum_{|\alpha_i|=q, s(\alpha_1)=s(\alpha_2)} u_{\alpha_1}u_{\xi_{s(\alpha_1)}}^*J_{0,s(\alpha_1)}u_{\xi_{s(\alpha_1)}}u_{\alpha_2}^*)u_{\delta_2}^*=\sum_{|\delta_i|=np}u_{\delta_1} J_0 u_{\delta_2}^*.$$
	
	\end{proof}

In order to discuss the simplicity of the algebra $C^*(E,Z)=\mathcal{T}(X_E,Z)/\mathcal{K}\cong \mathcal{O}(q(\mathcal{D}),q(F))$ we need first the following definition.

\begin{defn}\label{minnonper}
	Let $X$ be a $C^*$-correspondence over a unital $C^*$-algebra $A$. We say that it is \emph{minimal} if there are no non trivial ideals $J\subseteq A$ such that 
	$\langle X, JX \rangle \subseteq J$. It is said to be \emph{nonperiodic} if $X^{\otimes n}$ and $A$ are isometric (that is, there is a unitary map of correspondences from $X^{\otimes n}$ onto $A$) only if $n=0$. 	
	
\end{defn}

Note that, here, minimality means that $q(\cald)$ has no non trivial invariant ideal in the sense of Definition~\ref{idinv} and non periodicity was discussed in Proposition~\ref{nonperiodic}.

The following theorem was proved by J. Schweizer \cite[Theorem 3.9]{Sch01}.

\begin{thm}\label{simpleCP}
	Let $X$ be a full $C^*$-correspondence over a unital $C^*$-algebra $A$. Then the Cuntz-Pimsner algebra $\mathcal{O}(X,A)$ is simple if and only if $X$ is minimal and nonperiodic.
\end{thm}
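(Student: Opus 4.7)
The plan is to prove the two directions separately, and to take the gauge action $\gamma:\mathbb{T}\to \mathrm{Aut}(\mathcal{O}(X,A))$ as the central tool in both directions. Let $E:\mathcal{O}(X,A)\to \mathcal{O}(X,A)^\gamma$ denote the canonical conditional expectation onto the fixed-point algebra.

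For the necessity (assume $\mathcal{O}(X,A)$ is simple), both implications go through an ideal-construction. If $J\subseteq A$ is a nontrivial ideal with $\langle X,JX\rangle\subseteq J$, then the pair $(J,A)$ (or, equivalently, the covariance data $J$ on $A$ together with $JX$ inside $X$) is a nontrivial $O$-pair in the sense of Katsura, and therefore generates a proper, nontrivial gauge-invariant ideal of $\mathcal{O}(X,A)$, contradicting simplicity. Thus minimality is forced. For non-periodicity, assume a unitary correspondence isomorphism $u:A\to X^{\otimes n}$ for some $n\geq 1$; pushing $u(1_A)$ into $\mathcal{O}(X,A)$ yields a unitary element $V$ of gauge degree $n$. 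Then $C^*(V)\cong C(\mathbb{T})$ sits inside $\mathcal{O}(X,A)$ and the rotation automorphism of this $C(\mathbb{T})$ by $e^{2\pi i/n}$ extends to an inner automorphism of $\mathcal{O}(X,A)$; using this one produces proper gauge-invariant ideals (alternatively, one shows directly that the gauge-invariant subalgebra is not the full fixed-point algebra in a natural $\mathbb{Z}/n\mathbb{Z}$-grading refinement). Either way, simplicity fails, so non-periodicity is necessary.

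For the sufficiency, assume minimality and non-periodicity and take a nonzero closed two-sided ideal $I\subseteq \mathcal{O}(X,A)$. The core of the argument is the claim $I\cap A\neq \{0\}$. Once this is established the rest is clean: the ideal $J:=I\cap A$ satisfies $\langle X,JX\rangle\subseteq \langle X,IX\rangle\subseteq I\cap A = J$ (since $I$ absorbs left and right multiplication by elements of the natural images of $X$ and $X^*$ in $\mathcal{O}(X,A)$), so $J$ is invariant; minimality then forces $J=A$, i.e.\ $1\in I$, i.e.\ $I=\mathcal{O}(X,A)$.

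The main obstacle is proving $I\cap A\neq \{0\}$, and this is where non-periodicity is used. The scheme is as follows. First, $I$ is $\gamma$-invariant whenever $E(I)\cap A\neq\{0\}$ implies $I\cap A\neq \{0\}$, so it suffices to produce a nonzero element of $I$ that is, after applying $E$, essentially supported in $A$. Take $a\in I$ positive and of norm one. Approximate $a$ by a finite Fourier sum $\sum_{|k|\leq N}a_k$ with $a_k$ of gauge degree $k$; by faithfulness of $E$ on positives, $E(a)=a_0$ is nonzero. The $k\ne 0$ spectral pieces $a_k$ live in natural degree-$k$ spectral subspaces which, via the multiplicative structure of $\mathcal{O}(X,A)$, embed into (quotients of) $X^{\otimes k}$. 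The non-periodicity assumption is exactly what is needed to rule out any periodic coincidence that would force $a_0$ to be absorbed into these higher spectral components; concretely, one uses non-periodicity to construct, for a prescribed $\varepsilon>0$, contractions $b_\alpha\in \mathcal{O}(X,A)$ supported on long enough ``paths'' so that $\sum b_\alpha^* a_k b_\alpha$ is small in norm for $k\ne 0$ but $\sum b_\alpha^* a_0 b_\alpha$ remains close to an element of $A$. The resulting element sits in $I\cap A$ and is nonzero, as required. This compression argument is the delicate part and is where I anticipate the bulk of the technical work; it mirrors the aperiodicity/``Condition (L)'' arguments familiar from graph algebras, with $X^{\otimes n}\not\cong A$ playing the role of ``every cycle has an entry''.
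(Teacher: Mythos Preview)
The paper does not prove this theorem at all: it is quoted verbatim as a result of Schweizer \cite[Theorem 3.9]{Sch01} and used as a black box. So there is no ``paper's own proof'' to compare against.

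As to your sketch on its own merits: the overall architecture is the standard one, and the minimality half of necessity is fine. But two of the three substantive steps are left as assertions rather than arguments. First, in the periodicity direction you produce a degree-$n$ unitary $V$ and then say ``using this one produces proper gauge-invariant ideals'' or offer a vague alternative about a $\mathbb{Z}/n\mathbb{Z}$-grading; neither is an argument. The existence of a unitary of degree $n$ does not by itself hand you a proper ideal --- you need to actually exhibit one (for instance by showing $V$ lies in the relative commutant of $A$ and then using spectral theory, or by identifying $\mathcal{O}(X,A)$ with a crossed product and invoking non-simplicity there, as Schweizer does). Second, for sufficiency you correctly identify the compression step as ``the delicate part'' and ``the bulk of the technical work,'' and then do not do it. Saying that non-periodicity ``is exactly what is needed to rule out any periodic coincidence'' is a restatement of the goal, not a proof; the actual construction of the compressing elements $b_\alpha$ and the norm estimates are the entire content of this direction. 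So what you have is a correct table of contents for Schweizer's proof, with the two hardest sections left blank.
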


In order to understand what are the invariant ideals of $q(\cald)$, we first need the following.

Given paths $\alpha,\beta$ of the same length , $s(\alpha)=s(\beta)$ and $r(\alpha)=r(\beta)$, they define a map
$$\pi(\alpha,\beta):\calc_{r(\alpha)}\rightarrow \calc_{s(\alpha)}$$ defined by
\begin{equation}\label{pialphabeta}
	\pi(\alpha,\beta)(c)=u_{\xi_{s(\alpha)}}u_{\alpha}^*u_{\xi_{r(\alpha)}}^*cu_{\xi_{r(\beta)}}u_{\beta}u_{\xi_{s(\beta)}}^*. \end{equation} 

The range of $\pi(\alpha,\beta)$ is contained in $\calc_0$ because of Lemma~\ref{eCnf} (3)  (with $n=0$).

Note that, if $r(\alpha_2)=s(\alpha_1)$ and $r(\beta_2)=s(\beta_1)$ then, for $c\in \calc_{r(\alpha_1\alpha_2)}=\calc_{r(\alpha_1)}$,

 $\pi(\alpha_1\alpha_2,\beta_1\beta_2)(c)=u_{\xi_{s(\alpha_2)}}u_{\alpha_2}^*u_{\alpha_1}^*u_{\xi_{r(\alpha_1)}}^*cu_{\xi_{r(\beta_1)}}u_{\beta_1}u_{\beta_2}u_{\xi_{s(\beta_2)}}^*\\
 =u_{\xi_{s(\alpha_2)}}u_{\alpha_2}^*u_{\xi_{r(\alpha_2)}}^*u_{\xi_{r(\alpha_2)}}u_{\alpha_1}^*u_{\xi_{r(\alpha_1)}}^*cu_{\xi_{r(\beta_1)}}u_{\beta_1}u_{\xi_{r(\beta_2)}}^*u_{\xi_{r(\beta_2)}}u_{\beta_2}u_{\xi_{s(\beta_2)}}^*=\pi(\alpha_2,\beta_2)\pi(\alpha_1,\beta_1)(c)$. 
 
 Thus, in this case,
	\begin{equation}\label{pimult}
		\pi(\alpha_1\alpha_2,\beta_1\beta_2)=\pi(\alpha_2,\beta_2)\pi(\alpha_1,\beta_1).
		\end{equation} 

\begin{prop}\label{invariant}
	Let $J$ be an ideal in $\calc_n$. Then $J$ is invariant (in the sense of Definition~\ref{idinv}) if and only if, for every $e,f\in E^1$ with $r(e)=r(f)$ and $s(e)=s(f)$, $$\pi(e,f)(J_{r(e)})\subseteq J_{s(e)}.$$
\end{prop}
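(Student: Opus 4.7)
The plan is to establish both implications by direct algebraic manipulation, exploiting the structural description of $J$ provided by Proposition~\ref{ideals} together with the partial-isometry relations satisfied by the $u_\alpha$. Both directions hinge on the same key combinatorial move: splitting a path of length $np+q+1$ into a single edge plus a path of length $np+q$ so that the level-$n$ structure of Proposition~\ref{ideals}(3) can be matched against the single-edge invariance condition or the single-edge maps $\pi(e,f)$.

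For the forward direction, I fix $e,f \in E^1$ with $s(e)=s(f)=w$ and $r(e)=r(f)=v$, take $c \in J_v$, and use Proposition~\ref{ideals}(3) to write $c = u_{\xi_v}u_{\mu_1}^* d u_{\mu_2}u_{\xi_v}^*$ for some $d \in J$ and paths $\mu_1,\mu_2$ of length $np+q$ with $s(\mu_i)=v$ (which exist because $E$ has no sinks). Substituting into the definition of $\pi(e,f)(c)$ and simplifying using $u_{\xi_v}^* u_{\xi_v} = p_v$, $u_e^* p_v = u_e^*$, $p_v u_f = u_f$, and $u_e^* u_{\mu_1}^* = u_{\mu_1 e}^*$ yields $\pi(e,f)(c) = u_{\xi_w} u_{\mu_1 e}^* d u_{\mu_2 f} u_{\xi_w}^*$. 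Next I peel off the first edge of each $\mu_i$, writing $\mu_1 = e_1\tilde\mu_1$ and $\mu_2 = f_1\tilde\mu_2$, so that $u_{\mu_1 e}^* = u_{\tilde\mu_1 e}^* u_{e_1}^*$ and $u_{\mu_2 f} = u_{f_1}u_{\tilde\mu_2 f}$; the expression becomes $u_{\xi_w} u_{\tilde\mu_1 e}^* (u_{e_1}^* d u_{f_1}) u_{\tilde\mu_2 f} u_{\xi_w}^*$. Invariance of $J$ gives $u_{e_1}^* d u_{f_1} \in J$, and $\tilde\mu_1 e,\tilde\mu_2 f$ are paths of length $np+q$ with source $w$, so another application of Proposition~\ref{ideals}(3) at vertex $w$ places the whole expression in $J_w$.

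For the reverse direction, I assume the $\pi$-condition, fix $\alpha,\beta \in E^1$ and $d \in J$, and apply the isomorphism $\tau_n$ to reduce the problem to showing that every fibre $X_{\sigma_1,\sigma_2} := u_{\xi_v} u_{\alpha\sigma_1}^* d u_{\beta\sigma_2} u_{\xi_v}^*$ of $\tau_n(u_\alpha^* d u_\beta)$ lies in $J_v$, where $v = s(\sigma_1)=s(\sigma_2)$ and $|\sigma_i|=np+q$. Expanding $d$ in its canonical form from Proposition~\ref{ideals} and using the orthogonality relations of Lemma~\ref{alpha}, only a single summand survives: the one indexed by $\rho_1 = \alpha\sigma_1''$ and $\rho_2 = \beta\sigma_2''$, where $\sigma_1 = \sigma_1''g$ and $\sigma_2 = \sigma_2''h$ isolate the last edges. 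The constraint $s(\rho_1)=s(\rho_2)$ in the canonical form of $d$ forces $r(g)=r(h)=:v'$, and a direct calculation identifies $X_{\sigma_1,\sigma_2}$ as exactly $\pi(g,h)(c)$ for some $c \in J_{v'}$; the $\pi$-hypothesis then gives $\pi(g,h)(c) \in J_{s(g)} = J_v$ (with the fibre simply vanishing when $r(g)\ne r(h)$). Since each fibre lies in $J_{s(\sigma_1)}$, the element $\tau_n(u_\alpha^* d u_\beta)$ lies in $\tau_n(J)$, and thus $u_\alpha^* d u_\beta \in J$.

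The main obstacle is bookkeeping: each computation involves tracking several source/range matchings and path-length conditions at once, and one must correctly identify which edge to extract (the first edge of $\mu_i$ in the forward direction, the last edge of $\sigma_i$ in the reverse) so that the surviving factor has length exactly $np+q$ and fits the level-$n$ structural form. Once the right decomposition is written down the identification with $\pi(g,h)$ or with an element produced by single-edge invariance becomes automatic.
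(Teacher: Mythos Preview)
Your proposal is correct and follows essentially the same approach as the paper's proof: both directions proceed by translating between $J$ and the family $\{J_v\}$ via Proposition~\ref{ideals}, and then matching the single-edge compression $u_e^*(\,\cdot\,)u_f$ against the maps $\pi(e,f)$ by writing a path of length $np+q+1$ as an edge concatenated with a path of length $np+q$. The paper organizes the two implications in the opposite order and, in the ``invariance $\Rightarrow$ condition'' direction, works through $\tau_n$ rather than through the explicit expansion of $d$ from equation~(\ref{J}), but the combinatorial content is identical. One small caveat: your ``peel off the first edge of $\mu_i$'' step tacitly assumes $np+q\ge 1$; in the degenerate case $n=0,\,p=1$ the paths $\mu_i$ have length zero and there is nothing to peel, but there the identity $\pi(e,f)(c)=p_w u_e^*\,d\,u_f\,p_w$ together with invariance and Proposition~\ref{ideals}(3) gives the conclusion directly.
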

\begin{proof} 
Assume the condition holds. Fix $c\in J$. Then
\begin{equation}
	\tau_n(c)=\sum_{|\mu_1|=|\mu_2|=np+q, s(\mu_1)=s(\mu_2)} e_{\mu_1,\mu_2}\otimes u_{\xi_{s(\mu_1)}}u_{\mu_1}^*cu_{\mu_2}u_{\xi_{s(\mu_2)}}^*.
\end{equation}	
Thus, for every $|\mu_1|=|\mu_2|=np+q$ with $s(\mu_1)=s(\mu_2)$, $u_{\xi_{s(\mu_1)}}u_{\mu_1}^*cu_{\mu_2}u_{\xi_{s(\mu_2)}}^*\in J_{s(\mu_1)}$ and, using the condition of the proposition, for every $g_1,g_2\in E^1$ with $r(g_i)=s(\mu_i)$, $s(g_1)=s(g_2)$ and $r(g_1)=r(g_2)$, we have,
\begin{equation}\label{pi} \pi(g_1,g_2)(u_{\xi_{s(\mu_1)}}u_{\mu_1}^*cu_{\mu_2}u_{\xi_{s(\mu_2)}}^*	)\in J_{s(g_1)}.\end{equation}
Now $c=\sum_{|\mu_i|=np+q, s(\mu_1)=s(\mu_2)}u_{\mu_1}u_{\xi_{s(\mu_1)}}^*u_{\xi_{s(\mu_1)}}u_{\mu_1}^*cu_{\mu_2}u_{\xi_{s(\mu_2)}}^*u_{\xi_{s(\mu_2)}}u_{\mu_2}^*$ and 

 $u_e^*cu_f=\sum_{|\mu_i|=np+q, s(\mu_1)=s(\mu_2)}u_e^*u_{\mu_1}u_{\xi_{s(\mu_1)}}^*u_{\xi_{s(\mu_1)}}u_{\mu_1}^*cu_{\mu_2}u_{\xi_{s(\mu_2)}}^*u_{\xi_{s(\mu_2)}}u_{\mu_2}^*u_f$.
 Thus, for every $\delta_1,\delta_2$ with $|\delta_i|=np+q$ and $s(\delta_1)=s(\delta_2)$, we get
 $$u_{\xi_{s(\delta_1)}}u_{\delta_1}^*u_e^*cu_fu_{\delta_2}u_{\xi_{s(\delta_2)}}^*=\sum_{|\mu_i|=np+q, s(\mu_1)=s(\mu_2)}u_{\xi_{s(\delta_1)}}u_{\delta_1}^*u_e^*u_{\mu_1}u_{\xi_{s(\mu_1)}}^*u_{\xi_{s(\mu_1)}}u_{\mu_1}^*cu_{\mu_2}u_{\xi_{s(\mu_2)}}^*u_{\xi_{s(\mu_2)}}u_{\mu_2}^*u_fu_{\delta_2}u_{\xi_{s(\delta_2)}}^*$$
But $u_{\delta_1}^*u_e^*u_{\mu_1}\neq 0$ only if there is some $g_1\in E^1$ such that $e\delta_1=\mu_1g_1$ and, similarly, there is some $g_2\in E^1$ such that $f\delta_2=\mu_2g_2$ and, when this is the case, $s(g_1)=s(g_2)$, $r(g_1)=r(g_2)$ and
 $$u_{\xi_{s(\delta_1)}}u_{\delta_1}^*u_e^*cu_fu_{\delta_2}u_{\xi_{s(\delta_2)}}^*=\sum_{|\mu_i|=np+q, s(\mu_1)=s(\mu_2)}u_{\xi_{s(\delta_1)}}u_{g_1}^*u_{\xi_{s(\mu_1)}}^*u_{\xi_{s(\mu_1)}}u_{\mu_1}^*cu_{\mu_2}u_{\xi_{s(\mu_2)}}^*u_{\xi_{s(\mu_2)}}u_{g_2}u_{\xi_{s(\delta_2)}}^*.$$
 
But, since $s(\delta_i)=s(g_i)$ and $s(\mu_i)=r(g_i)$ ($i=1,2$), and $u_{\xi_{s(\mu_1)}}u_{\mu_1}^*cu_{\mu_2}u_{\xi_{s(\mu_2)}}^*\in J_{s(\mu_1)}=J_{r(g_1)}$, each summand in this sum lies in $\pi(g_1,g_2)(J_{r(g_1)})\subseteq J_{s(g_1)}=J_{s(\delta_1)}$.  
It follows that
$$\tau_n(u_e^*cu_f)\in\sum_{|\delta_i|=np+q, s(\delta_1)=s(\delta_2)} e_{\delta_1,\delta_2}\otimes J_{s(\delta_1)}=\tau_n(J)$$ and, thus, $u_e^*cu_f\in J$.

For the converse, assume now that $J$ is an invariant ideal in $\calc_n$. Fix $\mu_1,\mu_2$ with $|\mu_i|=np+q$ and $s(\mu_1)=s(\mu_2)$. Then $e_{\mu_1,\mu_2}\otimes J_{s(\mu_1)}$ is contained in $\tau_n(J)$ and, applying $\tau_n^{-1}$, we get
$$u_{\mu_1}u_{\xi_{s(\mu_1)}}^*J_{s(\mu_1)}u_{\xi_{s(\mu_2)}}u_{\mu_2}^*\subseteq J.$$
Then, for $e,f\in E^1$ (with $s(e)=s(f)$ and $r(e)=r(f)$) we have (using the invariance of $J$),
$$u_e^*u_{\mu_1}u_{\xi_{s(\mu_1)}}^*J_{s(\mu_1)}u_{\xi_{s(\mu_2)}}u_{\mu_2}^*u_f\subseteq J.$$
But the left-hand side is $0$ unless there are $\gamma_1,\gamma_2$ such that $\mu_1=e\gamma_1$ and $\mu_2=f\gamma_2$. If this holds then we get (using $s(\gamma_i)=s(\mu_i)$),
$$u_{\gamma_1}u_{\xi_{s(\gamma_1)}}^*J_{s(\gamma_1)}u_{\xi_{s(\gamma_2)}}u_{\gamma_2}^*\subseteq J.$$
Applying $\tau_n$ we get
$$\sum_{|\delta_i|=np+q, s(\delta_1)=s(\delta_2)} e_{\delta_1,\delta_2}\otimes u_{\xi_{s(\delta_1)}}u_{\delta_1}^*u_{\gamma_1}u_{\xi_{s(\gamma_1)}}^*J_{s(\gamma_1)}u_{\xi_{s(\gamma_2)}}u_{\gamma_2}^*u_{\delta_2}u_{\xi_{s(\delta_2)}}^*\subseteq \tau_n(J)$$ and, therefore,
$$u_{\xi_{s(\delta_1)}}u_{\delta_1}^*u_{\gamma_1}u_{\xi_{s(\gamma_1)}}^*J_{s(\gamma_1)}u_{\xi_{s(\gamma_2)}}u_{\gamma_2}^*u_{\delta_2}u_{\xi_{s(\delta_2)}}^*\subseteq J_{s(\delta_1)}$$ for every $\delta_i$ with $|\delta_i|=np+q$ and $s(\delta_1)=s(\delta_2)$. But the left-hand side is $0$ unless $\delta_i=\gamma_i g_i$ for some $g_1,g_2 \in E^1$ (as otherwise either $u_{\delta_1}^*u_{\gamma_1}=0$ or $u_{\gamma_2}^*u_{\delta_2}=0$). Assuming $\delta_i=\gamma_i g_i$ for some $g_1,g_2 \in E^1$ and noting that, in that case, $s(\delta_i)=s(g_i)$ and $s(\gamma_i)=r(g_i)$, we get
\begin{equation}\label{pi1}u_{\xi_{s(g_1)}}u_{g_1}^*u_{\xi_{r(g_1)}}^*J_{r(g_1)}u_{\xi_{r(g_2)}}u_{g_2}u_{\xi_{s(g_2)}}^*\subseteq J_{s(g_1)}.\end{equation}
Since $g_i$ are arbitrary in $E^1$ with $s(g_1)=s(g_2)$ and $r(g_1)=r(g_2)$ (as $\mu_i$ and $\delta_i$ run over all paths of length $np+q$ with $s(\mu_1)=s(\mu_2)$ and $s(\delta_1)=s(\delta_2)$) and the left-hand side of (\ref{pi1}) is $\pi(g_1,g_2)(J_{r(g_1)})$, this completes the proof of this direction.


	\end{proof}


\begin{thm}\label{simplicity}
	Suppose $E$ is finite, irreducible, and has no sources and no sinks. Assume also that condition A(p) holds (and let $p$ be the minimal positive integer for which this holds). Then $C^*(E,Z)$ is simple if and only if the following two conditions hold.
	\begin{enumerate}
		\item [(a)]  $E$ is not a cycle.
		\item[(b)] There is no non trivial closed ideal $J$ in $\calc_0$ that is invariant in the sense of Proposition~\ref{invariant}.
		
		Part (b) is equivalent to
		\item[(b')] There is no family $\{J_{v}\subseteq \calc_{v}\}_{v\in E^0}$ such that
		\begin{enumerate}
			\item [(i)]  For every $v\in E^0$, $J_v$ is an ideal in $\calc_v$ and
			\item[(ii)] for every $e,f\in E^1$ with $s(e)=s(f)$ and $r(e)=r(f)$, we have $\pi(e,f)(J_{r(e)})\subseteq J_{s(e)}.$
		\end{enumerate}
	\end{enumerate}

\end{thm}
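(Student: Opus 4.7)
The plan is to invoke Schweizer's criterion (Theorem~\ref{simpleCP}) through Theorem~\ref{isomorphism}: simplicity of $C^*(E,Z)\cong\mathcal{O}(q(F),q(\cald))$ reduces to showing that $q(F)$ is both minimal and nonperiodic as a $C^*$-correspondence over $q(\cald)$. Fullness of $q(F)$ is immediate from Lemma~\ref{alpha}, since $\sum_{|\alpha|=1}u_\alpha^*u_\alpha=\sum_v p_v=I$.

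First I would match nonperiodicity with condition (a). Since $E$ is irreducible and has no sources, a graph of the form $C_{k,l}$ with $l>1$ would have $v_1$ as a source, which is excluded; so ``cycle with an entry'' collapses to ``cycle''. If $E$ is not a cycle, Proposition~\ref{nonperiodic}(1) gives nonperiodicity directly. If $E=C_k$, I would show that every path $\alpha$ with $|\alpha|=kp$ commutes with all of $q(\cald)$: by Lemmas~\ref{cycle} and \ref{qDAp} the generators of $q(\cald)$ reduce to the commuting family $\{u_\beta z^m u_\beta^*\}$, and iterating Lemma~\ref{uz} (to pass $u_\alpha$ past $z^m$) together with the uniqueness of forward paths in a cycle (Lemma~\ref{cycleentry}(2)) yields the commutation; Proposition~\ref{nonperiodic}(2) then produces a correspondence isomorphism $q(\cald)\cong q(F^{kp})$, so $q(F)$ is periodic and simplicity fails. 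Hence nonperiodicity is equivalent to (a).

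Next I would match minimality with condition (b). As noted right after Definition~\ref{minnonper} and using $q(F)=\sum_\alpha u_\alpha q(\cald)$ from Lemma~\ref{qF}, minimality of $q(F)$ is exactly the absence of non-trivial invariant ideals in $q(\cald)$ in the sense of Definition~\ref{idinv}. For one direction, given a non-trivial invariant $J\subseteq q(\cald)$, set $J_0:=J\cap\calc_0$, which is an ideal of $\calc_0$; Proposition~\ref{invariant} (with $n=0$) shows it is invariant in the required sense. Non-triviality of $J_0$ follows from the key observation that for an invariant ideal $J$ the families $\{(J\cap\calc_n)_v\}_v$ supplied by Proposition~\ref{ideals} stabilize in $n$, i.e.\ $(J\cap\calc_n)_v=(J\cap\calc_0)_v$ for every $v$ and $n$; this uses Lemma~\ref{eCnf}(2) combined with the invariance condition $u_e^*Ju_f\subseteq J$, and it lets us recover $J=\overline{\bigcup_n J_n}$ from $J_0$ via Corollary~\ref{JnJ0}. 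For the reverse direction, starting from a non-trivial invariant ideal $J_0\subseteq\calc_0$ with associated family $\{J_{0,v}\}$, I would define $J_n:=\sum_{|\delta_i|=np}u_{\delta_1}J_0u_{\delta_2}^*$ and $J:=\overline{\bigcup_n J_n}$, and check via Propositions~\ref{ideals} and \ref{invariant} together with Lemma~\ref{inclusion} that each $J_n$ is an invariant ideal of $\calc_n$, that the chain is increasing, and that the union $J$ is invariant in $q(\cald)$ and non-trivial.

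Finally, the equivalence (b)$\Leftrightarrow$(b$'$) is bookkeeping: Proposition~\ref{ideals} (with $n=0$) gives a bijection between ideals of $\calc_0$ and families $\{J_v\subseteq\calc_v\}_{v\in E^0}$ of ideals, and Proposition~\ref{invariant} translates invariance of such an ideal exactly into condition (ii) of (b$'$). The principal obstacle I foresee is the stabilization $(J\cap\calc_n)_v=(J\cap\calc_0)_v$ invoked in the forward direction of minimality: one must rule out the pathology where an invariant $J$ is concentrated in higher levels and restricts trivially to $\calc_0$, by using Lemma~\ref{eCnf}(2) and invariance to pull any non-zero element of $J\cap\calc_n$ down to a non-zero element of $J\cap\calc_0$.
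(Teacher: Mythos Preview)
Your overall strategy matches the paper's: Schweizer's criterion via Theorem~\ref{isomorphism}, nonperiodicity $\Leftrightarrow$ (a) via Proposition~\ref{nonperiodic} and the cycle commutation argument with $|\alpha|=kp$, and minimality $\Leftrightarrow$ (b) by passing between invariant ideals of $q(\cald)$ and of $\calc_0$. The reverse direction of minimality and the (b)$\Leftrightarrow$(b$'$) translation are exactly as in the paper.

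There is one genuine soft spot, in the forward direction of minimality. You set $J_0:=J\cap\calc_0$ and assert that for an \emph{invariant} ideal $J$ one has the stabilization $(J\cap\calc_n)_v=(J\cap\calc_0)_v$, then invoke Corollary~\ref{JnJ0}. But that stabilization is established in the paper (Theorem~\ref{finvideals}(1)) only for \emph{fully} invariant ideals: the inclusion $J_{0,v}\subseteq J_{n,v}$ uses $u_{\delta}J_0u_{\delta}^*\subseteq J_n$, i.e.\ the condition $u_eJu_f^*\subseteq J$, which you do not have here. With mere invariance only the one-sided inclusion $J_{n,v}\subseteq J_{0,v}$ is available, so the appeal to Corollary~\ref{JnJ0} (whose hypothesis is exactly the two-sided equality) is not justified.

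The paper avoids this by \emph{not} taking $J\cap\calc_0$: it picks any level $n$ with $J_n=J\cap\calc_n$ non-trivial, reads off the family $\{J_{n,v}\}$ via Proposition~\ref{ideals}, and then \emph{defines} a fresh ideal $J_0':=\tau_0^{-1}\bigl(\sum e_{\alpha_1,\alpha_2}\otimes J_{n,s(\alpha_1)}\bigr)$ in $\calc_0$; invariance of $J_0'$ then follows from Proposition~\ref{invariant} applied at level $n$. Your route can in fact be salvaged without stabilization: properness of $J\cap\calc_0$ is immediate from $I\in\calc_0\setminus J$, and non-zeroness follows from your own closing remark (pull down a nonzero $a\in J\cap\calc_n$ via $a=\sum_{|\gamma_i|=np}u_{\gamma_1}u_{\gamma_1}^*au_{\gamma_2}u_{\gamma_2}^*$ and invariance, using Lemma~\ref{eCnf}(2) to land in $\calc_0$). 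So drop the stabilization claim and the reference to Corollary~\ref{JnJ0}, and argue non-triviality of $J\cap\calc_0$ directly as just described; the rest of your outline is correct and aligned with the paper.
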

\begin{proof} 
We first show that there is a non trivial invariant ideal in $q(\cald)$ if and only if there is a non trivial invariant ideal in $\calc_0$.

Assume first that $J\subseteq q(\cald)$ is a non trivial invariant ideal in $q(\cald)$. For every $n\geq 0$, write $J_n:=J\cap \calc_n$. Then, using Lemma~\ref{eCnf}, $J_n$ is an invariant ideal in $\calc_n$. Since $J=\overline{\cup_n J_n}$ and $J$ is non trivial, there is some $n\geq 0$ such that $J_n$ is non trivial. Fix such $n$. If it is $0$, we are done. In general, we can write
$$\tau_n(J_n)=\sum_{|\mu_i|=np+q} e_{\mu_1,\mu_2}\otimes J_{n, s(\mu_1)}$$ and, for every $e,f\in E^1$ with $s(e)=s(f)$ and $r(e)=r(f)$,
\begin{equation}\label{inv}
	\pi(e,f)(J_{n, r(e)})\subseteq J_{n,s(e)}.\end{equation}
Now set 
$$J_0:=\tau_0^{-1}( \sum_{|\alpha_1|=|\alpha_2|=q, s(\alpha_1)=s(\alpha_2)} 	e_{\alpha_1,\alpha_2}\otimes J_{n,s(\alpha_1)} ).$$
It follows from Proposition~\ref{ideals} that $J_0$ is an ideal in $\calc_0$ and Proposition~\ref{invariant} (together with (\ref{inv})) shows that this ideal is invariant. Since $J_n$ is non trivial, there are $v_1,v_2$ in $s(E^1)$ such that $J_{n,v_1}\neq 0$ and $J_{n,v_2}\neq \calc_{v_2}$ and it follows that  $J_0$ is non trivial. Thus, there is an invariant, non trivial ideal in $\calc_0$.

For the other direction, assume $J_0$ is a non trivial invariant ideal $J_0$ in $\calc_0$ and write (using Proposition~\ref{ideals})
$$\tau_0(J_0)=\sum_{|\alpha_1|=|\alpha_2|=q, s(\alpha_1)=s(\alpha_2)} e_{\alpha_1,\alpha_2}\otimes J_{0,s(\alpha_1)}.$$
Now, for every $n>0$, set
$$J_n:=\tau_n^{-1}(\sum_{|\mu_1|=|\mu_2|=np+q, s(\mu_1)=s(\mu_2)} e_{\mu_1,\mu_2}\otimes J_{0,s(\mu_1)}).$$
By Proposition~\ref{ideals}, $J_n$ is a non trivial ideal in $\calc_n$ and Proposition~\ref{invariant} implies that this ideal is invariant.

We claim that $J_n\subseteq J_{n+1}$. For this, we compute
$$J_n=\sum_{|\mu_1|=|\mu_2|=np+q, s(\mu_1)=s(\mu_2)} \tau_n^{-1}(e_{\mu_1,\mu_2}\otimes J_{0,s(\mu_1)})=\sum_{|\mu_1|=|\mu_2|=np+q, s(\mu_1)=s(\mu_2)} u_{\mu_1}u_{\xi_{s(\mu_1)}}^*J_{0,s(\mu_1}u_{\xi_{s(\mu_2)}}u_{\mu_2}^*$$  $$=\sum_{|\mu_1|=|\mu_2|=np+q, |\gamma_1|=|\gamma_2|=p, s(\mu_1)=s(\mu_2)} u_{\mu_1}u_{\gamma_1}u_{\xi_{s(\gamma_1)}}^*u_{\xi_{s(\gamma_1)}}u_{\gamma_1}^*u_{\xi_{s(\mu_1)}}^*J_{0,s(\mu_1)}u_{\xi_{s(\mu_2)}}u_{\gamma_2}u_{\xi_{s(\gamma_2)}}^*u_{\xi_{s(\gamma_2)}}u_{\mu_2}^*u_{\mu_2}^*$$
The only non zero terms in this sum are those for which $r(\gamma_i)=s(\mu_i)$, $i=1,2$. Hence
$$J_n=\sum_{|\mu_i|=np+q, |\gamma_i|=p, r(\gamma_i)=s(\mu_i), s(\mu_1)=s(\mu_2), s(\gamma_1)=s(\gamma_2)}u_{\mu_1\gamma_1}u_{\xi_{s(\gamma_1)}}^*\pi(\gamma_1,\gamma_2)(J_{0,r(\gamma_1)})u_{\xi_{s(\gamma_2)}}u_{\mu_2\gamma_2}^*$$  $$\subseteq \sum_{|\mu_i|=np+q, |\gamma_i|=p, r(\gamma_i)=s(\mu_i), s(\gamma_1)=s(\gamma_2)}u_{\mu_1\gamma_1}u_{\xi_{s(\gamma_1)}}^*J_{0,s(\gamma_1)}u_{\xi_{s(\gamma_2)}}u_{\mu_2\gamma_2}^*$$  $$=\tau^{-1}_{n+1}(\sum_{\mu_i,\gamma_i}e_{\mu_1\gamma_1,\mu_2\gamma_2}\otimes J_{0,s(\mu_1\gamma_1)})=J_{n+1}$$ proving the claim. Then $J:=\overline{\cup_nJ_n}$ is a non trivial invariant ideal in $q(\cald)$ because if $J=q(\mathcal{D})$, then $I\in J=\overline{\cup_nJ_n}$ and we can find $n$ and $a\in J_n$ such that $\|I-a\|<1$. But then $a\in J_n$ is invertible contradicting the fact that $J_n\neq \mathcal{C}_n$.	

Now, suppose that $E$ is not a cycle. Since we assume that there are no sources. $E$ is not a cycle with an entry (see the definition preceeding Lemma~\ref{cycleentry}). Thus, by Proposition~\ref{nonperiodic} (1), $q(F)$ is nonperiodic. The discussion above shows that it is minimal if and only if there is no non trivial invariant closed ideal in $\calc_0$. It follows from Theorem~\ref{simpleCP} that, if $E$ is not a cycle, $C^*(E,Z)$ is simple if and only if there is no non trivial closed invariant ideal in $\calc_0$.


Now assume $E$ is a cycle. We show that, in this case, $q(F)$ cannot be nonperiodic. Assume that it is nonperiodic. Then, by Proposition~\ref{nonperiodic}, for every $n>0$, there is some path $\alpha$ such that $|\alpha|=n$ and $u_{\alpha}$ is not in $q(\cald)'$. Suppose that the length of the cycle is $m$ and let $n=mp$. We show that for every $\alpha$ with $|\alpha|=n=mp$, $u_{\alpha}\in q(\cald)'$. This will complete the proof. 
Since $n$ is a multiple of $p$ (and we assume condition A(p)), $u_{\alpha}\in (C^*(z))'$. To prove that it commutes with $q(\cald)$, it suffices to show that it commutes with the generators. So, using Lemma~\ref{qDAp}, we fix a generator $u_{\mu}z^lu_{\nu}$ with $|\mu|=|\nu|=t$ and $s(\mu)=s(\nu)$. But, using lemma~\ref{cycle}, we can assume that $\mu=\nu$. Also, since $\{\calc_n\}$ is an increasing sequence, we can assume that $t>n$ and that allows us to write $\mu=\mu_1 \mu_2$ with $|\mu_1|=t-n$ and $|\mu_2|=n$.
Since $|\alpha \mu|=n+t$  we can write $\alpha \mu=\mu'\alpha'$ with $|\mu'|=t$ and $|\alpha'|=n$. But then, since $|\alpha|$ is divisible by $m$, the length of the cycle, $r(\alpha)=s(\alpha)$ and  $r(\mu)=s(\alpha)=r(\alpha)=r(\mu')$. Thus $\mu$ and $\mu'$ have the same length and same endpoint, implying that $\mu=\mu'$ (and, thus, $\alpha \mu=\mu \alpha'$). Then, using Lemma~\ref{uz}, we have $$u_{\alpha}u_{\mu}z^lu_{\mu}^*=u_{\mu}u_{\alpha'}z^lu_{\mu}^*=u_{\mu}z^lu_{\alpha'}u_{\mu}^*=u_{\mu}z^lu_{\alpha'}u_{\mu_2}^*u_{\mu_1}^*.$$
But $\alpha \mu=\mu \alpha'$ and $\mu=\mu_1\mu_2$. Thus $s(\alpha')=s(\mu)=s(\mu_2)$ and, since $|\alpha'|=|\mu_2|$, $\alpha'=\mu_2$ and it follows that $u_{\mu}z^lu_{\alpha'}u_{\mu_2}^*u_{\mu_1}^*=
u_{\mu}z^lu_{\mu_1}^*$. Since $u_{\alpha}u_{\mu_1}u_{\alpha'}=u_{\alpha }u_{\mu}=u_{\mu}u_{\alpha'}$, we get $u_{\alpha}u_{\mu_1}=u_{\mu}$ and $u_{\mu_1}=u_{\alpha}^*u_{\mu}$. Then $u_{\alpha}u_{\mu}z^lu_{\mu}^*=u_{\mu}z^lu_{\mu_1}^*=u_{\mu}z^lu_{\mu}^*u_{\alpha}$. Thus, $u_{\alpha}\in q(\cald)'$.
	\end{proof}

In Theorem~\ref{unweightedsimplicity} we show that, restricting to the unweighted case ($Z_k=I$ for all $k$ and $C^*(E,Z)=C^*(E)$ the Cuntz-Krieger algebra), we get a new proof of a  well known criterion for simplicity of the Cuntz-Krieger algebras.

We now want to describe the gauge-invariant ideals of $C^*(E,Z)$. (See Remark~\ref{gauge}). For this, we first use the analysis of Katsura in \cite{Ka2}. We shall start by introducing some of Katsura's notation and terminology. Here $X$ is a $C^*$-correspondence over the $C^*$-algebra $A$. 

For an ideal $I$ of $A$ we write
$$X(I)=\overline{span}\{\langle \eta,\varphi_X(a)\xi\rangle \in A:\; a\in I, \; \xi,\eta \in X\},$$
$$X^{-1}(I)=\{a\in A :\; \langle \eta,\varphi_X(a)\xi\rangle \in I \;\;\mbox{for all}\;\; \xi,\eta \in X\}.$$ Both are ideals in $A$.
He also defines an ideal $J(I)$, associated to $I$. But, for the case where $\varphi_X(A)=K(X)$ (which is the case where $A=q(\cald)$ and $X=q(F)$, by Lemma~\ref{qF1} (3)), we have $$J(I)=A.$$

\begin{defn} \label{Opair} (\cite[Definition 5.6 and Definition 5.12]{Ka2}) A pair $(I,I')$ of ideals in $A$ is said to be an O-pair if
	\begin{enumerate}
\item[(1)] $X(I)\subseteq I$,
\item[(2)] $J_X\cap X^{-1}(I)\subseteq I$ and
\item[(3)]  $I+J_X\subseteq I' \subseteq J(I)$,
\end{enumerate}
where $J_X:=\varphi_X^{-1}(K(X))\cap (\ker \varphi_X)^{\perp}$ 
\end{defn}

The importance of this concept is shown in the following theorem of Katsura.

\begin{thm}\label{Opairsinvideals}(\cite[Theorem 8.6]{Ka2}) The set of all gauge-invariant ideals of $\mathcal{O}(X,A)$ corresponds bijectively to the set of all $O$-pairs of $X$.
	\end{thm}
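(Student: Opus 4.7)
The plan is to exhibit two explicit maps between the collection of gauge-invariant ideals in $\mathcal{O}(X,A)$ and the collection of O-pairs of $X$, and show that they are mutual inverses. First I would set up the universal covariant representation $(t,\sigma)$ of $(X,A)$ so that $\mathcal{O}(X,A)$ is generated by $\sigma(A)\cup t(X)$, and recall that gauge-invariance of an ideal $\mathcal{I}$ allows one to apply the conditional expectation $\Phi_0(x)=\frac{1}{2\pi}\int_0^{2\pi}\gamma_s(x)\,ds$ onto the fixed-point algebra, as well as the higher Fourier projections $\Phi_n$.

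For the forward map, given a gauge-invariant ideal $\mathcal{I}$ in $\mathcal{O}(X,A)$, I would define $I:=\sigma^{-1}(\mathcal{I})\subseteq A$ and take $I'$ to be the largest ideal such that the induced map of $J_X$ into $\mathcal{O}(X,A)/\mathcal{I}$ is compatible with the Cuntz--Pimsner covariance on the quotient (explicitly, $I':=\{a\in J(I): \sigma(a)\in \mathcal{I}+\sigma(J_X)+\ldots\}$, the precise formulation being Katsura's). Verifying the O-pair axioms then reduces to direct covariance computations: condition (1), $X(I)\subseteq I$, follows because $\sigma(\langle \eta,\varphi_X(a)\xi\rangle)=t(\eta)^*\sigma(a)t(\xi)\in \mathcal{I}$ whenever $\sigma(a)\in\mathcal{I}$; condition (2), $J_X\cap X^{-1}(I)\subseteq I$, is precisely the Cuntz--Pimsner covariance $\sigma^{(1)}\circ \varphi_X=\sigma$ on $J_X$ pushed down to the quotient; and condition (3) follows tautologically from the construction of $I'$.

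For the reverse map, given an O-pair $(I,I')$, I would construct a quotient correspondence: form $A/I$, pass to the Hilbert $A/I$-module $X/XI$, and verify that the left action descends to a well-defined $C^*$-correspondence structure precisely because $X(I)\subseteq I$. Using $I'$ to specify which elements of $J_{X/XI}$ are required to satisfy the Cuntz--Pimsner covariance (rather than just the Toeplitz relations), I would form the corresponding relative Cuntz--Pimsner algebra and take $\mathcal{I}(I,I')$ to be the kernel of the induced surjection from $\mathcal{O}(X,A)$; gauge-invariance is then automatic since the construction respects the grading.

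The hard part will be showing that these two procedures are mutually inverse. The key tool here is a gauge-invariant uniqueness theorem applied to the quotient algebra $\mathcal{O}(X,A)/\mathcal{I}(I,I')$: using the Fourier projections $\Phi_n$ and Cesàro averaging one shows that a gauge-invariant ideal is determined by its intersection with the fixed-point algebra, which admits a natural description as a direct limit of blocks labelled by powers $X^{\otimes n}$ and governed by the data $(I, I', J_X)$. The remaining technicality --- verifying that starting from $\mathcal{I}$, forming $(I,I')$, and then reconstructing the ideal recovers $\mathcal{I}$ exactly --- is essentially a bookkeeping argument tracking how elements in each spectral subspace $\mathcal{O}(X,A)_n$ are controlled by the intersection $\mathcal{I}\cap \sigma(A)$ together with the covariance on $J_X$, and it is this bookkeeping that forms the technical core of Katsura's original proof.
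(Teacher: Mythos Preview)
The paper does not prove this theorem at all: it is quoted verbatim from Katsura \cite[Theorem 8.6]{Ka2} and used as a black box, so there is no proof in the paper to compare your proposal against. Your sketch is a reasonable high-level outline of Katsura's own argument (forward map via $I=\sigma^{-1}(\mathcal{I})$, reverse map via the quotient correspondence and a relative Cuntz--Pimsner algebra, with the bijection established through a gauge-invariant uniqueness theorem and analysis of the fixed-point algebra), though several of the definitions you gesture at---in particular the precise formulation of $I'$ and the structure of $J(I)$---would need to be pinned down exactly as in \cite{Ka2} for the argument to go through.
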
 

In our case here, $J_{q(F)}=q(\cald)$ and, thus, the O-pairs are pairs of the form $(J,q(\cald))$ where $J$ is an ideal in $q(\cald)$ such that $q(F)(J)\subseteq J$ and $(q(F))^{-1}(J)\subseteq J$.

\begin{defn}\label{finv}
	A fully invariant ideal in $q(\cald)$ is an ideal $J$ such that  $q(F)(J)\subseteq J$ and $(q(F))^{-1}(J)\subseteq J$.
	\end{defn} 

The following now follows from Theorem~\ref{Opairsinvideals}.

\begin{cor}\label{bijfinvideals}
The set of all gauge-invariant ideals of $C^*(E,Z)$ corresponds bijectively to the set of all fully invariant ideals of $q(\cald)$.	
	
\end{cor}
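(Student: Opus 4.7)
The plan is to apply Katsura's Theorem~\ref{Opairsinvideals} with $X = q(F)$ and $A = q(\cald)$, transporting the resulting bijection to $C^*(E,Z)$ via Theorem~\ref{isomorphism} and the gauge-compatibility noted in Remark~\ref{gauge}. The heart of the argument is to verify that the two degenerate features of our correspondence---namely, $\varphi_{q(F)}$ injective and $\varphi_{q(F)}(q(\cald)) = K(q(F))$---collapse the set of O-pairs onto exactly the fully invariant ideals of $q(\cald)$.

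First I would compute $J_{q(F)}$ and Katsura's auxiliary ideal $J(I)$ in this setting. By Lemma~\ref{qF1}(1) the left action is injective, so $(\ker \varphi_{q(F)})^\perp = q(\cald)$. By Lemma~\ref{qF1}(3), $\varphi_{q(F)}^{-1}(K(q(F))) = q(\cald)$. Intersecting gives $J_{q(F)} = q(\cald)$. The same identity $\varphi_{q(F)}(q(\cald)) = K(q(F))$ is precisely the hypothesis under which (as noted in the excerpt just before Definition~\ref{Opair}) $J(I) = q(\cald)$ for every ideal $I$ of $q(\cald)$.

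Next I would unpack the three conditions of Definition~\ref{Opair} using these identifications. Condition (1) reads $q(F)(I) \subseteq I$. Condition (2), $J_{q(F)} \cap q(F)^{-1}(I) \subseteq I$, reduces to $q(F)^{-1}(I) \subseteq I$. Condition (3), $I + J_{q(F)} \subseteq I' \subseteq J(I)$, squeezes into $q(\cald) \subseteq I' \subseteq q(\cald)$, forcing $I' = q(\cald)$ and eliminating the second coordinate entirely. Comparing with Definition~\ref{finv}, this shows that the O-pairs of $q(F)$ are precisely the pairs $(I, q(\cald))$ with $I$ a fully invariant ideal of $q(\cald)$. The assignment $I \mapsto (I, q(\cald))$ is therefore a bijection between the fully invariant ideals of $q(\cald)$ and the O-pairs of $q(F)$, and composing it with Katsura's bijection from Theorem~\ref{Opairsinvideals} delivers the corollary.

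There is no serious obstacle: once both $J_{q(F)}$ and $J(I)$ are identified with all of $q(\cald)$ via Lemma~\ref{qF1}, the three O-pair conditions collapse to the two conditions defining a fully invariant ideal, so the proof is a direct unwinding of definitions.
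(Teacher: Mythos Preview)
Your proposal is correct and follows essentially the same route as the paper: the paragraph preceding Definition~\ref{finv} already records that $J_{q(F)}=q(\cald)$ and $J(I)=q(\cald)$ (via Lemma~\ref{qF1}), so that the $O$-pairs are exactly the pairs $(J,q(\cald))$ with $J$ fully invariant, and the corollary is then stated as an immediate consequence of Theorem~\ref{Opairsinvideals}. You have simply spelled out these reductions in more detail.
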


\begin{rem}\label{rembij}
Using \cite[Theorem 8.6]{Ka2} and tracing through Katsura's definitions, we see that, in our special case (where every $O$-pair has the form $(J,q(\mathcal{D}))$ for a fully invariant ideal $J$ in $q(\mathcal{D})$), the bijection in Corollary~\ref{bijfinvideals} is defined as follows. Given a gauge-invariant ideal in $C^*(E,Z)$, the associated fully invariant ideal in $q(\mathcal{D})$ is its intersection with $q(\mathcal{D})$. In the converse direction, given a fully invariant ideal $J$ in $q(\mathcal{D})$, there is a $^*$-representation of $C^*(E,Z) \cong O(q(F),q(\mathcal{D}))$ into the Cuntz-Pimsner algebra $O(q(F)/q(F)J,q(\mathcal{D})/J)$ given by the pair of quotient maps $(q_{q(F)J},q_J)$. The gauge-invariant ideal in $C^*(E,Z)$ associated to $J$ is the kernel of that representation.

	\end{rem} 
\begin{lem}\label{charfullyinv}
	An ideal $J\subseteq q(\cald)$ is fully invariant if and only if , for every $e,f\in E^1$, we have
	\begin{enumerate}
		\item [(i)] $u_e^*Ju_f \subseteq J$ (and, by applying it successively, $u_{\gamma_1}^*Ju_{\gamma_2}\subseteq J$ if $|\gamma_1|=|\gamma_2|$) and 
		\item [(ii)] $u_eJu_f^*\subseteq J$ (and $u_{\gamma_1}Ju_{\gamma_2}^*\subseteq J$ if $|\gamma_1|=|\gamma_2|$) .
	\end{enumerate}
Equivalently, $\bigvee_{e,f \in E^1}u_e^*Ju_f=J$.
\end{lem}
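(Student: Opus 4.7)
The plan is to unpack Katsura's definition of full invariance (Definition~\ref{finv}) into the explicit conditions (i) and (ii), using the description $q(F)=\sum_{e\in E^1} u_e q(\cald)$ from Lemma~\ref{qF} together with the Cuntz-type relations of Lemma~\ref{alpha}. The two conditions $q(F)(J)\subseteq J$ and $(q(F))^{-1}(J)\subseteq J$ each have a clean reformulation in terms of the generators $u_e$, and (i), (ii) are precisely these reformulations.

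For the first condition, a typical element of $q(F)(J)$ has the form $\langle u_f d_f, a u_e d_e\rangle = d_f^* u_f^* a u_e d_e$ for $a\in J$, $e,f\in E^1$, $d_e,d_f\in q(\cald)$. Because $J$ is a two-sided ideal, $q(F)(J)\subseteq J$ is equivalent to $u_f^* J u_e\subseteq J$ for all $e,f\in E^1$, which is (i). The extension to $u_{\gamma_1}^* J u_{\gamma_2}\subseteq J$ for arbitrary paths of the same length is then immediate by induction on length (using the same one-step computation).

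For the second condition, note that $a\in (q(F))^{-1}(J)$ means exactly that $u_f^* a u_e\in J$ for all $e,f\in E^1$ (again by the ideal property, testing against $\xi=u_e,\eta=u_f$ suffices). Now use $\sum_{e\in E^1} u_e u_e^* = I$ (Lemma~\ref{alpha}(2)) to write
$$ a \;=\; \sum_{e,f\in E^1} u_e u_e^* a u_f u_f^* \;=\; \sum_{e,f} u_e\,(u_e^* a u_f)\,u_f^*. $$
If (ii) holds, each summand lies in $u_e J u_f^*\subseteq J$, so $a\in J$; this gives $(q(F))^{-1}(J)\subseteq J$. Conversely, assuming $(q(F))^{-1}(J)\subseteq J$, for any $a\in J$ and $e,f\in E^1$ the element $b:=u_e a u_f^*$ satisfies $u_g^* b u_h = \delta_{g,e}\delta_{f,h}\, p_{s(e)} a p_{s(f)} \in J$ by the orthogonality $u_g^* u_e=\delta_{g,e}\,p_{s(e)}$ (Lemma~\ref{alpha}(1)), so $b\in (q(F))^{-1}(J)\subseteq J$, yielding (ii). Combined with the first paragraph this proves the main equivalence, and the extension to paths $u_{\gamma_1} J u_{\gamma_2}^*$ again follows by iteration.

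Finally, for the equivalent formulation $\bigvee_{e,f\in E^1} u_e^* J u_f = J$: the inclusion $\bigvee\subseteq J$ is exactly (i). For $J\subseteq \bigvee$, given $a\in J$ use $\sum_{v\in E^0} p_v=I$ to write $a=\sum_{v,w} p_v a p_w$; since $E$ has no sinks, for each $v$ we may pick $e\in E^1$ with $s(e)=v$ and similarly $f$ with $s(f)=w$. By (ii), $u_e a u_f^*\in J$, and the identity $u_e^*(u_e a u_f^*)u_f = p_{s(e)}\,a\,p_{s(f)} = p_v a p_w$ places $p_v a p_w$ into $u_e^* J u_f\subseteq \bigvee$, so $a\in \bigvee$. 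Conversely, if $\bigvee u_e^* J u_f = J$, then (i) is immediate, and for (ii) we approximate $a\in J$ by sums $\sum u_{e_i}^* b_i u_{f_i}$ with $b_i\in J$, observing that $u_g u_{e_i}^*$ and $u_{f_i} u_h^*$ lie in $q(\cald)$ (they have gauge degree $0$), so $u_g(u_{e_i}^* b_i u_{f_i})u_h^* \in q(\cald)\cdot J\cdot q(\cald)\subseteq J$, giving $u_g a u_h^*\in J$. The only mildly delicate point in the argument is the passage between $(q(F))^{-1}(J)\subseteq J$ and (ii), which requires pairing the Cuntz relation $\sum u_e u_e^*=I$ with the orthogonality $u_g^* u_e=\delta_{g,e} p_{s(e)}$ in opposite directions; the rest is routine bookkeeping.
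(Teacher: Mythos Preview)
Your proof is correct and follows essentially the same approach as the paper's: both reduce $q(F)(J)\subseteq J$ to (i) via Lemma~\ref{qF}, handle the equivalence of $(q(F))^{-1}(J)\subseteq J$ with (ii) by combining the Cuntz relation $\sum_e u_e u_e^*=I$ with the orthogonality $u_g^*u_e=\delta_{g,e}p_{s(e)}$, and treat the final equivalent formulation via $p_v a p_w = u_e^*(u_e a u_f^*)u_f$ together with the observation that $u_e u_g^*$, $u_h u_f^*\in q(\cald)$. The only cosmetic difference is that for the direction ``fully invariant $\Rightarrow$ (ii)'' the paper packages the computation as $q(F)^*(q(F)Jq(F)^*)q(F)=q(\cald)Jq(\cald)\subseteq J$, whereas you spell out the term-by-term orthogonality; these are the same argument.
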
 
\begin{proof}
	Suppose $J$ is fully invariant. Then $q(F)^*Jq(F)=q(F)(J)\subseteq J$ and (i) follows from Lemma~\ref{qF}. To prove (ii) note that $$q(F)^*(q(F)Jq(F)^*)q(F)=q(\cald)Jq(\cald)\subseteq J. $$ Thus, every $a\in q(F)Jq(F)^*$ satisfy $q(F)^*aq(F)\subseteq J$. But this means that $a\in q(F)^{-1}(J)$ and, as $J$ is fully invariant, $a\in J$. It follows that $q(F)Jq(F)^*\subseteq J$. In particular, it proves (ii).
	
	For the converse, assume (i) and (ii). Then (i) (using Lemma~\ref{qF}) implies that $q(F)(J)=q(F)^*Jq(F)\subseteq J$. To prove that $q(F)^{-1}(J)\subseteq J$, we need to show that every $a\in q(\cald)$ that satisfies $q(F)^*aq(F)\subseteq J$ lies in $J$. So fix such $a$ . Then, for every $e,f \in E^1$, $u_e^*au_f\in J$. But then, using (ii), we get $u_eu_e^*au_fu_f^*\in J$. Summing over all $e,f \in E^1$ and using Lemma~\ref{alpha}(2) we get $a\in J$.
	
	To prove the last statement, assume first that  $\bigvee_{e,f\in E^1}u_e^*Ju_f=J$. Then (i) clearly holds. To prove (ii), fix $e,f\in E^1$ and note that it suffices to show that, for every $g,h\in E^1$, $u_e(u_g^*Ju_h)u_f^*\subseteq J$ but this is clear since $u_eu_g^*, u_hu_f^* \in q(\cald)$ and $J$ is an ideal. 
	  For the converse, assume that (i) and (ii) hold. We need to show that $J\subseteq \bigvee_{e,f \in E^1} u_e^*Ju_f$. For this, take $a\in J$. Then $a=\sum_{v,w \in E^0} p_vap_w$ and, for $e,f\in E^1$ such that $s(e)=v$ and $s(f)=w$ (recall that such $e,f$ exist since $E$ has no sinks), we have $p_vap_w=u_e^*u_eau_f^*u_f=u_e^*(u_eau_f^*)u_f\in u_e^*Ju_f$ (using (ii)). Thus $a\in \bigvee_{e,f\in E^1}u_e^*Ju_f$.
	
	\end{proof}

\begin{lem}\label{bij}
If $J_1,J_2$ are distinct fully invariant ideals in $q(\cald)$ then $J_1\cap \calc_0$ and $J_2\cap\calc_0$ are distinct ideals in $\calc_0$.		
\end{lem}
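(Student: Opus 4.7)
The plan is to show that whenever $J$ is a fully invariant ideal of $q(\cald)$, the intersection $J\cap\calc_0$ uniquely determines $J$; the contrapositive then yields the lemma. Since $q(\cald)=\overline{\cup_n\calc_n}$, we have $J=\overline{\cup_n J_n}$ for $J_n:=J\cap\calc_n$, so it suffices to reconstruct each $J_n$ from $J_0$. By Proposition~\ref{ideals}, each $J_n$ is encoded by its family $\{J_{n,v}\subseteq\calc_v\}_{v\in E^0}$ via formula~(\ref{J}). Hence the entire task reduces to proving the following key claim: \emph{for every fully invariant $J$, every $v\in E^0$, and every $n\geq 0$, one has $J_{n,v}=J_{0,v}$.} Once this claim is established, the family $\{J_{0,v}\}_{v\in E^0}$ is read off from $J_0$ by Proposition~\ref{ideals} applied at level $n=0$, each $J_n$ is then reconstructed from that same family by formula~(\ref{J}) at level $n$, and $J$ itself is recovered as the closure of $\cup_n J_n$.

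For the inclusion $J_{n,v}\subseteq J_{0,v}$, I would fix $a\in J_{n,v}$ and write $a=u_{\xi_v}u_{\mu_1}^*c\,u_{\mu_2}u_{\xi_v}^*$ with $c\in J_n$ and paths $\mu_1,\mu_2$ of length $np+q$ satisfying $s(\mu_i)=v$ (Proposition~\ref{ideals}(3)). Since $|\mu_1|=|\mu_2|$, Lemma~\ref{charfullyinv}(i) gives $u_{\mu_1}^*c\,u_{\mu_2}\in J$; then $|\xi_v|=|\xi_v|=q$ and Lemma~\ref{charfullyinv}(ii) (in the form $u_{\gamma_1}Ju_{\gamma_2}^*\subseteq J$ for $|\gamma_1|=|\gamma_2|$) give $a\in J$. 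Because $J_{n,v}\subseteq \calc_v\subseteq \calc_0$, we conclude $a\in J\cap\calc_0=J_0$, and the identity $a=u_{\xi_v}u_{\xi_v}^*\,a\,u_{\xi_v}u_{\xi_v}^*$ (valid for every $a\in\calc_v$, since $u_{\xi_v}u_{\xi_v}^*$ is the projection defining $\calc_v$) places $a$ in $u_{\xi_v}u_{\xi_v}^*J_0u_{\xi_v}u_{\xi_v}^*=J_{0,v}$.

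For the reverse inclusion $J_{0,v}\subseteq J_{n,v}$, I would take $a\in J_{0,v}$, write $a=u_{\xi_v}u_{\xi_v}^*b\,u_{\xi_v}u_{\xi_v}^*$ with $b\in J_0\subseteq J$, and exploit the fact that $E$ has no sources to pick paths $\mu_1,\mu_2$ of length $np+q$ with $s(\mu_i)=v$. Set $c:=u_{\mu_1}u_{\xi_v}^*b\,u_{\xi_v}u_{\mu_2}^*$. Using the partial-isometry identity $u_{\xi_v}^*u_{\xi_v}u_{\xi_v}^*=u_{\xi_v}^*$, a direct computation shows $c=u_{\mu_1}u_{\xi_v}^*a\,u_{\xi_v}u_{\mu_2}^*$, which is exactly $\tau_n^{-1}(e_{\mu_1,\mu_2}\otimes a)$ by formula~(\ref{tauninverse}); in particular $c\in\calc_n$. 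On the other hand, applying Lemma~\ref{charfullyinv}(i) with $|\xi_v|=|\xi_v|$ gives $u_{\xi_v}^*b\,u_{\xi_v}\in J$, and then Lemma~\ref{charfullyinv}(ii) with $|\mu_1|=|\mu_2|$ yields $c\in J$. Hence $c\in J_n$, and the identity $u_{\xi_v}u_{\mu_1}^*c\,u_{\mu_2}u_{\xi_v}^*=a$ (which follows from $\tau_n(c)=e_{\mu_1,\mu_2}\otimes a$ read off via formula~(\ref{taun})) shows $a\in J_{n,v}$. This completes the claim, and hence the lemma. The main obstacle throughout is simply careful book-keeping with the partial-isometry relations for the $u_{\xi_v}$ and consistent use of \emph{both} halves (i) and (ii) of Lemma~\ref{charfullyinv}; no ingredient beyond full invariance and the parametrization of Proposition~\ref{ideals} is needed.
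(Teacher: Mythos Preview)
Your proof is correct, but it takes a considerably longer route than the paper's. The paper argues directly: if $J_1\neq J_2$ then (since $J_i=\overline{\cup_n(J_i\cap\calc_n)}$) there is some $n$ and some $a\in J_1\cap\calc_n$ with $a\notin J_2$; for any $\gamma_1,\gamma_2$ of length $np$ one has $u_{\gamma_1}^*au_{\gamma_2}\in J_1\cap\calc_0$ by Lemma~\ref{charfullyinv}(i) and Lemma~\ref{eCnf}, and if all of these lay in $J_2$ then Lemma~\ref{charfullyinv}(ii) followed by summing $u_{\gamma_1}u_{\gamma_1}^*au_{\gamma_2}u_{\gamma_2}^*$ over all such $\gamma_i$ would force $a\in J_2$, a contradiction. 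This avoids the parametrization of Proposition~\ref{ideals} entirely.

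What you prove instead is the stronger structural statement $J_{n,v}=J_{0,v}$ for every fully invariant $J$, which is exactly condition~(1) of Theorem~\ref{finvideals} (appearing later in the paper). Your argument for that claim is essentially the same as the one the paper gives there. So your approach is sound and in fact yields more, at the cost of invoking the $\tau_n$-machinery and the finer decomposition into the ideals $J_{n,v}$; the paper's proof of the lemma itself is a two-line compression argument that defers the finer structural analysis to Theorem~\ref{finvideals}. One small remark: where you say ``since $E$ has no sources'' to pick paths $\mu_i$ of length $np+q$ with $s(\mu_i)=v$, the relevant hypothesis is actually that $E$ has no \emph{sinks} (so that one can extend paths forward from $v$); this is already a standing assumption and the argument goes through.
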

\begin{proof}
Assume 	 $J_1,J_2$ are distinct fully invariant ideals in $q(\cald)$. Clearly, $J_1\cap \calc_0$ and $J_2\cap\calc_0$ are ideals in $\calc_0$. We need to show that they are distict. Since $J_i=\overline{\cup_n (\calc_n \cap J_i) }$ and $J_1\neq J_2$ there is some $n$ such that $J_1\cap \calc_n\neq J_2 \cap \calc_n$. Fix this $n$.
Thus, without loss of generality, there is some $a\in J_1\cap \calc_n$ but $a\notin J_2$. For every $\gamma_1,\gamma_2$ with $|\gamma_i|=np$, we have $u_{\gamma_1}^*au_{\gamma_2}\in J_1\cap \calc_0$ (by applying part (i) of Lemma~\ref{charfullyinv}). We will show that, for some $\gamma_i$ of length $np$, $u_{\gamma_1}^*au_{\gamma_2}\notin J_2$ to complete the proof.
But, if this is not the case,  $u_{\gamma_1}^*au_{\gamma_2}\in J_2$ for every such $\gamma_i$ and then, by applying part (ii) of Lemma~\ref{charfullyinv}, $u_{\gamma_1}u_{\gamma_1}^*au_{\gamma_2}u_{\gamma_2}^* \in J_2$ for all such $\gamma_i$. Summing over all these $\gamma_i$s, we get $a\in J_2$ which is a contradiction. 

	\end{proof}

\begin{thm}\label{finvideals}
	Let $J$ be an ideal in $q(\cald)$ and, for $n\geq 0$, write $J_n=J\cap \calc_n$. Then $J$ is fully invariant if and only if the following conditions hold.
	\begin{enumerate} 
		\item[(1)] For every $v\in E^0$, and $n\geq 0$, $$J_{0,v}=J_{n,v},$$
		\item[(2)] for every $\alpha_1,\alpha_2$ with $|\alpha_1|=|\alpha_2|=q$,
		$$u_{\alpha_1}^*J_0u_{\alpha_2}\subseteq J_0$$ and
		\item[(3)] for every $\alpha, \beta$ with $|\alpha|=|\beta|$, $u_{\alpha} J_0 u_{\beta}^*\subseteq J$.
		\end{enumerate}
	\end{thm}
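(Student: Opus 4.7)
The plan is to use Lemma~\ref{charfullyinv} as the bridge in both directions: full invariance of an ideal $J$ in $q(\cald)$ amounts to $u_e^*Ju_f\subseteq J$ and $u_eJu_f^*\subseteq J$ for all $e,f\in E^1$. Combined with the fibre structure of each $J_n=J\cap \calc_n$ given by Proposition~\ref{ideals}, this will let me verify the three conditions.

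For the forward direction, suppose $J$ is fully invariant. Condition (2) follows by iterating $u_e^*Ju_f\subseteq J$ a total of $q$ times and intersecting with $\calc_0$ (Lemma~\ref{eCnf}(1) iterated gives $u_{\alpha_1}^*\calc_0u_{\alpha_2}\subseteq \calc_0$ for $|\alpha_i|=q$), and condition (3) is immediate by iterating $u_eJu_f^*\subseteq J$. For condition (1) I handle both inclusions via Proposition~\ref{ideals}(3). Fix $\mu=\delta\alpha$ with $|\delta|=np$, $|\alpha|=q$, $s(\alpha)=v$. To see $J_{n,v}\subseteq J_{0,v}$, an element $c=u_{\xi_v}u_\mu^*au_\mu u_{\xi_v}^*$ with $a\in J_n$ equals $u_{\xi_v}u_\alpha^*(u_\delta^*au_\delta)u_\alpha u_{\xi_v}^*$; full invariance places $u_\delta^*au_\delta$ in $J$, Lemma~\ref{eCnf}(2) places it in $\calc_0$, hence in $J_0$; a further compression by $u_\alpha^*\cdot u_\alpha$ keeps us in $J_0$, and the outer $u_{\xi_v}\cdot u_{\xi_v}^*$ together with full invariance deposit $c$ in $J\cap \calc_v=J_0\cap \calc_v=J_{0,v}$. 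For $J_{0,v}\subseteq J_{n,v}$, given $c\in J_{0,v}$ pick $\delta$ of length $np$ with $s(\delta)=r(\xi_v)$ (which exists since $E$ has no sources), set $\mu=\delta\xi_v$ and $a=u_\delta c u_\delta^*$; full invariance gives $a\in J$ and a direct degree check puts $a\in \calc_n$, so $a\in J_n$, and the computation $u_{\xi_v}u_\mu^*au_\mu u_{\xi_v}^*=u_{\xi_v}u_{\xi_v}^*p_{s(\delta)}cp_{s(\delta)}u_{\xi_v}u_{\xi_v}^*$ collapses to $c$ using the absorption $p_{s(\delta)}u_{\xi_v}u_{\xi_v}^*=u_{\xi_v}u_{\xi_v}^*$ (from $s(\delta)=r(\xi_v)$) together with $c\in \calc_v$.

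For the reverse direction, assume (1), (2), (3). Condition (1) and Corollary~\ref{JnJ0} give $J_n=\sum_{|\delta_i|=np}u_{\delta_1}J_0u_{\delta_2}^*$, so by density in $J=\overline{\cup_n J_n}$ and Lemma~\ref{charfullyinv} it suffices to verify the two closure conditions on each $J_n$. The condition $u_eJ_nu_f^*\subseteq J$ is immediate: each summand gives $u_{e\delta_1}J_0u_{f\delta_2}^*$ with $|e\delta_1|=|f\delta_2|=np+1$, which lies in $J$ by (3). For $u_e^*J_nu_f\subseteq J$, when $n\geq 1$ the relations $u_e^*u_{\delta_1}=u_{\delta_1'}$ (if $\delta_1=e\delta_1'$, else zero) and its mirror image reduce the expression to $u_{\delta_1'}J_0u_{\delta_2'}^*$ with $|\delta_i'|=np-1$, again in $J$ by (3). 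The case $n=0$ needs a separate unpacking: use the fibre form $J_0=\sum_{|\mu_i|=q}u_{\mu_1}u_{\xi_{s(\mu_1)}}^*J_{0,s(\mu_1)}u_{\xi_{s(\mu_1)}}u_{\mu_2}^*$ from Proposition~\ref{ideals}, and the same reduction on $u_e^*\cdot u_f$ yields terms $u_{\mu_1'}u_{\xi_v}^*J_{0,v}u_{\xi_v}u_{\mu_2'}^*$ with $|\mu_i'|=q-1$; condition (2) applied with the length-$q$ paths $\xi_v$ puts $u_{\xi_v}^*J_{0,v}u_{\xi_v}$ inside $J_0$, and condition (3) closes the argument.

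The main obstacle is the $n=0$ subcase of $u_e^*J_nu_f\subseteq J$ in the reverse direction: unlike $n\geq 1$, there is no shorter path available inside the Corollary~\ref{JnJ0} representation of $J_0$, so the fibre decomposition of $J_0$ from Proposition~\ref{ideals} must be unpacked, and condition (2) (about length-$q$ compressions) is precisely what reroutes through $\xi_v$. A pervasive bookkeeping subtlety, in both directions, is the projection absorption $p_{s(\delta)}u_{\xi_v}u_{\xi_v}^*=u_{\xi_v}u_{\xi_v}^*$ when $s(\delta)=r(\xi_v)$, which collapses the stray source projections produced by the partial-isometry relations $u_\alpha^*u_\alpha=p_{s(\alpha)}$.
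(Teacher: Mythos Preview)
Your proof is correct. The forward direction is essentially the same as the paper's. The reverse direction takes a genuinely different and somewhat cleaner route: instead of fixing $a\in J_n$ and decomposing $u_{\alpha_1}^*au_{\alpha_2}$ (respectively $u_{\alpha_1}au_{\alpha_2}^*$) via the projections $u_{\gamma}u_{\gamma}^*$ with $|\gamma|=np+q-1$ (respectively $np+q+1$), as the paper does, you first invoke Corollary~\ref{JnJ0} (available once~(1) holds) to obtain the explicit form $J_n=\sum_{|\delta_i|=np}u_{\delta_1}J_0u_{\delta_2}^*$, and then verify Lemma~\ref{charfullyinv}(i),(ii) summand by summand using~(3). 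This bypasses the paper's repeated appeals to Lemma~\ref{Lempsi}(3) and the somewhat intricate path-splitting there; the price is that you must justify $u_{\delta}\calc_0u_{\delta}^*\subseteq \calc_n$ for $|\delta|=np$ (your ``direct degree check''), which follows from Equation~(\ref{J}) for $\calc_0$ together with~(\ref{tauninverse}).

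One minor point: your separate treatment of the $n=0$ subcase for $u_e^*J_0u_f$ tacitly assumes $q\geq 1$, since it produces paths $\mu_i'$ of length $q-1$. This subcase is in fact unnecessary: because $\calc_0\subseteq \calc_1$ we have $J_0\subseteq J_1$, so the $n\geq 1$ argument already covers $J_0$. (Alternatively, when $q=0$ Condition~A(1) gives $u_ez=zu_e$, whence $u_e^*cu_f=cu_e^*u_f$ for $c\in J_0\subseteq C^*(z)$, and the inclusion is immediate.) A byproduct of this observation is that condition~(2) is formally redundant in the reverse implication: (1) and~(3) alone force full invariance, and then~(2) follows from the forward direction.
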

\begin{proof}
Assume first that $J$ is a fully invariant ideal.

Fix $n\geq m$ and $v\in E^0$ and write $q_v:=u_{\xi_v}u_{\xi_v}^*$. Also fix paths $\delta$ and $\gamma$ with $|\gamma|=(n-m)p$, $|\delta|=mp$, $s(\delta)=r(\xi_v)$ and $s(\gamma)=r(\delta)$. Thus $\gamma \delta \xi_v$ is well defined, $|\gamma \delta \xi_v|=np+q$ and $|\delta \xi_v|=mp+q$.

Using the invariance of $J$ (as in Lemma~\ref{charfullyinv}(i) ) and Lemma~\ref{eCnf} (2), we have
$$u_{\gamma}^*J_nu_{\gamma}=u_{\gamma}^*(J\cap \calc_n)u_{\gamma}\subseteq u_{\gamma}^*Ju_{\gamma}\cap u_{\gamma}^*\calc_n u_{\gamma}\subseteq J\cap \calc_m=J_m.$$
Thus
\begin{equation}\label{mn} J_{n,v}=u_{\xi_v}u_{\gamma \delta \xi_v}^*J_nu_{\gamma \delta \xi_v}u_{\xi_v}^*=q_vu_{\delta}^*u_{\gamma}^*J_nu_{\gamma}u_{\delta}q_v \subseteq q_vu_{\delta}^*J_mu_{\delta}q_v=J_{m,v}.\end{equation} 
In particular, $J_{n,v}\subseteq J_{0,v}$. 

To complete the proof of (1), fix  paths $\alpha, \delta$ with $|\alpha|=q$, $|\delta|=np$, $s(\alpha)=v$ and $s(\delta)=r(\alpha)$. Then 
$$ J_{0,v}=u_{\xi_v}u_{\alpha}^*J_0 u_{\alpha}u_{\xi_v}^*=u_{\xi_v}u_{\alpha}^*u_{\delta}^*u_{\delta}J_0 u_{\delta}^*u_{\delta}u_{\alpha}u_{\xi_v}^*=u_{\xi_v}(u_{\alpha}^*u_{\delta}^*)(u_{\delta}J_0 u_{\delta}^*)(u_{\delta}u_{\alpha})u_{\xi_v}^*.$$
By Lemma~\ref{charfullyinv} (ii), $u_{\delta}J_0u_{\delta}^*\subseteq J_n$ and, thus,
$$J_{0,v}\subseteq u_{\xi_v}u_{\delta \alpha}^*J_nu_{\delta \alpha}u_{\xi_v}^*=J_{n,v}$$ completing the proof of (1). Parts (2) and (3) follow from the invariance of $J$ (see Lemma~\ref{charfullyinv} ) and the fact that $u_{\alpha_1}^*\calc_0 u_{\alpha_2}\subseteq \calc_0$.

For the converse, we now assume that (1), (2) and (3) hold.

Fix $a\in J$ and $\alpha_1,\alpha_2 \in E^1$. We will show that $u_{\alpha_1}^*au_{\alpha_2}\in J$.

Since $J=\overline{\cup_n J_n}$, it suffices to assume that $a\in J_n$ for some $n\geq 1$.
For every paths $\gamma,\delta$ with $|\gamma|=|\delta|=np+q-1$, $r(\delta)=s(\alpha_2)$ and $r(\gamma)=s(\alpha_1)$,
we consider $u_{\gamma}u_{\gamma}^*u_{\alpha_1}^*au_{\alpha_2}u_{\delta}u_{\delta}^*$. We claim that it lies in $J$. If $s(\delta)\neq s(\gamma)$, it follows from Lemma~\ref{Lempsi} (3) that this is $0$. So we now assume that $s(\gamma)=s(\delta)=v$. Then $u_{\xi_v}u_{\alpha_1\gamma}^*au_{\alpha_2\delta}u_{\xi_v}^*\in J_{n,v}\subseteq J_{0,v}$ and, thus, there is some path $\mu$ (that depends on $\gamma$ and $\delta$) such that $|\mu|=q$, $s(\mu)=v$ and
$$u_{\xi_v}u_{\gamma}^*u_{\alpha_1}^*a u_{\alpha_2}u_{\delta}u_{\xi_v}^*\in u_{\xi_v}u_{\mu}^*J_0u_{\mu}u_{\xi_v}^*.$$
Multiplying by $u_{\xi_v}^*$ on the left and by $u_{\xi_v}$ on the right yields
$$u_{\gamma}^*u_{\alpha_1}^*a u_{\alpha_2}u_{\delta}\in u_{\mu}^*J_0u_{\mu}$$ and 
$$u_{\gamma}u_{\gamma}^*u_{\alpha_1}^*a u_{\alpha_2}u_{\delta}u_{\delta}^*\in u_{\gamma}u_{\mu}^*J_0u_{\mu}u_{\delta}^*.$$
Write $\gamma=\gamma_1\gamma_2$ and $\delta=\delta_1\delta_2$ where $|\gamma_1|=|\delta_1|=np-1$ and $|\gamma_2|=|\delta_2|=q$ to get
$$u_{\gamma}u_{\mu}^*J_0u_{\mu}u_{\delta}=u_{\gamma_1}(u_{\gamma_2}u_{\mu}^*)J_0(u_{\mu}u_{\delta_2})u_{\delta_1}^*\subseteq u_{\gamma_1}\calc_0J_0\calc_0 u_{\delta_1}^*\subseteq u_{\gamma_1}J_0u_{\delta_1}^*.$$ 	
Using (3), we get
$$u_{\gamma}u_{\gamma}^*u_{\alpha_1}^*a u_{\alpha_2}u_{\delta}u_{\delta}^*\in J,$$ proving the claim.  Summing over all $\gamma$ and $\delta$ as above ($|\gamma|=|\delta|=np+q-1$, $r(\delta)=s(\alpha_2)$ and $r(\gamma)=s(\alpha_1)$), we get	
$$u_{\alpha_1}^*a u_{\alpha_2}\in J.$$

To complete the proof that $J$ is fully invariant we have to show that $u_{\alpha_1}Ju_{\alpha_2}^*\subseteq J$ whenever $\alpha_1,\alpha_2 \in E^1$. So we fix $\alpha_i\in E^1$ and $a\in J_n$. 
For paths $\gamma_1,\gamma_2$ of lengths $np+q+1$, consider $u_{\gamma_1}^*u_{\alpha_1}au_{\alpha_2}^*u_{\gamma_2}$. If this is non zero, there are paths $\nu_1,\nu_2$ such that $\gamma_i=\alpha_i\nu_i$ (so that $|\nu_i|=np+q$ and $s(\nu_i)=s(\gamma_i)$ ) and then $u_{\alpha_i}^*u_{\gamma_i}=u_{\nu_i}$ and $u_{\gamma_1}^*u_{\alpha_1}au_{\alpha_2}^*u_{\gamma_2}=u_{\nu_1}^*au_{\nu_2}$. But this is non zero only if $s(\nu_1)=s(\nu_2)$ (by Lemma~\ref{Lempsi}(3)) so we assume that  $s(\nu_1)=s(\nu_2)=v$ (and then also $s(\gamma_1)=s(\gamma_2)=v$).
Thus, $$u_{\xi_v}u_{\gamma_1}^*u_{\alpha_1}au_{\alpha_2}^*u_{\gamma_2}u_{\xi_v}^*= u_{\xi_v}u_{\nu_1}^*au_{\nu_2}u_{\xi_v}\in J_{n,v}\subseteq J_{0,v} $$ (using (1)). Thus there is a path $\mu$ of length $q$ such that $$u_{\xi_v}u_{\gamma_1}^*u_{\alpha_1}au_{\alpha_2}^*u_{\gamma_2}u_{\xi_v}^*\in u_{\xi_v}u_{\mu}^*J_0u_{\mu}u_{\xi_v}^*.$$ 
It follows that 
$$u_{\gamma_1}^*u_{\alpha_1}au_{\alpha_2}^*u_{\gamma_2}\in u_{\mu}^*J_0u_{\mu}\subseteq J_0$$ (using (2)) and
$$u_{\alpha_1}^*u_{\gamma_1}u_{\gamma_1}^*u_{\alpha_1}au_{\alpha_2}^*u_{\gamma_2}u_{\gamma_2}^*u_{\alpha_2}\in u_{\alpha_1}^*u_{\gamma_1} u_{\mu}^*J_0u_{\mu}u_{\gamma_2}^*u_{\alpha_2}\subseteq u_{\alpha_1}^*J u_{\alpha_2}$$ (using (3)).

Summing up over all $\gamma_1,\gamma_2$ of lengths $np+q+1$, we get $u_{\alpha_1}^*u_{\alpha_1}au_{\alpha_2}^*u_{\alpha_2} \in u_{\alpha_1}^*Ju_{\alpha_2}$ and, as $u_{\alpha_i}u_{\alpha_i}^*\in q(\cald)$,
$$u_{\alpha_1}au_{\alpha_2}^*=u_{\alpha_1}u_{\alpha_1}^*u_{\alpha_1}au_{\alpha_2}^*u_{\alpha_2}u_{\alpha_2}^*\in u_{\alpha_1}u_{\alpha_1}^*Ju_{\alpha_2}u_{\alpha_2}^*\subseteq J.$$
	\end{proof}
\begin{rem}\label{remark2}
Condition (2) in the theorem can be replaced by
\begin{enumerate}
	\item [(2')] For every $\alpha_1,\alpha_2$ with $|\alpha_1|=|\alpha_2|$, $$u_{\alpha_1}^*J_0u_{\alpha_2}\subseteq J_0$$
\end{enumerate}	
Clearly, (2') implies (2) and, thus, (2'), (1) and (3) imply that $J$ is fully invariant. For the converse, if $J$ is fully invariant, then we get (2') by successive application of (i) in Lemma~\ref{charfullyinv}. 
	
\end{rem}


We also have the following.

\begin{thm}\label{finvideals2}
	Let $J$ be an ideal in $q(\cald)$ and, for $n\geq 0$, write $J_n=J\cap \calc_n$. Then $J$ is fully invariant if and only if the following conditions hold.
\begin{enumerate} 
	\item[(1)] For every $v\in E^0$, and $n\geq 0$, $$J_{0,v}=J_{n,v}$$,
	\item[(2)] $J_0$ is an invariant ideal (that is, for every $\alpha, \beta \in E^1$, $u_{\alpha}^*J_0 u_{\beta}\subseteq J_0$)
\end{enumerate}
\end{thm}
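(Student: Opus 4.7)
The plan is to deduce Theorem~\ref{finvideals2} from Theorem~\ref{finvideals} together with Remark~\ref{remark2}. For the forward direction, suppose $J$ is fully invariant. Condition (1) is literally condition (1) of Theorem~\ref{finvideals}. For condition (2), Lemma~\ref{charfullyinv}(i) yields $u_\alpha^*Ju_\beta\subseteq J$ while Lemma~\ref{eCnf}(1) yields $u_\alpha^*\calc_0u_\beta\subseteq \calc_0$, and intersecting gives $u_\alpha^*J_0u_\beta\subseteq J\cap \calc_0=J_0$.

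For the backward direction, assume conditions (1) and (2). By Theorem~\ref{finvideals} together with Remark~\ref{remark2}, it suffices to verify conditions (1), (2'), and (3) of Theorem~\ref{finvideals}, where (2') strengthens (2) to arbitrary equal lengths. Condition (1) is given. Condition (2') --- namely $u_\mu^*J_0u_\nu\subseteq J_0$ for all $|\mu|=|\nu|$ --- follows by induction on the common length, with base case our hypothesis (2): writing $\mu=e\mu'$ and $\nu=f\nu'$ one computes $u_\mu^*J_0u_\nu=u_{\mu'}^*(u_e^*J_0u_f)u_{\nu'}\subseteq u_{\mu'}^*J_0u_{\nu'}\subseteq J_0$.

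It remains to verify condition (3): $u_\alpha J_0 u_\beta^*\subseteq J$ for all $|\alpha|=|\beta|=k$. I will choose $m\geq 0$ so that $k+m=np$ is a multiple of $p$, and insert the resolution $I=\sum_{|\gamma|=m}u_\gamma u_\gamma^*$ (Lemma~\ref{alpha}(2)) on each side of $J_0$ to obtain
\begin{equation*}
u_\alpha J_0 u_\beta^* = \sum_{|\gamma_1|=|\gamma_2|=m} u_{\alpha\gamma_1}(u_{\gamma_1}^*J_0u_{\gamma_2})u_{\beta\gamma_2}^*.
\end{equation*}
By (2'), each middle factor $u_{\gamma_1}^*J_0u_{\gamma_2}$ lies in $J_0$, so each summand lies in a set of the form $u_{\delta_1}J_0 u_{\delta_2}^*$ with $|\delta_i|=np$. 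Since condition (1) is precisely the hypothesis of Corollary~\ref{JnJ0}, that corollary gives $J_n=\sum_{|\delta_i|=np}u_{\delta_1}J_0u_{\delta_2}^*$, so each summand (and hence the whole expression) lies in $J_n\subseteq J$.

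The main obstacle is condition (3); it is resolved cleanly once one recognizes that Corollary~\ref{JnJ0} already encodes the formula for $J_n$ in terms of $J_0$ under hypothesis (1), reducing the task to padding the given paths up to a common length that is a multiple of $p$.
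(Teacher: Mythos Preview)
Your proof is correct. The forward direction is identical to the paper's. For the backward direction, both you and the paper reduce to verifying condition (3) of Theorem~\ref{finvideals}, but the mechanics differ. The paper pads the given paths $\alpha_i$ on the \emph{left} to length $np$ (choosing $\gamma_i$ with $\delta_i=\gamma_i\alpha_i$, which uses that $E$ has no sinks), obtaining $u_{\alpha_1}J_0u_{\alpha_2}^*\subseteq u_{\gamma_1}^*J_nu_{\gamma_2}$; it then needs the extra step of showing that $J_n$ is itself invariant, which it does via Proposition~\ref{invariant} and the equality $J_{n,v}=J_{0,v}$. You instead pad on the \emph{right} by inserting the resolution of identity $\sum_{|\gamma|=m}u_\gamma u_\gamma^*=I$ between $u_\alpha$ and $J_0$ (and symmetrically on the other side), use (2') to absorb the inner factors back into $J_0$, and land directly in $\sum_{|\delta_i|=np}u_{\delta_1}J_0u_{\delta_2}^*=J_n$ via Corollary~\ref{JnJ0}. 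Your route is a bit more streamlined since it bypasses the invariance-of-$J_n$ argument entirely; the paper's route has the incidental benefit of recording that $J_n$ is invariant.
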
	
\begin{proof}
	Suppose first that $J$ is fully invariant. Then part (1) follows from Theorem~\ref{finvideals}. Part (2) follows from Lemma~\ref{charfullyinv} (i) and Lemma~\ref{eCnf} (1).
	
	Now assume that (1) and (2) are satisfied. By applying (2) successively we obtain part (2) of Theorem~\ref{finvideals} and it is left to prove part (3) of that theorem. So we fix paths $\alpha_1,\alpha_2$ with $|\alpha_1|=|\alpha_2|=m$. If $m=np$ for some $n$, it follows from Corollary~\ref{JnJ0} that $u_{\alpha_1}J_0u_{\alpha_2}^*\subseteq J_n\subseteq J$ and we are done. In general, we fix $n$ such that $m\leq np$ and paths $\delta_i$ and $\gamma_i$ such that $|\delta_i|=np$ and $\delta_i=\gamma_i \alpha_i$. This is possible because $E$ has no sinks. Then $u_{\delta_i}=u_{\gamma_i}u_{\alpha_i}$ and $u_{\alpha_i}=u_{\gamma_i}^*u_{\delta_i}$. Since $|\delta_i|=np$, it follows from Corollary~\ref{JnJ0} that $u_{\delta_1}J_0u_{\delta_2}^*\subseteq J_n$ and, thus,
	$$u_{\alpha_1}J_0u_{\alpha_2}^*=u_{\gamma_1}^*u_{\delta_1}J_0u_{\delta_2}^*u_{\gamma_2}\subseteq u_{\gamma_1}^*J_nu_{\gamma_2}$$
		and it is left to show that $u_{\gamma_1}^*J_nu_{\gamma_2}\subseteq J_n$, that is, to show that $J_n$ is invariant. We do assume that $J_0$ is invariant and, thus, Proposition~\ref{invariant} holds for $J_0$. But the condition in that proposition (written for $J_n$) depends only on $\{J_{n,v}\}$ and here we assume that $J_{n,v}=J_{0,v}$ for every $v$ so that the invariance of $J_0$ implies the invariance of $J_n$ and this completes the proof.
	
	\end{proof}

In the following proposition, we describe a proceedure to get, from a family $\{J_v: v\in E^0\}$ (where $J_v$ is an ideal in $\mathcal{C}_v$), a fully invariant ideal in $q(\mathcal{D})$.

\begin{prop}\label{calIfinv}
	Fix a family $\{J_v : v\in E^0\}$ where each $J_v$ is an ideal in $\calc_v$ and 
	for every $e,f\in E^1$ with $r(e)=r(f)$ and $s(e)=s(f)$, $$\pi(e,f)(J_{r(e)})\subseteq J_{s(e)}.$$ Then
	\begin{enumerate}
		\item [(1)] The space $$\calj_0:= \sum_{|\alpha|=|\beta|=q, s(\alpha)=s(\beta)=v} u_{\alpha}u_{\xi_{v}}^*J_v u_{\xi_v}u_{\beta}^*$$ is an ideal in $\calc_0$.
		\item[(2)] For every $e,f \in E^1$, $u_e^* \calj_0 u_f\subseteq \calj_0$
		\item[(3)] For every $n\geq 0$, $$\calj_n:=\sum_{|\delta|=|\gamma|=np} u_{\delta}\calj_0 u_{\gamma}^*$$ is an ideal in $\calc_n$.
		\item[(4)] For every $n\geq 0$, $\calj_n\subseteq \calj_{n+1}$ and $$\calj:=\overline{\cup_n \calj_n}$$ is an ideal in $q(\cald)$.
		\item[(5)] $\calj$ is a fully invariant ideal.
	\end{enumerate}
\end{prop}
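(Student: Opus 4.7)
The plan is to exploit the matrix-bundle description of Proposition~\ref{cn}: under the isomorphism $\tau_n$, ideals of $\calc_n$ correspond (Proposition~\ref{ideals}) to families of ideals indexed by the vertices, and the standing hypothesis on $\pi(e,f)$ is precisely the invariance criterion of Proposition~\ref{invariant}. The strategy is to show that each $\calj_n$ corresponds to the constant family $\{J_v\}$, that the $\calj_n$ nest, and finally that $\calj$ satisfies the characterization of full invariance given by Theorem~\ref{finvideals}.

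For part (1), the defining formula reads
\[ \calj_0 \;=\; \tau_0^{-1}\!\left(\sum_{|\alpha|=|\beta|=q,\, s(\alpha)=s(\beta)} e_{\alpha,\beta}\otimes J_{s(\alpha)}\right) \]
directly from (\ref{tauninverse}); since each $J_v$ is an ideal in $\calc_v$, Proposition~\ref{ideals} identifies $\calj_0$ as an ideal in $\calc_0$ with component $(\calj_0)_v = J_v$. Part (2) is then immediate: the hypothesis $\pi(e,f)(J_{r(e)})\subseteq J_{s(e)}$ is precisely the criterion of Proposition~\ref{invariant}, so $u_e^*\calj_0 u_f\subseteq \calj_0$.

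For part (3), I would substitute the formula for $\calj_0$ into the definition of $\calj_n$; writing each path $\mu$ of length $np+q$ uniquely as $\mu=\delta\alpha$ with $|\delta|=np$ and $|\alpha|=q$, the double sum collapses to
\[ \calj_n \;=\; \sum_{|\mu_1|=|\mu_2|=np+q,\, s(\mu_1)=s(\mu_2)=v} u_{\mu_1}u_{\xi_v}^*J_v u_{\xi_v}u_{\mu_2}^*, \]
which is again $\tau_n^{-1}$ of the corresponding ideal in $\mathcal{A}_n$, and hence an ideal in $\calc_n$ with $(\calj_n)_v = J_v$. Part (4) is the key technical step. Using the relation $\iota_n = \tau_{n+1}^{-1}\circ\psi_n\circ\tau_n$ and the explicit formula of Proposition~\ref{psin}, one checks that $\psi_n$ maps $e_{\mu_1,\mu_2}\otimes b$ (for $b\in J_v$) to $\sum_{|\eta|=p,\, r(\eta)=v} e_{\mu_1\eta,\mu_2\eta}\otimes \pi(\eta,\eta)(b)$. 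Decomposing $\eta$ into its constituent edges and applying the multiplicativity identity (\ref{pimult}) together with the hypothesis on $\pi$ at each edge, one obtains $\pi(\eta,\eta)(b)\in J_{s(\eta)}$, so the image lies in $\tau_{n+1}(\calj_{n+1})$; since $\iota_n$ is the inclusion, this gives $\calj_n\subseteq \calj_{n+1}$. A standard approximation argument on the nested increasing sequence then shows that $\calj=\overline{\cup_n\calj_n}$ is an ideal in $q(\cald)=\overline{\cup_n\calc_n}$.

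Finally, for part (5), I would verify the three conditions of Theorem~\ref{finvideals}. Condition (1), $(\calj_0)_v = (\calj_n)_v$ for every $v$ and $n$, has already been established in the computation of (3). Condition (2), $u_{\alpha_1}^*\calj_0 u_{\alpha_2}\subseteq \calj_0$ for $|\alpha_i|=q$, follows from part (2) of the present proposition by successive application along the edges of $\alpha_1$ and $\alpha_2$. Condition (3), $u_\alpha \calj_0 u_\beta^*\subseteq \calj$ for $|\alpha|=|\beta|=m$, is obtained by choosing $n$ with $np\geq m$, inserting $\sum_{|\eta_i|=np-m}u_{\eta_i}u_{\eta_i}^*=I$ on both sides of an element $a\in\calj_0$, and observing via condition (2) that $u_{\eta_1}^*au_{\eta_2}\in\calj_0$; each resulting summand then lies in $u_{\alpha\eta_1}\calj_0 u_{\beta\eta_2}^*\subseteq\calj_n\subseteq\calj$. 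The main obstacle is part (4), which requires the delicate compatibility between the $\psi_n$-formula and the multiplicativity of $\pi$ to transport the hypothesis from single edges to paths of length $p$; once that is in hand, everything else is essentially an unwinding of the $\tau_n$-isomorphisms.
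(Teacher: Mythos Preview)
Your approach for parts (1)--(4) is correct and takes a genuinely different route from the paper. Where the paper verifies everything by direct computation with generators (multiplying $u_\gamma z^l u_\delta^*$ against elements of $\calj_0$ and $\calj_n$ by hand), you instead systematically exploit the matrix-bundle machinery: recognising $\calj_0$ and $\calj_n$ as $\tau_n^{-1}$ of the ideal with constant fibre family $\{J_v\}$ immediately gives (1) and (3) via Proposition~\ref{ideals}, (2) via Proposition~\ref{invariant}, and the nesting (4) via Proposition~\ref{psin} together with the multiplicativity~(\ref{pimult}) to transport the edge hypothesis to paths of length $p$. This is cleaner and more conceptual; the paper's hands-on computations have the advantage of being self-contained but are longer.

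There is, however, a genuine gap in your part (5). Theorem~\ref{finvideals} is stated for the ideals $J_n:=J\cap\calc_n$, and its conditions refer to the families $J_{n,v}$ extracted from \emph{those} intersections. You have only established $(\calj_n)_v=J_v$ for the \emph{constructed} $\calj_n$, and you have not shown that $\calj_n=\calj\cap\calc_n$. In fact this equality can fail when condition~(S) does not hold (cf.\ Proposition~\ref{JvequalJv} and Lemma~\ref{twoconditions}), and (S) is not among your hypotheses, so citing Theorem~\ref{finvideals} as stated is not justified. The fix is easy: either observe that the proof of the ``if'' direction of Theorem~\ref{finvideals} goes through verbatim with any increasing sequence of ideals $\calj_n\lhd\calc_n$ satisfying $\calj=\overline{\cup_n\calj_n}$ and your verified conditions (this is a short check), or, as the paper does, bypass Theorem~\ref{finvideals} entirely and verify (i) and (ii) of Lemma~\ref{charfullyinv} directly from the explicit form $\calj_n=\sum_{|\delta|=|\gamma|=np}u_\delta\calj_0 u_\gamma^*$ together with your part (2).
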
 
\begin{proof}
To prove (1) we need to show that for every $a\in J_v$, $v\in E^0$, $f(z)\in C^*(z)$, paths $\alpha,\beta,\gamma$ and $\delta$ that start in $v$ and satisfy $0\leq |\gamma|=|\delta|\leq q$ and $|\alpha|=|\beta|=q$, we have $x:=u_{\gamma}f(z)u_{\delta}^*u_{\alpha}u_{\xi_v}^*au_{\xi_v}u_{\beta}^*\in \calj_0$.

Write $|\gamma|=|\delta|=m$. For $x$ to be non zero, $\alpha$ can be written as $\alpha=\delta \alpha'$ with $|\alpha'|=q-m$. Also write $\xi_v=\rho_2\rho_1$ with $|\rho_1|=q-m$, $|\rho_2|=m$ and $s(\rho_1)=v$. Then $u_{\xi_v}=u_{\rho_2}u_{\rho_1}$ and $x=u_{\gamma}f(z)u_{\alpha'}u_{\rho_1}^*u_{\rho_2}^*au_{\xi_v}u_{\beta}^*$. Using Lemma~\ref{zcomm} (1), we get
$$x=u_{\gamma \alpha'}u_{\rho_1}^*f(z)u_{\rho_2}^*au_{\xi_v}u_{\beta}^*.$$ Since $u_{\rho_1}^*=u_{\xi_v}^*u_{\rho_2}$, $$x=u_{\gamma \alpha'}u_{\xi_v}^*[u_{\rho_2}f(z)u_{\rho_2}^*a]u_{\xi_v}u_{\beta}^*=u_{\gamma \alpha'}u_{\xi_v}^*[(u_{\xi_v}u_{\xi_v}^*u_{\rho_2}f(z)u_{\rho_2}^*u_{\xi_v}u_{\xi_v}^*)a]u_{\xi_v}u_{\beta}^*.$$ As $u_{\rho_2}f(z)u_{\rho_2}^*\in \calc_0$, we have $u_{\xi_v}u_{\xi_v}^*u_{\rho_2}f(z)u_{\rho_2}^*u_{\xi_v}u_{\xi_v}^*\in \mathcal{C}_v$ 
	and, since $a\in J_v$ (and $J_v$ is an ideal in $\calc_v$), $(u_{\xi_v}u_{\xi_v}^*u_{\rho_2}f(z)u_{\rho_2}^*u_{\xi_v}u_{\xi_v}^*)a\in J_v$ and, thus, $x\in \calj_0$ proving (1).

For (2), take $c\in \calj_0$ and $e,f \in E^1$. It suffices to assume that $$c=u_{\alpha}u_{\xi_v}^*au_{\xi_v}u_{\beta}^*$$ for $a\in J_v$, $s(\alpha)=s(\beta)=v$ and $|\alpha|=|\beta|=q$.
Now, fix paths $\delta_1,\delta_2$ with $|\delta_1|=|\delta_2|=q$ and consider
$$x:=u_{\xi_{s(\delta_1)}}u_{\delta_1}^*u_e^*cu_fu_{\delta_2}u_{\xi_{s(\delta_2)}}^*=u_{\xi_{s(\delta_1)}}u_{\delta_1}^*u_e^*u_{\alpha}u_{\xi_v}^*au_{\xi_v}u_{\beta}^*u_fu_{\delta_2}u_{\xi_{s(\delta_2)}}^*.$$
Assume that $x\ne 0$. Then $u_{e\delta_1}^*u_{\alpha}$ and $u_{\beta}^*u_{f\delta_2}$ are both non zero. Hence there are $g_1,g_2\in E^1$ such that $e\delta_1=\alpha g_1$ and $f\delta_2=\beta g_2$ (in particular, $s(g_i)=s(\delta_i)$ and $r(g_i)=s(\alpha)=s(\beta)=v$). In this case, $u_{\delta_1}^*u_e^*u_{\alpha}=u_{g_1}^*$ and $u_{\beta}^*u_fu_{\delta_2}=u_{g_2}$ and, consequently, 
$$x=u_{\xi_{s(g_1)}} u_{g_1}^*u_{\xi_v}^*au_{\xi_v}u_{g_2}u_{\xi_{s(g_2)}}^*=\pi(g_1,g_2)(a).$$ (See (\ref{pialphabeta})). 
Note also that, as we assume that $x\ne 0$, it follows from Lemma~\ref{Lempsi} (3) (with $n=0$) that $s(\delta_1)=s(\delta_2)$ and, thus, also $s(g_1)=s(g_2)$. Since $a\in J_{r(g_i)}$, we use our assumption to get 
$$x\in J_{s(g_i)}=J_{s(\delta_i)}.$$ 
Finally, we use the fact that $\sum_{|\delta_1|=q} u_{\delta_1}u_{\xi_{s(\delta_1)}}^*u_{\xi_{s(\delta_1)}}u_{\delta_1}^*=I$ (and a similar statement for $\delta_2$) to get
$$u_e^*cu_f=\sum_{|\delta_i|=q}u_{\delta_1}u_{\xi_{s(\delta_1)}}^*(u_{\xi_{s(\delta_1)}}u_{\delta_1}^*u_e^*cu_fu_{\delta_2}u_{\xi_{s(\delta_1)}}^*)u_{\xi_{s(\delta_1)}}u_{\delta_2}^* \in \calj_0.$$

For (3), we fix $n\geq 0$, $x:=u_{\delta}au_{\gamma}^*\in \calj_n$ (where $a\in \calj_0$ and $|\delta|=|\gamma|=np$) and $y:=u_{\alpha}f(z)u_{\beta}\in \calc_n$ (where $f(z)\in C^*(z)$, $np\leq |\alpha|=|\beta|\leq np+q$ and $s(\alpha)=s(\beta)$). We need to show that $xy\in \calj_n$. We write $|\alpha|=|\beta|=np+m$ where $0\leq m \leq q$.

We have $xy=u_{\delta}au_{\gamma}^*u_{\alpha}f(z)u_{\beta}^*$. If $u_{\gamma}^*u_{\alpha}=0$, we are done. Otherwise, $\alpha=\gamma \alpha_2$ (with $|\alpha_2|=m$). We also write $\beta=\beta_1 \beta_2$ with $|\beta_1|=np$ and $|\beta_2|=m$. We get
$xy=u_{\delta}au_{\alpha_2}f(z)u_{\beta_2}^*u_{\beta_1}^*$. Since $u_{\alpha_2}f(z)u_{\beta_2}^*\in \calc_0$ (note that $s(\alpha_2)=s(\alpha)=s(\beta)=s(\beta_2)$) and $a\in \calj_0$, it follows from (1) that $au_{\alpha_2}f(z)u_{\beta_2}^*\in \calj_0$ and $xy\in \calj_n$.

To prove (4) we need to show that, for every paths $\delta,\gamma$ with $|\delta|=|\gamma|=np$, $u_{\delta}\calj_0 u_{\gamma}^*\subseteq \calj_{n+1}$. But $u_{\delta}\calj_0 u_{\gamma}^*=\sum_{|\mu|=|\nu|=p}u_{\delta}u_{\mu}u_{\mu}^*\calj_0 u_{\nu}u_{\nu}^* u_{\gamma}^*$ and it follows from part (2) (applied successively) that $u_{\mu}^*\calj_0 u_{\nu}\subseteq \calj_0$. Thus 
$u_{\delta}\calj_0 u_{\gamma}^*\subseteq \sum u_{\delta \mu}\calj_0 u_{\gamma \nu}^* \subseteq \calj_{n+1}$.
It follows that $\calj_n\subseteq \calj_{n+1}$ and, consequently,  $\calj$ is an ideal in $q(\cald)$.

To prove (5) we shall verify (i) and (ii) of Lemma~\ref{charfullyinv}. 
For (i), we fix $e,f \in E^1$ and $x:=u_{\delta}au_{\gamma}^* \in \calj_n\subseteq \calj$ (where $a\in \calj_0$ and $|\delta|=|\gamma|=np$). Then $u_e^*xu_f=u_e^*u_{\delta}au_{\gamma}^*u_f$ is either $0$ or there are $\delta'$ and $\gamma'$, of length $np-1$ such that $\delta=e\delta'$ and $\gamma=f\gamma'$ and, then,
$$u_e^*xu_f=u_{\delta'}au_{\gamma'}^*=\sum_{g,h \in E^1} u_{\delta'}u_gu_g^*au_hu_h^*u_{\gamma'}^*=\sum_{g,h \in E^1}u_{\delta'g}u_g^*au_hu_{\gamma' h}^*.$$ 
Since $u_g^*au_h\in \calj_0$ (by part (2)), $u_e^*xu_f\in \calj_n\subseteq \calj$.

For (ii) we fix $e,f,a$ and $x$ as above. Since $E$ has no sinks, we can find paths $\mu$ and $\nu$ of length $q$ with $s(\mu)=r(e)$ and $s(\nu)=r(f)$ and, then, $u_e=u_{\mu}^*u_{\mu e}$ and $u_f=u_{\nu}^*u_{\nu f}$. Thus
$$u_exu_f^*=u_{\mu}^*u_{\mu e}u_{\delta}au_{\gamma}^*u_{\nu f}^*u_{\nu}=u_{\mu}^*(u_{\mu e \delta}au_{\nu f \gamma})u_{\nu}.$$
But $|\mu e \delta|=|\nu f \gamma|=(n+1)p$ and, thus, $u_{\mu e \delta}au_{\nu f \gamma}\in \calj_{n+1}\subset \calj$. Using (i) successively, we see that $u_exu_f^*\in \calj$.

	\end{proof}	

\begin{prop}\label{JvequalJv}
	Let $\{J_v : v\in E^0\}$ be a family where each $J_v$ is an ideal in $\calc_v$ and the following two conditions hold.
	\begin{enumerate} 
\item[(H)] For every $e,f\in E^1$ with $r(e)=r(f)$ and $s(e)=s(f)$, $$\pi(e,f)(J_{r(e)})\subseteq J_{s(e)}.$$ 
\item[(S)] For $v\in E^0$, write
$$\mathcal{A}(v):=\cup\{\sum_{|\alpha|=np-q, |\tau|=|\sigma|=q, s(\tau)=s(\sigma)}u_{\xi_v}u_{\alpha}u_{\tau}u_{\xi_{s(\tau)}}^*J_{s(\tau)}u_{\xi_{s(\tau)}}u_{\sigma}^*u_{\alpha}^*u_{\xi_v}^*: \; n\in \mathbb{N}  \}.$$ Then $$\overline{\mathcal{A}(v)}\cap \mathcal{C}_v\subseteq J_v $$ for every $v\in E^0$.


\end{enumerate}
Let $\mathcal{J}$ be the ideal constructed in Proposition~\ref{calIfinv}. Then, for every $v\in E^0$,
$$ \mathcal{J}_v=J_v.$$
\end{prop}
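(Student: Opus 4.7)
My plan is to establish the two inclusions $J_v\subseteq\mathcal{J}_v$ and $\mathcal{J}_v\subseteq J_v$ separately, where $\mathcal{J}_v$ denotes the common value $\mathcal{J}_{0,v}=\mathcal{J}_{n,v}$ guaranteed by Proposition~\ref{calIfinv}(5) combined with Theorem~\ref{finvideals}(1). The first inclusion is essentially direct: by Proposition~\ref{ideals}(3) with $n=0$ and $\mu_1=\mu_2=\xi_v$, one has $\mathcal{J}_v=u_{\xi_v}u_{\xi_v}^*(\mathcal{J}\cap\mathcal{C}_0)u_{\xi_v}u_{\xi_v}^*$, and the summand in the definition of $\mathcal{J}_0$ in Proposition~\ref{calIfinv}(1) corresponding to $\alpha=\beta=\xi_v$ is exactly $u_{\xi_v}u_{\xi_v}^*J_vu_{\xi_v}u_{\xi_v}^*=J_v$, since $J_v\subseteq\mathcal{C}_v=u_{\xi_v}u_{\xi_v}^*\mathcal{C}_0u_{\xi_v}u_{\xi_v}^*$ is fixed by that compression. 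Hence $J_v\subseteq\mathcal{J}_0\subseteq\mathcal{J}\cap\mathcal{C}_0$, and one more compression gives $J_v\subseteq\mathcal{J}_v$.

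For the reverse inclusion, I would take $x\in\mathcal{J}_v\subseteq\mathcal{C}_v\cap\mathcal{J}$, use $\mathcal{J}=\overline{\bigcup_m\mathcal{J}_m}$ to pick $x_k\in\mathcal{J}_{m_k}$ with $x_k\to x$, and set $y_k:=u_{\xi_v}u_{\xi_v}^*x_ku_{\xi_v}u_{\xi_v}^*$ so that $y_k\to x$. Writing a typical element of $\mathcal{J}_{m_k}=\sum u_\delta\mathcal{J}_0 u_\gamma^*$ as a finite linear combination $\sum u_\delta b\,u_\gamma^*$ with $b\in\mathcal{J}_0$, and expanding $b$ via the generators in Proposition~\ref{calIfinv}(1), the outer compression annihilates any term in which $\delta$ or $\gamma$ does not start with $\xi_v$. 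Thus, up to a harmless density approximation, each $y_k$ becomes a finite linear combination of expressions
\[
u_{\xi_v}u_{\delta'}u_\tau u_{\xi_w}^*a\,u_{\xi_w}u_\sigma^*u_{\gamma'}^*u_{\xi_v}^*,\qquad a\in J_w,
\]
with $|\delta'|=|\gamma'|=m_kp-q$, $|\tau|=|\sigma|=q$ and $s(\tau)=s(\sigma)=w$.

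The crux of the argument is a diagonal/off-diagonal decomposition in the matrix picture. Under $\tau_{m_k}$ from Proposition~\ref{cn} the term above becomes the matrix unit $e_{\xi_v\delta'\tau,\,\xi_v\gamma'\sigma}\otimes a$ in $\mathcal{A}_{m_k}$; splitting $y_k=D_k+O_k$ by collecting the matrix units with equal row and column indices, uniqueness of path factorisation forces $\delta'=\gamma'$ and $\tau=\sigma$ on the diagonal part $D_k$, and setting $\alpha=\delta'$ in the definition of $\mathcal{A}(v)$ at level $n=m_k$ exhibits $D_k\in\mathcal{A}(v)$. The matrix-diagonal projection $\mathbb{E}_m(c):=\sum_\gamma(e_{\gamma,\gamma}\otimes 1)\,c\,(e_{\gamma,\gamma}\otimes 1)$ is a contractive conditional expectation on $\mathcal{A}_m$, and Proposition~\ref{psin} shows that $\psi_n$ maps diagonal matrix units to diagonal ones and off-diagonal to off-diagonal, so the $\mathbb{E}_m$'s are compatible under the embeddings and assemble into a bounded conditional expectation $\mathbb{E}$ on $q(\mathcal{D})$; iteration of Proposition~\ref{psin} also shows that $\mathcal{C}_v$ embeds diagonally in every $\mathcal{A}_m$ (supported on matrix units $e_{\xi_v\mu,\,\xi_v\mu}$), so $\mathbb{E}(x)=x$. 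Continuity then gives $D_k=\mathbb{E}(y_k)\to x$, whence $x\in\overline{\mathcal{A}(v)}$, and combined with $x\in\mathcal{C}_v$, hypothesis (S) yields $x\in J_v$. The hardest part of this plan is verifying that the matrix-diagonal projections assemble into a single continuous conditional expectation on $q(\mathcal{D})$ that fixes $\mathcal{C}_v$; once this is in place, condition (S) applies essentially verbatim to finish the argument.
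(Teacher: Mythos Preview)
Your argument is correct and follows the same underlying strategy as the paper's proof: approximate $x\in\mathcal{J}_v$ by an element of some $\mathcal{J}_n$, pass to the ``diagonal'' part, and observe that the diagonal part lies in $\mathcal{A}(v)$. The paper's execution is more elementary in that it works at a single fixed level $n$ rather than assembling a global conditional expectation. Given $g\in\mathcal{J}_v\subseteq\mathcal{C}_0$ and an approximant $a\in\mathcal{J}_n$ with $\|g-u_{\xi_v}u_{\xi_v}^*au_{\xi_v}u_{\xi_v}^*\|<\epsilon$, the paper invokes Lemma~\ref{Lempsi}(1) directly to conclude that $r_\alpha g r_\beta=0$ for $\alpha\neq\beta$, where $r_\alpha=u_{\xi_v}u_\alpha u_\alpha^*u_{\xi_v}^*$ ($|\alpha|=np-q$) are mutually orthogonal projections summing to $u_{\xi_v}u_{\xi_v}^*$; hence $g=\sum_\alpha r_\alpha g r_\alpha$, and since the norm on the diagonal is $\max_\alpha\|\,\cdot\,\|$, the diagonalized approximant $f:=\sum_\alpha u_{\xi_v}u_\alpha c_{\alpha,\alpha}u_\alpha^*u_{\xi_v}^*\in\mathcal{A}(v)$ still satisfies $\|g-f\|<\epsilon$. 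Your global $\mathbb{E}$ encodes exactly the same phenomenon---your claim that $\mathcal{C}_v$ embeds diagonally in every $\mathcal{A}_m$ is essentially Lemma~\ref{Lempsi}(1) transported through $\tau_0$ and the $\psi_n$'s---but the paper avoids the extra bookkeeping of checking compatibility under the connecting maps $\psi_n$.
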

\begin{proof}
Recall first that $$\mathcal{J}_v=u_{\xi_v}u_{\mu_1}^*(\mathcal{J}\cap \mathcal{C}_0)u_{\mu_2}u_{\xi_v}^*$$ for any choice of paths $\mu_1,\mu_2$ of length $q$.

To prove that $J_v\subseteq \mathcal{J}_v$, we fix $a\in J_v$. Then for every paths $\alpha,\beta$ of length $q$ with $s(\alpha)=s(\beta)=v$, we have $u_{\alpha}u_{\xi_v}^*au_{\xi_v}u_{\beta}^* \in \mathcal{J}_0\subseteq \mathcal{J}\cap \mathcal{C}_0$. For $\mu_1,\mu_2$ of length $q$, $u_{\xi_v}u_{\mu_1}^*(u_{\alpha}u_{\xi_v}^*au_{\xi_v}u_{\beta}^* )u_{\mu_2}u_{\xi_v}^*$ is either $0$ (if $\alpha \neq \mu_1$ or $\beta\neq \mu_2$) or is equal to $u_{\xi_v}u_{\xi_v}^*au_{\xi_v}u_{\xi_v}^*=a$. Thus $a\in \mathcal{J}_v$.

Now we turn to prove the other direction. So we fix $v\in E^0$ and $g\in \mathcal{J}_v$. Then there is some $b\in \mathcal{J}\cap \mathcal{C}_0$ such that $g=u_{\xi_v}u_{\xi_v}^*bu_{\xi_v}u_{\xi_v}^*$. Note that $g\in \mathcal{C}_v$. To prove that $g$ lies in $J_v$, we shall show that $g\in \overline{\mathcal{A}(v)}$ and then condition (S) will complete the proof.

Now, fix $\epsilon >0$. Since $b\in \mathcal{J}=\overline{\cup \mathcal{J}_n}$, there are $n\in \mathbb{N}$ and $a\in \mathcal{J}_n$ (that depend on $\epsilon$) such that $\|b-a\|<\epsilon$. But then $$\|g-u_{\xi_v}u_{\xi_v}^*au_{\xi_v}u_{\xi_v}^*\| <\epsilon.$$
Take paths $\alpha,\beta$ of length $np-q$. For every such $\alpha,\beta$,
\begin{equation}\label{ga}\|u_{\alpha}^*u_{\xi_v}^*gu_{\xi_v}u_{\beta}-u_{\alpha}^*u_{\xi_v}^*au_{\xi_v}u_{\beta}\|<\epsilon.
	\end{equation}  (Note that, if $r(\alpha)$ or $r(\beta)$ is different from $v$ , this will be just $0$.)

Since $a\in \mathcal{J}_n$, we can write
$$a=\sum_{|\delta|=|\gamma|=np} u_{\delta} c_{\delta,\gamma}u_{\gamma}^* $$ for some $\{c_{\delta,\gamma}\}\subseteq \mathcal{J}_0$ and then
$$u_{\alpha}^*u_{\xi_v}^*au_{\xi_v}u_{\beta}=\sum_{|\delta|=|\gamma|=np} u_{\alpha}^*u_{\xi_v}^* u_{\delta} c_{\delta,\gamma}u_{\gamma}^*u_{\xi_v}u_{\beta}. $$
But the sum on the right hand side really runs over these $\delta,\gamma$ (of length $np$) that can be written as $\delta=\xi_v \delta'$ and $\gamma=\xi_v \gamma'$ for some $\delta', \gamma'$ (otherwise, either $u_{\xi_v}^*u_{\delta}=0$ or $u_{\gamma}^*u_{\xi_v}=0$). Thus, we can write (renaming $c_{\delta,\gamma}$ as $c_{\delta',\gamma'}$ for simplicity)
$$u_{\alpha}^*u_{\xi_v}^*au_{\xi_v}u_{\beta}=\sum_{|\delta'|=|\gamma'|=np-q} u_{\alpha}^*u_{\xi_v}^* u_{\xi_v}u_{\delta'}c_{\delta',\gamma'}u_{\gamma'}^*u_{\xi_v}^*u_{\xi_v}u_{\beta}$$  $$=\sum_{|\delta'|=|\gamma'|=np-q, r(\delta')=r(\gamma')=v} u_{\alpha}^*u_{\delta'}c_{\delta',\gamma'}u_{\gamma'}^*u_{\beta}=u_{\alpha}^*u_{\alpha}c_{\alpha,\beta}u_{\beta}^*u_{\beta}$$ where the last equality follows from the fact that $u_{\alpha}^*u_{\delta'}=0$ unless $\delta'=\alpha$ and, similarly, $u_{\gamma'}^*u_{\beta}=0$ unless $\gamma'=\beta$. 

It now follows from Equation (\ref{ga}) that, for such $\alpha,\beta$,
\begin{equation}\label{estimate}\|u_{\alpha}^*u_{\xi_v}^*gu_{\xi_v}u_{\beta}-u_{\alpha}^*u_{\alpha}c_{\alpha,\beta}u_{\beta}^*u_{\beta}\|<\epsilon.\end{equation}

Now write, for every path $\alpha$ of length $np-q$, $r_{\alpha}$ to be the projection
$$r_{\alpha}:=u_{\xi_v}u_{\alpha}u_{\alpha}^*u_{\xi_v}^*.$$
Then $\sum_{\alpha} r_{\alpha}=u_{\xi_v}u_{\xi_v}^*$. Since the sum is a projection, the family $\{r_{\alpha}\}$ is orthogonal and, thus, for every family $\{x_{\alpha}\}$ with $x_{\alpha} \in r_{\alpha}q(\mathcal{D})r_{\alpha}$,
$$\| \sum_{\alpha} x_{\alpha} \|=\max_{\alpha}{\|x_{\alpha}\|}.$$

Since $g=u_{\xi_v}u_{\xi_v}^*gu_{\xi_v}u_{\xi_v}^*$,
$g=\sum_{|\alpha|=|\beta|=np-q} r_{\alpha}gr_{\beta}$. But
$ r_{\alpha}gr_{\beta} =u_{\xi_v}u_{\alpha}u_{\alpha}^*u_{\xi_v}^*gu_{\xi_v}u_{\beta}u_{\beta}^*u_{\xi_v}^*$ and, since $g\in \mathcal{C}_0$, it follows from Lemma~\ref{Lempsi} (1) that $r_{\alpha}gr_{\beta}$ is nonzero only if $\alpha=\beta$. Thus
$$g=\sum_{|\alpha|=np-q}r_{\alpha}gr_{\alpha}=\sum_{|\alpha|=np-q}u_{\xi_v}u_{\alpha}u_{\alpha}^*u_{\xi_v}^*gu_{\xi_v}u_{\alpha}u_{\alpha}^*u_{\xi_v}^*.$$

For $\alpha$ of length $np-q$ as above, write $x_{\alpha}:=u_{\xi_v}u_{\alpha}(u_{\alpha}^*u_{\xi_v}^*gu_{\xi_v}u_{\beta}-u_{\alpha}^*u_{\alpha}c_{\alpha,\alpha}u_{\alpha}^*u_{\alpha})u_{\alpha}^*u_{\xi_v}^*$. Then $\|x_{\alpha}\|<\epsilon$ (by Equation (\ref{estimate})) and $x_{\alpha}=r_{\alpha}x_{\alpha}r_{\alpha}$. Thus
$$\|\sum_{|\alpha|=np-q}u_{\xi_v}u_{\alpha}(u_{\alpha}^*u_{\xi_v}^*gu_{\xi_v}u_{\alpha}-u_{\alpha}^*u_{\alpha}c_{\alpha,\alpha}u_{\alpha}^*u_{\alpha})u_{\alpha}^*u_{\xi_v}^* \|<\epsilon.$$

Write $f:=\sum_{|\alpha|=np-q}u_{\xi_v}u_{\alpha}u_{\alpha}^*u_{\alpha}c_{\alpha,\alpha}u_{\alpha}^*u_{\alpha}u_{\alpha}^*u_{\xi_v}^*$. Hence, we get $\|g-f\|<\epsilon$.

We can write $f:=\sum_{|\alpha|=np-q}u_{\xi_v}u_{\alpha}c_{\alpha,\alpha}u_{\alpha}^*u_{\xi_v}^*$ and, noting that $$c_{\alpha,\alpha}\in \mathcal{J}_0=\sum_{|\tau|=|\sigma|=q, s(\tau)=s(\sigma)}u_{\tau}u_{\xi_{s(\tau)}}^*J_{s(\tau)}u_{\xi_{s(\tau)}}u_{\sigma}^*,$$ we can write
$$f=\sum_{|\alpha|=np-q, |\tau|=|\sigma|, s(\tau)=s(\sigma)}u_{\xi_v}u_{\alpha}u_{\tau}u_{\xi_{s(\tau)}}^*d_{\alpha,\tau,\sigma}u_{\xi_{s(\tau)}}u_{\sigma}^*u_{\alpha}^*u_{\xi_v}^* $$ for some $d_{\alpha,\tau,\sigma}$ in $J_{s(\tau)}$.

It follows that $f\in \mathcal{A}(v)$. Therefore, for every $\epsilon >0$, we found $f\in \mathcal{A}(v)$ such that $\|g-f\|<\epsilon$. This shows that $g\in \overline{\mathcal{A}(v)}\cap \mathcal{C}_v$ and, using (S), $g\in J_v$, completing the proof.
	
	\end{proof}	

\begin{rem}\label{remSH} In the next section we show that, in the unweighted case, condition (H) says that the set $W:=\{v\in E^0: \; J_v\neq \{0\}\}$ is hereditary while condition (S) says that $W$ is saturated.
		
	\end{rem} 

We also need the following.

\begin{lem}\label{twoconditions}
	Suppose $J$ is a fully invariant ideal in $q(\mathcal{D})$. Recall (Theorem~\ref{finvideals2}) that for every $v\in E^0$, $J_{n,v}$ is independent of n so we will refer to it simply as $J_v$.
	Then the family $\{J_v\}$ satisfies conditions (H) and (S) of Proposition~\ref{JvequalJv}.
	
	\end{lem}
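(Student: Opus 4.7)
The plan is to prove (H) and (S) separately, in each case reducing to known structural results about $J_0:=J\cap\mathcal{C}_0$.

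For condition (H), the strategy is to observe that $J_0$ is an invariant ideal in $\mathcal{C}_0$ and then invoke Proposition~\ref{invariant}. Indeed, since $J$ is fully invariant, Lemma~\ref{charfullyinv}(i) gives $u_e^*Ju_f\subseteq J$ for all $e,f\in E^1$; combined with Lemma~\ref{eCnf}(1), which gives $u_e^*\mathcal{C}_0u_f\subseteq\mathcal{C}_0$, we deduce $u_e^*J_0u_f\subseteq J\cap\mathcal{C}_0=J_0$. Thus $J_0$ is invariant in the sense of Definition~\ref{idinv}, and Proposition~\ref{invariant} with $n=0$ directly yields $\pi(e,f)(J_{0,r(e)})\subseteq J_{0,s(e)}$ whenever $r(e)=r(f)$ and $s(e)=s(f)$. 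Since $J_{0,v}=J_v$ by Theorem~\ref{finvideals2}(1), this is exactly (H).

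For (S), the plan is to show first that $\mathcal{A}(v)\subseteq J$ and then that $J\cap\mathcal{C}_v=J_v$. A typical summand of an element of $\mathcal{A}(v)$ has the form
$$x=u_{\xi_v}u_{\alpha}\,\bigl(u_{\tau}u_{\xi_{s(\tau)}}^*\,a\,u_{\xi_{s(\tau)}}u_{\sigma}^*\bigr)\,u_{\alpha}^*u_{\xi_v}^*$$
with $a\in J_{s(\tau)}$, $|\tau|=|\sigma|=q$, $s(\tau)=s(\sigma)$, and $|\alpha|=np-q$. The inner factor is precisely a generator of $J_0$ under the decomposition of Proposition~\ref{ideals} applied to $J_0$ (with $n=0$, $\mu_1=\tau$, $\mu_2=\sigma$). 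Next, conjugation by $u_{\alpha}$ keeps this factor inside $J$ by Theorem~\ref{finvideals}(3) (with $\beta=\alpha$), and finally conjugation by $u_{\xi_v}$ preserves $J$ because $J$ is a two-sided ideal in $q(\mathcal{D})$. Hence $\mathcal{A}(v)\subseteq J$, and passing to closure gives $\overline{\mathcal{A}(v)}\subseteq J$.

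To finish, I want to identify $J\cap\mathcal{C}_v$ with $J_v$. Any $g\in J\cap\mathcal{C}_v$ lies in $J_0$ (since $\mathcal{C}_v\subseteq\mathcal{C}_0$) and satisfies $g=u_{\xi_v}u_{\xi_v}^*gu_{\xi_v}u_{\xi_v}^*$ by definition of $\mathcal{C}_v$, so
$$g\in u_{\xi_v}u_{\xi_v}^*J_0u_{\xi_v}u_{\xi_v}^*=J_v,$$
where the last equality is Proposition~\ref{ideals}(3) applied with $\mu_1=\mu_2=\xi_v$ (noting $|\xi_v|=q$ and $s(\xi_v)=v$). Combining with the previous paragraph,
$$\overline{\mathcal{A}(v)}\cap\mathcal{C}_v\subseteq J\cap\mathcal{C}_v=J_v,$$
which is (S). There is no substantial obstacle here: the argument is a direct assembly of the structural decomposition of $J_0$ (Proposition~\ref{ideals}), the conjugation closure of fully invariant ideals (Theorem~\ref{finvideals}(3) and Lemma~\ref{charfullyinv}), and the identification of the corner $\mathcal{C}_v$. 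The only small care needed is in verifying that the three nested layers of conjugation defining a generic summand of $\mathcal{A}(v)$ each preserve membership in the appropriate set ($J_0$, then $J$, then $J$).
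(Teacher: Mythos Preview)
Your proof is correct and follows essentially the same route as the paper's: for (H) you invoke Proposition~\ref{invariant} after checking $J_0$ is invariant, and for (S) you show $\mathcal{A}(v)\subseteq J$ by identifying the inner bracket as an element of $J_0$ (via Proposition~\ref{ideals}) and then conjugating up into $J$, finishing with $J\cap\mathcal{C}_v\subseteq J_v$.

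There is one small slip in the justification. You write that ``conjugation by $u_{\xi_v}$ preserves $J$ because $J$ is a two-sided ideal in $q(\mathcal{D})$,'' but $u_{\xi_v}\in q(F^q)$, not $q(\mathcal{D})$ (unless $q=0$). The correct reason is Lemma~\ref{charfullyinv}(ii), or---cleaner, and what the paper does---apply Theorem~\ref{finvideals}(3) in a single step with the path $\xi_v\alpha$ of length $np$, rather than splitting the conjugation into two layers.
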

\begin{proof} 
	Condition (H) follows from Proposition~\ref{invariant}.
	
	To prove condition (S), fix  $v\in E^0$. For every $n\in \mathbb{N}$, paths $\alpha$ (of length $np-q$), and $\tau$, $\sigma$ (of length $q$ with $s(\tau)=s(\sigma)$), and elements $d_{\alpha,\tau,\sigma}$ in $J_{s(\tau)}$, we have $d_{\alpha,\tau,\sigma} \in J_0$, $u_{\tau}u_{\xi_{s(\tau}}^*\in \mathcal{C}_0$ and $u_{\xi_{s(\tau)}}u_{\sigma}^*\in \mathcal{C}_0$. Thus $u_{\tau}u_{\xi_{s(\tau)}}^*d_{\alpha,\tau,\sigma}u_{\xi_{s(\tau)}}u_{\sigma}^*\in J_0$. It follows from Theorem~\ref{finvideals}(3) that $ u_{\xi_v}u_{\alpha}u_{\tau}u_{\xi_{s(\tau)}}^*d_{\alpha,\tau,\sigma}u_{\xi_{s(\tau)}}u_{\sigma}^*u_{\alpha}^*u_{\xi_v}^*\in J$. In fact, it is contained in $u_{\xi_v}u_{\xi_v}^*Ju_{\xi_v}u_{\xi_v}^*$ and, thus, $\mathcal{A}(v)\subseteq u_{\xi_v}u_{\xi_v}^*Ju_{\xi_v}u_{\xi_v}^*$ and also $\overline{\mathcal{A}(v)}\subseteq u_{\xi_v}u_{\xi_v}^*Ju_{\xi_v}u_{\xi_v}^*$. But then 
	$$\overline{\mathcal{A}(v)}\cap \mathcal{C}_0 \subseteq u_{\xi_v}u_{\xi_v}^*Ju_{\xi_v}u_{\xi_v}^* \cap \mathcal{C}_0\subseteq J\cap \mathcal{C}_v=J_v$$
	proving condition (S).

	\end{proof}

\begin{thm}\label{gaugeinvid}
	\begin{enumerate} 
\item[(1)] The construction of Proposition~\ref{calIfinv} gives a bijection between the set of all families $\{J_v\}_{v\in E^0}$ (with $J_v$ an ideal in $\mathcal{C}_v$) that satisfy conditions (S) and (H) and the set of all fully invariant ideals in $q(\mathcal{D})$. 
Given a fully invariant ideal $\mathcal{J}$ in $q(\mathcal{D})$, the associated family is $\{\mathcal{J}_v\}$ where $\mathcal{J}_v=u_{\xi_v}u_{\xi_v}^*(\mathcal{J}\cap \mathcal{C}_0)u_{\xi_v}u_{\xi_v}^*$.
\item[(2)] There is a bijection between the set of all gauge invariant ideals in $C^*(E,Z)$ and the set of all families $\{J_v\}_{v\in E^0}$ that satisfy (H) and (S).

\end{enumerate} 	
	
	\end{thm}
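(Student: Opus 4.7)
The plan is to assemble the result from the machinery just developed. Part (1) is essentially a matter of verifying that the two maps---construction of an ideal from a family (Proposition~\ref{calIfinv}) and extraction of a family from an ideal ($\mathcal{J} \mapsto \{\mathcal{J}_v\}$)---are mutual inverses; part (2) then follows by composing with the bijection of Corollary~\ref{bijfinvideals}.

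For part (1), first I would check that the family-to-ideal direction is well-defined and injective: given $\{J_v\}$ satisfying (H) and (S), Proposition~\ref{calIfinv} produces a fully invariant ideal $\mathcal{J}$, and Proposition~\ref{JvequalJv} shows that the family $\{\mathcal{J}_v\}$ extracted from $\mathcal{J}$ coincides with $\{J_v\}$. This already shows that the map $\{J_v\} \mapsto \mathcal{J}$ is injective (and that its left inverse is $\mathcal{J} \mapsto \{\mathcal{J}_v\}$).

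The main step is the opposite composition: starting from an arbitrary fully invariant ideal $\mathcal{J} \subseteq q(\mathcal{D})$, one extracts $\{\mathcal{J}_v\}$, observes via Lemma~\ref{twoconditions} that it satisfies (H) and (S), and then must verify that the ideal $\mathcal{J}'$ produced by Proposition~\ref{calIfinv} applied to $\{\mathcal{J}_v\}$ equals $\mathcal{J}$. Here I would argue levelwise: by Theorem~\ref{finvideals2}(1), the ``fibers'' $\mathcal{J}_{n,v}$ are all equal to $\mathcal{J}_v$; then Proposition~\ref{ideals} applied with $n=0$ expresses $\mathcal{J}\cap \mathcal{C}_0$ as exactly the formula defining $\mathcal{J}_0'$ in Proposition~\ref{calIfinv}(1), and Corollary~\ref{JnJ0} gives $\mathcal{J}\cap \mathcal{C}_n = \sum_{|\delta_i|=np} u_{\delta_1}(\mathcal{J}\cap\mathcal{C}_0) u_{\delta_2}^*$, which is precisely $\mathcal{J}_n'$. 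Since $\mathcal{J} = \overline{\bigcup_n (\mathcal{J}\cap \mathcal{C}_n)}$ and $\mathcal{J}' = \overline{\bigcup_n \mathcal{J}_n'}$, we conclude $\mathcal{J} = \mathcal{J}'$. This identification of the two formulas is the only nontrivial step, and it is essentially bookkeeping using results already in hand.

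For part (2), Corollary~\ref{bijfinvideals} supplies a bijection between gauge-invariant ideals in $C^*(E,Z)$ and fully invariant ideals in $q(\mathcal{D})$; composing with the bijection from part (1) yields the desired bijection with the families satisfying (H) and (S). The hardest conceptual work has already been absorbed into Propositions~\ref{calIfinv} and~\ref{JvequalJv} together with Lemma~\ref{twoconditions}, so the remaining obstacle---showing $\mathcal{J}' = \mathcal{J}$---reduces to matching the explicit formula of Proposition~\ref{ideals} with that of Proposition~\ref{calIfinv}, using the key fact (Theorem~\ref{finvideals2}(1)) that the fibers of a fully invariant ideal do not depend on the level~$n$.
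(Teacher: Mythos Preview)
Your proposal is correct and follows essentially the same approach as the paper. The paper's proof is extremely terse: it cites Proposition~\ref{calIfinv} for well-definedness, Proposition~\ref{JvequalJv} for injectivity, and simply writes ``Surjectivity follows from Lemma~\ref{twoconditions}'' without spelling out why the construction applied to $\{\mathcal{J}_v\}$ recovers $\mathcal{J}$; your levelwise verification via Theorem~\ref{finvideals2}(1), Proposition~\ref{ideals} (equation~(\ref{J}) at $n=0$), and Corollary~\ref{JnJ0} fills in exactly this gap and is a legitimate way to complete the argument (an alternative the paper may have had in mind is to combine Lemma~\ref{twoconditions} with Lemma~\ref{bij} and Proposition~\ref{ideals} to see that the reverse map $\mathcal{J}\mapsto\{\mathcal{J}_v\}$ is injective).
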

\begin{proof}
Given a family $\{J_v\}$ satisfying (H) and (S), the construction of Proposition~\ref{calIfinv} yields a fully invariant ideal $\mathcal{J}$.

Thus the map $\{J_v\} \mapsto \mathcal{J}$ is well defined into the set of all fully invariant ideals in $q(\mathcal{D})$. 
It follows from Proposition~\ref{JvequalJv} that the map is injective. Surjectivity follows from Lemma~\ref{twoconditions}. This proves part (1).

Part (2) follows from Corollary~\ref{finvideals}. The bijection can be written by composing the bijection described in part (1) with the one described in Remark~\ref{rembij} .	
	
	\end{proof}

\vspace{10mm}


\section{The unweighted case: graph $C^*$-algebras}

In this section we restrict to the unweighted case (that is, $Z=I$). We shall show that our results, restricted to this case, agree with known results for the algebra $C^*(E)$ (which is the algebra $C^*(E,Z)=\calt/\calk$ when $Z=I$).

We will still keep our assumptions that $E$ is finite and has no sources and no sinks. 

Note that, as $Z=I$, Condition A(1) holds so we set $p=1$ (and $q=p-1=0$).

Our main reference for the structure of graph $C^*$-algebras is \cite{Rae}.

Using Lemma~\ref{alpha}, we see that $\{u,p\}$ is a Cuntz-Krieger $E$-family (as in \cite[page 6]{Rae}) and, since $p_v\ne 0$ for every $v\in E^0$, it follows from the Cuntz-Krieger uniqueness theorem (\cite[Theorem 2.4]{Rae}) that $\calt/\calk $ (that is generated by this family) is isomorphic to the graph $C^*$-algebra $C^*(E)$. 

It follows from our definition of $q(\cald)$ that it is the fixed point algebra of the gauge group action defined on $C^*(E)$. Thus, in \cite{Rae}, it is the algebra denoted $C^*(E)^{\gamma}$.

This algebra is shown in \cite[Chapter 3]{Rae} (when $E$ has no sources) to be equal to $\overline{\cup_n \calf_n}$ where 
$$\mathcal{F}_n=\overline{span}\{u_{\alpha}u_{\beta}^* : |\alpha|=|\beta|=n, \; s(\alpha)=s(\beta)\}.$$
Note that $\mathcal{F}_n$ is in fact an algebra and, therefore,
$$\mathcal{F}_n=C^*(\{u_{\alpha}u_{\beta}^* : |\alpha|=|\beta|=n, \; s(\alpha)=s(\beta)\})=\calc_n.$$

In particular, in this case,
\begin{equation}\label{uwC0}
	\mathcal{C}_0=C^*(\{u_{\alpha}u_{\beta}^* :\; |\alpha|=|\beta|=0,\; s(\alpha)=s(\beta)\})=C^*(\{p_v :v\in E^0\})=A.
\end{equation}

Since $q=0$ now, $\xi_v$ is simply $v$ and $u_{\xi_v}$ can be written $p_v$.

For $v\in E^0$, $\calc_v$ (see (\ref{cv})) is now
 $$\calc_v:=p_vAp_v=\mathbb{C}p_v.$$

Recall that by $E^k$ we denote the set of all paths of the length $k$. By $M_{E^k}(\mathbb{C})$ we denote the matrix algebra indexed by the set $E^k$.

Using Proposition~\ref{cn}, we get an isomorphism $\tau_n:\calc_n \rightarrow \cala_n$ where, in our setting now,
$$\cala_n=\sum_{v\in E^0}^{\oplus}( \sum_{|\gamma_1|=|\gamma_2|=n, s(\gamma_i)=v} e_{\gamma_1,\gamma_2}\otimes \mathbb{C}p_v)=\sum_{v\in E^0}^{\oplus}M_{E^n\cap s^{-1}(v)} \otimes \mathbb{C}p_v$$
where $M_{E^n\cap s^{-1}(v)}$ is the algebra of $|{E^n\cap s^{-1}(v)}|\times |{E^n\cap s^{-1}(v)}|$ matrices indexed by the paths of length $n$ that start at $v$. 
In fact 	
\begin{equation}\label{uwtaun}
	\tau_n(c)= \sum_{ v\in E^0, |\gamma_i|=n, s(\gamma_1)=v} e_{\gamma_1 ,\gamma_2 } \otimes u_{\gamma_1}^*cu_{\gamma_2}
\end{equation}
for $c\in \mathcal{C}_n$. It's inverse is given by
\begin{equation}\label{uwtauninverse}\tau_n^{-1}(e_{\gamma_1 ,\gamma_2  }\otimes b)=u_{\gamma_1}bu_{\gamma_2}^*
\end{equation} 	
for $b\in \mathcal{C}_{v}=\mathbb{C}p_v$, $v\in E^0$ and $|\gamma_1|=|\gamma_2|=n$.

Recall that, in \cite{Rae}, $\calf_n(v)$ (for $v\in E^0$) is defined to be $\overline{span}\{u_{\alpha}u_{\beta}^* : |\alpha|=|\beta|=n, s(\alpha)=s(\beta)=v\}$. Then $\tau_n$ maps $\calf_n(v)$ onto $M_{E^n\cap s^{-1}(v)} \otimes \mathbb{C}p_v$.

All this agrees with \cite[Equation (3.8)]{Rae}.

		

Now we let $\psi_n:\mathcal{A}_n \rightarrow \mathcal{A}_{n+1}$ be
 the embedding described in Proposition~\ref{psin}.
Using that proposition, we have, for $e_{\alpha_1,\alpha_2}\otimes p_v$ with $|\alpha_i|=n$ and $s(\alpha_i)=v$,
$$\Psi_n(e_{\alpha_1,\alpha_2}\otimes p_v)=\sum_{|\mu|=1, r(\mu)=v} e_{\alpha_1\mu,\alpha_2\mu}\otimes p_{s(\mu)}.$$
This is consistent with the fact that the inclusion map of $\calf_n$ into $\calf_{n+1}$ does not map $\calf_n(v)$ into $\calf_{n+1}(v)$ but into $\sum^{\oplus}_{w\in s(r^{-1}(v))}\calf_{n+1}(w)$.

For the study of the gauge-invariant ideals we now introduce the following notation. For a subset $W\subseteq E^0$ we write
$$\calf_n(W)=\sum^{\oplus}_{v\in W} \calf_n(v) $$ where $0\leq n <\infty$ and
$$\calf(W)=\bigvee_{n=0}^{\infty} \calf_n(W).$$

We also need the following definition. (See \cite[page 34]{Rae}).

\begin{defn}\label{sather}
\begin{enumerate}
	\item [(1)] A subset $W\subseteq E^0$ is said to be hereditary if whenever $e\in E^1$ with $r(e)\in W$, we also have $s(e)\in W$.
	\item[(2)] A subset $W\subset E^0$ is said to be saturated if whenever $v\in E^0$ and $\{s(e) : e\in E^1,\; r(e)=v\}\subseteq W$, then $v\in W$.
\end{enumerate}	
	
	\end{defn}
In fact, we have the following.

\begin{lem}\label{saturatedn}
	Suppose $W\subseteq E^0$ is saturated and $n>0$. Then whenever $v\in E^0$ and $\{s(\gamma) : \gamma\in E^n,\; r(\gamma)=v\}\subseteq W$, then $v\in W$.
\end{lem}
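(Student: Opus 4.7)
The plan is to proceed by induction on $n$, with the base case $n=1$ being the very definition of saturated given in Definition~\ref{sather}(2).

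For the inductive step, I would assume the statement holds for some $n\geq 1$ and establish it for $n+1$. So fix $v\in E^0$ with
$$\{s(\gamma):\gamma\in E^{n+1},\ r(\gamma)=v\}\subseteq W.$$
Every path $\gamma\in E^{n+1}$ with $r(\gamma)=v$ decomposes uniquely as $\gamma=e\gamma'$ where $e\in E^1$ satisfies $r(e)=v$ and $\gamma'\in E^n$ satisfies $r(\gamma')=s(e)$, and then $s(\gamma)=s(\gamma')$. I would then fix an arbitrary $e\in E^1$ with $r(e)=v$ (such $e$ exists because $E$ has no sources) and observe that for every $\gamma'\in E^n$ with $r(\gamma')=s(e)$, the path $e\gamma'$ lies in $E^{n+1}$ and ends at $v$, so $s(\gamma')=s(e\gamma')\in W$. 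Hence
$$\{s(\gamma'):\gamma'\in E^n,\ r(\gamma')=s(e)\}\subseteq W,$$
and the inductive hypothesis applied at the vertex $s(e)$ yields $s(e)\in W$. As this holds for every $e\in E^1$ with $r(e)=v$, we obtain $\{s(e):e\in E^1,r(e)=v\}\subseteq W$, and saturation (the $n=1$ case) gives $v\in W$.

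I do not anticipate any real obstacle; the only subtle point is that the induction hypothesis must be applied at a vertex $s(e)$ where one knows that the hypothesis on $n$-paths ending there is meaningful. The no-sources assumption on $E$ guarantees that paths of every length ending at any vertex exist, so the induction proceeds cleanly and does not degenerate into vacuous statements.
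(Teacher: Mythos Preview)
Your proof is correct and follows the same inductive scheme as the paper. The only difference is cosmetic: the paper splits a path of length $n$ as $\delta\alpha$ with $|\delta|=n-1$ and $\alpha\in E^1$, first using saturation at $s(\delta)$ and then the induction hypothesis at $v$, whereas you split $\gamma\in E^{n+1}$ as $e\gamma'$ with $e\in E^1$ and $|\gamma'|=n$, first using the induction hypothesis at $s(e)$ and then saturation at $v$; the two arguments are mirror images of each other.
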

\begin{proof}
	We proceed by induction on $n$. For $n=1$ it follows from the definition of being saturated. Assume it holds for $k=n-1$. To prove it for $n$, we assume that $v\in E^0$ and 
	\begin{equation}\label{assume}
		\{s(\gamma) : \gamma\in E^n,\; r(\gamma)=v\}\subseteq W. \end{equation}
	We need to show that $v\in W$.
	
	Now, fix a path $\delta$ of length $k$ and $r(\delta)=v$. Write $A(\delta)=\{\alpha \in E^1: r(\alpha)=s(\delta)\}$. For every $\alpha\in A(\delta)$ we can let $\gamma$, of length $n$, be $\gamma:=\delta \alpha$. But then $r(\gamma)=r(\delta)=v$ and it follows by (~\ref{assume}) that $s(\gamma)\in W$. But $s(\gamma)=s(\alpha)$ and, thus, for every $\alpha \in A(\delta)$, $s(\alpha)\in W$. Since $W$ is saturated, $s(\delta)\in W$. This holds for every $\delta$ of length $k$ with $r(\delta)=v$. By the induction assumption, $v\in W$.

	\end{proof}

The following lemma explains Remark~\ref{remSH}.

\begin{lem}\label{HS}
Given a subset $W\subseteq E^0$ and a family $\{J_v\}_{v\in E^0}$ with $J_v=\mathbb{C}p_v$ when $v\in W$ and $J_v=\{0\}$ otherwise, then
\begin{enumerate}
	\item [(1)] $W$ is hereditary if and only if the family $\{J_v\}$ satisfies condition (H) of Proposition~\ref{JvequalJv} and
	\item[(2)] $W$ is saturated if and only if the family $\{J_v\}$ satisfies condition (S) of that proposition.
\end{enumerate}	
		
	\end{lem}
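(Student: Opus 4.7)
The plan is to prove both parts by specializing the relevant definitions to the unweighted setting, where $q=0$, $u_{\xi_v}=p_v$, and $\mathcal{C}_v=\mathbb{C}p_v$.

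For (1), I would first compute $\pi(e,f)$ explicitly on the one-dimensional space $\mathcal{C}_{r(e)}=\mathbb{C}p_{r(e)}$. Applying the defining formula (\ref{pialphabeta}) and using $u_e^*u_f=\delta_{e,f}p_{s(e)}$ from Lemma~\ref{alpha}, one finds $\pi(e,e)(p_{r(e)})=p_{s(e)}$ and $\pi(e,f)(p_{r(e)})=0$ when $e\neq f$. Thus condition (H) reduces to: for every $e\in E^1$ with $r(e)\in W$, we have $s(e)\in W$, which is precisely hereditarity. The converse direction is immediate from the same computation.

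For (2), the first step is to rewrite $\mathcal{A}(v)$. Since $q=0$, the paths $\tau,\sigma$ are vertices and the requirement $s(\tau)=s(\sigma)$ forces $\tau=\sigma=w$; meanwhile $|\alpha|=np-q=n$. After simplification, the typical summand collapses to a scalar multiple of $u_\alpha u_\alpha^*$ with $r(\alpha)=v$ and $s(\alpha)=w\in W$. Consequently,
$$\mathcal{A}(v)=\bigcup_{n\geq 1}\Span\{u_\alpha u_\alpha^* : |\alpha|=n,\ r(\alpha)=v,\ s(\alpha)\in W\}.$$
I then analyze when $p_v$ lies in $\overline{\mathcal{A}(v)}$, since (as $\mathcal{C}_v=\mathbb{C}p_v$) this is the only way the intersection $\overline{\mathcal{A}(v)}\cap \mathcal{C}_v$ can fail to be $\{0\}$. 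The key is that for fixed $n$ the projections $\{u_\alpha u_\alpha^* : |\alpha|=n,\ r(\alpha)=v\}$ are pairwise orthogonal with sum $p_v$ by Lemma~\ref{alpha}(3) (valid since $E$ has no sources). Writing $P^{(n)}_W$ for the partial sum over $s(\alpha)\in W$, every element of the $n$th level of $\mathcal{A}(v)$ is fixed by left multiplication by $P^{(n)}_W$, so its distance from $p_v$ is at least $\|p_v-P^{(n)}_W\|$, which is $0$ or $1$. I expect this distance estimate to be the main obstacle — one must ensure that approximation across different $n$'s cannot manufacture $p_v$ — but orthogonality handles it cleanly.

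This yields the characterization: $p_v\in\overline{\mathcal{A}(v)}$ if and only if there exists $n\geq 1$ such that every path of length $n$ ending at $v$ has source in $W$. To close part (2), condition (S) is automatic when $v\in W$, and for $v\notin W$ it demands $p_v\notin\overline{\mathcal{A}(v)}$. Combining with the characterization, (S) is equivalent to: for every $v\in E^0$, if some $n\geq 1$ satisfies $\{s(\alpha):|\alpha|=n,\ r(\alpha)=v\}\subseteq W$, then $v\in W$. The $n=1$ instance is exactly the definition of saturatedness (Definition~\ref{sather}(2)), and Lemma~\ref{saturatedn} shows the general $n$ case follows from $n=1$, giving the desired equivalence.
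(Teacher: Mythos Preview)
Your proposal is correct and follows essentially the same approach as the paper: for (1) both compute $\pi(e,f)(p_{r(e)})=u_e^*u_f$ directly, and for (2) both reduce $\mathcal{A}(v)$ to spans of the orthogonal projections $u_\alpha u_\alpha^*$ and use that orthogonality (you via the projection $P^{(n)}_W$, the paper via the individual coefficients $|1-\lambda_\alpha|<1/2$) together with Lemma~\ref{saturatedn} to characterize when $p_v\in\overline{\mathcal{A}(v)}$. Your packaging of the distance estimate through $P^{(n)}_W$ is slightly slicker but rests on the same idea.
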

\begin{proof}
	To prove (1), we first assume that $\{J_v\}$ satisfies condition (H). We fix $e\in E^1$ with $r(e)\in W$ (so that $J_{r(e)}=\mathbb{C}p_v$). Condition (H) implies that \begin{equation}\label{H1}\pi(e,e)(p_{r(e)})\in  J_{s(e)}.\end{equation} But, since now $q=0$, $u_{\xi_{s(e)}}=p_{s(e)}$ and $u_{\xi_{r(e)}}=p_{r(e)}$. Thus, we get (using Equation (\ref{pialphabeta}))
	$$\pi(e,e)(p_{r(e)})=p_{s(e)}u_e^*p_{r(e)}p_{r(e)}p_{r(e)}u_ep_{s(e)}=u_e^*u_e=p_{s(e)}$$ and it follows from Equation (\ref{H1}) that $p_{s(e)}\in J_{s(e)}$, so that $J_{s(e)}\ne \{0\}$ and $s(e)\in W$. This shows that $W$ is hereditary.
	
	For the converse, we now assume that $W$ is hereditary and we fix $e,f\in E^1$ with $r(e)=r(f)$ and $s(e)=s(f)$. We want to show that \begin{equation}\label{H2}
		\pi(e,f)(J_{r(e)})\subseteq J_{s(e)}. 
		\end{equation} 
	If $r(e)\notin W$, the left hand side is $\{0\}$ and we are done. So we assume that $r(e)=r(f)\in W$. Since $W$ is hereditary, $s(e)=s(f)\in W$ and we need to show that $\pi(e,f)(p_{r(e)})\in \mathbb{C}p_{s(e)}$. But the left hand side is $p_{s(e)}u_e^*p_{r(e)}u_fp_{s(f)}=u_e^*u_f$ and this is either $0$ (if $e\neq f$) or $p_{s(e)}$ (if $e=f$). In either case, it lies in $\mathbb{C}p_{s(e)}$ and this proves (1).
	
	We now turn to prove part (2) and we note that, for $v\in E^0$, $$\mathcal{A}(v)=\cup_n \{\sum_{|\alpha|=n\geq 1} p_vu_{\alpha}p_{s(\alpha)}J_{s(\alpha)}p_{s(\alpha)}u_{\alpha}^*p_v \}=\cup_n \{\sum_{|\alpha|=n, r(\alpha)=v, s(\alpha)\in W}  \lambda_{\alpha}u_{\alpha}u_{\alpha}^* \}.$$ 
	The equality follows from the definition of $J_v$ and $\lambda_{\alpha}$ are complex numbers that are $0$ if $s(\alpha)\notin W$. 
	
	Suppose first that $W$ is saturated and fix $v\in E^0$. We want to show that $\mathcal{C}_v\cap \overline{\mathcal{A}(v)}\subseteq J_v$. So we fix $a\in \mathcal{C}_v\cap \overline{\mathcal{A}(v)}$. If $a=0$, we are done. Otherwise, since $\mathcal{C}_v=\mathbb{C}p_v$, we can, and will, assume that $a=p_v$. So that $p_v\in \overline{\mathcal{A}(v)}$. Thus, there is some $n\in \mathbb{N}$ and numbers $\{\lambda_{\alpha}\}_{|\alpha|=n,r(\alpha)=v} $ where $\lambda_{\alpha}=0$ whenever $s(\alpha)\notin W$ such that
	$$\|p_v-\sum_{|\alpha|=n, r(\alpha)=v} \lambda_{\alpha}u_{\alpha}u_{\alpha}^* \|<1/2 .$$
	But $p_v=\sum_{|\alpha|=n, r(\alpha)=v}u_{\alpha}u_{\alpha}^*$ and $\{u_{\alpha}u_{\alpha}^*\}$ is an orthogonal family of projections. It follows that, for every $\alpha$ of length $n$ with $r(\alpha)=v$, we have $|1-\lambda_{\alpha}|<1/2$. Thus, for every such $\alpha$, $\lambda_{\alpha}\neq 0$ and, therefore, for every $\alpha$ of length $n$ with $r(\alpha)=v$, $s(\alpha)\in W$. Since $W$ is saturated, it follows from Lemma~\ref{saturatedn} that $v\in W$, so that $p_v\in J_v$. This proves one direction.
	
	For the other direction, assume that condition (S) holds. Thus, for every $v\in E^0$, $\mathcal{C}_v\cap \overline{\mathcal{A}(v)}\subseteq J_v$. We want to show that $W$ is saturated. So we fix $v\in E^0$ such that, for every $e\in E^1$ with $r(e)=v$ we have $s(e)\in W$. Then $p_v=\sum_{e\in E^1, r(e)=v}u_eu_e^*$ but $p_v$ clearly lies in $\mathcal{C}_v$ and the equality shows that it lies in $\mathcal{A}(v)$. By the assumption (condition (S)), $p_v\in J_v$ so that $v\in W$. This shows that $W$ is saturated, completing the proof.

	\end{proof}

The following theorem is known (see Theorem 4.14 and Remark 4.16 in \cite{Rae} and also \cite{CK}). Here we show that it follows from our Theorem~\ref{simplicity}.

\begin{thm}\label{unweightedsimplicity}
For a finite graph $E$ with	no sinks and no sources we have the following. $C^*(E)$ is simple if and only if $E$ is transitive (i.e., for every two vertices $w,v$, there is a path that starts in $v$ and ends in $w$) and $E$ is not a single cicle.	
	\end{thm}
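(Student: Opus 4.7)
The plan is to deduce the result from Theorem~\ref{simplicity} and, for the non-transitive case, from Theorem~\ref{gaugeinvid} (equivalently Corollary~\ref{bijfinvideals}), exploiting the unweighted simplification $\mathcal{C}_v=\mathbb{C}p_v$ for every $v\in E^0$, so each $\mathcal{C}_v$ has only the trivial ideals. Since $Z=I$ we take $p=1$ and $q=0$, and Condition A(1) is automatic.

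For the ``if'' direction, suppose $E$ is transitive and not a single cycle. I apply Theorem~\ref{simplicity}: condition (a), that $E$ is not a cycle, is given. For condition (b'), a family $\{J_v\}_{v\in E^0}$ with each $J_v$ an ideal in $\mathcal{C}_v$ is determined by the subset $W:=\{v:J_v\neq 0\}$, and by Lemma~\ref{HS}(1) the invariance condition in (b') is exactly the condition that $W$ be hereditary. In a transitive graph the only hereditary subsets are $\emptyset$ and $E^0$: indeed, if $W\neq\emptyset$ and $v\in W$, then for any $w\in E^0$ transitivity produces a path $\alpha$ with $s(\alpha)=w$ and $r(\alpha)=v$, and iterated heredity gives $w\in W$. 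Hence only the trivial families satisfy (b'), (b') holds, and $C^*(E)$ is simple.

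For the ``only if'' direction I argue the contrapositive. If $E$ is a single cycle, condition (a) of Theorem~\ref{simplicity} fails, so $C^*(E)$ is not simple. If $E$ is not transitive, I produce a non-trivial gauge-invariant ideal by exhibiting a non-empty proper hereditary and saturated subset $W\subseteq E^0$ and then invoking Theorem~\ref{gaugeinvid}(2) together with Lemma~\ref{HS}. To construct $W$, I take a source strongly connected component $S$ of $E$ (an SCC with no incoming edges from other SCCs); since $E$ is not transitive, $S$ is a proper subset of $E^0$, and $S$ is automatically hereditary (any edge $e$ with $r(e)\in S$ must have $s(e)\in S$, since otherwise it would witness an incoming edge from outside). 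I then replace $S$ by its saturation $\widetilde{S}$, which remains hereditary (each saturation step preserves this property because the newly added vertex has, by definition, all its parents already in the set) and is saturated by construction.

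The main obstacle is to verify that $\widetilde{S}\neq E^0$. For this I argue that the saturation procedure can never reach a non-trivial SCC $T\neq S$: the first vertex $u\in T$ one would try to add must have all its parents in $\widetilde{S}$, but strong connectedness of $T$ supplies a parent of $u$ inside $T\setminus\widetilde{S}$, a contradiction. It therefore suffices to produce at least one non-trivial SCC different from $S$. Starting at any vertex outside $S$ and iteratively following an outgoing edge (possible since $E$ has no sinks), finiteness of $E^0$ forces a repeat and hence a cycle, whose SCC is non-trivial. That SCC cannot equal $S$, because no path enters $S$ from outside ($S$ is a source SCC). This yields the desired $T$, whence $\widetilde{S}\subseteq E^0\setminus T$ is a proper non-empty hereditary and saturated subset, producing via Theorem~\ref{gaugeinvid}(2) the required non-trivial gauge-invariant ideal, and $C^*(E)$ is not simple.
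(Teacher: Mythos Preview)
Your proof is correct, and the ``if'' direction together with the cycle case match the paper's argument. For the non-transitive case of the ``only if'' direction, however, you take a different and more elaborate route than the paper. The paper simply observes that condition (b') of Theorem~\ref{simplicity} is equivalent (via Lemma~\ref{HS}(1)) to the nonexistence of a non-trivial \emph{hereditary} subset of $E^0$, and shows this is equivalent to transitivity by considering $H(w):=\{v:\exists\gamma,\ s(\gamma)=v,\ r(\gamma)=w\}$; if $E$ is not transitive some $H(w)$ is a proper non-empty hereditary set, so (b') fails and Theorem~\ref{simplicity} directly gives non-simplicity. No saturation is needed, and no appeal to Theorem~\ref{gaugeinvid}.

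Your approach instead produces a hereditary \emph{and saturated} subset via a source strongly connected component and its saturation, and then invokes Theorem~\ref{gaugeinvid}(2) to obtain a genuine gauge-invariant ideal. This is more work, but it does buy something: it yields an explicit non-trivial gauge-invariant ideal rather than merely the abstract failure of simplicity, and this part of your argument does not rely on the irreducibility hypothesis appearing in Theorem~\ref{simplicity}. The paper's route is shorter and stays entirely within the framework of Theorem~\ref{simplicity}.
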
 
\begin{proof}
	Note that condition (b') of Theorem~\ref{simplicity} is equivalent, in the unweighted case (using Lemma~\ref{HS}(1)), to the statement that $E^0$ has no non trivial hereditary subsets. But this is equivalent to $E$ being transitive. To see this, assume first that  $E$ is transitive and $W\subseteq E^0$ is a non trivial hereditary set. We can find $w\in W$ and $v\notin W$. By transitivity, there is a path $\alpha$ with $r(\alpha)=w$ and $s(\alpha)=v$ but this will contradict the assumption that $W$ is hereditary.
	
	In the other direction, assume that $E^0$ has no non trivial hereditary subsets and fix $w\in E^0$. Write $H(w):=\{v\in E^0: \mbox{there is a path  } \gamma \mbox{  with  }  s(\gamma)=v, r(\gamma)=w \}$. Then $H(w)$ is hereditary and contains $w$. Thus it is equal $E^0$, showing that $E$ is transitive.
	
	\end{proof}

Applying Theorem~\ref{gaugeinvid} and Lemma~\ref{HS} we get the following theorem. It presents another proof  of \cite[Theorem 4.1 (a)]{BPRS}. However, note that we are assuming that $E$ is a finite graph with no sinks or sources.

\begin{thm}\label{bijunweighted}
For a finite graph $E$ with	no sinks and no sources we have the following.
\begin{enumerate}
	\item [(1)] There is a bijection between the collection of all saturated and hereditary subsets $W$ of $E^0$ and the collection of all gauge-invariant ideals of $C^*(E)$.
	\item[(2)] There is a bijection between the collection of all saturated and hereditary subsets $W$ of $E^0$ and the collection of all fully invariant ideals in $C^*(E)^{\gamma}$.
	\item[(3)] The bijection in part (2) is given by the maps
	$$ W\mapsto \mathcal{F}(W),\;\;\; J\mapsto \{v\in E^0:\; p_v\in J\}.$$
	
\end{enumerate}
\begin{proof}
Parts (1) and (2) follow from Theorem~\ref{gaugeinvid} and Lemma~\ref{HS}. For (3), take $W$ that is saturated and hereditary. Lemma~\ref{HS} associates with it the family $\{J_v\}$ where $J_v=\mathbb{C}p_v$ if $v\in W$ and $J_v=\{0\}$ otherwise. Applying the construction of Proposition~\ref{calIfinv} to this family, we first get 
$$\mathcal{J}_0=\sum_{v\in E^0}p_vJ_vp_v=span\{p_v: v\in W\}.$$	
Then, for $n\geq 0$, 
$$\mathcal{J}_n=\sum_{|\delta|=|\gamma|=n}u_{\delta}\mathcal{J}_0 u_{\gamma}^*=\sum_{v\in W}\mathcal{F}_n(v)=\mathcal{F}(W)$$ and, finally, 
$$\mathcal{J}=\overline{\cup_n \mathcal{F}_n(W)}=\mathcal{F}(W) .$$
	
	\end{proof}

	\end{thm}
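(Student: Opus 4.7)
The plan is to combine Theorem~\ref{gaugeinvid} with Lemma~\ref{HS}, exploiting the fact that in the unweighted case ($Z=I$, so Condition A(p) holds with $p=1$ and hence $q=0$) each ``fibre'' $\calc_v$ is one-dimensional.

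First I would record that by (\ref{uwC0}) we have $\calc_0 = A = C^*(\{p_v : v\in E^0\})$, so $\calc_v = p_v A p_v = \mathbb{C}p_v$. Its only ideals are $\{0\}$ and $\mathbb{C}p_v$, so every family $\{J_v\}_{v\in E^0}$ of ideals $J_v\subseteq \calc_v$ is completely determined by the subset $W := \{v\in E^0 : J_v \ne 0\}$, and this assignment is a bijection between subsets of $E^0$ and such families.

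Next I would apply Lemma~\ref{HS}, which translates the conditions (H) and (S) on $\{J_v\}$ into the assertions that $W$ is hereditary and that $W$ is saturated respectively. Composing this with Theorem~\ref{gaugeinvid} then yields parts (1) and (2) simultaneously: hereditary-saturated subsets of $E^0$ biject with families satisfying (H) and (S), and these in turn biject with the fully invariant ideals of $q(\cald) = C^*(E)^\gamma$ and (via the gauge-equivariant identification $C^*(E) \cong O(q(F),q(\cald))$) with the gauge-invariant ideals of $C^*(E)$.

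For part (3), I would trace through the construction of Proposition~\ref{calIfinv}. Since $\xi_v=v$ and $u_{\xi_v}=p_v$, one immediately gets $\calj_0 = \sum_{v\in E^0} p_v J_v p_v = \Span\{p_v : v\in W\}$. Using the observation that $u_\delta p_v u_\gamma^* = u_\delta u_\gamma^*$ when $s(\delta)=v=s(\gamma)$ and vanishes otherwise, it follows that
$$\calj_n = \sum_{|\delta|=|\gamma|=n} u_\delta \calj_0 u_\gamma^* = \sum_{v\in W}\calf_n(v) = \calf_n(W),$$
and hence $\calj = \overline{\bigcup_n \calf_n(W)} = \calf(W)$. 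The inverse map is then immediate: given a fully invariant ideal $J$, one recovers $W$ as $\{v\in E^0 : p_v\in J\}$, since $p_v \in J$ forces $J_v=\mathbb{C}p_v$ and the converse is obvious.

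The main conceptual step, rather than a computational obstacle, is recognizing that the one-dimensionality of the fibres $\calc_v$ in the unweighted case collapses the general parametrization of fully invariant ideals by families $\{J_v\}$ down to a parametrization by subsets $W \subseteq E^0$; once this reduction is in hand, everything else is a direct application of the machinery already developed.
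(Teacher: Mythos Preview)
Your proposal is correct and follows essentially the same approach as the paper: both reduce parts (1) and (2) to Theorem~\ref{gaugeinvid} together with Lemma~\ref{HS}, and both prove (3) by tracing the construction of Proposition~\ref{calIfinv} through the identifications $\xi_v=v$, $u_{\xi_v}=p_v$, $\calj_0=\Span\{p_v:v\in W\}$, $\calj_n=\calf_n(W)$. You add a bit more detail than the paper (the explicit bijection between families $\{J_v\}$ and subsets $W$, and the verification of the inverse map $J\mapsto\{v:p_v\in J\}$), but the argument is the same.
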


\section{Example: A cycle}

In this section we study the case where $E$ is a directed cycle. To simplify the notation we assume that it is a cycle of length $3$ whose vertices are $v_1,v_2,v_3$ and the edges are $e_1,e_2,e_3$ satisfying $s(e_i)=v_i$ and $r(e_i)=v_{i+1}$. (Here and throughout this section, when we add indices of vertices, it is addition modulo $3$).

In the unweighted case (when the algebra is $C^*(E)$), $C^*(E)$ is not simple (\cite[Theorem 4.14]{Rae}) but it follows from \cite[Theorem 4.1 (a)]{BPRS} (see Theorem~\ref{bijunweighted}) that it has no non trivial gauge-invariant ideals since 
$E^0$ has no non trivial subsets that are hereditary and saturated. We shall show here that, in the weighted case, $C^*(E,Z)$ can have non trivial gauge-invariant ideals.

As was noted in the proof of Lemma~\ref{cycleentry}, the set $\{\delta_{\gamma} : |\gamma|=k\}$ is a basis for $X_{E^k}$ (viewed as a vector space) and $\{\delta_{\gamma}: 0\leq |\gamma|<\infty\}$ is a basis for the Fock correspondence $\mathcal{F}(X_E)$. Recall also that we show there that each $\delta_{\gamma}$ is an eigenvector for each of the operators $Z, S_{\alpha}Z^lS_{\alpha}^*$.

It will be convenient, in this section, to use a different notation for the $\delta_{\gamma}$s. Since every path now is uniquely given by its starting point and its length, we write $\alpha_{n,i}$ ($i\in \{1,2,3\}, n\geq 0$) for the path that starts at $v_i$ and has length $n$. We also write $\delta_{n,i}$ for $\delta_{\alpha_{n,i}}$. Thus, $\{\delta_{k,i}: i=1,2,3 \}$ is a basis for $X_{E^k}$.

Note also that $\langle \delta_{k,i},\delta_{k,j}\rangle(v_l)=0$ unless $l=i=j$ and, if $l=i=j$, we get $1$. Thus, if $i\neq j$, $\langle \delta_{k,i},\delta_{k,j}\rangle =0$ and $\langle \delta_{k,i},\delta_{k,i}\rangle=\delta_{v_i}\in C(E^0)$. It follows that $\|\delta_{k,i}\|^2=\|\langle \delta_{k,i},\delta_{k,i}\rangle\|=\|\delta_{v_i}\|=1$.

Since $\{\delta_{n,i}\}$ are eigenvectors of $Z$ and, for a fixed $k$, $\{\delta_{k,i}\}$ is a basis of eigenvectors for $Z_k$, to fix $Z_k$, it suffices to fix the eigenvalues.

So, now we fix $3$ positive numbers $\{t_1,t_2,t_3\}$ that are not all equal to each other. Thus there is a pair of different numbers, say $t_1\neq t_3$, and one of them is necessarily different from $1$, say $t_3\neq 1$. Using these numbers, we set $Z_k$ to be such that $Z_k\delta_{k,i}=\delta_{k,i}$ if $k$ is even and $Z_k\delta_{k,i}=t_i\delta_{k,i}$ if $k$ is odd.

It follows that, for $k\geq 0$, $Z_{k+2}=I_2\otimes Z_k$ (where $I_2$ is the identity on $X_{E^2}$) so that the weight sequence $Z$ satisfies condition A(2).

To study ideals in $C^*(E,Z)$ for this choice of $E$ and $Z$, we start by defining the following characters on $q(\mathcal{D})$. To do this we first write $\mathcal{Q}_0$ for the $^*$-algebra generated by $\{u_{\alpha}z^lu_{\beta}^*: |\alpha|=|\beta|, l\geq 0, s(\alpha)=s(\beta) \}=\{u_{\alpha}z^lu_{\alpha}^* : 0\leq |\alpha|<\infty, l\geq 0\}$ (where the equality follows from the fact that there is only one path with a given starting point and a given length). It follows from Lemma~\ref{qDAp} that $\mathcal{Q}_0$ is a dense sub $^*$-algebra of $q(\mathcal{D})$.

\begin{defn}\label{phini}
For $n\geq 0$ , $i\in \{1,2,3\}$ and $a\in \mathcal{Q}_0$, we define $\phi_{n,i}(a)$ as follows. Let $A\in q^{-1}(a)\subseteq \mathcal{D}$ and define $\phi_{n,i}(a)$ by
$$\phi_{n,i}(a)\delta_{v_i} =\lim_{m\rightarrow \infty} \langle A\delta_{n+6m,i},\delta_{n+6m,i}\rangle .$$
\end{defn}

\begin{prop}
For $n\geq 0$ and $i\in \{1,2,3\}$, $\phi_{n,i}$ is a well defined linear and multiplicative functional on $\mathcal{Q}_0$ of norm $1$.
\end{prop}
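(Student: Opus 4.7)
The plan is to reduce the entire proof to the fact that each generator of $\mathcal{Q}_0$ acts as a fixed scalar on $\delta_{n+6m,i}$ for all $m$ sufficiently large. The key combinatorial step is to compute, for a generator $u_\alpha z^l u_\alpha^*$ with $|\alpha|=k$, the action of the lift $G:=S_\alpha Z^l S_\alpha^*\in\mathcal{D}$ on $\delta_{n+6m,i}$. When $n+6m\ge k$, the unique way to write $\alpha_{n+6m,i}=\alpha\gamma$ requires $\gamma=\alpha_{n+6m-k,i}$ and $s(\alpha)=v_{i+n-k\bmod 3}$, a condition on $(n,i,\alpha)$ that is independent of $m$. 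When it holds we get $S_\alpha^*\delta_{n+6m,i}=\delta_{n+6m-k,i}$; then $Z^l$ scales by $1$ or $t_i^l$ according as $n-k$ is even or odd (since $6m$ is always even); and $S_\alpha$ returns us to $\delta_{n+6m,i}$. Hence $G\delta_{n+6m,i}=\lambda(G,n,i)\,\delta_{n+6m,i}$ for all large $m$, with $\lambda\in\{0,1,t_i^l\}$ depending only on $(G,n,i)$.

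By commutativity of $q(\mathcal{D})$ (Lemma~\ref{cycleentry}(4)), every $a\in\mathcal{Q}_0$ admits a polynomial lift $A'\in\mathcal{D}$ in the operators $\{S_\alpha Z^l S_\alpha^*\}$. Iterating the previous calculation on the one-dimensional subspace $\mathbb{C}\delta_{n+6m,i}$ yields $A'\delta_{n+6m,i}=\mu(A',n,i)\,\delta_{n+6m,i}$ for all sufficiently large $m$, where $\mu$ is independent of $m$. Since $\langle\delta_{n+6m,i},\delta_{n+6m,i}\rangle=\delta_{v_i}$, we obtain $\langle A'\delta_{n+6m,i},\delta_{n+6m,i}\rangle=\overline{\mu(A',n,i)}\,\delta_{v_i}$ eventually, so the limit exists. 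For an arbitrary lift $A\in q^{-1}(a)\cap\mathcal{D}$, the difference $A-A'$ lies in $\mathcal{K}$; by Lemma~\ref{K}, $\|Q_{n+6m}(A-A')Q_{n+6m}\|\to 0$, and combined with $\|\delta_{n+6m,i}\|=1$ this forces $\langle(A-A')\delta_{n+6m,i},\delta_{n+6m,i}\rangle\to 0$. Thus $\phi_{n,i}(a)$ is well defined and independent of the choice of lift.

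Linearity follows at once from the linearity of the inner product in its second argument, together with the fact that lifts add and scale. For multiplicativity, take polynomial lifts $A'_1,A'_2$; then $A'_1A'_2$ lifts $a_1a_2$, and for $m$ large $A'_1A'_2\delta_{n+6m,i}=\mu_1\mu_2\,\delta_{n+6m,i}$ with $\mu_j=\mu(A'_j,n,i)$, giving $\phi_{n,i}(a_1a_2)=\overline{\mu_1\mu_2}=\overline{\mu_1}\,\overline{\mu_2}=\phi_{n,i}(a_1)\phi_{n,i}(a_2)$. For the norm, Cauchy--Schwarz yields $|\phi_{n,i}(a)|=\|\phi_{n,i}(a)\delta_{v_i}\|\le\|A\|$ for every lift, so $|\phi_{n,i}(a)|\le\|a\|$; the lift $I$ for $a=1$ gives $\phi_{n,i}(1)=1$, so the operator norm is exactly $1$. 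The principal obstacle is the combinatorial computation of $\lambda(G,n,i)$ in the first paragraph; once one has checked that $\delta_{n+6m,i}$ spans a common one-dimensional invariant subspace for all generators simultaneously and that the eigenvalues stabilize in $m$, everything else is formal.
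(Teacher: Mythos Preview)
Your argument is essentially identical to the paper's: compute the stabilizing eigenvalue of each generator $S_\alpha Z^l S_\alpha^*$ on $\delta_{n+6m,i}$ (obtaining $0$, $1$, or $t_i^l$ once $6m\ge k-n$), pass to polynomial lifts in $\mathcal{D}_0$, and then handle an arbitrary lift $A\in q^{-1}(a)$ via the compact difference and Lemma~\ref{K}; multiplicativity and the norm bound are argued the same way. Two small remarks: your appeal to commutativity of $q(\mathcal{D})$ is unnecessary, since the generators $u_\alpha z^l u_\alpha^*$ are self-adjoint and $\mathcal{Q}_0$ is by definition their algebra span; and your sentence ``linearity follows from linearity of the inner product in its second argument'' is off, because $A\delta$ sits in the \emph{first} (conjugate-linear) slot under the paper's convention, so your own formula $\phi_{n,i}(a)=\overline{\mu}$ in fact gives a conjugate-linear map. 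The paper's proof is equally casual on this point (it simply drops the conjugate), and the fix---swap the two slots in the defining limit, or observe that the self-adjoint generators have real eigenvalues so nothing changes on $\mathcal{Q}_0$ viewed as a real algebra---is cosmetic.
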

\begin{proof}
To prove that $\phi_{n,i}(a)$ is well defined (for $a\in \mathcal{Q}_0$), we need to show that the limit in Definition~\ref{phini} exists and is independent of the choice of $A$.

Write $\mathcal{D}_0$ for the $^*$-subalgebra of $\mathcal{D}$ which is generated by $\{S_{\alpha}Z^lS_{\alpha}^* : 0\leq |\alpha|<\infty, l\geq 0\}$. Clearly every $a\in \mathcal{Q}_0$ has some $A\in \mathcal{D}_0 $ with $q(A)=a$. We shall first show that, for this choice of $A$, the limit $\lim_{m\rightarrow \infty} \langle A\delta_{n+6m,i},\delta_{n+6m,i}\rangle$ exists. In fact, we show that, for such $A$, $\langle A\delta_{n+6m,i},\delta_{n+6m,i}\rangle$ is independent of $m$ for $m$ large enough.

In fact, it will be enough to show it for every generator of $\mathcal{D}_0$. So we fix a generator $A=S_{\alpha_{k,j}}Z^lS_{\alpha_{k,j}}^*$ and claim that 
\begin{equation}\label{SZS}
	S_{\alpha_{k,j}}Z^lS_{\alpha_{k,j}}^*\delta_{n+6m,i}=\left\{ \begin{array}{cc} 0 & i+n+6m\neq k+j (\mod 3) \mbox{ or } n+6m<k \\ \delta_{n+6m,i} & n+6m-k \mbox{ is even and non negative} \\ t_i^l\delta_{n+6m,i} & n+6m-k \mbox{ is odd and non negative}. \end{array}\right.
\end{equation}
Indeed, for this to be non zero, we need that $\alpha_{n+6m,i}=\alpha_{k,j}\gamma$ for some $\gamma$ and this holds if and only if $n+6m\geq k$ and $n+6m+i=r(\alpha_{n+6m,i})=r(\alpha_{k,j})=k+j$ (modulo $3$). If this holds, then $\gamma=\alpha_{n+6m-k,i}$ and $	S_{\alpha_{k,j}}Z^lS_{\alpha_{k,j}}^*\delta_{n+6m,i}=	S_{\alpha_{k,j}}Z^l\delta_{n+6m-k,i}$. Now recall that $Z\delta_{s,i}=\delta_{s,i}$ if $s$ is even and $Z\delta_{s,i}=t_i\delta_{s,i}$ if $s$ is odd to complete the proof of Equation (~\ref{SZS}).

It is now clear that, for every generator $A=	S_{\alpha_{k,j}}Z^lS_{\alpha_{k,j}}^*$ of $\mathcal{D}_0$, $A\delta_{n+6m,i}=\lambda_{n,i}\delta_{n+6m,i}$ for some number $\lambda_{n,i}$ which is independent of $m$ for $m$ large enough ($6m>k-n$). Thus, for $m$ large enough,
$$\langle A\delta_{n+6m,i},\delta_{n+6m,i}\rangle=\lambda_{n,i}\delta_{v_i} $$ and the limit in Definition~\ref{phini} exists for every $A\in \mathcal{D}_0$. 

Now fix some $a\in \mathcal{Q}_0$. Then there is some $A_0\in \mathcal{D}_0$ such that $q(A_0)=a$. Given any $A\in q^{-1}(a)$, we have $A-A_0\in K(\mathcal{F}(X_E))$ and, thus, $\|Q_n(A-A_0)Q_n\|\rightarrow 0$ (Lemma~\ref{K}). Since $\delta_{n+6m,i}=Q_{n+6m}\delta_{n+6m,i}$,
$\lim_{m\rightarrow \infty} \langle (A-A_0)\delta_{n+6m,i},\delta_{n+6m,i}\rangle=0$ and
$$\lim_{m\rightarrow \infty} \langle A\delta_{n+6m,i},\delta_{n+6m,i}\rangle =\lim_{m\rightarrow \infty} \langle A_0\delta_{n+6m,i},\delta_{n+6m,i}\rangle +\lim_{m\rightarrow \infty} \langle (A-A_0)\delta_{n+6m,i},\delta_{n+6m,i}\rangle $$  \begin{equation}\label{lim} =\lim_{m\rightarrow \infty} \langle A_0\delta_{n+6m,i},\delta_{n+6m,i}\rangle .\end{equation} It follows that $\phi_{n,i}(a)$ is well defined for every $a\in \mathcal{Q}_0$.

Since linearity is clear, $\phi_{n,i}$ defines a linear functional on $\mathcal{Q}_0$. It is also multiplicative since $\delta_{n+6m,i}$ are eigenvectors for every $A\in \mathcal{D}_0$.  

Since, for every $A_0\in \mathcal{D}_0$, $\langle A_0\delta_{n+6m,i},\delta_{n+6m,i}\rangle=\lambda_{n,i}\delta_{v_i} $ where $A_0\delta_{n+6m,i}=\lambda_{n,i}\delta_{n+6m,i}$ (so that $|\lambda_{n,i}|\leq \|A_0\|$), we have $\|\lim_{m\rightarrow \infty} \langle A_0\delta_{n+6m,i},\delta_{n+6m,i}\rangle\|\leq \|A_0\|$ (note that $\|\delta_{v_i}\|=1$).
It follows from Equation (~\ref{lim}) that this holds for every $A$ with $q(A)=q(A_0)$. Since, for $a\in \mathcal{Q}_0$, $\|a\|=\inf\{\|A\|: A\in \mathcal{D}, q(A)=a\}$, we have $|\phi_{n,i}(a)|\leq \|a\|$.

As $\mathcal{Q}_0$ is dense in $q(\mathcal{D})$, we can extend $\phi_{n,i}$ (uniquely) to a character of $q(\mathcal{D})$.

	
	\end{proof}
\begin{example}\label{ZZ}
Recall that $Z\delta_{s,i}=\delta_{s,i}$ if $s$ is even and $Z\delta_{s,i}=t_i\delta_{s,i}$ if $s$ is odd. Thus  $Z\delta_{n+6m,i}=\delta_{n+6m,i}$ if $n$ is even and $Z\delta_{n+6m,i}=t_i\delta_{n+6m,i}$ if $n$ is odd. It follows that $\phi_{n,i}(z)=1$ if $n$ is even and $\phi_{n,i}(z)=t_i$ if $n$ is odd.  	
	
	\end{example}

Now we write $K_{n,i}$ for the kernel of $\phi_{n,i}$ and observe the following.

\begin{lem}\label{invariance}
	Fix $n\geq 0$ and $i\in \{1,2,3\}$. 
	\begin{enumerate}
		\item [(1)] $K_{n,i}\cap \mathcal{Q}_0$ is dense in $K_{n,i}$.
		\item[(2)] For every $e,f\in E^1$, $u_eK_{n,i}u_f^*\subseteq K_{n+1,i}$.
		\item[(3)] For every $e,f\in E^1$, $u_e^*K_{n,i}u_f\subseteq K_{n-1,i}$.
	\end{enumerate}
\end{lem}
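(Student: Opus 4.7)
For part (1), I would use the general fact that the kernel of a continuous character is the closure of its intersection with any dense subalgebra on which the character is non-zero. Since $\mathcal{Q}_0$ is dense in $q(\mathcal{D})$, and the element $c := p_{v_{i+n \bmod 3}} \in \mathcal{Q}_0$ satisfies $\phi_{n,i}(c) = 1$ (observe that $P_v \delta_{k,j} = \delta_{k,j}$ iff $r(\alpha_{k,j}) = v$, and $r(\alpha_{n+6m,i}) = v_{i+n \bmod 3}$), for any $a \in K_{n,i}$ and any sequence $(a_m) \subseteq \mathcal{Q}_0$ with $a_m \to a$ the modified sequence $a_m - \phi_{n,i}(a_m) c$ lies in $K_{n,i} \cap \mathcal{Q}_0$ and still converges to $a$.

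For (2) and (3), the key enabling observation is that in the cycle every $T \in \mathcal{L}(X_E^{\otimes k})$ is diagonal in the basis $\{\delta_{k,j}\}_j$: using $\delta_{k,j}\cdot \delta_{v_i} = \delta_{ij}\delta_{k,j}$ together with right $A$-linearity, the identity $T(\delta_{k,j}\cdot \delta_{v_i}) = (T\delta_{k,j})\cdot \delta_{v_i}$ forces $T\delta_{k,j} \in \mathbb{C}\delta_{k,j}$. Consequently, every $A \in \mathcal{D}$ acts on $\delta_{k,i}$ by scalar multiplication by some $\mu_{A,k,i}$, and $\phi_{n,i}(q(A))$ is determined by $\lim_{m \to \infty} \mu_{A,n+6m,i}$. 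By part (1) and continuity of $a \mapsto u_e a u_f^*$, to prove (2) it suffices to check it on arbitrary $a = q(A) \in \mathcal{Q}_0$ with $A \in \mathcal{D}_0$. I would then compute $S_e A S_f^* \delta_{n+1+6m,i}$ in three steps: $S_f^* \delta_{n+1+6m,i}$ is non-zero iff $f$ is the first edge of $\alpha_{n+1+6m,i}$, which in the cycle is uniquely $e_{i+n \bmod 3}$ (and then yields $\delta_{n+6m,i}$); next $A$ multiplies by $\mu_{A,n+6m,i}$; finally $S_e \delta_{n+6m,i}$ is non-zero iff $s(e) = v_{i+n \bmod 3}$, i.e.\ $e = e_{i+n \bmod 3}$, returning $\delta_{n+1+6m,i}$. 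The composite is therefore either $0$ or $\mu_{A,n+6m,i}\delta_{n+1+6m,i}$, so after passing to the limit $\phi_{n+1,i}(u_e a u_f^*)$ equals either $0$ or $\phi_{n,i}(a)$; in either case, $\phi_{n,i}(a) = 0$ forces $u_e a u_f^* \in K_{n+1,i}$, and combining with part (1) and continuity delivers $u_e K_{n,i} u_f^* \subseteq K_{n+1,i}$.

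Part (3) is entirely symmetric: replacing $\delta_{n+1+6m,i}$ by $\delta_{n-1+6m,i}$, the three-step chain becomes ``$S_f$ raises the index to $\delta_{n+6m,i}$ precisely when $f = e_{i+n-1 \bmod 3}$, $A$ acts diagonally, and $S_e^*$ lowers back exactly when $e = e_{i+n-1 \bmod 3}$'', which again yields $\phi_{n-1,i}(u_e^* a u_f) = 0$ whenever $\phi_{n,i}(a) = 0$. The main technical burden is purely bookkeeping: one must carefully track the indexing of the paths $\alpha_{k,i}$ and the uniqueness of in/out edges at each vertex of the cycle, so that each of the three steps lands on (a scalar multiple of) a single basis vector. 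Once the diagonal rigidity of $\mathcal{L}(X_E^{\otimes k})$ is in hand, everything reduces to tracking one scalar at each step.
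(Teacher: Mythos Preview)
Your proof is correct and follows essentially the same approach as the paper. The only cosmetic differences are that the paper uses the identity $I$ rather than your $c=p_{v_{i+n\bmod 3}}$ in part (1), and in part (2) it moves $S_e$ across the inner product to work with $\langle AS_f^*\delta_{n+1+6m,i},\,S_e^*\delta_{n+1+6m,i}\rangle$ rather than computing $S_eAS_f^*\delta_{n+1+6m,i}$ directly; your added remark that every $T\in\mathcal{L}(X_E^{\otimes k})$ is diagonal in the basis $\{\delta_{k,j}\}$ (via right $A$-linearity) is a clean way to phrase what the paper established earlier in the proof of Lemma~\ref{cycleentry}.
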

\begin{proof}
	To prove (1) let $a$ be in $K_{n,i}$ and find a sequence $a_j \rightarrow a$ with $a_j\in \mathcal{Q}_0$. Then $\phi_{n,i}(a_j)\rightarrow \phi_{n,i}(a)=0$ and, thus, $a_j-\phi(a_j)I \rightarrow a$. Since $a_j-\phi_{n,i}(a_j) \in K_{n,i}\cap \mathcal{Q}_0$, we are done.
	
	For (2), we fix $e,f\in E^1$. Using (1) it is enough to show that $u_e(K_{n,i}\cap \mathcal{Q}_0)u_f^*\subseteq K_{n+1,i}$. So we fix $a\in K_{n,i}\cap \mathcal{Q}_0$. There is some $A\in \mathcal{D}_0$ such that $a=q(A)$ and 
	$$\lim_{m\rightarrow \infty} \langle A\delta_{n+6m,i},\delta_{n+6m,i}\rangle =0.$$ 
	We have $u_eau_f^*=q(S_eAS_f^*)$ and we need to show that $$0=\lim_{m\rightarrow \infty} \langle S_eAS_f^*\delta_{n+1+6m,i},\delta_{n+1+6m,i}\rangle =\lim_{m\rightarrow \infty} \langle AS_f^*\delta_{n+1+6m,i},S_e^*\delta_{n+1+6m,i}\rangle.$$ 
	But for this limit to be non zero we should have $S_f^*\delta_{n+1+6m,i}\neq 0$ and $S_e^*\delta_{n+1+6m,i}\neq 0$ and, in such a case, $S_f^*\delta_{n+1+6m,i}=\delta_{n+6m,i}=S_e^*\delta_{n+1+6m,i}$ and we get $\phi_{n+1,i}(u_eau_f^*)=\phi_{n,i}(a)=0$, proving (2). The proof of (3) is similar and is omitted.

	\end{proof}

\begin{cor}\label{examplecycle}
	The ideal $K_1:=\cap_n K_{n,1}$ is a non trivial fully covariant ideal in $q(\mathcal{D})$ and gives rise to a non trivial gauge-invariant ideal in $C^*(E,Z)$.
\end{cor}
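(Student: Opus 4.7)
The plan is to verify that $K_1$ is (i) fully invariant in $q(\mathcal{D})$ and (ii) neither $\{0\}$ nor all of $q(\mathcal{D})$, and then to pass to a gauge-invariant ideal via Corollary~\ref{bijfinvideals}.

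The decisive observation for full invariance is that the characters $\phi_{n,i}$ are $6$-periodic in $n$: reindexing $m\mapsto m+1$ in Definition~\ref{phini} gives $\phi_{n+6,i}=\phi_{n,i}$ immediately. Hence $K_{n,1}$ depends only on $n\pmod 6$, and $K_1=\bigcap_{n=0}^{5}K_{n,1}$ is a finite intersection. To verify the criterion of Lemma~\ref{charfullyinv}, I fix $e,f\in E^1$ and $a\in K_1$. For each target $m\in\{0,\dots,5\}$ I choose $n\in\{0,\dots,5\}$ with $n+1\equiv m\pmod 6$, and Lemma~\ref{invariance}(2) gives $u_e a u_f^*\in K_{n+1,1}=K_{m,1}$. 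Symmetrically, choosing $n\in\{1,\dots,6\}$ with $n-1\equiv m\pmod 6$ and applying Lemma~\ref{invariance}(3) yields $u_e^* a u_f\in K_{n-1,1}=K_{m,1}$. I expect this to be the main obstacle: without the mod-$6$ periodicity, Lemma~\ref{invariance}(2) would fail to close up the indexing at $m=0$, and an ad-hoc argument would be required there.

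For non-triviality, the upper bound is trivial: $\phi_{n,1}(I)=1$ for every $n$ forces $K_1\neq q(\mathcal{D})$. For $K_1\neq\{0\}$ I propose the explicit witness $a:=q\bigl((Z-I)(Z-t_1 I)\bigr)$. By Example~\ref{ZZ}, $\phi_{n,1}(z)$ equals $1$ when $n$ is even and $t_1$ when $n$ is odd, so $\phi_{n,1}(a)$ is in either case a product with a vanishing factor, and $a\in K_1$. On the other hand, $(Z-I)(Z-t_1 I)$ multiplies $\delta_{s,i}$ (for $s$ odd) by $(t_i-1)(t_i-t_1)$; for $i=3$ this scalar equals $(t_3-1)(t_3-t_1)$, nonzero by the standing hypotheses $t_3\neq 1$ and $t_1\neq t_3$. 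Since this nonzero scalar is realized on the infinite orthogonal family $\{\delta_{s,3}:s\text{ odd}\}$, the operator is not compact, so $a\neq 0$ in $q(\mathcal{D})$.

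Finally, Corollary~\ref{bijfinvideals} associates to the fully invariant ideal $K_1$ a gauge-invariant ideal $\mathcal{I}\subseteq C^*(E,Z)$, and by Remark~\ref{rembij} one has $\mathcal{I}\cap q(\mathcal{D})=K_1$. Since $K_1$ is neither $\{0\}$ nor all of $q(\mathcal{D})$, the ideal $\mathcal{I}$ is likewise non-trivial, completing the plan.
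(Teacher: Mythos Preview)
Your argument is correct and follows the same overall strategy as the paper: Lemma~\ref{invariance} together with Lemma~\ref{charfullyinv} for full invariance, the element $(z-I)(z-t_1I)$ as a nonzero witness in $K_1$, and Corollary~\ref{bijfinvideals} to pass to a gauge-invariant ideal. Your explicit observation that $\phi_{n,1}$ is $6$-periodic in $n$ is a genuine improvement: the paper simply asserts that Lemma~\ref{invariance} gives $u_eK_1u_f^*\subseteq K_1$, but without periodicity Lemma~\ref{invariance}(2) only yields $u_eK_1u_f^*\subseteq\bigcap_{m\geq 1}K_{m,1}$, and the case $m=0$ is left implicit. The non-triviality checks differ in detail but not in spirit: the paper verifies $(z-I)(z-t_1I)\neq 0$ by computing $\phi_{n,3}$ on it for odd $n$ (your non-compactness argument is equally valid and can be made precise via Lemma~\ref{K}), and shows properness via $z-t_3I\notin K_1$ rather than your simpler observation that $I\notin K_1$.
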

\begin{proof}
It follows from Lemma~\ref{invariance} that, for every $e,f \in E^1$, both $u_eK_1u_f^*$ and $u_e^*K_1u_f$ are contained in $K_1$. By Lemma~\ref{charfullyinv}, $K_1$ is fully invariant. 

To show that it is non trivial, we use the fact that $\phi_{n,1}(z)$ is either $1$ or $t_1$ (see Example~\ref{ZZ}), so that $\phi_{n,1}((z-I)(z-t_1I))=0$ for every $n$, and it follows that $(z-I)(z-t_1I)\in K_1$. Since $\phi_{n,3}(z)=t_3$ when $n$ is odd and $t_3$ is different from both $t_1$ and $1$, we see that $\phi_{n,3}((z-I)(z-t_1I))\neq 0$ for odd $n$ so that $(z-I)(z-t_1I)\neq 0$. Thus $K_1\neq \{0\}$.

A similar argument shows that $z-t_3I$ is not in $K_1$ so that $K_1$ is a proper ideal.
	
	\end{proof}

\begin{rem}
Here $E$ is a cycle of length $k=3$ and the weight sequence $Z$ has period $p=2$. But one can apply similar arguments for $k>3$ to find weight sequences $Z$ (with period $p>1$) such that $C^*(E,Z)$ has a non trivial gauge-invariant ideal. In the computation above one should replace $6$, that appears in the formulas, with $kp$.
\end{rem}

\addcontentsline{toc}{section}{\refname}

\end{document}